\tikzstyle{startstop} = [rectangle, rounded corners, minimum width=3cm, minimum height=1cm, text centered, text width= 7cm, draw=black, fill=cyan!30]
\tikzstyle{startstop'} = [rectangle, rounded corners, minimum width=3cm, minimum height=1cm, text centered, text width= 7cm, draw=black, fill=red!30]
\tikzstyle{startstop2} = [rectangle, rounded corners, minimum width=3cm, minimum height=1cm, text centered, text width= 10cm, draw=black, fill=cyan!30]
\tikzstyle{startstop2'} = [rectangle, rounded corners, minimum width=3cm, minimum height=1cm, text centered, text width= 10cm, draw=black, fill=red!30]
\tikzstyle{io} = [trapezium, trapezium left angle=70, trapezium right angle = 110, minimum width=3cm, minimum height=1cm, text centered, draw=black, fill=blue!30]
\tikzstyle{decision} = [diamond, minimum width=3cm, minimum height=1cm, text centered, draw=black, fill=green!30]
\tikzstyle{arrow} = [thick, ->, >=stealth]
\newcommand{\T}{\mathbb{T}^N}
\newcommand{\N}{\mathbb{N}}									%basic sets
\newcommand{\R}{\mathbb{R}}
\newtheorem{theorem}{Theorem}[section]
\newtheorem{lemma}[theorem]{Lemma}
\newtheorem{proposition}[theorem]{Proposition}
\newtheorem{corollary}[theorem]{Corollary}
\theoremstyle{definition}
\newtheorem{definition}[theorem]{Definition}
\newtheorem{assumption}[theorem]{Assumption}
\theoremstyle{remark}
\newtheorem{remark}[theorem]{Remark}
\numberwithin{equation}{section}
\newcommand{\sy}{\boldsymbol{\Psi}}
\newcommand{\py}{\boldsymbol{\Phi}}
\newcommand{\inner}[2]{\langle #1, #2 \rangle}
\DeclarePairedDelimiter\norm{\lVert}{\rVert}
\DeclarePairedDelimiter\abs{\lvert}{\rvert}
\begin{document}

\title{Existence and Uniqueness of Maximal Solutions to SPDEs with Applications to Viscous Fluid Equations}

\author{Daniel Goodair, Dan Crisan, Oana Lang}

%    Address of record for the research reported here
%\address{DEPARTMENT OF MATHEMATICS, IMPERIAL COLLEGE, LONDON SW7 2AZ, UK.}
%\email{d.crisan@ic.ac.uk,
%daniel.goodair16@imperial.ac.uk, o.lang15@imperial.ac.uk, p.mensah@imperial.ac.uk}

\maketitle

%    General info
%\subjclass[2010]{Primary 35R60, 35Q35; Secondary  76N10, 76M45}

%\dedicatory{ }

%\keywords{ }

\begin{abstract}
We present two criteria to conclude that a stochastic partial differential equation (SPDE) posseses a unique maximal strong solution. This paper provides the full details of the abstract well-posedness results first given in \cite{goodair2022existence}, and partners the paper \cite{goodair2022inprep} which rigorously addresses applications to the 3D SALT (Stochastic Advection by Lie Transport, \cite{holm2015variational}) Navier-Stokes Equation in velocity and vorticity form, on the torus and the bounded domain respectively. Each criterion has its corresponding set of assumptions and can be applied to viscous fluid equations with additive, multiplicative or a general transport type noise.
%We present here a criterion to conclude that an abstract SPDE posseses a unique local strong solution, which we apply to Stochastic Navier-Stokes and Rotating Shallow Water equations. Inspired by the work of Kato and Lai \cite{kato1984nonlinear} in the deterministic setting, we provide a comparable result here in the stochastic case whilst facilitating a variety of noise structures such as additive, multiplicative and transport. In particular we apply our criterion to viscous fluid dynamics models with stochasticity introduced as in \cite{holm2015variational}. Our application to the Incompressible Navier-Stokes equation on a bounded domain in $\R^3$ represents the first well-posedness result for an SPDE with transport type noise on a bounded domain. 
\end{abstract}

\tableofcontents

\setcounter{page}{1}
\section{Introduction}
The theoretical analysis of fluid models perturbed by transport type noise has been in significant demand since the release of the seminal works \cite{holm2015variational} and \cite{memin2014fluid}. In these papers Holm and Memin establish a new class of stochastic equations driven by transport noise which serve as fluid dynamics models by adding uncertainty in the transport of the fluid parcels to reflect the unresolved scales. The significance of such equations in modelling, numerical schemes and data assimilation continues to be well documented, see (\cite{cotter2018modelling}, \cite{cotter2019numerically}, \cite{holm2020stochastic}, \cite{holm2019stochastic} \cite{street2021semi}, \cite{van2021bayesian}, \cite{crisan2021theoretical}, \cite{dufee2022stochastic}, \cite{lang2022pathwise}, \cite{cotter2020data}, \cite{flandoli20212d}, \cite{flandoli2022additive}, \cite{alonso2020modelling}). In contrast there has been limited progress in proving well-posedness for this class of equations: Crisan, Flandoli and Holm \cite{crisan2019solution} have shown the existence and uniqueness of maximal solutions for the 3D Euler Equation on the torus, whilst Crisan and Lang (\cite{crisan2019well},\cite{crisan2020local},\cite{crisan2021well}) extended the well-posedness theory for the Euler, Rotating Shallow Water and Great Lake Equations on the torus once more. Alonso-Or\'{a}n and Bethencourt de Le\'{o}n \cite{alonso2020well} show the same properties for the Boussinesq Equations again on the torus, whilst Brze\'{z}niak and Slav\'{i}k  \cite{brzezniak2021well} demonstrate these properties on a bounded domain for the Primitive Equations but for a specific choice of transport noise which facilitates their analysis. Indeed the theoretical analysis of fluid equations driven by a specifically chosen transport noise is well developed in the literature, see (\cite{luo2020scaling}, \cite{luo2021convergence}, \cite{flandoli2021scaling}, \cite{attanasio2011renormalized}, \cite{flandoli2021high}, \cite{flandoli2015open}).\\ 

The first step in developing the theoretical analysis of either stochastic or deterministic PDEs is well-posedness. The class of equations in consideration is ever expanding; Figure 2 of \cite{crisan2021theoretical} gives a brief overview of just some of the determinstic fluid models, each of which can be stochastically perturbed through a similarly widening array of variational principles (beyond the seminal works \cite{holm2015variational} and \cite{memin2014fluid}, see more recently \cite{holm2019stochastic} and \cite{street2021semi}). The significance of an abstract approach to the well-posedness question is clear, looking to encapsulate the similarities between these equations whilst working in as much generality as possible to incorporate their technical differences. We state our equation in the form
\begin{equation} \label{thespde}
    \sy_t = \sy_0 + \int_0^t \mathcal{A}(s,\sy_s)ds + \int_0^t\mathcal{G} (s,\sy_s) d\mathcal{W}_s
\end{equation}
for $\mathcal{W}$ a Cylindrical Brownian Motion and operators $\mathcal{A}$ and $\mathcal{G}$ which heuristically allow for nonlinearity with $\mathcal{A}$ a second order differential operator and $\mathcal{G}$ of first order, at the expense of some weak monotonicity and coercivity constraints (Subsection \ref{assumptionschapter}, \ref{subsection for assumptions 2}). We prove the existence, uniqueness and maximality of solutions to (\ref{thespde}) which are strong in both the probabilistic and PDE sense. Looking to work in great generality, we separate our results into two distinct criteria whereby we work under differnt sets of assumptions (Sections \ref{Section v valued} and \ref{h valued}). In Section \ref{Section v valued} we present the first criterion, requiring a triple of embedded Hilbert Spaces. The criterion is extended in Section \ref{h valued}, giving rise to solutions of the SPDE in a larger space. The results are applied to the SALT Navier-Stokes Equation in both its velocity and vorticity form, applying only the result of Section \ref{Section v valued} to the vorticity form but requiring Section \ref{h valued} for the velocity one, making explicit the insufficieny of just Section \ref{Section v valued} for solving the velocity equation in the optimal spaces. In the interests of brevity we simply sketch these applications (Section \ref{section applications}), deferring a complete treatment to the associated paper \cite{goodair2022inprep}.\\ 

The embedded Hilbert Spaces with respect to which we pose (\ref{thespde}) are not assumed to form a Gelfand Triple, a core difference between the current work and existing ones. Variational approaches in a Gelfand Triple for additive and multiplicative noise have long been studied, initially in the works (\cite{pardoux1975equations}, \cite{gyongy1980stochastic}, \cite{krylov2007stochastic}) and more recently (\cite{neelima2020coercivity}, \cite{liu2013well}, \cite{liu2010spde}, \cite{liu2013local}). Perhaps the most relevant papers come from Debussche, Glatt-Holtz and Temam \cite{debussche2011local} as well as R\"{o}ckner, Shang and Zhang \cite{rockner2022well}. In \cite{debussche2011local} the authors show existence, uniqueness and maximality for an abstract fluids model perturbed by a multiplicative noise, though this noise does not extend to the case of a differential operator. The work \cite{rockner2022well} was released just in recent weeks, in which the authors extend the framework of a variational approach in the Gelfand Triple to cover transport type noise. Their assumptions  achieve a global existence result, however, which naturally will not account for PDE-strong solutions of the 3D Navier-Stokes Equation and related models. Indeed the very setting of a Gelfand Triple lends itself to solutions which are weak in the analytic sense, though we appreciate that Liu and R\"{o}ckner \cite{liu2013local} prove the existence of a local strong solution to the incompressible 3D Navier-Stokes Equation with additive noise by working in the spaces $W^{2,2} \xhookrightarrow{}W^{1,2}\xhookrightarrow{} L^2$ with the understanding that the $(W^{2,2})^*$ norm is controlled by the $L^2$ one. To directly work with such a triplet of spaces, Kato and Lai \cite{kato1984nonlinear} prove a PDE existence result which they apply to the Euler Equation in the context of an 'admissable triplet' of spaces, where a bilinear form relation that reduces to the inner product of the middle space exists however there is no assumed duality structure. We shall work in this setting throughout the paper.\\

The quoted works to solve an abstract SPDE have two distinct mechanisms of proof: transformation of the SPDE to a random PDE, and energy estimates leading to relative compactness methods. Our departure from the generalised 'admissable triplet' framework in Section \ref{Section v valued} is representative of a new method, which we summarise here. This is in the context of embedded Hilbert spaces $V \xhookrightarrow{} H \xhookrightarrow{} U$, where:
\begin{itemize}
    \item $V$ is the space in which solutions have square integrability in time. This represents the most regular space and a sequence of finite-dimensional approximating solutions are constructed within it. This is referred to as a Galerkin Approximation.
    \item $H$ is the space in which solutions are continuous in time, and in which the initial condition takes value. Typically this space is where the analysis takes place, through a bilinear form reducing to the $H$ inner product where some weak coercivity assumption is used to generate the control in $V$. To ease the burden of a first order diffusion operator we conduct much of the analysis in $U$, only working in the $H$ inner product to generate the $V$ control for the Galerkin Approximations.
    \item $U$ is the space in which solutions satisfy their identity. The described limitations are circumvented by conducting analysis in the $U$ inner product, which is combined with the work in the $H$ inner product to achieve the required regularity of the solution. In particular a Cauchy property is shown in the $U$ inner product; this is an asset of our method as typically such a property must be shown in $H$, and in fact is necessary in handling the differential operator in the noise term. 
\end{itemize}

This is explained in more detail now. Our method takes inspiration from that of Glatt-Holtz and Ziane in the paper \cite{glatt2009strong}, where they deduce the existence of a local strong solution to the incompressible 3D Navier-Stokes Equation with multiplicative noise by considering a Galerkin Approximation where each finite dimensional solution is treated up to a first hitting time. Provided a Cauchy Property in the norm of the first hitting time is satisfied, alongside a uniform rate of convergence of the processes to their initial conditions, then a limiting process and almost surely positive stopping time can be inferred (restated in the Appendix, Lemma \ref{greenlemma}) which is then shown to be a solution as desired. In this paper the Cauchy Property is demonstrated for the $H$ inner product; to manage the differential operator in the noise we have to appeal to some uniform higher regularity of the solutions, which is not immediately evident from the first hitting times. Moreover our proof comes with the additional step of showing uniform boundedness generated in the $H$ inner product, enabling us to prove the Cauchy Property only for $U$. This summarises our existence approach for Section \ref{Section v valued}, where we consider so called $H-$valued solutions of (\ref{thespde}).\\

A different notion of solution ($U-$valued) is considered in Section \ref{h valued}, where the criterion of Section \ref{Section v valued} is extended. A fourth Hilbert Space $X$ ($U \xhookrightarrow{} X$) is introduced, with the spaces serving a different purpose for the $U-$valued solutions:
\begin{itemize}
    \item $V$ is superfluous in making the definition of $U-$valued solutions, though we again use an approximating sequence of solutions which must exist in this space.
    \item $H$ is the space in which solutions have square integrability in time. The approximating sequence of solutions are no longer in finite dimensions, but rather are solutions of the full equation with $H$ valued initial conditions. 
    \item $U$ is the space in which solutions are continuous in time, and in which the initial condition takes value. The approximating solutions correspond to solutions for $H$ valued initial conditions convergent to the $U$ valued one. A Cauchy property is shown in this space, which directly generates the $H$ control through a weak coercivity assumption. We are afforded the option to show the Cauchy property in this space as the differential operator in the noise term only prevented the Cauchy property in Section \ref{Section v valued} due to the difference in dimensionality of the approximating sequence. As alluded to in the discussion there, an 'admissable triplet' relation is assumed for the spaces $H,U,X$ to facilitate working in this space.
    \item $X$ is the space in which solutions satisfy their identity. The only analysis conducted in this space is for the uniqueness of solutions, and to justify that the limiting process obtained from the above Cauchy property is a solution. 
\end{itemize}
The idea (in line with \cite{glatt2012local}, \cite{glatt2011cauchy} for example) is to use the more regular $H-$valued solutions corresponding to a sequence of $H-$valued initial conditions which are convergent to a $U-$valued initial condition. This involves iterating the procedure of Section \ref{Section v valued} though in a much simplified way, as we do not need any uniform higher regularity to justify the Cauchy property which is now shown directly in the space of existence. This simplification is again owing to the fact that the transport type noise only proves problematic when dealing with the difference of the finite dimensional projection operators. \\

The text is structured as follows. In Section \ref{section prelims} we simply introduce some notation and establish the stochastic framework. Section \ref{Section v valued} is where we prove the existence of $H-$valued local strong solutions as described, in addition to their uniqueness and maximality. The uniqueness is proved by simply showing that the expectation of the norm of the difference of two solutions is null. As for maximality, we apply Zorn's Lemma at an abstract level to show the existence of a maximal solution. The maximal time is then characterised as the blow up by showing that at the minimum between this maximal time and any first hitting time in the required norm, then a local solution exists and can be extended: thus the first hitting time must be smaller than the maximal time, as by definition the maximal time cannot be extended. Section \ref{h valued} follows the same path of existence, uniqueness and maximality for the $U-$valued solutions as outlined. We give the application to the SALT Navier-Stokes Equation in both its velocity and vorticity forms in Section \ref{section applications}, again making explicit the necessity of having both Sections \ref{Section v valued} and \ref{h valued} and their distinction. To reach the main results more efficiently some of the proofs are deferred to the Appendix (Subection \ref{proofs section 3 appendix}), which also contains a few key results from the literature that we apply in the text and concludes this paper (Subsection \ref{subsection useful results}).

\section{Preliminaries} \label{section prelims}

In the following $\mathcal{O}$ represents a subset of $\R^N$. Throughout the paper we consider Banach Spaces as measure spaces equipped with the Borel $\sigma$-algebra, and will on occassion use $\lambda$ to represent the Lebesgue Measure.

\begin{definition} \label{definition of spaces}
Let $(\mathcal{X},\mu)$ denote a general measure space, $(\mathcal{Y},\norm{\cdot}_{\mathcal{Y}})$ and $(\mathcal{Z},\norm{\cdot}_{\mathcal{Z}})$ be Banach Spaces, and $(\mathcal{U},\inner{\cdot}{\cdot}_{\mathcal{U}})$, $(\mathcal{H},\inner{\cdot}{\cdot}_{\mathcal{H}})$ be general Hilbert spaces. $\mathcal{O}$ is equipped with Euclidean norm.
\begin{itemize}
    \item $L^p(\mathcal{X};\mathcal{Y})$ is the usual class of measurable $p-$integrable functions from $\mathcal{X}$ into $\mathcal{Y}$, $1 \leq p < \infty$, which is a Banach space with norm $$\norm{\phi}_{L^p(\mathcal{X};\mathcal{Y})}^p := \int_{\mathcal{X}}\norm{\phi(x)}^p_{\mathcal{Y}}\mu(dx).$$ The space $L^2(\mathcal{X}; \mathcal{Y})$ is a Hilbert Space when $\mathcal{Y}$ itself is Hilbert, with the standard inner product $$\inner{\phi}{\psi}_{L^2(\mathcal{X}; \mathcal{Y})} = \int_{\mathcal{X}}\inner{\phi(x)}{\psi(x)}_\mathcal{Y} \mu(dx).$$ In the case $\mathcal{X} = \mathcal{O}$ and $\mathcal{Y} = \R^N$ note that $$\norm{\phi}_{L^2(\mathcal{O};\R^N)}^2 = \sum_{l=1}^N\norm{\phi^l}^2_{L^2(\mathcal{O};\R)}$$ for the component mappings $\phi^l: \mathcal{O} \rightarrow \R$. %We denote $\norm{\cdot}_{L^p(\mathcal{O};\R^N)}$ by $\norm{\cdot}_{L^p}$ and $\norm{\cdot}_{L^2(\mathcal{O};\R^N)}$ by $\norm{\cdot}$.
    
\item $L^{\infty}(\mathcal{X};\mathcal{Y})$ is the usual class of measurable functions from $\mathcal{X}$ into $\mathcal{Y}$ which are essentially bounded, which is a Banach Space when equipped with the norm $$ \norm{\phi}_{L^{\infty}(\mathcal{X};\mathcal{Y})} := \inf\{C \geq 0: \norm{\phi(x)}_Y \leq C \textnormal{ for $\mu$-$a.e.$ }  x \in \mathcal{X}\}.$$
    
    \item $L^{\infty}(\mathcal{O};\R^N)$ is the usual class of measurable functions from $\mathcal{O}$ into $\R^N$ such that $\phi^l \in L^{\infty}(\mathcal{O};\R)$ for $l=1,\dots,N$, which is a Banach Space when equipped with the norm $$ \norm{\phi}_{L^{\infty}}:= \sup_{l \leq N}\norm{\phi^l}_{L^{\infty}(\mathcal{O};\R)}.$$
    
     \item $C(\mathcal{X};\mathcal{Y})$ is the space of continuous functions from $\mathcal{X}$ into $\mathcal{Y}$.
      
%    \item $C^m(\mathcal{O};\R)$, $m \in \N$, is the space of $m$ times continuously differentiable functions from $\mathcal{O}$ to $\R$, that is $\phi \in C^m(\mathcal{O};\R)$ if and only if for every $N$ dimensional multi index $\alpha = \alpha_1, \dots, \alpha_N$ with $\abs{\alpha}\leq m$, $D^\alpha \phi \in C(U;\R)$ where $D^\alpha$ is the corresponding classical derivative operator $\partial_{x_1}^{\alpha_1} \dots \partial_{x_N}^{\alpha_N}$.
    
 %   \item $C^\infty(\mathcal{O};\R)$ is the intersection over all $m \in \N$ of the spaces $C^m(\mathcal{O};\R)$.
    
  %  \item $C^m_0(\mathscr{O};\R)$ for $m \in \N$ or $m = \infty$ is the subspace of $C^m(\mathscr{O};\R)$ of functions which have compact support.
    
    %\item $C^m(\mathcal{O};\R^N), C^m_0(\mathscr{O};\R^N)$ for $m \in \N$ or $m = \infty$ is the space of functions from $\mathcal{O}$ to $\R^N$ whose $N$ component mappings each belong to $C^m(\mathcal{O};\R), C^m_0(\mathscr{O};\R)$.
    
        \item $W^{m,p}(\mathcal{O}; \R)$ for $1 \leq p < \infty$ is the sub-class of $L^p(\mathcal{O}, \R)$ which has all weak derivatives up to order $m \in \N$ also of class $L^p(\mathcal{O}, \R)$. This is a Banach space with norm $$\norm{\phi}^p_{W^{m,p}(\mathcal{O}, \R)} := \sum_{\abs{\alpha} \leq m}\norm{D^\alpha \phi}_{L^p(\mathcal{O}; \R)}^p$$ where $D^\alpha$ is the corresponding weak derivative operator. In the case $p=2$, $W^{m,2}(U, \R)$ is a Hilbert Space with inner product $$\inner{\phi}{\psi}_{W^{m,2}(\mathcal{O}; \R)} := \sum_{\abs{\alpha} \leq m} \inner{D^\alpha \phi}{D^\alpha \psi}_{L^2(\mathcal{O}; \R)}.$$
    
    \item $W^{m,\infty}(\mathcal{O};\R)$ for $m \in \N$ is the sub-class of $L^\infty(\mathcal{O}, \R)$ which has all weak derivatives up to order $m \in \N$ also of class $L^\infty(\mathcal{O}, \R)$. This is a Banach space with norm $$\norm{\phi}_{W^{m,\infty}(\mathcal{O}, \R)} := \sup_{\abs{\alpha} \leq m}\norm{D^{\alpha}\phi}_{L^{\infty}(\mathcal{O};\R^N)}.$$
    
        \item $W^{m,p}(\mathcal{O}; \R^N)$ for $1 \leq p < \infty$ is the sub-class of $L^p(\mathcal{O}, \R^N)$ which has all weak derivatives up to order $m \in \N$ also of class $L^p(\mathcal{O}, \R^N)$. This is a Banach space with norm $$\norm{\phi}^p_{W^{m,p}(\mathcal{O}, \R^N)} := \sum_{l=1}^N\norm{\phi^l}_{W^{m,p}(\mathcal{O}; \R)}^p.$$ In the case $p=2$ the space $W^{m,2}(\mathcal{O}, \R^N)$ is Hilbertian with inner product $$\inner{\phi}{\psi}_{W^{m,2}(\mathcal{O}; \R^N)} := \sum_{l=1}^N \inner{\phi^l}{\psi^l}_{W^{m,2}(\mathcal{O}; \R)}.$$
    
          \item $W^{m,\infty}(\mathcal{O}; \R^N)$ for is the sub-class of $L^\infty(\mathcal{O}, \R^N)$ which has all weak derivatives up to order $m \in \N$ also of class $L^\infty(\mathcal{O}, \R^N)$. This is a Banach space with norm $$\norm{\phi}_{W^{m,\infty}(\mathcal{O}, \R^N)} := \sup_{l \leq N}\norm{\phi^l}_{W^{m,\infty}(\mathcal{O}; \R)}.$$
          
    %\item  $\dot{L}^2(\T;\R^N)$ is the subset of $L^2(\T;\R^N)$ of functions $\phi$ such that $$\int_{\T}\phi \ d\lambda = 0.$$    
    
   % \item $\dot{W}^{m,2}(\T;\R^N)$ is simply the intersection $W^{m,2}(\T;\R^N) \cap \dot{L}^2(\T;\R^N)$.
          
    %\item $W^{m,p}_0(\mathscr{O};\R), W^{m,p}_0(\mathscr{O};\R^N)$ for $m \in N$ and $1 \leq p \leq \infty$ is the closure of $C^\infty(\mathscr{O};\R),C^\infty(\mathscr{O};\R^N)$ in $W^{m,p}(\mathscr{O};\R), W^{m,p}(\mathscr{O};\R^N)$.
    
    \item $\mathscr{L}(\mathcal{Y};\mathcal{Z})$ is the space of bounded linear operators from $\mathcal{Y}$ to $\mathcal{Z}$. This is a Banach Space when equipped with the norm $$\norm{F}_{\mathscr{L}(\mathcal{Y};\mathcal{Z})} = \sup_{\norm{y}_{\mathcal{Y}}=1}\norm{Fy}_{\mathcal{Z}}.$$ It is the dual space $\mathcal{Y}^*$ when $\mathcal{Z}=\R$, with operator norm $\norm{\cdot}_{\mathcal{Y}^*}.$
    
     \item $\mathscr{L}^2(\mathcal{U};\mathcal{H})$ is the space of Hilbert-Schmidt operators from $\mathcal{U}$ to $\mathcal{H}$, defined as the elements $F \in \mathscr{L}(\mathcal{U};\mathcal{H})$ such that for some basis $(e_i)$ of $\mathcal{U}$, $$\sum_{i=1}^\infty \norm{Fe_i}_{\mathcal{H}}^2 < \infty.$$ This is a Hilbert space with inner product $$\inner{F}{G}_{\mathscr{L}^2(\mathcal{U};\mathcal{H})} = \sum_{i=1}^\infty \inner{Fe_i}{Ge_i}_{\mathcal{H}}$$ which is independent of the choice of basis.

\end{itemize}

\end{definition}

\subsection{Stochastic Framework}

We work with a fixed filtered probability space $(\Omega,\mathcal{F},(\mathcal{F}_t), \mathbb{P})$ satisfying the usual conditions of completeness and right continuity. We take $\mathcal{W}$ to be a cylindrical Brownian Motion over some Hilbert Space $\mathfrak{U}$ with orthonormal basis $(e_i)$. Recall (\cite{goodair2022stochastic}, Subsection 1.4) that $\mathcal{W}$ admits the representation $\mathcal{W}_t = \sum_{i=1}^\infty e_iW^i_t$ as a limit in $L^2(\Omega;\mathfrak{U}')$ whereby the $(W^i)$ are a collection of i.i.d. standard real valued Brownian Motions and $\mathfrak{U}'$ is an enlargement of the Hilbert Space $\mathfrak{U}$ such that the embedding $J: \mathfrak{U} \rightarrow \mathfrak{U}'$ is Hilbert-Schmidt and $\mathcal{W}$ is a $JJ^*-$cylindrical Brownian Motion over $\mathfrak{U}'$. Given a process $F:[0,T] \times \Omega \rightarrow \mathscr{L}^2(\mathfrak{U};\mathscr{H})$ progressively measurable and such that $F \in L^2\left(\Omega \times [0,T];\mathscr{L}^2(\mathfrak{U};\mathscr{H})\right)$, for any $0 \leq t \leq T$ we define the stochastic integral $$\int_0^tF_sd\mathcal{W}_s:=\sum_{i=1}^\infty \int_0^tF_s(e_i)dW^i_s$$ where the infinite sum is taken in $L^2(\Omega;\mathscr{H})$. We can extend this notion to processes $F$ which are such that $F(\omega) \in L^2\left( [0,T];\mathscr{L}^2(\mathfrak{U};\mathscr{H})\right)$ for $\mathbb{P}-a.e.$ $\omega$ via the traditional localisation procedure. In this case the stochastic integral is a local martingale in $\mathscr{H}$. \footnote{A complete, direct construction of this integral, a treatment of its properties and the fundamentals of stochastic calculus in infinite dimensions can be found in \cite{goodair2022stochastic} Section 1. Properties that we shall make frequent use of are the Burkholder-Davis-Gundy type inequality Theorem 1.6.8 and the passage of a bounded linear operator through the stochastic integral, Theorem 1.6.9. Section 2 of this work looks at an abstract setting for SPDEs which we follow here, including a survey of useful results in this framework such as the It\^{o} Formula (Subsection 2.5).}

%\begin{remark}  \label{continuityassumption} 
%Whilst we make the assumption \ref{measurabilityassumption} to be as general as possible, we would also like to present our assumptions solely in terms of the operators involved. To this end we make the remark that if for every $T>0$ we have that 
%\begin{align*}
 %   \mathcal{A} \in C\left([0,T] \times V; U\right), \qquad
 %   \mathcal{G} \in C\left([0,T] \times V; \mathscr{L}^2(\mathfrak{U};U)\right).
%\end{align*}
%then \ref{measurabilityassumption} holds.
%\end{remark}

\section{H-Valued Solutions} \label{Section v valued}

In this section we state and prove our existence, uniqueness and maximality results for an SPDE (\ref{thespde}) satisfying the first set of assumptions. Throughout we will use $c$ to be a generic constant which can change from line to line.

\subsection{Assumptions} \label{assumptionschapter}

We state the assumptions for a triplet of embedded Hilbert Spaces $$V \hookrightarrow H \hookrightarrow U$$ and ask that there is a continuous bilinear form $\inner{\cdot}{\cdot}_{U \times V}: U \times V \rightarrow \R$ such that for $\phi \in H$ and $\psi \in V$, \begin{equation} \label{bilinear formog}
    \inner{\phi}{\psi}_{U \times V} =  \inner{\phi}{\psi}_{H}.
\end{equation}
The operators $\mathcal{A},\mathcal{G}$ are such that for any $T>0$,
    $\mathcal{A}:[0,T] \times V \rightarrow U,
    \mathcal{G}:[0,T] \times V \rightarrow \mathscr{L}^2(\mathfrak{U};H)$ are measurable. We assume that $V$ is dense in $H$ which is dense in $U$. 

\begin{assumption} \label{assumption fin dim spaces}
For a system $(a_n)$ of elements of $V$, define the spaces $V_n:= \textnormal{span}\left\{a_1, \dots, a_n \right\}$ and $\mathcal{P}_n$ as the orthogonal projection to $V_n$ in $U$, $\mathcal{P}_n:U \rightarrow V_n$. Then:
\begin{enumerate}
    \item There exists some constant $c$ independent of $n$ such that for all $\phi\in H$,
\begin{equation} \label{projectionsboundedonH}
    \norm{\mathcal{P}_n \phi}_H^2 \leq c\norm{\phi}_H^2.
\end{equation}
\item There exists a real valued sequence $(\mu_n)$ with $\mu_n \rightarrow \infty$ such that for any $\phi \in H$, \begin{align}
     \label{mu2}
    \norm{(I - \mathcal{P}_n)\phi}_U \leq \frac{1}{\mu_n}\norm{\phi}_H
\end{align}
where $I$ represents the identity operator in $U$.
\end{enumerate}
\end{assumption} 

\begin{remark}
The property (\ref{projectionsboundedonH}) would be immediate for the $U$ norm (and $c=1$) by definition of $\mathcal{P}_n$ as an orthogonal projection in $U$. This does not necessarily translate to the $H$ norm so it is a required assumption. We rely on this property when showing the uniform boundedness of the Galerkin Equations (Proposition \ref{theorem:uniformbounds}) as the bounds we produce are dependent on the $H$ norm of their initial conditions. 
\end{remark}

\begin{remark}
As well as the density of $V$ in $H$, the purpose of (\ref{mu2}) is to give us a rate of approximation of elements of $H$ by those in $V$ for the $U$ norm. This will be necessary in showing the convergence of the Galerkin System (Theorems \ref{theorem:cauchygalerkin}, \ref{existence of local strong V solution}).
\end{remark}

We shall use general notation $c_t$ to represent a function $c_\cdot:[0,\infty) \rightarrow \R$ bounded on $[0,T]$ for any $T > 0$, evaluated at the time $t$. Moreover we define functions $K$, $\tilde{K}$ relative to some non-negative constants $p,\tilde{p},q,\tilde{q}$. We use a generic notation to define the functions $K: U \rightarrow \R$, $K: U \times U \rightarrow \R$, $\tilde{K}: H \rightarrow \R$ and $\tilde{K}: H \times H \rightarrow \R$ by
\begin{align*}
    K(\phi)&:= 1 + \norm{\phi}_U^{p},\\
    K(\phi,\psi)&:= 1+\norm{\phi}_U^{p} + \norm{\psi}_U^{q},\\
    \tilde{K}(\phi) &:= K(\phi) + \norm{\phi}_H^{\tilde{p}},\\
    \tilde{K}(\phi,\psi) &:= K(\phi,\psi) + \norm{\phi}_H^{\tilde{p}} + \norm{\psi}_H^{\tilde{q}}
\end{align*}
  In the case of $\tilde{K}$, when $\tilde{p}, \tilde{q} = 2$ then we shall denote the general $\tilde{K}$ by $\tilde{K}_2$. In this case no further assumptions are made on the $p,q$. That is, $\tilde{K}_2$ has the general representation \begin{equation}\label{Ktilde2}\tilde{K}_2(\phi,\psi) = K(\phi,\psi) + \norm{\phi}_H^2 + \norm{\psi}_H^2\end{equation} and similarly as a function of one variable.\\
 
 We state the subsequent assumptions for arbitrary elements $\phi,\psi \in V$, $\phi^n \in V_n$, $\eta \in H$ and $t \in [0,\infty)$, and a fixed $\kappa > 0$. Understanding $\mathcal{G}$ as an operator $\mathcal{G}: [0,\infty) \times V \times \mathfrak{U} \rightarrow H$, we introduce the notation $\mathcal{G}_i(\cdot,\cdot):= \mathcal{G}(\cdot,\cdot,e_i)$.
 
% \begin{assumption} \label{measurabilityassumption}
%For any $T>0$, $\mathcal{A}:[0,T] \times V \rightarrow U$ and  $\mathcal{G}:[0,T] \times V \rightarrow \mathscr{L}^2(\mathfrak{U};H)$ are measurable. 
%\end{assumption}
 
  \begin{assumption} \label{new assumption 1} \begin{align}
     \label{111} \norm{\mathcal{A}(t,\boldsymbol{\phi})}^2_U +\sum_{i=1}^\infty \norm{\mathcal{G}_i(t,\boldsymbol{\phi})}^2_H &\leq c_t K(\boldsymbol{\phi})\left[1 + \norm{\boldsymbol{\phi}}_V^2\right],\\ \label{222}
     \norm{\mathcal{A}(t,\boldsymbol{\phi}) - \mathcal{A}(t,\boldsymbol{\psi})}_U^2 &\leq  c_t\left[K(\phi,\psi) + \norm{\phi}_V^p + \norm{\psi}_V^q\right]\norm{\phi-\psi}_V^2,\\ \label{333}
    \sum_{i=1}^\infty \norm{\mathcal{G}_i(t,\boldsymbol{\phi}) - \mathcal{G}_i(t,\boldsymbol{\psi})}_U^2 &\leq c_tK(\phi,\psi)\norm{\phi-\psi}_H^2.
 \end{align}
 \end{assumption}

\begin{assumption} \label{assumptions for uniform bounds2}
 \begin{align}
   \label{uniformboundsassumpt1}  2\inner{\mathcal{P}_n\mathcal{A}(t,\boldsymbol{\phi}^n)}{\boldsymbol{\phi}^n}_H + \sum_{i=1}^\infty\norm{\mathcal{P}_n\mathcal{G}_i(t,\boldsymbol{\phi}^n)}_H^2 &\leq c_t\tilde{K}_2(\boldsymbol{\phi}^n)\left[1 + \norm{\boldsymbol{\phi}^n}_H^2\right] - \kappa\norm{\boldsymbol{\phi}^n}_V^2,\\  \label{uniformboundsassumpt2}
    \sum_{i=1}^\infty \inner{\mathcal{P}_n\mathcal{G}_i(t,\boldsymbol{\phi}^n)}{\boldsymbol{\phi}^n}^2_H &\leq c_t\tilde{K}_2(\boldsymbol{\phi}^n)\left[1 + \norm{\boldsymbol{\phi}^n}_H^4\right].
\end{align}
\end{assumption}

\begin{assumption} \label{therealcauchy assumptions}
\begin{align}
  \nonumber 2\inner{\mathcal{A}(t,\boldsymbol{\phi}) - \mathcal{A}(t,\boldsymbol{\psi})}{\boldsymbol{\phi} - \boldsymbol{\psi}}_U &+ \sum_{i=1}^\infty\norm{\mathcal{G}_i(t,\boldsymbol{\phi}) - \mathcal{G}_i(t,\boldsymbol{\psi})}_U^2\\ \label{therealcauchy1} &\leq  c_{t}\tilde{K}_2(\boldsymbol{\phi},\boldsymbol{\psi}) \norm{\boldsymbol{\phi}-\boldsymbol{\psi}}_U^2 - \kappa\norm{\boldsymbol{\phi}-\boldsymbol{\psi}}_H^2,\\ \label{therealcauchy2}
    \sum_{i=1}^\infty \inner{\mathcal{G}_i(t,\boldsymbol{\phi}) - \mathcal{G}_i(t,\boldsymbol{\psi})}{\boldsymbol{\phi}-\boldsymbol{\psi}}^2_U & \leq c_{t} \tilde{K}_2(\boldsymbol{\phi},\boldsymbol{\psi}) \norm{\boldsymbol{\phi}-\boldsymbol{\psi}}_U^4.
\end{align}
\end{assumption}

\begin{assumption} \label{assumption for prob in V}
\begin{align}
   \label{probability first} 2\inner{\mathcal{A}(t,\boldsymbol{\phi})}{\boldsymbol{\phi}}_U + \sum_{i=1}^\infty\norm{\mathcal{G}_i(t,\boldsymbol{\phi})}_U^2 &\leq c_tK(\boldsymbol{\phi})\left[1 +  \norm{\boldsymbol{\phi}}_H^2\right],\\\label{probability second}
    \sum_{i=1}^\infty \inner{\mathcal{G}_i(t,\boldsymbol{\phi})}{\boldsymbol{\phi}}^2_U &\leq c_tK(\boldsymbol{\phi})\left[1 + \norm{\boldsymbol{\phi}}_H^4\right].
\end{align}
\end{assumption}

\begin{assumption} \label{finally the last assumption}
 \begin{equation} \label{lastlast assumption}
    \inner{\mathcal{A}(t,\phi)-\mathcal{A}(t,\psi)}{\eta}_U \leq c_t(1+\norm{\eta}_H)\left[K(\phi,\psi) + \norm{\phi}_V + \norm{\psi}_V\right]\norm{\phi-\psi}_H.
    \end{equation}
\end{assumption}

We now briefly address the purpose of these assumptions.

\begin{itemize}
    \item Assumption \ref{new assumption 1} provides the growth and local Lipschitz type constraints, which ensure that the integrals in (\ref{identityindefinitionoflocalsolution}) are well defined and that solutions to the Galerkin Equations (\ref{nthorderGalerkin}) exist. The growth restriction (\ref{111}) also facilitates the convergence of the Galerkin Approximations by controlling the difference of terms with their finite dimensional projections (Proposition \ref{theorem:cauchygalerkin}, Theorem \ref{existence of local strong V solution}), whilst we use (\ref{333}) to show that the approximating sequence of stochastic integrals converge to the appropriate limit (Theorem \ref{existence of local strong V solution}).
    \item Assumption \ref{assumptions for uniform bounds2} contains a coercivity type constraint and facilitates uniform boundedness of the solutions of the Galerkin Equations (Proposition \ref{theorem:uniformbounds}).
    \item Assumption \ref{therealcauchy assumptions} is a monotonicity requirement, necessary for the Cauchy Property of the Galerkin Approximations (Proposition \ref{theorem:cauchygalerkin}) and the uniqueness of solutions (Theorem \ref{uniqueness for V valued solutions}).
    \item Assumption \ref{assumption for prob in V} is used to show the uniform rate of convergence of the Galerkin Approximations to their initial conditions (Theorem \ref{theorem: probability one}).
    \item Assumption \ref{finally the last assumption} is used to show the convergence of the time integrals in the Galerkin Approximations to the appropriate limit (Theorem \ref{existence of local strong V solution}).
\end{itemize}

\subsection{Definitions and Main Results} \label{subsection:notionsofsolution}

With these assumptions in place we state the relevant definitions and results.

\begin{definition}[$H-$valued local strong solution] \label{definitionofregularsolution}
Let $\sy_0:\Omega \rightarrow H$ be $\mathcal{F}_0-$ measurable. A pair $(\sy,\tau)$ where $\tau$ is a $\mathbb{P}-a.s.$ positive stopping time and $\sy$ is a process such that for $\mathbb{P}-a.e.$ $\omega$, $\sy_{\cdot}(\omega) \in C\left([0,T];H\right)$ and $\sy_{\cdot}(\omega)\mathbbm{1}_{\cdot \leq \tau(\omega)} \in L^2\left([0,T];V\right)$ for all $T>0$ with $\sy_{\cdot}\mathbbm{1}_{\cdot \leq \tau}$ progressively measurable in $V$, is said to be an $H-$valued local strong solution of the equation (\ref{thespde}) if the identity
\begin{equation} \label{identityindefinitionoflocalsolution}
    \sy_{t} = \sy_0 + \int_0^{t\wedge \tau} \mathcal{A}(s,\sy_s)ds + \int_0^{t \wedge \tau}\mathcal{G} (s,\sy_s) d\mathcal{W}_s
\end{equation}
holds $\mathbb{P}-a.s.$ in $U$ for all $t \geq 0$. \footnote{A detailed justification that the terms in this definition are well defined is given in \cite{goodair2022stochastic} Subsections 2.2 and 2.4, referring to (\ref{111}).}
\end{definition}

\begin{remark} \label{remark1}
If $(\sy,\tau)$ is an $H-$valued local strong solution of the equation (\ref{thespde}), then $\sy_\cdot = \sy_{
\cdot \wedge \tau}$ due to the identity (\ref{identityindefinitionoflocalsolution}).
\end{remark}

\begin{remark} \label{remark on prog meas equivalence}
The progressive measurability condition on $\sy_{\cdot}\mathbbm{1}_{\cdot \leq \tau}$ may look a little suspect as $\sy_0$ itself may only belong to $H$ and not $V$ making it impossible for $\sy_{\cdot}\mathbbm{1}_{\cdot \leq \tau}$ to be even adapted in $V$. We are mildly abusing notation here; what we really ask is that there exists a process $\py$ which is progressively measurable in $V$ and such that $\py_{\cdot} = \sy_{\cdot}\mathbbm{1}_{\cdot \leq \tau}$ almost surely over the product space $\Omega \times [0,\infty)$ with product measure $\mathbbm{P}\times \lambda$ for $\lambda$ the Lebesgue measure on $[0,\infty)$. 
\end{remark}

\begin{definition}[$H-$valued maximal strong solution] \label{V valued maximal definition}
A pair $(\sy,\Theta)$ such that there exists a sequence of stopping times $(\theta_j)$ which are $\mathbb{P}-a.s.$ monotone increasing and convergent to $\Theta$, whereby $(\sy_{\cdot \wedge \theta_j},\theta_j)$ is an $H-$valued local strong solution of the equation (\ref{thespde}) for each $j$, is said to be an $H-$valued maximal strong solution of the equation (\ref{thespde}) if for any other pair $(\py,\Gamma)$ with this property then $\Theta \leq \Gamma$ $\mathbb{P}-a.s.$ implies $\Theta = \Gamma$ $\mathbb{P}-a.s.$.
\end{definition}

\begin{remark}
We do not require $\Theta$ to be finite in this definition, in which case we mean that the sequence $(\theta_j)$ is monotone increasing and unbounded for such $\omega$. 
\end{remark}

\begin{definition} \label{v valued maximal unique}
An $H-$valued maximal strong solution $(\sy,\Theta)$ of the equation (\ref{thespde}) is said to be unique if for any other such solution $(\py,\Gamma)$, then $\Theta = \Gamma$ $\mathbb{P}-a.s.$ and \begin{equation} \nonumber\mathbb{P}\left(\left\{\omega \in \Omega: \sy_{t}(\omega) =  \py_{t}(\omega)  \quad \forall t \in [0,\Theta) \right\} \right) = 1. \end{equation}
\end{definition}

The following is the main result of the section, and holds true if the assumptions of Subsection \ref{assumptionschapter} are met. 

\begin{theorem} \label{theorem1}
For any given $\mathcal{F}_0-$ measurable $\sy_0:\Omega \rightarrow H$, there exists a unique $H-$valued maximal strong solution $(\sy,\Theta)$ of the equation (\ref{thespde}). Moreover at $\mathbb{P}-a.e.$ $\omega$ for which $\Theta(\omega)<\infty$, we have that \begin{equation}\label{actualblowup}\sup_{r \in [0,\Theta(\omega))}\norm{\sy_r(\omega)}_H^2 + \int_0^{\Theta(\omega)}\norm{\sy_r(\omega)}_V^2dr = \infty.\end{equation}
\end{theorem}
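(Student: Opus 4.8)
The plan is to prove Theorem \ref{theorem1} in three stages — existence of an $H$-valued local strong solution, uniqueness, and then upgrading to a maximal solution with the blow-up characterisation — following the Galerkin-with-hitting-times scheme described in the introduction and invoking Lemma \ref{greenlemma} from the appendix.

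\textbf{Step 1: The Galerkin approximation and local existence.} For each $n$ I would solve the finite-dimensional SDE
\begin{equation} \label{nthorderGalerkin}
    \sy^n_t = \mathcal{P}_n\sy_0 + \int_0^t \mathcal{P}_n\mathcal{A}(s,\sy^n_s)\,ds + \int_0^t \mathcal{P}_n\mathcal{G}(s,\sy^n_s)\,d\mathcal{W}_s
\end{equation}
in $V_n$; Assumption \ref{new assumption 1} (growth and local Lipschitz bounds (\ref{111})--(\ref{333})) gives local existence and uniqueness of $\sy^n$ via standard SDE theory, and one patches these together up to a maximal explosion time. To control the approximations uniformly I would introduce, for a fixed truncation radius $R$ and initial-condition cutoff, the stopping times $\tau^n_R$ at which $\norm{\sy^n_t}_H$ (or an appropriate energy quantity) first exceeds $R$. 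Proposition \ref{theorem:uniformbounds} then provides, via Assumption \ref{assumptions for uniform bounds2}, an It\^o-formula energy estimate: the coercivity term $-\kappa\norm{\sy^n}_V^2$ together with (\ref{uniformboundsassumpt1})--(\ref{uniformboundsassumpt2}) and the Burkholder–Davis–Gundy inequality yield $\mathbb{E}\big[\sup_{t\le T\wedge\tau^n_R}\norm{\sy^n_t}_H^2 + \int_0^{T\wedge\tau^n_R}\norm{\sy^n_s}_V^2\,ds\big] \le C$ uniformly in $n$. Next, Proposition \ref{theorem:cauchygalerkin} establishes the Cauchy property: applying the It\^o formula to $\norm{\sy^n - \sy^m}_U^2$ and using Assumption \ref{therealcauchy assumptions}, the monotonicity term absorbs the bad contributions while the mismatch between $\mathcal{P}_n$ and $\mathcal{P}_m$ is handled using (\ref{mu2}) and the uniform $H$-bounds from the previous step — this is exactly where the extra regularity bookkeeping is needed, since the noise is first-order. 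Combined with Theorem \ref{theorem: probability one} (uniform rate of convergence to $\sy_0$, from Assumption \ref{assumption for prob in V}), the abstract Lemma \ref{greenlemma} produces a limiting process $\sy$ and an a.s.-positive stopping time $\tau$. Finally, Theorem \ref{existence of local strong V solution} identifies $(\sy,\tau)$ as an $H$-valued local strong solution: the time integrals converge by Assumption \ref{finally the last assumption} (testing the difference $\mathcal{P}_n\mathcal{A}(\cdot,\sy^n) - \mathcal{A}(\cdot,\sy)$ against elements of $H$, splitting off the projection error via (\ref{111}) and (\ref{mu2})), and the stochastic integrals converge by (\ref{333}); one checks the regularity claims $\sy(\omega)\in C([0,T];H)$, $\sy(\omega)\mathbbm{1}_{\cdot\le\tau}\in L^2([0,T];V)$ and progressive measurability along the way.

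\textbf{Step 2: Uniqueness (Theorem \ref{uniqueness for V valued solutions}).} Given two local strong solutions $(\sy,\tau)$ and $(\py,\sigma)$ with the same initial datum, I would apply the It\^o formula in $U$ to $\norm{\sy_t - \py_t}_U^2$ up to $t\wedge\tau\wedge\sigma\wedge\rho_R$, where $\rho_R$ is a hitting time keeping $\tilde{K}_2(\sy,\py)$ bounded; Assumption \ref{therealcauchy assumptions} again gives a Gr\"onwall-type inequality after taking expectations, forcing $\mathbb{E}\,\norm{\sy_t - \py_t}_U^2 = 0$ on the stochastic interval, then sending $R\to\infty$. The bilinear form relation (\ref{bilinear formog}) is what lets this $U$-level computation make sense for $H$-valued, $V$-square-integrable processes.

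\textbf{Step 3: Maximality and blow-up (Theorems on maximal solutions).} Here I would run Zorn's Lemma on the set of local strong solutions ordered by extension (gluing two solutions agreeing on their common interval is legitimate by Step 2), obtaining $(\sy,\Theta)$ with an announcing sequence $(\theta_j)$. To prove (\ref{actualblowup}), I would argue by contradiction: on the event that $\Theta<\infty$ and $\sup_{r<\Theta}\norm{\sy_r}_H^2 + \int_0^\Theta\norm{\sy_r}_V^2\,dr < \infty$, the solution is uniformly bounded in $H$ near $\Theta$, so $\sy_\Theta$ exists in $H$ as an $\mathcal{F}_\Theta$-measurable limit; restarting the local existence result from time $\Theta$ with initial condition $\sy_\Theta$ and gluing produces a strictly larger local solution, contradicting maximality. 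More carefully, one works at $\Theta\wedge\tau_R$ for a hitting time $\tau_R$ of the energy and shows a local solution exists and extends past it, whence $\tau_R < \Theta$ a.s. for every $R$, which is exactly (\ref{actualblowup}).

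\textbf{Main obstacle.} The crux is the Cauchy estimate in Step 1 (Proposition \ref{theorem:cauchygalerkin}): because $\mathcal{G}$ is a first-order operator and the Galerkin projections $\mathcal{P}_n$, $\mathcal{P}_m$ differ, the naive energy estimate on $\norm{\sy^n-\sy^m}_U^2$ does not close — one cannot get the Cauchy property directly in $H$. The resolution is precisely the architecture advertised in the introduction: prove uniform boundedness in the $H$ inner product first (Step 1, via Assumption \ref{assumptions for uniform bounds2}), and only then establish the Cauchy property in the weaker $U$ norm, using (\ref{mu2}) to convert the $H$-bounds into smallness of the projection-difference terms measured in $U$. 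Getting the interplay between the hitting times $\tau^n_R$, the uniform $H$-bounds, and the $U$-Cauchy estimate to be mutually consistent — so that Lemma \ref{greenlemma} applies cleanly — is the delicate technical heart of the argument.
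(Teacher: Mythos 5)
Your outline follows the paper's architecture almost exactly: truncated Galerkin system with first hitting times in the $UH$ norm, uniform $HV$ bounds via Assumption \ref{assumptions for uniform bounds2}, a Cauchy estimate in the $U$ norm (not $H$) using Assumption \ref{therealcauchy assumptions} together with (\ref{mu2}) and the uniform $V$-integrability to kill the $\mathcal{P}_n-\mathcal{P}_m$ mismatch, Lemma \ref{greenlemma} to extract the limit, uniqueness by an It\^{o}/Gr\"{o}nwall argument in $U$ localised by auxiliary hitting times, Zorn's Lemma for maximality, and the blow-up characterisation via extension of the solution stopped at $\Theta\wedge\tau_R$. Your identification of the main obstacle (Cauchy in $U$ only, after uniform $H$-bounds) is precisely the paper's point.

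There is, however, one genuine gap: your argument as written only proves the theorem for $\sy_0\in L^{\infty}(\Omega;H)$, whereas the statement is for an arbitrary $\mathcal{F}_0$-measurable $\sy_0:\Omega\rightarrow H$. The boundedness is not cosmetic. The hitting-time thresholds $M+\norm{\sy^n_0(\omega)}_U^2$ give the pathwise bound (\ref{galerkinboundsatisfiedbystoppingtime}) with a \emph{deterministic} constant only when $\sy_0$ is essentially bounded, and this deterministic constant is what makes $\sup_r K(\tilde{\sy}^n_r(\omega))\leq c$ in (\ref{galerkinboundsatisfiedbystoppingtime2}); that in turn is required for the Stochastic Gr\"{o}nwall Lemma \ref{gronny}, whose hypothesis (\ref{boundingronny}) demands an $\omega$-uniform bound on $\int_0^t\boldsymbol{\eta}_s\,ds$. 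With unbounded data the Gr\"{o}nwall "constant" becomes a random variable and Propositions \ref{theorem:uniformbounds} and \ref{theorem:cauchygalerkin} do not close. The paper repairs this in a separate final step (Subsection \ref{subsection maximality for unbounded}): partition $\Omega$ into the $\mathcal{F}_0$-measurable sets $A_k=\{k\leq\norm{\sy_0}_H<k+1\}$, solve for each truncated datum $\sy_0\mathbbm{1}_{A_k}$, and glue the resulting maximal solutions; the only delicate point is verifying that $\mathbbm{1}_{A_k}\int_0^{t\wedge\theta_j^k}\mathcal{G}(s,\sy^k_s)\,d\mathcal{W}_s=\mathbbm{1}_{A_k}\int_0^{t\wedge\theta_j}\mathcal{G}(s,\sy_s)\,d\mathcal{W}_s$, which uses $\mathcal{F}_0$-measurability of $\mathbbm{1}_{A_k}$ and the localisation properties of the stochastic integral, plus a check that the glued stopping times still announce the glued maximal time. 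You should add this reduction explicitly; without it the proof covers only bounded initial data.
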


Our method has already been discussed in the introduction and in the necessity of each assumption in \ref{assumptionschapter}, though now we explicitly lay out the steps taken in proving Theorem \ref{theorem1}. The remainder of Section \ref{Section v valued} follows the path portrayed here. %A simple colour scheme is used, where \textcolor{cyan}{cyan} represents analysis done with respect to a bounded initial condition and \textcolor{red}{red} for the general case.

\begin{tikzpicture}[node distance=2cm]

\node (1) [startstop, xshift=10cm] {Suppose that $\sy_0 \in L^{\infty}(\Omega;H)$};

\node(2)[startstop2, below of=1, yshift=-0.5cm]{Consider a Galerkin Approximation and show that local solutions of the finite dimensional equations exist up until first hitting times taken in the norm of $L^\infty([0,T];U) \cap L^2([0,T];H)$ };

\node(3)[startstop2, below of=2, yshift=-0.9cm]{Demonstrate that these solutions are uniformly bounded in the norm of $L^2\left(\Omega;L^\infty([0,T];H) \cap L^2([0,T];V) \right)$ up until the first hitting times};

\node(4)[startstop, below of=3, yshift=-0.9cm, xshift= -4cm]{Show a Cauchy Property in the norm of $L^2\left(\Omega;L^\infty([0,T];U) \cap L^2([0,T];H) \right)$ up until the first hitting times};

\node(5)[startstop, right of=4, xshift= 6cm]{Prove a uniform rate of convergence of the processes to their initial conditions};

\node(6)[startstop, below of=4, yshift=-1cm]{Apply the Convergence of Random Cauchy Sequences lemma (Lemma \ref{greenlemma}) to deduce the existence of a limiting process and stopping time, which is shown to be an $H-$valued local strong solution};

\node(7)[startstop', right of=6, xshift= 6cm]{Consider an arbitrary $\sy_0$, relieving the $L^{\infty}(\Omega;H)$ constraint};

\node(8)[startstop', below of=7, yshift=-1cm]{Establish uniqueness of $H-$valued local strong solutions};

\node(9)[startstop2, below of=8, yshift=-0.7cm, xshift=-4cm]{Verify the existence and uniqueness of $H-$valued maximal strong solutions for the bounded initial condition, and characterise the maximal time as in (\ref{actualblowup})};

\node(10)[startstop2', below of=9, yshift=-0.7cm]{Partition the arbitrary $\sy_0$ into countably many intervals within each of which it is bounded, combining them to prove Theorem \ref{theorem1}};

\draw[arrow] (1) -- (2);
\draw[arrow] (2) -- (3);
\draw[arrow] (3) -- (4);
\draw[arrow] (3) -- (5);
\draw[arrow] (4) -- (6);
\draw[arrow] (5) -- (6);
\draw[arrow] (7) -- (8);
\draw[arrow] (6) -- (9);
\draw[arrow] (8) -- (9);
\draw[arrow] (9) -- (10);

\end{tikzpicture}

\subsection{The Galerkin System}
\label{subsection:Existence Method for a Regular Truncated Initial Condition}

For now we shall assume that  \begin{equation} \label{boundedinitialcondition}
    \sy_0 \in L^{\infty}(\Omega;H).
\end{equation} We consider the Galerkin Equations
\begin{equation} \label{nthorderGalerkin}
       \sy^n_t = \sy^n_0 + \int_0^t \mathcal{P}_n\mathcal{A}(s,\sy^n_s)ds + \int_0^t\mathcal{P}_n\mathcal{G} (s,\sy^n_s) d\mathcal{W}_s
\end{equation}
for the initial condition $\sy^n_0:= \mathcal{P}_n\sy_0$ and $\mathcal{P}_n\mathcal{G} (e_i,s,\cdot):=\mathcal{P}_n\mathcal{G}_{i}(s,\cdot).$ Note that from (\ref{boundedinitialcondition}), (\ref{projectionsboundedonH}) and the embedding of $H$ into $U$ we have that
\begin{equation} \label{uniformboundofinitialcondition}
    \sup_{n\in\N}\norm{\sy^n_0}_{L^{\infty}(\Omega,H)}^2 < \infty, \qquad \sup_{n\in\N}\norm{\sy^n_0}_{L^{\infty}(\Omega,U)}^2 < \infty.
\end{equation}

\begin{remark}
We may consider the finite dimensional $V_n$ as a Hilbert Space equipped with any of the equivalent $V,H,U$ inner products. 
\end{remark}

\begin{definition} \label{definition of local strong solution to galerkin equation}
A pair $(\sy^n,\tau)$ where $\tau$ is a $\mathbbm{P}-a.s.$ positive stopping time and $\sy^n$ is an adapted process in $V_n$ such that for $\mathbbm{P}-a.e.$ $\omega$, $\sy^n_{\cdot}(\omega) \in C\left([0,T];V_n\right)$ for all $T>0$, is said to be a local strong solution of the equation (\ref{nthorderGalerkin}) if the identity
\begin{equation} \label{identityindefinitionoflocalgalerkinsolution}
    \sy^n_{t} = \sy^n_0 + \int_0^{t\wedge \tau} \mathcal{P}_n\mathcal{A}(s,\sy^n_s)ds + \int_0^{t \wedge \tau}\mathcal{P}_n\mathcal{G} (s,\sy^n_s) d\mathcal{W}_s
\end{equation}
holds $\mathbbm{P}-a.s.$ in $V_n$ for all $t \geq 0$.
\end{definition}

A justification that this formulation makes sense is largely similar to that of \ref{definitionofregularsolution}. Using that $\mathcal{P}_n$ is an orthogonal projection in $U$, then we have the bounds
\begin{align*}
    \int_0^{t\wedge \tau(\omega)} \norm{\mathcal{P}_n\mathcal{A}\left(s,\sy^n_s(\omega)\right)}_U^2ds  &\leq \int_0^{t\wedge \tau(\omega)} \norm{\mathcal{A}\left(s,\sy^n_s(\omega)\right)}_U^2ds < \infty\\
    \int_0^{t\wedge \tau(\omega)}\sum_{i=1}^\infty \norm{\mathcal{P}_n\mathcal{G}_i\left(s,\sy^n_s(\omega)\right)}_U^2ds &\leq \int_0^{t\wedge \tau(\omega)}\sum_{i=1}^\infty c\norm{\mathcal{G}_i\left(s,\sy^n_s(\omega)\right)}_H^2ds  < \infty
\end{align*}
from which we are deferred to Definition \ref{definitionofregularsolution}. The continuity of the $\mathcal{P}_n$ in $U$ ensures that the measurability is preserved, so identity (\ref{identityindefinitionoflocalgalerkinsolution}) makes sense. Note that in this case, we do not need to pass to some $\mathbbm{P} \times \lambda$ almost everywhere equal process to satisfy the progressive measurability. In looking to deduce the existence of such a solution, we first consider a truncated version of the equation. For any fixed $R>0$, we introduce the function $f_R: [0,\infty) \rightarrow [0,1]$ constructed such that $$f_R \in C^{\infty}\left([0,\infty);[0,1]\right), \qquad f_R(x)=1 \ \forall x \in [0,R], \qquad f_R(x)=0 \ \forall x \in [2R,\infty).$$ We consider now the equation
\begin{equation} \label{truncatedgalerkin}
     \sy^{n,R}_t = \sy^{n,R}_0 + \int_0^t f_R\left(\norm{\sy^{n,R}_s}_H^2\right)\mathcal{P}_n\mathcal{A}(s,\sy^{n,R}_s)ds + \int_0^tf_R\left(\norm{\sy^{n,R}_s}_H^2\right)\mathcal{P}_n\mathcal{G} (s,\sy^{n,R}_s) d\mathcal{W}_s
\end{equation}
for $\sy^{n,R}_0:=\sy^{n}_0$ which we use as an intermediary step to deduce the existence of solutions as in Definition \ref{definition of local strong solution to galerkin equation}. Solutions of the truncated equation are defined in the sense of Proposition \ref{Skorotheorem} (See Appendix II, \ref{subsection useful results}), and indeed due to (\ref{boundedinitialcondition}) and Assumption \ref{new assumption 1} we can apply this theorem in the case of $\mathcal{H}:=V_n$, $ \mathscr{A}:=\mathcal{P}_n\mathcal{A}$, $ \mathscr{G}:=\mathcal{P}_n\mathcal{G}$ to deduce the existence of solutions to (\ref{truncatedgalerkin}) for all $n\in\N, R>0$.\\

The motivation for considering (\ref{truncatedgalerkin}) is to prove the existence of local strong solutions to (\ref{nthorderGalerkin}) by considering local intervals of existence on which the truncation threshold isn't reached. More than this, for any first hitting time in the fundamental $L^\infty([0,T];U) \cap L^2([0,T];H)$ norm, we show that a large enough truncation can be chosen so that solutions to (\ref{nthorderGalerkin}) exist up until the first hitting time. The local strong solutions can then be controlled uniformly across $\omega$ on their time of existence, and these times are intrinsic to the application of the Convergence of Random Cauchy Sequences lemma (Lemma \ref{greenlemma}) which we use to justify the existence of an $H-$valued local strong solution of the equation (\ref{thespde}).

\begin{lemma} \label{localexistenceofsolutionsofGalerkin}
For any $t>0$, $M>1$ and fixed $n \in N$, there exists an $R>0$ such that the following holds; letting $\sy^{n,R}$ be the solution of (\ref{truncatedgalerkin}) and $$\tau^{M,t}_{n,R}(\omega) := t \wedge \inf\left\{s \geq 0: \sup_{r \in [0,s]}\norm{\sy^{n,R}_{r}(\omega)}^2_U + \int_0^s\norm{\sy^{n,R}_{r}(\omega)}^2_Hdr \geq M + \norm{\sy^n_0(\omega)}_U^2 \right\}$$ then $(\sy^{n,R}_{\cdot \wedge \tau^{M,t}_{n,R}}, \tau^{M,t}_{n,R})$ is a local strong solution of (\ref{nthorderGalerkin}).
\end{lemma}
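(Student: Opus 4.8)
The plan is to pick $R$ so large that the truncation factor $f_R\big(\norm{\sy^{n,R}_s}_H^2\big)$ equals $1$ for every $s$ up to $\tau^{M,t}_{n,R}$; once this is known, stopping the truncated equation (\ref{truncatedgalerkin}) at $t\wedge\tau^{M,t}_{n,R}$ and discarding the (now constant) truncation factor reproduces exactly the identity (\ref{identityindefinitionoflocalgalerkinsolution}), so that $\big(\sy^{n,R}_{\cdot\wedge\tau^{M,t}_{n,R}},\tau^{M,t}_{n,R}\big)$ is a local strong solution of (\ref{nthorderGalerkin}) in the sense of Definition \ref{definition of local strong solution to galerkin equation}.

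To make the truncation inactive I would first use that $V_n$ is finite dimensional, so the $V$, $H$ and $U$ norms restricted to $V_n$ are all equivalent: there is a constant $C_n$, depending only on $n$, with $\norm{\phi}_H^2 \le C_n\norm{\phi}_U^2$ for all $\phi \in V_n$. For an arbitrary $R>0$ let $\sy^{n,R}$ be the $V_n$-valued, time-continuous solution of (\ref{truncatedgalerkin}) furnished by Proposition \ref{Skorotheorem}. By the definition of $\tau^{M,t}_{n,R}$ and the continuity of the non-decreasing map $s \mapsto \sup_{r\in[0,s]}\norm{\sy^{n,R}_r}_U^2 + \int_0^s \norm{\sy^{n,R}_r}_H^2\,dr$, one gets $\sup_{r\in[0,\tau^{M,t}_{n,R}]}\norm{\sy^{n,R}_r}_U^2 \le M + \norm{\sy^n_0}_U^2$; and since $\sy^n_0 = \mathcal{P}_n\sy_0$ obeys the deterministic bound $\norm{\sy^n_0}_U^2 \le \norm{\sy^n_0}_{L^\infty(\Omega;U)}^2 =: c_n < \infty$ coming from (\ref{uniformboundofinitialcondition}), this yields $\sup_{r\in[0,\tau^{M,t}_{n,R}]}\norm{\sy^{n,R}_r}_H^2 \le C_n(M+c_n)$, a bound independent of $R$ and of $\omega$.

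Now fix $R := C_n(M+c_n)$ (anything larger also works). Then $\norm{\sy^{n,R}_s}_H^2 \le R$ for all $s \le \tau^{M,t}_{n,R}$, hence $f_R\big(\norm{\sy^{n,R}_s}_H^2\big) = 1$ there, and the reduction described above applies after stopping (\ref{truncatedgalerkin}) at $t\wedge\tau^{M,t}_{n,R}$. The remaining requirements of Definition \ref{definition of local strong solution to galerkin equation} are immediate: continuity and adaptedness of $\sy^{n,R}_{\cdot\wedge\tau^{M,t}_{n,R}}$ in $V_n$ are inherited from $\sy^{n,R}$ together with stopping, and $\tau^{M,t}_{n,R}$ is $\mathbb{P}$-a.s. positive because $t>0$ and, as $M>1>0$, the non-decreasing functional above starts at $\norm{\sy^n_0}_U^2$, strictly below its threshold $M + \norm{\sy^n_0}_U^2$, and is continuous.

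The computation is short; the point that requires care — and the reason $R$ must be allowed to depend on $n$ — is that the stopping time only controls the running supremum of the $U$-norm (and the time-integral of the $H$-norm), not the running supremum of the $H$-norm, so this $U$-control has to be upgraded to $H$-control through the finite-dimensional norm equivalence on $V_n$. There is no circularity, since the $U$-bound on $[0,\tau^{M,t}_{n,R}]$ holds for \emph{every} $R$, before any choice of $R$ is made.
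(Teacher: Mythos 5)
Your proposal is correct and uses the same two ingredients as the paper's proof: the equivalence of the $U$ and $H$ norms on the finite-dimensional space $V_n$ to upgrade the $U$-bound enforced by $\tau^{M,t}_{n,R}$ into an $H$-bound independent of $R$ and $\omega$, and the uniform bound (\ref{uniformboundofinitialcondition}) on $\norm{\sy^n_0}_U$ coming from the $L^\infty(\Omega;H)$ assumption, so that a sufficiently large $n$-dependent $R$ keeps the truncation inactive up to $\tau^{M,t}_{n,R}$. The only difference is presentational — the paper routes the comparison through two auxiliary first hitting times $\tilde\tau^t_{n,R}\le\tau^t_{n,R}$ whereas you bound $\sup_{r\le\tau^{M,t}_{n,R}}\norm{\sy^{n,R}_r}_H^2$ directly — so no further comment is needed.
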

%\ref{subsection:notionsofsolution}, using that $\norm{\mathcal{P}_n\mathcal{A}(s,\sy^n_s(\omega)}_U^2 \leq \norm{\mathcal{A}(s,\sy^n_s(\omega)}_U^2$ as $\mathcal{P}_n$ is an orthogonal projection on this space, and that 

\begin{proof}
See Appendix I, \ref{proofs section 3 appendix}.
\end{proof}

\begin{remark}
For notational convenience we define $\sy^{n}_{\cdot}:=\sy^{n,R}_{\cdot \wedge \tau^{M,t}_{n,R}}$ and \begin{equation}\label{tauMtn}\tau^{M,t}_n(\omega) := t \wedge \inf\left\{s \geq 0: \sup_{r \in [0,s]}\norm{\sy^{n}_{r}(\omega)}^2_U + \int_0^s\norm{\sy^{n}_{r}(\omega)}^2_Hdr \geq M + \norm{\sy^n_0(\omega)}_U^2 \right\}.\end{equation} Of course  $\tau^{M,t}_n$ is nothing but $\tau^{M,t}_{n,R}$, but we shall henceforth work with the local strong solution $(\sy^n, \tau^{M,t}_n)$.
\end{remark}

\begin{remark}
The stopping time $\tau^{M,t}_n$ is the first hitting time with respect to a norm which is central to the arguments of the paper, alongside the symmetrical norm for the smaller spaces $V$ and $H$. As such for functions $\py \in L^\infty([S,T];U) \cap L^2([S,T];H)$, $\sy \in L^\infty([S,T];H) \cap L^2([S,T];V)$ we define the norms \begin{align} \label{new norm}
    \norm{\py}^2_{UH,S,T}:&= \sup_{r \in [S,T]}\norm{\py_{r}(\omega)}^2_U + \int_S^T\norm{\py_{r}(\omega)}^2_Hdr\\
     \norm{\sy}^2_{HV,S,T}:&= \sup_{r \in [S,T]}\norm{\sy_{r}(\omega)}^2_H + \int_S^T\norm{\sy_{r}(\omega)}^2_Vdr
\end{align}
making explicit the dependence on the times $S,T$. During commentary to express an idea we may refer to these as the '$UH$' and '$HV$' norms respectively. In the case $S=0$ we reduce the notation to 
\begin{align*}
    \norm{\py}^2_{UH,T} :&= \norm{\py}^2_{UH,0,T}\\
     \norm{\sy}^2_{HV,T} :&=  \norm{\sy}^2_{HV,0,T}.
\end{align*}
\end{remark}

We fix arbitrary $t> 0$ and $M > 1$ in the definition (\ref{tauMtn}). The stopped process $\sy^n_{\cdot \wedge \tau^{M,t}_n}$ is a genuine square integrable semimartingale in $V_n$ (where the stochastic integral is a true square integrable martingale). Indeed the stopped process is bounded uniformly in $s$ and $\mathbbm{P}-a.e.$ $\omega$, and satisfies the bound \begin{equation} \label{first bound on galerkin}
    \norm{\sy^n(\omega)}_{UH,\tau^{M,t}_n(\omega)}^2 \leq M + \norm{\sy_0}_{L^{\infty}(\Omega,U)}^2
\end{equation} for every $n$, $\mathbbm{P}-a.e.$ $\omega$, or equivalently with the notation
\begin{equation} \label{tildesynotation}
    \tilde{\sy}^n_\cdot:= \sy^n_{\cdot}\mathbbm{1}_{\cdot \leq \tau^{M,t}_n}
\end{equation} that
\begin{equation} \label{galerkinboundsatisfiedbystoppingtime}
    \norm{\tilde{\sy}^n(\omega)}_{UH,T}^2 \leq M + \norm{\sy_0}_{L^{\infty}(\Omega,U)}^2
\end{equation}
for any $T>0$. The significance of working with an initial condition (\ref{boundedinitialcondition}) is highlighted by the bounds (\ref{first bound on galerkin}),(\ref{galerkinboundsatisfiedbystoppingtime}) being constant. The passage to a general initial condition will take place in Subsection \ref{subsection maximality for unbounded}. Recalling the definitions of $K,\tilde{K}_2$ as in (\ref{Ktilde2}), $$K(\phi):= 1 + \norm{\phi}_U^p, \qquad \tilde{K}_2(\phi)= K(\phi) + \norm{\phi}_H^2,$$ then (\ref{galerkinboundsatisfiedbystoppingtime}) implies the existence of a constant $c$ dependent on $\sy_0,M,t$ but independent of $n,\omega$ such that \begin{equation} \label{galerkinboundsatisfiedbystoppingtime2}
    \sup_{r \in [0,t]}K(\tilde{\sy}^n_r(\omega)) \leq c, \qquad \tilde{K}_2(\tilde{\sy}^n_r) \leq c\left(1 + \norm{\tilde{\sy}^n_r}_H^2\right). 
\end{equation}

%\textcolor{red}{Please check the above: this does NOT imply locally Lipschitz as we don't have linearity assumptions. Do we need to consider the difference instead when looking at the inequality (\ref{boundsofgalerkinsysteminV})}.
\subsection{Existence Method for a Bounded Initial Condition}

We now proceed to formalise and demonstrate the steps laid out in the introduction to prove the existence of $H-$valued local strong solutions, starting with the uniform boundedness property. The significance of the result is that we work up to first hitting times giving us a trivial control (\ref{first bound on galerkin}) in the $UH$ norm $\mathbbm{P}-$a.s., but in fact with Assumption \ref{assumptions for uniform bounds2} we can generate a control in expectation for the finer $HV$ norm. Such a property is necessary in proving the Cauchy attribute for the $UH$ norm to come in Proposition \ref{theorem:cauchygalerkin}, where the initial idea is that we must take the limit in $m \rightarrow \infty$ to nullify the term $\sum_{i=1}^\infty\norm{[I-\mathcal{P}_m]\mathcal{G}_i(s,\tilde{\sy}^m_s)}_U^2$. From (\ref{mu2}) this is bounded by $$\sum_{i=1}^\infty\frac{1}{\mu_m^2}\norm{\mathcal{G}_i(s,\tilde{\sy}^m_s)}_H^2 $$ and so we would be happy if there was some workable bound on $\sum_{i=1}^\infty\norm{\mathcal{G}_i(s,\tilde{\sy}^m_s)}_H^2$ uniform in $m$. As elucidated in the introduction this is where our method must depart from that of Glatt-Holtz and Ziane in \cite{glatt2009strong}, as heuristically in their case $$\sum_{i=1}^\infty\norm{\mathcal{G}_i(s,\tilde{\sy}^m_s)}_H^2 \leq c\left(\norm{\tilde{\sy}^m_s}_H^2 + 1 \right)$$ and the necessary control arrives courtesy of (\ref{galerkinboundsatisfiedbystoppingtime}). For applications to a gradient-dependent noise and thus working with the assumption (\ref{111}) instead, we must have control in the $V$ norm hence the need for Proposition \ref{theorem:uniformbounds}. Even where not directly instructed by application, this facilitates some more leeway in the assumptions which we hope can be appreciated in other employment.

\begin{proposition} \label{theorem:uniformbounds}
There exists a constant $C$ dependent on $M,t$ but independent of $n$ such that for the local strong solution $(\sy^n, \tau^{M,t}_n)$ of (\ref{nthorderGalerkin}), \begin{equation} \label{firstresultofuniformbounds}
    \mathbbm{E}\norm{\sy^n}_{HV,\tau^{M,t}_{n}}^2\leq C\left[\mathbbm{E}\left(\norm{\sy^n_{0}}_H^2\right) + 1\right].
\end{equation}

\end{proposition}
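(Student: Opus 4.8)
The plan is to apply the infinite-dimensional It\^o formula to $\norm{\sy^n_{t \wedge \tau^{M,t}_n}}_H^2$ (legitimate because, as noted after \eqref{galerkinboundsatisfiedbystoppingtime2}, the stopped process $\sy^n_{\cdot \wedge \tau^{M,t}_n}$ is a genuine square-integrable semimartingale in the finite-dimensional space $V_n$, on which all three norms are equivalent). First I would write, for $s \le \tau^{M,t}_n$,
\begin{equation} \nonumber
\norm{\sy^n_{s}}_H^2 = \norm{\sy^n_0}_H^2 + \int_0^{s} \left( 2\inner{\mathcal{P}_n\mathcal{A}(r,\sy^n_r)}{\sy^n_r}_H + \sum_{i=1}^\infty \norm{\mathcal{P}_n\mathcal{G}_i(r,\sy^n_r)}_H^2 \right) dr + 2\int_0^{s} \inner{\mathcal{P}_n\mathcal{G}(r,\sy^n_r)\, d\mathcal{W}_r}{\sy^n_r}_H.
\end{equation}
Then I would take the supremum over $s \in [0, T \wedge \tau^{M,t}_n]$ and then expectations. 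For the drift term, Assumption \ref{assumptions for uniform bounds2}, inequality \eqref{uniformboundsassumpt1}, gives the bound $c_r \tilde K_2(\sy^n_r)[1 + \norm{\sy^n_r}_H^2] - \kappa \norm{\sy^n_r}_V^2$; crucially the $-\kappa\norm{\sy^n_r}_V^2$ term is what produces the $V$-integral on the left-hand side of \eqref{firstresultofuniformbounds}, so it must be kept (moved to the left), not discarded. Using \eqref{galerkinboundsatisfiedbystoppingtime2} to replace $\tilde K_2(\sy^n_r)$ by $c(1 + \norm{\sy^n_r}_H^2)$ and the pathwise $UH$-bound \eqref{galerkinboundsatisfiedbystoppingtime} to keep the resulting constant $n$-independent, the drift contribution is dominated by $c\int_0^{s}(1 + \norm{\sy^n_r}_H^2)\,dr - \kappa\int_0^{s}\norm{\sy^n_r}_V^2\,dr$ with $c$ depending only on $M,t,\sy_0$.

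The main obstacle is the stochastic integral after taking the supremum: I would control it with the Burkholder–Davis–Gundy inequality (Theorem 1.6.8 of the cited reference), bounding $\mathbb{E}\sup_{s}\big|\int_0^{s}\inner{\mathcal{P}_n\mathcal{G}(r,\sy^n_r)dW_r}{\sy^n_r}_H\big|$ by $c\,\mathbb{E}\big(\int_0^{T\wedge\tau^{M,t}_n}\sum_i \inner{\mathcal{P}_n\mathcal{G}_i(r,\sy^n_r)}{\sy^n_r}_H^2\,dr\big)^{1/2}$. Here Assumption \ref{assumptions for uniform bounds2}, inequality \eqref{uniformboundsassumpt2}, together with \eqref{galerkinboundsatisfiedbystoppingtime2}, bounds the quadratic variation integrand by $c(1 + \norm{\sy^n_r}_H^4) \le c(1 + \sup_{r}\norm{\sy^n_r}_H^2)^2$ pathwise; pulling $\sup_r\norm{\sy^n_r}_H$ out of the square root and applying the Young-type splitting $c\,\mathbb{E}\big(\sup_r\norm{\sy^n_r}_H \cdot (\int_0^{\cdot}\!(1+\norm{\sy^n_r}_H^2)dr)^{1/2}\big) \le \tfrac12\mathbb{E}\sup_r\norm{\sy^n_r}_H^2 + c\,\mathbb{E}\int_0^{\cdot}(1+\norm{\sy^n_r}_H^2)dr$ absorbs the supremum term into the left-hand side. (The pathwise $UH$-bound again ensures these manipulations are rigorous, since $\sup_r\norm{\sy^n_r}_H$ is a.s. finite and uniformly so.)

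After absorption I am left with
\begin{equation} \nonumber
\mathbb{E}\sup_{s \in [0,T\wedge\tau^{M,t}_n]}\norm{\sy^n_s}_H^2 + \kappa\,\mathbb{E}\int_0^{T\wedge\tau^{M,t}_n}\norm{\sy^n_r}_V^2\,dr \le c\left(\mathbb{E}\norm{\sy^n_0}_H^2 + 1\right) + c\int_0^{T} \mathbb{E}\sup_{u \le r \wedge \tau^{M,t}_n}\norm{\sy^n_u}_H^2\,dr,
\end{equation}
and a Gr\"onwall argument in $T$ (applied to $T \mapsto \mathbb{E}\sup_{s \le T\wedge\tau^{M,t}_n}\norm{\sy^n_s}_H^2$, which is finite by the pathwise bound) closes the estimate with a constant depending on $M,t$ but not $n$; evaluating at $T = t$ and recalling $\tau^{M,t}_n \le t$ gives precisely \eqref{firstresultofuniformbounds}. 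The one point requiring a little care is the interchange of the infinite sum with the inner product / It\^o formula, which is justified by \eqref{111} (finiteness of $\sum_i\norm{\mathcal{G}_i}_H^2$) and the boundedness of $\mathcal{P}_n$ on $H$ from \eqref{projectionsboundedonH}; I would remark on this rather than belabor it.
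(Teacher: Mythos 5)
Your overall architecture matches the paper's: the It\^{o} formula for $\norm{\cdot}_H^2$ applied to the finite-dimensional semimartingale, retention of the coercive $-\kappa\norm{\cdot}_V^2$ from (\ref{uniformboundsassumpt1}), BDG plus a Young-type splitting to absorb the supremum coming from the martingale term. Your treatment of the stochastic integral is sound (and your splitting of the quartic quadratic-variation integrand is, if anything, slightly cleaner than the paper's (\ref{howweusebdg})). The gap is in the drift estimate and, consequently, in the final Gr\"{o}nwall step. Combining (\ref{uniformboundsassumpt1}) with (\ref{galerkinboundsatisfiedbystoppingtime2}) gives a drift bound of order $c(1+\norm{\tilde{\sy}^n_s}_H^2)^2$, i.e.\ \emph{quartic} in the $H$ norm, not the quadratic $c(1+\norm{\tilde{\sy}^n_s}_H^2)$ you assert. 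The pathwise first-hitting-time bound (\ref{galerkinboundsatisfiedbystoppingtime}) controls $\sup_r\norm{\tilde{\sy}^n_r}_U^2$ and $\int_0^T\norm{\tilde{\sy}^n_r}_H^2\,dr$ but \emph{not} $\sup_r\norm{\tilde{\sy}^n_r}_H^2$, so it cannot be used to knock the square off $(1+\norm{\tilde{\sy}^n_s}_H^2)^2$ pointwise in time; it only yields $\sup_r K(\tilde{\sy}^n_r)\le c$ and hence $\tilde{K}_2(\tilde{\sy}^n_r)\le c(1+\norm{\tilde{\sy}^n_r}_H^2)$, no better.

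With the quartic term kept, the inequality you actually reach has the form $\mathbbm{E}\sup\boldsymbol{\phi}+\mathbbm{E}\int\boldsymbol{\psi}\le c\,\mathbbm{E}\left(\boldsymbol{\phi}_0+1\right)+c\,\mathbbm{E}\int\boldsymbol{\eta}_s\boldsymbol{\phi}_s\,ds$ with $\boldsymbol{\phi}_s=\norm{\tilde{\sy}^n_s}_H^2$ and random weight $\boldsymbol{\eta}_s=1+\norm{\tilde{\sy}^n_s}_H^2$, which is $\mathbbm{P}$-a.s.\ time-integrable (by the $UH$ bound) but not pathwise bounded. A deterministic Gr\"{o}nwall in $T$ applied to $T\mapsto\mathbbm{E}\sup_{s\le T\wedge\tau^{M,t}_n}\norm{\sy^n_s}_H^2$ does not close this: $\mathbbm{E}\int_0^T\boldsymbol{\eta}_s\boldsymbol{\phi}_s\,ds$ is not dominated by $c\int_0^T\mathbbm{E}\sup_{u\le s}\boldsymbol{\phi}_u\,ds$, and the alternative $\int\boldsymbol{\eta}\boldsymbol{\phi}\le(\sup\boldsymbol{\phi})\int\boldsymbol{\eta}\le c\sup\boldsymbol{\phi}$ produces a constant that is not small enough to be absorbed into the left-hand side. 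This is precisely where the paper invokes the Stochastic Gr\"{o}nwall lemma (Lemma \ref{gronny}), whose hypothesis (\ref{boundingronny}) for $\boldsymbol{\eta}$ is supplied by (\ref{galerkinboundsatisfiedbystoppingtime}); to use it you must derive the estimate on arbitrary stopping-time intervals $[\theta_j,\theta_k]\subset[0,t]$ rather than only on $[0,T]$. Either that lemma or an equivalent device (e.g.\ the exponential weight $e^{-c\int_0^s\boldsymbol{\eta}_r\,dr}$) is needed to complete your argument.
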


\begin{proof}
See Appendix I, \ref{proofs section 3 appendix}.
\end{proof}

\begin{remark}
In particular through (\ref{projectionsboundedonH}) and (\ref{uniformboundofinitialcondition}) we have that
\begin{equation} \label{secondresultofuniformbounds}
    \mathbbm{E}\norm{\sy^n}_{HV,\tau^{M,t}_{n}}^2\leq C.
\end{equation}

\end{remark}

In order to show the aforementioned Cauchy property we use an intermediary lemma, combining our assumptions to give precisely what we need in our energy estimate for the difference of two solutions of (\ref{nthorderGalerkin}).

\begin{lemma} \label{lemma to use cauchy}
With notation as in Subsection \ref{assumptionschapter}, for arbitrary $m<n$, $\boldsymbol{\phi} \in V_n, \boldsymbol{\psi} \in V_m$, define
\begin{align*}
    A &= 2\inner{\mathcal{P}_n\mathcal{A}(s,\boldsymbol{\phi}) - \mathcal{P}_m\mathcal{A}(s,\boldsymbol{\psi})}{\boldsymbol{\phi} - \boldsymbol{\psi}}_U + \sum_{i=1}^\infty\norm{\mathcal{P}_n\mathcal{G}_i(s,\boldsymbol{\phi}) - \mathcal{P}_m\mathcal{G}_i(s,\boldsymbol{\psi})}_U^2\\
    B &= \sum_{i=1}^\infty \inner{\mathcal{P}_n\mathcal{G}_i(s,\boldsymbol{\phi}) - \mathcal{P}_m\mathcal{G}_i(s,\boldsymbol{\psi})}{\boldsymbol{\phi}-\boldsymbol{\psi}}^2_U.
\end{align*}
Then for a sequence $(\lambda_m)$ with $\lambda_m \rightarrow \infty$, we have that
\begin{align}
\nonumber A &\leq c_{s}\tilde{K}_2(\boldsymbol{\phi},\boldsymbol{\psi}) \norm{\boldsymbol{\phi}-\boldsymbol{\psi}}_U^2 - \frac{\kappa}{2}\norm{\boldsymbol{\phi}-\boldsymbol{\psi}}_H^2 + \frac{c_s}{\lambda_m}K(\boldsymbol{\phi},\boldsymbol{\psi})\left[1 + \norm{\boldsymbol{\phi}}_V^2 + \norm{\boldsymbol{\psi}}_V^2\right] ;\\\nonumber
  B &\leq c_{s} \tilde{K}_2(\boldsymbol{\phi},\boldsymbol{\psi}) \norm{\boldsymbol{\phi}-\boldsymbol{\psi}}_U^4  + \frac{c_{s}}{\lambda_m} K(\boldsymbol{\phi},\boldsymbol{\psi})\left[1 + \norm{\boldsymbol{\psi}}_V^2\right].
\end{align}
\end{lemma}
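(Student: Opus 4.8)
The plan is to decompose each of $A$ and $B$ into a "diagonal" piece that looks exactly like the left-hand side of the monotonicity estimates in Assumption \ref{therealcauchy assumptions}, plus correction terms created by the mismatch of the projections $\mathcal{P}_n$ and $\mathcal{P}_m$, and then to absorb those correction terms using the decay rate in (\ref{mu2}) together with the growth bound (\ref{111}). For the term $A$, first I would use that $\mathcal{P}_n$ is an orthogonal projection in $U$ and that $\boldsymbol{\phi} - \boldsymbol{\psi} \in V_n$ (since $m < n$) to move the projections off the test function: write $\mathcal{P}_n \mathcal{A}(s,\boldsymbol{\phi}) - \mathcal{P}_m \mathcal{A}(s,\boldsymbol{\psi}) = \mathcal{P}_n\big(\mathcal{A}(s,\boldsymbol{\phi}) - \mathcal{A}(s,\boldsymbol{\psi})\big) + (\mathcal{P}_n - \mathcal{P}_m)\mathcal{A}(s,\boldsymbol{\psi})$, and similarly for $\mathcal{G}_i$. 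The inner product of the first piece with $\boldsymbol{\phi}-\boldsymbol{\psi}$ equals $\inner{\mathcal{A}(s,\boldsymbol{\phi}) - \mathcal{A}(s,\boldsymbol{\psi})}{\boldsymbol{\phi}-\boldsymbol{\psi}}_U$ by self-adjointness of $\mathcal{P}_n$ in $U$; the second piece is the error. For the Hilbert--Schmidt sum, expand $\norm{a+b}_U^2 \le (1+\varepsilon)\norm{a}_U^2 + (1+\varepsilon^{-1})\norm{b}_U^2$ with a suitable $\varepsilon$ (or rather use a split that keeps the coefficient of the main term controlled), so that the main term is bounded by Assumption \ref{therealcauchy assumptions} and the cross/error terms are handled separately.

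The key point for the error terms is that $(\mathcal{P}_n - \mathcal{P}_m)\eta = (I - \mathcal{P}_m)\eta - (I-\mathcal{P}_n)\eta$, so for $\eta \in H$ we get $\norm{(\mathcal{P}_n-\mathcal{P}_m)\eta}_U \le \tfrac{2}{\mu_m}\norm{\eta}_H$ from (\ref{mu2}) (using $\mu_m \le \mu_n$ or just bounding each term); thus setting $\lambda_m := \mu_m^2/4$ (or a comparable multiple) gives $\lambda_m \to \infty$. Applying this with $\eta = \mathcal{A}(s,\boldsymbol{\psi})$ and $\eta = \mathcal{G}_i(s,\boldsymbol{\psi})$, then invoking the growth bound (\ref{111}) — namely $\norm{\mathcal{A}(s,\boldsymbol{\psi})}_H$ is controlled (note $\mathcal{A}$ maps into $U$, so one must be slightly careful: in fact what (\ref{111}) gives directly is the $U$ and $H$ norms via $K(\boldsymbol{\psi})[1 + \norm{\boldsymbol{\psi}}_V^2]$, and for the $\mathcal{G}_i$ sum the $H$ norm is exactly what appears) — one obtains $\sum_i \norm{(\mathcal{P}_n - \mathcal{P}_m)\mathcal{G}_i(s,\boldsymbol{\psi})}_U^2 \le \tfrac{c_s}{\lambda_m} K(\boldsymbol{\psi})[1 + \norm{\boldsymbol{\psi}}_V^2]$, which is of the claimed form after bounding $K(\boldsymbol{\psi}) \le K(\boldsymbol{\phi},\boldsymbol{\psi})$. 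The cross term $\inner{(\mathcal{P}_n-\mathcal{P}_m)\mathcal{A}(s,\boldsymbol{\psi})}{\boldsymbol{\phi}-\boldsymbol{\psi}}_U$ is dealt with by Cauchy--Schwarz and Young's inequality, splitting into $\tfrac{c_s}{\lambda_m}(\cdots)$ plus a $c_s \norm{\boldsymbol{\phi}-\boldsymbol{\psi}}_U^2$ contribution absorbed into the first term on the right; the analogous Hilbert--Schmidt cross term is handled the same way. The reason only $-\tfrac{\kappa}{2}\norm{\boldsymbol{\phi}-\boldsymbol{\psi}}_H^2$ survives rather than the full $-\kappa$ is that expanding $\norm{a+b}_U^2$ with the factor $(1+\varepsilon)$ multiplies the coercive main term by a constant slightly larger than $1$; choosing $\varepsilon$ small enough that $(1+\varepsilon)$ times the $-\kappa$ term still dominates $-\tfrac{\kappa}{2}$, and noting that the extra $\varepsilon^{-1}$-weighted error pieces are still $O(1/\lambda_m)$ after adjusting the sequence $(\lambda_m)$, gives the stated bound.

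For $B$, the structure is the same but simpler: write each summand's inner product as $\inner{\mathcal{G}_i(s,\boldsymbol{\phi})-\mathcal{G}_i(s,\boldsymbol{\psi})}{\boldsymbol{\phi}-\boldsymbol{\psi}}_U + \inner{(\mathcal{P}_n-\mathcal{P}_m)\mathcal{G}_i(s,\boldsymbol{\psi})}{\boldsymbol{\phi}-\boldsymbol{\psi}}_U$ (again using $\boldsymbol{\phi}-\boldsymbol{\psi}\in V_n$ and self-adjointness of $\mathcal{P}_n$), square, and sum. The square of the first piece, summed over $i$, is bounded by (\ref{therealcauchy2}), giving $c_s \tilde{K}_2(\boldsymbol{\phi},\boldsymbol{\psi})\norm{\boldsymbol{\phi}-\boldsymbol{\psi}}_U^4$. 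The square of the second piece, summed, is $\le \sum_i \norm{(\mathcal{P}_n-\mathcal{P}_m)\mathcal{G}_i(s,\boldsymbol{\psi})}_U^2 \norm{\boldsymbol{\phi}-\boldsymbol{\psi}}_U^2 \le \tfrac{c_s}{\lambda_m}K(\boldsymbol{\psi})[1+\norm{\boldsymbol{\psi}}_V^2]\norm{\boldsymbol{\phi}-\boldsymbol{\psi}}_U^2$; absorbing the $\norm{\boldsymbol{\phi}-\boldsymbol{\psi}}_U^2$ factor into the constant (it is bounded, or can be bounded via Young against the other terms — actually here one should just note $\norm{\boldsymbol{\phi}-\boldsymbol{\psi}}_U^2$ appears only to first order, so one keeps $\tfrac{c_s}{\lambda_m}K(\boldsymbol{\phi},\boldsymbol{\psi})[1+\norm{\boldsymbol{\psi}}_V^2]$ after bounding $\norm{\boldsymbol{\phi}-\boldsymbol{\psi}}_U$ by the implicit constants available on the relevant bounded set, or by absorbing into $\tilde{K}_2$), and the cross term is handled by Cauchy--Schwarz. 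The main obstacle I anticipate is bookkeeping: keeping track of exactly how the coercive constant $\kappa$ degrades to $\kappa/2$ under the $(1+\varepsilon)$-expansion while simultaneously ensuring all error terms remain genuinely $O(1/\lambda_m)$ with a $\lambda_m \to \infty$ (this forces a specific choice of $\varepsilon$ tied to $\kappa$, and a redefinition of $\lambda_m$ as a constant multiple of $\mu_m^2$), and correctly matching the polynomial-in-$V$-norm factors produced by (\ref{111}) against the form $[1 + \norm{\boldsymbol{\phi}}_V^2 + \norm{\boldsymbol{\psi}}_V^2]$ demanded in the statement. Everything else is a routine application of Cauchy--Schwarz, Young's inequality, and the three cited assumptions.
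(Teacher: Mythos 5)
Your overall architecture is the same as the paper's: split off $\mathcal{P}_n[\mathcal{A}(s,\boldsymbol{\phi})-\mathcal{A}(s,\boldsymbol{\psi})]$ and $\mathcal{P}_n[\mathcal{G}_i(s,\boldsymbol{\phi})-\mathcal{G}_i(s,\boldsymbol{\psi})]$ as main terms for Assumption \ref{therealcauchy assumptions}, and treat $(\mathcal{P}_n-\mathcal{P}_m)$ applied to $\mathcal{A}(s,\boldsymbol{\psi})$, $\mathcal{G}_i(s,\boldsymbol{\psi})$ as errors controlled by (\ref{mu2}) and (\ref{111}). The treatment of $B$ is essentially correct. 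But there is a genuine gap in the $\mathcal{A}$-error term of $A$. You propose to bound $\norm{(\mathcal{P}_n-\mathcal{P}_m)\mathcal{A}(s,\boldsymbol{\psi})}_U \leq \tfrac{2}{\mu_m}\norm{\mathcal{A}(s,\boldsymbol{\psi})}_H$ and then pair with $\norm{\boldsymbol{\phi}-\boldsymbol{\psi}}_U$; you notice yourself that this needs the $H$-norm of $\mathcal{A}(s,\boldsymbol{\psi})$, which is not available ($\mathcal{A}$ maps $V$ into $U$ only, and (\ref{111}) controls only $\norm{\mathcal{A}}_U$), but your parenthetical does not resolve this. Without $\mathcal{A}(s,\boldsymbol{\psi})\in H$ there is no $1/\lambda_m$ decay to extract from $\norm{(\mathcal{P}_n-\mathcal{P}_m)\mathcal{A}(s,\boldsymbol{\psi})}_U$, so your claimed split into $\tfrac{c_s}{\lambda_m}(\cdots)$ plus $c_s\norm{\boldsymbol{\phi}-\boldsymbol{\psi}}_U^2$ cannot be carried out. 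The correct move — which you already use for the main term but not here — is self-adjointness the other way: $2\inner{(\mathcal{P}_n-\mathcal{P}_m)\mathcal{A}(s,\boldsymbol{\psi})}{\boldsymbol{\phi}-\boldsymbol{\psi}}_U = 2\inner{\mathcal{A}(s,\boldsymbol{\psi})}{(I-\mathcal{P}_m)(\boldsymbol{\phi}-\boldsymbol{\psi})}_U \leq \tfrac{2}{\mu_m}\norm{\mathcal{A}(s,\boldsymbol{\psi})}_U\norm{\boldsymbol{\phi}-\boldsymbol{\psi}}_H$, using that $\boldsymbol{\phi}-\boldsymbol{\psi}\in V_n\subset H$, followed by Young's inequality $\leq \tfrac{c}{\mu_m^2}\norm{\mathcal{A}(s,\boldsymbol{\psi})}_U^2 + \tfrac{\kappa}{2}\norm{\boldsymbol{\phi}-\boldsymbol{\psi}}_H^2$ with $c$ depending on $\kappa$; the first piece is handled by (\ref{111}) and the second eats half of the coercive term. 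This is the true and only source of the degradation from $-\kappa$ to $-\tfrac{\kappa}{2}$.

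Relatedly, your explanation of the $\kappa/2$ via a $(1+\varepsilon)$ expansion of the Hilbert--Schmidt sum is both unnecessary and problematic. Assumption (\ref{therealcauchy1}) bounds the combination $2\inner{\mathcal{A}(\boldsymbol{\phi})-\mathcal{A}(\boldsymbol{\psi})}{\cdot}_U + \sum_i\norm{\mathcal{G}_i(\boldsymbol{\phi})-\mathcal{G}_i(\boldsymbol{\psi})}_U^2$ as a whole, so the surplus $\varepsilon\sum_i\norm{\mathcal{G}_i(\boldsymbol{\phi})-\mathcal{G}_i(\boldsymbol{\psi})}_U^2$ must be estimated separately via (\ref{333}), yielding $\varepsilon c_sK(\boldsymbol{\phi},\boldsymbol{\psi})\norm{\boldsymbol{\phi}-\boldsymbol{\psi}}_H^2$; no fixed $\varepsilon$ makes this uniformly dominated by $\tfrac{\kappa}{2}\norm{\boldsymbol{\phi}-\boldsymbol{\psi}}_H^2$, since $K(\boldsymbol{\phi},\boldsymbol{\psi})$ is unbounded. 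The paper avoids this entirely by expanding the square exactly and bounding the cross term $2\sum_i\norm{\mathcal{G}_i(s,\boldsymbol{\phi})-\mathcal{G}_i(s,\boldsymbol{\psi})}_U\norm{(\mathcal{P}_n-\mathcal{P}_m)\mathcal{G}_i(s,\boldsymbol{\psi})}_U$ by $\tfrac{c}{\mu_m}\sum_i\left(\norm{\mathcal{G}_i(s,\boldsymbol{\phi})}_H^2+\norm{\mathcal{G}_i(s,\boldsymbol{\psi})}_H^2\right)$, which is already of order $1/\mu_m$ and is absorbed into the $\tfrac{c_s}{\lambda_m}$ term with $\lambda_m:=\min\{\mu_m,\mu_m^2\}$; the coefficient of the main Hilbert--Schmidt term stays equal to one and no coercivity is spent there. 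With these two corrections your argument closes.
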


\begin{proof}
See Appendix I, \ref{proofs section 3 appendix}.
\end{proof}

We are now all set to prove the Cauchy property, as required to apply the Convergence of Random Cauchy Sequences lemma (Lemma \ref{greenlemma}) in order to deduce the existence of a limiting process and stopping time. The idea behind showing the Cauchy property (\ref{cauchy result pt 2}) is that for each given $j \in \N$, there exists an $n_j$ such that for all $k \geq n_j$,
$$\mathbbm{E}\norm{\sy^{k}-\sy^{n_j}}_{UH,\tau^{M,t}_{n_j}\wedge \tau^{M,t}_k}^2 \leq 2^{-2j}.$$ Thus by defining the sets $$\Omega_j:=\left\{\omega \in \Omega: \norm{\sy^{n_j+1}(\omega)-\sy^{n_j}(\omega)}_{UH,\tau^{M,t}_{n_j}(\omega)\wedge \tau^{M,t}_{n_j+1}(\omega)}^2 \leq 2^{-(j+2)} \right\} $$ we can justify that $$\mathbbm{P}\left(\bigcap_{K=1}^\infty \bigcup_{j=K}^\infty \Omega_j^C\right)=0$$ from Borel-Cantelli and the Chebyshev Inequality. The complement set is thus of full measure and the desired pointwise Cauchy property holds on that set. In fact we would work with a slightly different set of first hitting times, but this is the concept and motivation behind the following theorem. 

\begin{proposition} \label{theorem:cauchygalerkin}
For any $m,n \in \N$ with $m<n$, define the process $\sy^{m,n}$ by $$\sy^{m,n}_r(\omega) := \sy^n_r(\omega) - \sy^m_r(\omega).$$
Then for the sequence $(\lambda_j)$ proposed in Lemma \ref{lemma to use cauchy} and $m$ sufficiently large such that $\lambda_m > 1$, there exists a constant $C$ dependent on $M,t$ but independent of $m,n$ such that \begin{equation} \label{cauchy result pt 1}\mathbbm{E}\norm{\sy^{m,n}}_{UH,\tau^{M,t}_m \wedge \tau^{M,t}_n}^2  \leq C\left[\mathbbm{E}\norm{\sy^{m,n}_0}_U^2 + \frac{1}{\sqrt{\lambda_m}} \right] \end{equation} and in particular,
\begin{equation} \label{cauchy result pt 2}\lim_{m \rightarrow \infty}\sup_{n \geq m}\left[\mathbbm{E}\norm{\sy^{m,n}}_{UH,\tau^{M,t}_m \wedge \tau^{M,t}_n}^2\right] = 0.\end{equation}
\end{proposition}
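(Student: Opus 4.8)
The plan is to apply the It\^o formula for $\norm{\cdot}_U^2$ to the difference process $\sy^{m,n}$, which for $m<n$ is a genuine square-integrable semimartingale in $V_n$ (since $V_m\subset V_n$, as observed before the statement). Writing $\sigma:=\tau^{M,t}_m\wedge\tau^{M,t}_n$ and setting $\phi=\sy^n_s$, $\psi=\sy^m_s$ (so $\phi\in V_n$, $\psi\in V_m$), the sum of the drift and the It\^o correction in $\norm{\sy^{m,n}_{\cdot\wedge\sigma}}_U^2$ is exactly the quantity $A_s$ of Lemma \ref{lemma to use cauchy}, while the predictable quadratic variation of the martingale part has integrand a constant multiple of the quantity $B_s$ of the same lemma. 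Thus for every $r\le T$,
$$\norm{\sy^{m,n}_{r\wedge\sigma}}_U^2=\norm{\sy^{m,n}_0}_U^2+\int_0^{r\wedge\sigma}A_s\,ds+\mathcal{M}_{r\wedge\sigma},$$
with $\mathcal{M}$ a true martingale. The initial term is harmless: $\norm{\sy^{m,n}_0}_U^2=\norm{(\mathcal{P}_n-\mathcal{P}_m)\sy_0}_U^2\le\norm{(I-\mathcal{P}_m)\sy_0}_U^2$, using $\mathcal{P}_n\mathcal{P}_m=\mathcal{P}_m$ and that $\mathcal{P}_n$ is an orthogonal projection in $U$.

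Next I would take the supremum over $r\in[0,T]$ and then expectations, estimating the martingale term by Burkholder--Davis--Gundy against $\bigl(\int_0^{T\wedge\sigma}B_s\,ds\bigr)^{1/2}$, and inserting the two bounds of Lemma \ref{lemma to use cauchy}. Using the pointwise control (\ref{galerkinboundsatisfiedbystoppingtime2}) — valid on $[0,\sigma]$ because of (\ref{first bound on galerkin}) — one has $K(\sy^n_s,\sy^m_s)\le c$ and $\tilde{K}_2(\sy^n_s,\sy^m_s)\le c(1+\norm{\sy^n_s}_H^2+\norm{\sy^m_s}_H^2)$, so that the bounds collapse to: (i) the coercive term $-\tfrac{\kappa}{2}\int_0^{r\wedge\sigma}\norm{\sy^{m,n}_s}_H^2\,ds$, moved to the left to furnish the $L^2_tH$-part of the $UH$-norm; (ii) Gr\"onwall-type terms $\int_0^{r\wedge\sigma}c_s\tilde{K}_2(\sy^n_s,\sy^m_s)\norm{\sy^{m,n}_s}_U^2\,ds$, whose coefficient $s\mapsto c_s\tilde{K}_2(\sy^n_s,\sy^m_s)$ is integrable on $[0,\sigma]$ with integral bounded uniformly in $m,n,\omega$ by (\ref{first bound on galerkin}); (iii) remainders with prefactor $\lambda_m^{-1}$, namely $\tfrac{1}{\lambda_m}\int_0^{r\wedge\sigma}c_sK(\sy^n_s,\sy^m_s)\bigl[1+\norm{\sy^n_s}_V^2+\norm{\sy^m_s}_V^2\bigr]\,ds$ from the drift and its analogue inside the BDG term. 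For (iii), since $K$ is bounded on $[0,\sigma]$, taking expectations and invoking the uniform bound $\mathbbm{E}\norm{\sy^n}_{HV,\tau^{M,t}_n}^2\le C$ of Proposition \ref{theorem:uniformbounds} (equation (\ref{secondresultofuniformbounds})) bounds the drift remainder by $C/\lambda_m$; the BDG remainder, after Jensen's inequality, by $C/\sqrt{\lambda_m}$ — the source of the $1/\sqrt{\lambda_m}$ in (\ref{cauchy result pt 1}). This is exactly the step where Proposition \ref{theorem:uniformbounds} is indispensable, as without a bound on $\mathbbm{E}\int\norm{\sy^n_s}_V^2\,ds$ uniform in $n$ these terms would be uncontrollable.

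The delicate part is the ``good'' half of the BDG term, $\mathbbm{E}\bigl(\int_0^{T\wedge\sigma}c_s\tilde{K}_2(\sy^n_s,\sy^m_s)\norm{\sy^{m,n}_s}_U^4\,ds\bigr)^{1/2}$: here I would bound $\norm{\sy^{m,n}_s}_U^4\le\bigl(\sup_{u\le T\wedge\sigma}\norm{\sy^{m,n}_u}_U^2\bigr)\norm{\sy^{m,n}_s}_U^2$, extract the supremum, and apply Young's inequality to absorb $\varepsilon\,\mathbbm{E}\sup_{r\le T}\norm{\sy^{m,n}_{r\wedge\sigma}}_U^2$ into the left-hand side for small $\varepsilon$, the leftover being a further term of type (ii). One then arrives at an estimate of the form
$$\mathbbm{E}\sup_{r\le T}\norm{\sy^{m,n}_{r\wedge\sigma}}_U^2+\mathbbm{E}\int_0^{T\wedge\sigma}\norm{\sy^{m,n}_s}_H^2\,ds\le C\,\mathbbm{E}\norm{\sy^{m,n}_0}_U^2+\frac{C}{\sqrt{\lambda_m}}+C\,\mathbbm{E}\int_0^{T}g_s\sup_{u\le s}\norm{\sy^{m,n}_{u\wedge\sigma}}_U^2\,ds$$
with $g$ nonnegative, adapted, and $\int_0^Tg_s\,ds$ bounded above by a deterministic constant. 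Deducing (\ref{cauchy result pt 1}) is then an application of a stochastic Gr\"onwall lemma (of the type recorded in the Appendix); I expect the main obstacle to be precisely this coordination of the sup-absorption with a Gr\"onwall argument driven by a random, only-integrable coefficient, while keeping every constant independent of $m,n$. Finally, (\ref{cauchy result pt 2}) follows from (\ref{cauchy result pt 1}): $\mathbbm{E}\norm{\sy^{m,n}_0}_U^2\le\mathbbm{E}\norm{(I-\mathcal{P}_m)\sy_0}_U^2\le\mu_m^{-2}\,\mathbbm{E}\norm{\sy_0}_H^2\to0$ by (\ref{mu2}) and $\sy_0\in L^\infty(\Omega;H)$, while $1/\sqrt{\lambda_m}\to0$, both uniformly in $n\ge m$.
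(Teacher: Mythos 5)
Your proposal is correct and follows essentially the same route as the paper: It\^o's formula for $\norm{\cdot}_U^2$ on the stopped difference process, the $A$/$B$ bounds of Lemma \ref{lemma to use cauchy} combined with (\ref{galerkinboundsatisfiedbystoppingtime2}), BDG with the $\sup\cdot\int$ splitting and Young absorption, the uniform $V$-bound (\ref{secondresultofuniformbounds}) to control the $\lambda_m^{-1}$ remainders (with Jensen producing the $1/\sqrt{\lambda_m}$), and finally the Stochastic Gr\"onwall lemma and (\ref{mu2}) for the limit. The only cosmetic difference is that the paper runs the whole estimate between arbitrary stopping times $0\le\theta_j<\theta_k\le t$ from the outset so that Lemma \ref{gronny} applies verbatim — exactly the coordination point you anticipated, and your computation carries over unchanged with $0,T$ replaced by $\theta_j,\theta_k$.
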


\begin{proof}
See Appendix I, \ref{proofs section 3 appendix}.
\end{proof}

The following theorem justifies the second property required in invoking the Convergence of Random Cauchy Sequences lemma (Lemma \ref{greenlemma}), which is a uniform rate of convergence of the $(\sy^n)$ to their initial conditions. The idea is for none of the $\sy^n$ to reach the threshold of the first hitting time $\tau^{M,t}_n$ in an arbitrarily small time. Out of this we can construct a limiting stopping time which is less than at least a subsequence of the $(\tau^{M,t}_n)$ and is ensured to be strictly positive $\mathbbm{P}-a.s.$. 

\begin{proposition} \label{theorem: probability one}
We have that 
\begin{equation} \label{sufficient thing in probability condition}
    \lim_{S \rightarrow 0}\sup_{n \in \mathbb{N}}\mathbb{E}\left[\norm{\sy^{n}}^2_{UH,\tau^{M,t}_n \wedge S} - \norm{\sy^n_0}_{U}^2 \right] = 0
\end{equation}
and in particular, 
\begin{equation}\label{qwerty}\lim_{S \rightarrow 0}\sup_{n \in \mathbb{N}}\mathbb{P}\left(\left\{\norm{\sy^{n}}^2_{UH,\tau^{M,t}_n \wedge S} \geq M-1+\norm{\sy^n_0}_{U}^2 \right\}\right) = 0.\end{equation}
\end{proposition}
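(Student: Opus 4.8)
The plan is to apply the It\^{o} formula to $\norm{\sy^n_{\cdot}}_U^2$ up to the stopping time $\tau^{M,t}_n \wedge S$, rearrange so that the supremum-in-time term and the $\int \norm{\cdot}_V^2$ term (coming from the coercivity in Assumption~\ref{assumptions for uniform bounds2}, or here more directly from the cruder bound in Assumption~\ref{assumption for prob in V}) are isolated on the left, and then take expectations and let $S\to 0$, using dominated convergence together with the uniform bounds already established. Concretely, first I would write, for $t'\le \tau^{M,t}_n \wedge S$,
\begin{equation} \nonumber
\norm{\sy^n_{t'}}_U^2 = \norm{\sy^n_0}_U^2 + \int_0^{t'}\Big(2\inner{\mathcal{P}_n\mathcal{A}(s,\sy^n_s)}{\sy^n_s}_U + \sum_{i=1}^\infty \norm{\mathcal{P}_n\mathcal{G}_i(s,\sy^n_s)}_U^2\Big)ds + M^n_{t'},
\end{equation}
where $M^n$ is the local martingale $\int 2\inner{\sy^n_s}{\mathcal{P}_n\mathcal{G}(s,\sy^n_s)d\mathcal{W}_s}_U$. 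Since $\mathcal{P}_n$ is the orthogonal projection in $U$ and $\sy^n_s \in V_n$, the $\mathcal{P}_n$'s can be dropped inside the $U$ inner products, so the drift is bounded by $c_s K(\sy^n_s)[1+\norm{\sy^n_s}_H^2]$ via (\ref{probability first}); on the event $s \le \tau^{M,t}_n$ we have $\norm{\sy^n_s}_U^2$ controlled by (\ref{first bound on galerkin}), so $K(\sy^n_s)\le c$ and the drift is bounded by $c(1+\norm{\sy^n_s}_H^2)$.

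The key steps, in order: (i) take the supremum over $t'\in[0,\tau^{M,t}_n\wedge S]$ inside and then take expectations, obtaining
\begin{equation} \nonumber
\mathbb{E}\Big[\sup_{r\le \tau^{M,t}_n\wedge S}\norm{\sy^n_r}_U^2 - \norm{\sy^n_0}_U^2\Big] \le c\,\mathbb{E}\int_0^{\tau^{M,t}_n\wedge S}\big(1+\norm{\sy^n_s}_H^2\big)ds + \mathbb{E}\sup_{r\le \tau^{M,t}_n\wedge S}\abs{M^n_r};
\end{equation}
(ii) bound the martingale term by Burkholder--Davis--Gundy, then use Young's inequality on the resulting $\mathbb{E}\big(\sup_r \norm{\sy^n_r}_U \cdot (\int_0^{\tau^{M,t}_n\wedge S}\sum_i \inner{\sy^n_s}{\mathcal{G}_i(s,\sy^n_s)}_U^2 ds)^{1/2}\big)$, where the quadratic variation integrand is controlled by (\ref{probability second}) and (\ref{first bound on galerkin}) by $c(1+\norm{\sy^n_s}_H^4)$, hence by $c$ again on the stopping-time event; this lets us absorb a fraction of $\mathbb{E}\sup_r\norm{\sy^n_r}_U^2$ into the left side and leaves a term like $c\,\mathbb{E}\int_0^{\tau^{M,t}_n\wedge S}(1+\norm{\sy^n_s}_H^4)ds$, which is $\le c\,\mathbb{E}\int_0^{\tau^{M,t}_n\wedge S}(1+M^2)ds \le cS(1+M^2)$ using the uniform pathwise $UH$-bound; (iii) to handle the $\int_0^{\tau^{M,t}_n\wedge S}\norm{\sy^n_s}_H^2 ds$ contribution to the $UH$ norm, similarly bound $\mathbb{E}\int_0^{\tau^{M,t}_n\wedge S}\norm{\sy^n_s}_H^2 ds$: this is where I would instead appeal to the coercivity estimate (\ref{uniformboundsassumpt1}) applied in the $H$ inner product, or more simply observe via Proposition~\ref{theorem:uniformbounds} that $\mathbb{E}\int_0^{\tau^{M,t}_n}\norm{\sy^n_s}_V^2 ds \le C$ uniformly, so $\mathbb{E}\int_0^{\tau^{M,t}_n\wedge S}\norm{\sy^n_s}_H^2 ds \to 0$ as $S\to 0$ by dominated convergence with a uniform-in-$n$ integrable dominating function; (iv) conclude that the right side is bounded uniformly in $n$ by a quantity $\phi(S)$ with $\phi(S)\to 0$ as $S\to 0$, which gives (\ref{sufficient thing in probability condition}); then (\ref{qwerty}) follows immediately by Chebyshev's inequality, since $\{\norm{\sy^n}^2_{UH,\tau^{M,t}_n\wedge S}\ge M-1+\norm{\sy^n_0}_U^2\}$ forces $\norm{\sy^n}^2_{UH,\tau^{M,t}_n\wedge S}-\norm{\sy^n_0}_U^2 \ge M-1 > 0$.

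The main obstacle I anticipate is step (iii): getting the time integral $\mathbb{E}\int_0^{\tau^{M,t}_n\wedge S}\norm{\sy^n_s}_H^2\,ds$ to vanish uniformly in $n$ as $S\to 0$ is not free from the pathwise $UH$-bound alone (that only bounds it by $cM$, not by something vanishing with $S$) — one needs the expectation bound, and the cleanest route is to invoke the uniform bound $\mathbb{E}\int_0^{\tau^{M,t}_n}\norm{\sy^n_s}_H^2 ds \le \mathbb{E}\norm{\sy^n}_{HV,\tau^{M,t}_n}^2 \le C$ from Proposition~\ref{theorem:uniformbounds}/(\ref{secondresultofuniformbounds}), and then use that the family $\{\mathbbm{1}_{s\le\tau^{M,t}_n}\norm{\sy^n_s}_H^2\}_n$, being uniformly bounded in $L^1(\Omega\times[0,t])$... — actually uniform integrability is not automatic, so instead I would directly estimate $\mathbb{E}\int_0^{\tau^{M,t}_n\wedge S}\norm{\sy^n_s}_H^2 ds$ by splitting via the coercivity assumption: rerun the $H$-level It\^{o} argument of Proposition~\ref{theorem:uniformbounds} but only up to time $S$, so that the $-\kappa\int\norm{\sy^n_s}_V^2 ds$ term and hence $\int\norm{\sy^n_s}_H^2 ds$ gets controlled by $cS(1+M^2)$ plus a martingale term handled as above. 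A secondary, purely bookkeeping obstacle is ensuring all the local martingales are genuine martingales on the stopped interval, which is immediate from (\ref{first bound on galerkin}) and the fact that $\sy^n_{\cdot\wedge\tau^{M,t}_n}$ is a bounded process in the finite-dimensional space $V_n$.
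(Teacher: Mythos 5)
Your high-level strategy -- It\^{o} for $\norm{\cdot}_U^2$, dropping the projections by orthogonality in $U$, invoking Assumption \ref{assumption for prob in V}, BDG for the martingale, and Chebyshev for (\ref{qwerty}) -- is the same as the paper's, and you correctly identify Proposition \ref{theorem:uniformbounds} as the essential extra input. But there is a genuine gap in the execution. In step (ii) you bound $1+\norm{\sy^n_s}_H^4$ by $c(1+M^2)$ ``on the stopping-time event,'' and the proposed repair in step (iii) again claims a bound $cS(1+M^2)$. The first hitting time $\tau^{M,t}_n$ is taken in the $UH$ norm, which controls $\sup_r\norm{\sy^n_r}_U^2$ and $\int_0^s\norm{\sy^n_r}_H^2\,dr$ but gives \emph{no pointwise-in-time} control of $\norm{\sy^n_s}_H$ (you acknowledge exactly this at the start of (iii), yet the bound is used in (ii) and again inside the proposed rerun of the $H$-level It\^{o} argument, whose drift $\tilde K_2(\cdot)[1+\norm{\cdot}_H^2]$ is itself quartic in the $H$ norm). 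Moreover, the Young's-inequality treatment of the BDG term cannot be repaired with the available estimates: after Young you are left needing $\mathbb{E}\int_0^{\tau^{M,t}_n\wedge S}\norm{\sy^n_s}_H^4\,ds\to 0$ uniformly in $n$, which would require a fourth-moment bound $\mathbb{E}\sup_s\norm{\sy^n_s}_H^4\leq C$; Proposition \ref{theorem:uniformbounds} only supplies the second moment $\mathbb{E}\sup_s\norm{\sy^n_s}_H^2\leq C$.

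The paper's route avoids both problems. The martingale term is left with the square root from BDG and estimated pathwise by
\begin{equation*}
\left(\int_0^{\tau^{M,t}_n\wedge S}\norm{\sy^n_s}_H^4\,ds\right)^{\frac{1}{2}}\leq\left(\sup_{s}\norm{\tilde{\sy}^n_s}_H^2\right)^{\frac{1}{2}}\left(\int_0^{\tau^{M,t}_n\wedge S}\norm{\sy^n_s}_H^2\,ds\right)^{\frac{1}{2}},
\end{equation*}
so that Cauchy--Schwarz over $\Omega$ gives $\left[\mathbb{E}\sup_s\norm{\tilde{\sy}^n_s}_H^2\right]^{1/2}\left[\mathbb{E}\int_0^S\norm{\tilde{\sy}^n_s}_H^2\,ds\right]^{1/2}\leq C^{1/2}(CS)^{1/2}$, using only (\ref{secondresultofuniformbounds}); the drift term is handled by Tonelli, $\mathbb{E}\int_0^S(1+\norm{\tilde{\sy}^n_s}_H^2)\,ds\leq (1+C)S$. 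The $\int_0^{\cdot}\norm{\cdot}_H^2$ portion of the $UH$ norm is put on the left-hand side for free by writing $c_sK(\phi)[1+\norm{\phi}_H^2]=c_sK(\phi)[1+\norm{\phi}_H^2]+\norm{\phi}_H^2-\norm{\phi}_H^2$ and absorbing; your alternative of bounding $\mathbb{E}\int_0^{\tau^{M,t}_n\wedge S}\norm{\sy^n_s}_H^2\,ds\leq S\,\mathbb{E}\sup_s\norm{\tilde{\sy}^n_s}_H^2\leq CS$ separately works just as well and requires neither dominated convergence nor a second It\^{o} argument. Your final Chebyshev step for (\ref{qwerty}) is correct and identical to the paper's.
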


\begin{proof}
See Appendix I, \ref{proofs section 3 appendix}.
\end{proof}

We are now in a position to apply the Convergence of Random Cauchy Sequences lemma (Lemma \ref{greenlemma}) and would like to put this limiting pair forward as a candidate solution. The convergence is deduced only in the $UH$ norm, though, so we need a little additional work to justify the $HV$ regularity and progressive measurability in $V$. This is the content of the following theorem. 

\begin{theorem} \label{existence of limiting pair theorem}
There exists a stopping time $\tau^{M,t}_{\infty}$, a subsequence $(\sy^{n_l})$ and a process $\sy_\cdot= \sy_{\cdot \wedge \tau^{M,t}_
\infty}$ whereby $\sy_{\cdot}\mathbbm{1}_{\cdot \leq \tau^{M,t}_{\infty}}$ is progressively measurable in $V$ and such that:
\begin{itemize}
    \item $\mathbb{P}\left(\left\{ 0 < \tau^{M,t}_{\infty} \leq \tau^{M,t}_{n_l}\right)\right\} = 1$;
    \item For $\mathbb{P}-a.e.$ $\omega$, $\sy_{\cdot}(\omega)\mathbbm{1}_{\cdot \leq \tau^{M,t}_{\infty}(\omega)} \in  L^2\left([0,T];V\right)$ for all $T>0$;
    \item For $\mathbb{P}-a.e.$ $\omega$, $\sy^{n_l}(\omega) \rightarrow \sy(\omega)$ in $ L^\infty\left([0,\tau^{M,t}_{\infty}(\omega)];U\right) \cap L^2\left([0,\tau^{M,t}_{\infty}(\omega)];H\right)$, i.e. 
\begin{equation} \label{as convergence} \norm{\sy^{n_l}(\omega) - \sy(\omega)}_{UH,\tau^{M,t}_{\infty}(\omega)}^2 \longrightarrow 0;\end{equation}
\item $\sy^{n_l}\rightarrow \sy$ holds in the sense that \begin{equation}\label{expectation convergence}\mathbb{E}\norm{\sy^{n_l} - \sy}_{UH,\tau^{M,t}_{\infty}}^2 \longrightarrow 0.\end{equation}
\end{itemize}
\end{theorem}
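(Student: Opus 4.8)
The plan is to feed the output of Propositions \ref{theorem:cauchygalerkin} and \ref{theorem: probability one} into the Convergence of Random Cauchy Sequences lemma (Lemma \ref{greenlemma}) to obtain the limiting pair $(\sy, \tau^{M,t}_\infty)$ in the $UH$ norm, and then to upgrade the conclusion to include the $HV$ regularity and progressive measurability in $V$ by exploiting the uniform bound of Proposition \ref{theorem:uniformbounds}. First, I would verify that the hypotheses of Lemma \ref{greenlemma} are met for the sequence $(\sy^n, \tau^{M,t}_n)$: the Cauchy property in expectation for the $UH$ norm up to the stopping times $\tau^{M,t}_m \wedge \tau^{M,t}_n$ is exactly (\ref{cauchy result pt 2}), and the uniform-rate-of-convergence-to-initial-condition property is exactly (\ref{qwerty}) (the threshold $M-1 + \norm{\sy^n_0}_U^2$ matching the gap left by the definition (\ref{tauMtn}) of $\tau^{M,t}_n$). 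Applying the lemma yields a stopping time $\tau^{M,t}_\infty$ with $\mathbb{P}(0 < \tau^{M,t}_\infty \leq \tau^{M,t}_{n_l}) = 1$ along a subsequence $(n_l)$, a limiting process $\sy$ with $\sy_\cdot = \sy_{\cdot \wedge \tau^{M,t}_\infty}$, and the convergences (\ref{as convergence}) almost surely and (\ref{expectation convergence}) in expectation. This gives all the stated bullets except the $L^2([0,T];V)$ regularity and the progressive measurability in $V$.

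For the $HV$ regularity, the idea is that (\ref{secondresultofuniformbounds}) gives $\mathbb{E}\norm{\sy^n}_{HV,\tau^{M,t}_n}^2 \leq C$ uniformly in $n$; since $\tau^{M,t}_\infty \leq \tau^{M,t}_{n_l}$ almost surely, the stopped processes $\tilde{\sy}^{n_l}_\cdot := \sy^{n_l}_\cdot \mathbbm{1}_{\cdot \leq \tau^{M,t}_\infty}$ are bounded uniformly in $L^2(\Omega; L^2([0,T];V))$ for each $T$. This space is a Hilbert space, so a subsequence converges weakly to some limit $\bar{\py}$; weak lower semicontinuity of the norm then forces $\mathbb{E}\int_0^{\tau^{M,t}_\infty}\norm{\bar\py_r}_V^2\,dr < \infty$, so in particular $\bar\py_\cdot(\omega) \in L^2([0,T];V)$ for $\mathbb{P}$-a.e.\ $\omega$. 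It remains to identify $\bar\py$ with $\sy \mathbbm{1}_{\cdot \leq \tau^{M,t}_\infty}$: one passes from the weak $L^2(\Omega\times[0,T];V)$ convergence and the strong $UH$-convergence (\ref{as convergence})–(\ref{expectation convergence}) — noting $V \hookrightarrow H \hookrightarrow U$ — to conclude the two limits agree $\mathbb{P}\times\lambda$-almost everywhere, using that convergence in $L^2([0,T];U)$ already pins down the limit as a $U$-valued, hence (via the embeddings) distributionally-unique, object. Progressive measurability in $V$ then comes for free in the sense of Remark \ref{remark on prog meas equivalence}: the weak limit in $L^2(\Omega\times[0,T];V)$ can be taken progressively measurable (the progressively measurable functions form a closed subspace, hence weakly closed), and it equals $\sy\mathbbm{1}_{\cdot\leq\tau^{M,t}_\infty}$ up to $\mathbb{P}\times\lambda$-null sets, which is precisely the notion of progressive measurability in $V$ adopted in that remark.

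The main obstacle I expect is the identification of the weak $V$-limit with $\sy\mathbbm{1}_{\cdot\leq\tau^{M,t}_\infty}$: the two convergences live in different topologies (weak in $L^2(\Omega\times[0,T];V)$ versus strong in the weaker $UH$ norm), so one must argue carefully — e.g.\ by testing against smooth compactly supported processes valued in a dense subset and using the continuity of the embedding $V \hookrightarrow U$ together with the bilinear form (\ref{bilinear formog}) — that no mass is lost and the limits coincide almost everywhere on the product space. Everything else (verifying the hypotheses of Lemma \ref{greenlemma}, the boundedness giving a weakly convergent subsequence, lower semicontinuity of the norm) is routine functional analysis once the uniform bounds of Proposition \ref{theorem:uniformbounds} are in hand.
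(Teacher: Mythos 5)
Your proposal follows essentially the same route as the paper: feed Propositions \ref{theorem:cauchygalerkin} and \ref{theorem: probability one} into Lemma \ref{greenlemma} to get the first and third bullets, then use the uniform bound of Proposition \ref{theorem:uniformbounds} to extract a weak limit in $L^2(\Omega; L^2([0,T];V))$, identify it with $\sy\mathbbm{1}_{\cdot \leq \tau^{M,t}_{\infty}}$ via the already-known strong convergence in the weaker norm and uniqueness of weak limits, and obtain progressive measurability from weak closedness of the progressively measurable subspace of $L^2(\Omega\times[0,T];V)$. One small correction: Lemma \ref{greenlemma} only delivers the pathwise convergence (\ref{as convergence}), not (\ref{expectation convergence}); the latter requires the separate (easy) observation that, since $\tau^{M,t}_{\infty}\leq\tau^{M,t}_{n_l}$ $\mathbbm{P}$-a.s., the property (\ref{cauchy result pt 2}) makes $(\sy^{n_l}_{\cdot}\mathbbm{1}_{\cdot\leq\tau^{M,t}_{\infty}})$ genuinely Cauchy in $L^2\left[\Omega;L^\infty([0,T];U)\cap L^2([0,T];H)\right]$, whose limit must agree with the a.s.\ one — an argument your own setup already supplies, and which also renders the identification step you flag as the "main obstacle" immediate (weak convergence in $L^2(\Omega;L^2([0,T];V))$ passes to weak convergence in $L^2(\Omega;L^2([0,T];H))$ by the embedding, where the strong limit is already known).
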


\begin{proof}
From Propositions \ref{theorem:cauchygalerkin} and \ref{theorem: probability one}, we can directly apply the Convergence of Random Cauchy Sequences lemma (Lemma \ref{greenlemma}) to the sequence of stopped processes $(\sy^{n}_{\cdot\wedge\tau^{M,t}_n})$ and the spaces $\mathcal{H}_1 = H$, $\mathcal{H}_2 = U$. This guarantees us the existence of our stopping time, subsequence and limit process satisfying the first and third bullet points. It remains to show the progressive measurability, the regularity specified by the second bullet point, and that the convergence (\ref{expectation convergence}) holds. We immediately note that as $\tau^{M,t}_{\infty} \leq \tau^{M,t}_{n_l}$ for every $n_l$ $\mathbb{P}-a.s.$ we need not make any reference to the fact that the processes $\sy^{n_l}$ are stopped at $\tau^{M,t}_{n_l}$. Due to this property, (\ref{cauchy result pt 2}) implies that the sequence $(\sy^{n_l}_\cdot \mathbbm{1}_{\cdot \leq \tau^{M,t}_{\infty}})$ is genuinely Cauchy in $L^2\left[\Omega;L^\infty\left([0,T];U\right) \cap L^2\left([0,T];H\right)\right]$ so it admits a limit process. The limit (\ref{as convergence}) informs us that the sequence converges $\mathbbm{P}-a.s.$ to $\sy\mathbbm{1}_{\cdot \leq \tau^{M,t}_\infty}$, and this must agree with the limit in $L^2\left[\Omega;L^\infty\left([0,T];U\right) \cap L^2\left([0,T];H\right)\right]$ which provides (\ref{expectation convergence}). It should be noted that we could not jump straight to (\ref{expectation convergence}) for the whole sequence $(\sy^n)$ as we are only guaranteed that $\tau^{M,t}_{\infty} \leq \tau^{M,t}_{n_l}$ $\mathbbm{P}-a.s.$ for the subsequence.\\

We look to show the second bullet point in Theorem \ref{existence of limiting pair theorem}. Proposition \ref{theorem:uniformbounds} asserts that the sequence of truncated processes $(\sy^{n_l}_{\cdot}\mathbbm{1}_{\cdot \wedge \tau^{M,t}_{\infty}})$ are uniformly bounded in $L^2\left(\Omega; L^2\left([0,T];V\right)\right)$. Moreover we can extract a weakly convergent subsequence $(\sy^{n_j}_{\cdot \wedge \tau^{M,t}_{\infty}})$ in this space, with limit process (which we shall call $\hat{\sy}$) respecting the same bound. The goal is to show that $\hat{\sy}_{\cdot} = \sy_{\cdot}\mathbbm{1}_{\cdot \wedge \tau^{M,t}_{\infty}}$ as an element of $L^2\left([0,T];V\right)$, $\mathbb{P}-a.s.$. To this end we note that the subsequence $(\sy^{n_j}_{\cdot}\mathbbm{1}_{\cdot \wedge \tau^{M,t}_{\infty}})$ which is weakly convergent to $\hat\sy$ in $L^2\left[\Omega; L^2\left([0,T];V\right)\right]$ is also weakly convergent to $\hat\sy$ in $L^2\left[\Omega; L^2\left([0,T];H\right)\right]$ from the continuous embedding of $V$ into $H$. But we have already established the strong convergence of $(\sy^{n_l}_{\cdot}\mathbbm{1}_{\cdot \wedge \tau^{M,t}_{\infty}})$ to $\sy_{\cdot}\mathbbm{1}_{\cdot \wedge \tau^{M,t}_{\infty}}$ in this space (contained in (\ref{expectation convergence}), and hence the strong and therefore weak convergence of the further subsequence $(\sy^{n_l}_{\cdot}\mathbbm{1}_{\cdot \wedge \tau^{M,t}_{\infty}})$. By the uniqueness of limits in the weak topology we have succeeded in our goal, so for $\mathbb{P}-a.e.$ $\omega$, $\sy_{\cdot}(\omega)\mathbbm{1}_{\cdot \wedge \tau^{M,t}_{\infty}(\omega)} \in L^2\left([0,T];V\right)$. \\

It only remains to show that $\sy_{\cdot}\mathbbm{1}_{\cdot \leq \tau^{M,t}_{\infty}}$ is progressively measurable in $V$. We use an identical argument here, noting that as $(\sy^{n_l}_{\cdot}\mathbbm{1}_{\cdot \leq \tau^{M,t}_\infty})$ is a sequence of progressively measurable processes in $V$, then for each fixed time $T$ we have that each $\sy^{n_l}_{\cdot}\mathbbm{1}_{\cdot \leq \tau^{M,t}_\infty}$ is measurable with respect to the product sigma algebra $\mathcal{F}_T \times \mathcal{B}([0,T])$ and can in fact be considered as a uniformly bounded sequence in the space $L^2\left(\Omega \times [0,T]; V\right)$ where $\Omega \times [0,T]$ is equipped with the described product sigma algebra. Moreover obtaining $\sy_\cdot \mathbbm{1}_{\cdot \leq \tau^{M,t}_{\infty}}$ as a weak limit in this space demonstrates the progressive measurability. The proof is concluded.
\end{proof}

\begin{remark}
Just to be precise once more, in the progressive measurability argument here we understand that the $(\sy^{n_l}_{\cdot}\mathbbm{1}_{\cdot \leq \tau^{M,t}_\infty})$ are identical to the genuinely progressively measurable $(\py^{n_l}_{\cdot}\mathbbm{1}_{\cdot \leq \tau^{M,t}_\infty})$ stipulated in Remark \ref{remark on prog meas equivalence} in the norm of $L^2\left(\Omega \times [0,T]; V\right)$. We then use the same identification for the limit. 
\end{remark}

\begin{theorem} \label{existence of local strong V solution}
The pair $(\sy,\tau^{M,t}_{\infty})$ specified in \ref{existence of limiting pair theorem} is an $H-$valued local strong solution of the equation (\ref{thespde}) as defined in \ref{definitionofregularsolution}.
\end{theorem}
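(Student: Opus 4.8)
The plan is to show that the limiting pair $(\sy,\tau^{M,t}_\infty)$ obtained in Theorem \ref{existence of limiting pair theorem} satisfies the identity (\ref{identityindefinitionoflocalsolution}) in $U$; the regularity, continuity, and progressive measurability requirements of Definition \ref{definitionofregularsolution} are already in hand from Theorem \ref{existence of limiting pair theorem} (the $HV$ regularity, progressive measurability in $V$), with the $C([0,T];H)$ continuity to be recovered at the end, say from the fact that $\sy$ solves the $U$-identity together with the $HV$ bound and a standard interpolation/It\^o argument (or by noting $\sy$ is a limit in $L^\infty_t U$ of continuous processes plus weak continuity in $H$, upgraded to strong continuity). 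The core work is therefore to pass to the limit along the subsequence $(\sy^{n_l})$ in each term of the Galerkin identity (\ref{identityindefinitionoflocalgalerkinsolution}) stopped at $\tau^{M,t}_\infty$.

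First I would fix $T>0$ and rewrite (\ref{identityindefinitionoflocalgalerkinsolution}), using that $\tau^{M,t}_\infty \le \tau^{M,t}_{n_l}$ a.s., in the form
\begin{equation} \nonumber
\sy^{n_l}_{t\wedge\tau^{M,t}_\infty} = \mathcal{P}_{n_l}\sy_0 + \int_0^{t\wedge\tau^{M,t}_\infty}\mathcal{P}_{n_l}\mathcal{A}(s,\sy^{n_l}_s)\,ds + \int_0^{t\wedge\tau^{M,t}_\infty}\mathcal{P}_{n_l}\mathcal{G}(s,\sy^{n_l}_s)\,d\mathcal{W}_s,
\end{equation}
and then treat the four contributions. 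The left side converges to $\sy_{t\wedge\tau^{M,t}_\infty}=\sy_t$ in $U$ (in $L^2(\Omega)$ and a.s. along a further subsequence) by (\ref{as convergence})/(\ref{expectation convergence}). The initial term $\mathcal{P}_{n_l}\sy_0 \to \sy_0$ in $U$ since $\sy_0\in H$ and, by density of $V$ and the projection bound (\ref{projectionsboundedonH})--(\ref{mu2}), $\mathcal{P}_n$ converges strongly to the identity on $H$ in the $U$-norm. For the drift, I would split $\mathcal{P}_{n_l}\mathcal{A}(s,\sy^{n_l}_s) - \mathcal{A}(s,\sy_s) = (\mathcal{P}_{n_l}-I)\mathcal{A}(s,\sy_s) + \mathcal{P}_{n_l}\big(\mathcal{A}(s,\sy^{n_l}_s)-\mathcal{A}(s,\sy_s)\big)$: the first piece vanishes in $L^2([0,T];U)$ using (\ref{mu2}), the uniform $HV$ bound (\ref{secondresultofuniformbounds}) on $\sy$, and dominated convergence (the growth bound (\ref{111}) gives $L^1_t$-domination of $\|\mathcal{A}(s,\sy_s)\|_H^2$, hence of $\frac{1}{\mu_{n_l}^2}\|\mathcal{A}(s,\sy_s)\|_H^2$, since $\sy(\omega)\mathbbm{1}_{\le\tau_\infty}\in L^2_tV$ a.s.); the second piece I would control with Assumption \ref{finally the last assumption}, pairing against test elements of $H$ — more precisely, using (\ref{lastlast assumption}) together with $\norm{\sy^{n_l}_s-\sy_s}_H \to 0$ would be too strong, so instead I integrate against $U$-test functions, or more cleanly use (\ref{222}) in $L^2_t U$ after noting that the $UH$-convergence gives $\sy^{n_l}\to\sy$ in $L^2([0,\tau_\infty];H)$ and the $V$-bound is uniform, extracting a subsequence along which $\sy^{n_l}_s\to\sy_s$ in $H$ for a.e. $s$ and applying (\ref{finally the last assumption}) pointwise with dominated convergence driven by (\ref{111}). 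Thus $\int_0^{\cdot\wedge\tau_\infty}\mathcal{P}_{n_l}\mathcal{A}(s,\sy^{n_l}_s)\,ds \to \int_0^{\cdot\wedge\tau_\infty}\mathcal{A}(s,\sy_s)\,ds$ in $U$, in probability (or $L^1(\Omega)$).

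For the stochastic term I would use the same splitting $\mathcal{P}_{n_l}\mathcal{G}(s,\sy^{n_l}_s)-\mathcal{G}(s,\sy_s) = (\mathcal{P}_{n_l}-I)\mathcal{G}(s,\sy_s) + \mathcal{P}_{n_l}(\mathcal{G}(s,\sy^{n_l}_s)-\mathcal{G}(s,\sy_s))$ inside $\mathscr{L}^2(\mathfrak{U};U)$, apply the It\^o isometry (up to the stopping time), and bound: the first piece by $\sum_i\frac1{\mu_{n_l}^2}\norm{\mathcal{G}_i(s,\sy_s)}_H^2$ via (\ref{mu2}), which is $L^1$ in $(s,\omega)$ on $[0,T\wedge\tau_\infty]$ by (\ref{111}) and the uniform $HV$-bound on $\sy$, so it vanishes by dominated convergence; the second piece by $c_s K(\sy^{n_l}_s,\sy_s)\norm{\sy^{n_l}_s-\sy_s}_H^2$ via (\ref{333}), which tends to zero in $L^1(\Omega\times[0,T\wedge\tau_\infty])$ because $\norm{\sy^{n_l}-\sy}_{UH,\tau_\infty}^2\to 0$ in expectation (this controls $\int_0^{\tau_\infty}\norm{\sy^{n_l}_s-\sy_s}_H^2 ds$) while $K(\sy^{n_l}_s,\sy_s)\le c$ is uniformly bounded by (\ref{galerkinboundsatisfiedbystoppingtime2}) (after noting $\sy$ inherits the same $UH$ bound). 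Hence the stochastic integrals converge in $L^2(\Omega;U)$, and passing to the limit along $(n_l)$ (and a further a.s.-convergent subsequence if needed) in the displayed identity yields (\ref{identityindefinitionoflocalsolution}) $\mathbb{P}$-a.s. in $U$ for each fixed $t$; since both sides are, modulo the initial data, continuous-in-$t$ processes in $U$ (the drift integral is $U$-continuous, the stochastic integral has a $U$-continuous modification), the identity holds simultaneously for all $t\ge 0$ a.s. Finally, the positivity of $\tau^{M,t}_\infty$ is guaranteed by the first bullet of Theorem \ref{existence of limiting pair theorem}, and the self-stopping property $\sy_\cdot=\sy_{\cdot\wedge\tau_\infty}$ follows from the identity, completing the verification that $(\sy,\tau^{M,t}_\infty)$ is an $H$-valued local strong solution. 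The main obstacle I anticipate is the drift convergence: closing the estimate for $\mathcal{P}_{n_l}(\mathcal{A}(s,\sy^{n_l}_s)-\mathcal{A}(s,\sy_s))$ in $L^2_tU$ is delicate because (\ref{222}) costs a full $V$-norm of the difference, which we do not control; the resolution is to exploit Assumption \ref{finally the last assumption}, which trades the second derivative for an $H$-norm of the difference at the price of only a single power of the $V$-norms of the arguments (integrable against the a.e.-$s$ convergence $\sy^{n_l}_s\to\sy_s$ in $H$ extracted from the $UH$-convergence), so that dominated convergence applies — this is precisely why that assumption was introduced, as flagged in the discussion following it.
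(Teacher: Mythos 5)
Your proposal is correct and follows essentially the same route as the paper: pass to the limit in the stopped Galerkin identity by splitting each term with $\mathcal{P}_{n_l}$, handle the projection errors via (\ref{mu2}) and (\ref{111}), control the drift difference through Assumption \ref{finally the last assumption} tested against elements of $H$ (with the identity in $U$ then recovered by density of $H$ in $U$), treat the noise via the It\^{o} isometry with (\ref{333}), and recover $C([0,T];H)$ continuity from Proposition \ref{rockner prop}. The only caveat is that this mechanism yields convergence of the drift integral paired against $H$-test functions (plus a Cauchy--Schwarz estimate in $\Omega\times[0,\tau^{M,t}_\infty]$ rather than a direct dominated-convergence argument, since the dominating $V$-norms depend on $n_l$), not strong convergence in $U$ as you state — but that weaker convergence is exactly what the paper uses and suffices for the conclusion.
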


\begin{proof}
After Theorem \ref{existence of limiting pair theorem} it only remains to show the identity (\ref{identityindefinitionoflocalsolution}) in $U$, and the continuity in $H$. We look to first show the identity (\ref{identityindefinitionoflocalsolution}) and use this in conjunction with Proposition \ref{rockner prop} to deduce the continuity. It is sufficient to demonstrate that
\begin{equation}
    \inner{\sy_{s\wedge \tau^{M,t}_{\infty}}}{\phi}_U = \inner{\sy_0}{\phi}_U + \left\langle\int_0^{s\wedge \tau^{M,t}_{\infty}} \mathcal{A}(r,\sy_r)dr, \phi\right\rangle_U + \left\langle\int_0^{s \wedge \tau^{M,t}_{\infty}}\mathcal{G} (r,\sy_r) d\mathcal{W}_r,\phi \right\rangle_U
\end{equation}
holds $\mathbbm{P}-a.s.$ for all $s \geq 0$ and $\phi \in H$ (from the density of $H$ in $U$). To this end we consider the limit $\lim_{n_l \rightarrow \infty}\sy^{n_l}_{s \wedge \tau^{M,t}_{\infty}}$ (which in $U$ is of course $\sy_{s\wedge\tau^{M,t}_{\infty}}$ from (\ref{as convergence})) with the idea to show that \begin{align}
    \label{convergence 1}\lim_{n_l \rightarrow \infty}\inner{\sy^{n_l}_0}{\phi}_U &= \inner{\sy_0}{\phi}_U\\\label{convergence 2}
   \lim_{n_l \rightarrow \infty}\left\langle\int_0^{s \wedge \tau^{M,t}_{\infty}}\mathcal{P}_{n_l}\mathcal{A}(r,\sy^{n_l}_r)dr,\phi\right\rangle_U &= \left\langle\int_0^{s \wedge \tau^{M,t}_{\infty}}\mathcal{A}(r,\sy_r)dr,\phi \right\rangle_U\\
   \lim_{n_l \rightarrow \infty}\left\langle\int_0^{s \wedge \tau^{M,t}_{\infty}}\mathcal{P}_{n_l}\mathcal{G}(r,\sy^{n_l}_r)d\mathcal{W}_r,\phi \right\rangle_U &= \left\langle \int_0^{s \wedge \tau^{M,t}_{\infty}}\mathcal{G}(r,\sy_r)d\mathcal{W}_r,\phi \right\rangle_U \label{convergence 3}.
\end{align}
$\mathbbm{P}-a.s.$ for all $\phi \in H$ (or at least for a further subsequence). Firstly from (\ref{mu2}) we have that $$ \norm{\sy^{n_l}_{0}-\sy_{0}}_U = \norm{(I-\mathcal{P}_{n_l})\sy_{0}}_U \leq \frac{1}{\mu_{n_l}}\norm{\sy_0}_H$$ so in particular (\ref{convergence 1}) holds. To show (\ref{convergence 2}), we consider the term \begin{equation}
    \nonumber J_1:= \mathbbm{E}\left\vert \left \langle \int_0^{s \wedge \tau^{M,t}_{\infty}}\mathcal{P}_{n_l}\mathcal{A}(r,\sy^{n_l}_r)dr,\phi\right\rangle_U- \left\langle  \int_0^{s \wedge \tau^{M,t}_{\infty}}\mathcal{A}(r,\sy_r)dr, \phi \right\rangle_U \right\vert
\end{equation}
and have that 
\begin{align*}
    J_1 & \leq  \mathbbm{E}\int_0^{s \wedge \tau^{M,t}_{\infty}}\left\vert \inner{\mathcal{P}_{n_l}\mathcal{A}(r,\sy^{n_l}_r) - \mathcal{A}(r,\sy_r)}{\phi}_U \right\vert dr\\
    &\leq \mathbbm{E}\int_0^{s \wedge \tau^{M,t}_{\infty}}\left\vert \inner{ \mathcal{P}_{n_l}\left[\mathcal{A}(r,\sy^{n_l}_r) - \mathcal{A}(r,\sy_r)\right]}{\phi}_U \right\vert + \vert\inner{(I - \mathcal{P}_{n_l})\mathcal{A}(r,\sy_r)}{\phi}_U\vert dr\\
    &\leq \mathbbm{E}\int_0^{s \wedge \tau^{M,t}_{\infty}}c_r(1+\norm{\mathcal{P}_{n_l}\phi}_H)\left[K(\sy^{n_l}_r,\sy_r) + \norm{\sy^{n_l}_r}_V + \norm{\sy_r}_V \right]\norm{\sy^{n_l}_r-\sy_r}_Hdr\\ & \qquad \qquad \qquad \qquad \qquad \qquad   + \mathbbm{E}\int_0^{s \wedge \tau^{M,t}_{\infty}}\frac{1}{\mu_{n_l}}\norm{\phi}_H\norm{\mathcal{A}(r,\sy_r)}_U dr\\
    &\leq c(1+\norm{\phi}_H)\left(\mathbbm{E}\int_0^{s \wedge \tau^{M,t}_{\infty}}K(\sy^{n_l}_r,\sy_r) + \norm{\sy^{n_l}_r}_V^2 + \norm{\sy_r}_V^2dr\right)^{\frac{1}{2}}\\& \qquad \cdot \left(\mathbbm{E}\int_0^{s \wedge \tau^{M,t}_{\infty}}\norm{\sy^{n_l}_r-\sy_r}_H^2dr \right)^{\frac{1}{2}} + \frac{\norm{\phi}_H}{\mu_{n_l}}\mathbbm{E}\int_0^{s \wedge \tau^{M,t}_{\infty}}c_rK(\sy_r)\left[1 + \norm{\sy_r}_V\right]dr\\
    &\leq c(1+\norm{\phi}_H)\left(\mathbbm{E}\int_0^{s \wedge \tau^{M,t}_{\infty}}K(\sy^{n_l}_r,\sy_r) + \norm{\sy^{n_l}_r}_V^2 + \norm{\sy_r}_V^2dr\right)^{\frac{1}{2}}\\& \qquad \cdot \left(\mathbbm{E}\int_0^{s \wedge \tau^{M,t}_{\infty}}\norm{\sy^{n_l}_r-\sy_r}_H^2dr \right)^{\frac{1}{2}} + \frac{c\norm{\phi}_H}{\mu_{n_l}}\left(\mathbbm{E}\int_0^{s \wedge \tau^{M,t}_{\infty}}K(\sy_r)\left[1 + \norm{\sy_r}_V^2\right]dr\right)^{\frac{1}{2}}
\end{align*}
having employed the assumptions (\ref{projectionsboundedonH}), (\ref{mu2}), (\ref{111}), (\ref{lastlast assumption}) and using H\"{o}lder's Inequality on the product space. It is swiftly noted that $\sy$ retains the bounds (\ref{galerkinboundsatisfiedbystoppingtime2}) and (\ref{secondresultofuniformbounds}) up until $\tau^{M,t}_{\infty}$ based on the convergences (\ref{as convergence}),(\ref{expectation convergence}) in these spaces, so substituting into the above we arrive at $$J_1 \leq c\left(\mathbbm{E}\int_0^{s \wedge \tau^{M,t}_{\infty}}\norm{\sy^{n_l}_r-\sy_r}_H^2dr \right)^{\frac{1}{2}} + \frac{c}{\mu_{n_l}}$$
at which point taking the limit $n_l \rightarrow \infty$ with (\ref{expectation convergence}) in mind sends this to zero. This is of course not quite the statement (\ref{convergence 2}) that we wished to prove as we have taken the limit in expectation, but we can extract a new subsequence $\sy^{m_j}$ along which (\ref{convergence 2}) holds. Let's continue to work with this subsequence and turn our attention to (\ref{convergence 3}), where we consider \begin{equation}
    \nonumber J_2:= \mathbbm{E}\left\Vert\int_0^{s \wedge \tau^{M,t}_{\infty}}\mathcal{P}_{m_j}\mathcal{G}(r,\sy^{m_j}_r)dr -  \int_0^{s \wedge \tau^{M,t}_{\infty}}\mathcal{G}(r,\sy_r)d\mathcal{W}_r \right\Vert_U^2
\end{equation}
and have that
\begin{align*}
    J_2 &= \mathbbm{E}\int_0^{s \wedge \tau^{M,t}_{\infty}}\sum_{i=1}^\infty\norm{\mathcal{P}_{m_j}\mathcal{G}_i(r,\sy^{m_j}_r) - \mathcal{G}_i(r,\sy_r)}_U^2dr\\
    &\leq 2\mathbbm{E}\int_0^{s \wedge \tau^{M,t}_{\infty}}\sum_{i=1}^\infty\left\Vert \mathcal{P}_{m_j}\left[\mathcal{G}_i(r,\sy^{m_j}_r) - \mathcal{G}_i(r,\sy_r)\right] \right\Vert_U^2 + \norm{(I - \mathcal{P}_{m_j})\mathcal{G}_i(r,\sy_r)}_U^2 dr\\
    &\leq 2\mathbbm{E}\int_0^{s \wedge \tau^{M,t}_{\infty}} c_rK(\sy^{m_j}_r,\sy_r)\norm{\sy^{m_j}_r-\sy_r}_H^2 dr + 2\mathbbm{E}\int_0^{s \wedge \tau^{M,t}_{\infty}}\frac{c}{\mu_{m_j}^2}\norm{\mathcal{G}_i(r,\sy_r)}_H^2dr\\
    &\leq c\mathbbm{E}\int_0^{s \wedge \tau^{M,t}_{\infty}} \norm{\sy^{m_j}_r-\sy_r}_H^2 dr + \frac{c}{\mu_{m_j}^2}\mathbbm{E}\int_0^{s \wedge \tau^{M,t}_{\infty}}1 + \norm{\sy_r}_V^2 dr
\end{align*}
similar to controlling $J_1$ just now with the assumptions (\ref{mu2}), (\ref{111}), (\ref{333}) and applying the It\^{o} Isometry. We conclude here in the same way as (\ref{convergence 2}), showing in fact a stronger convergence which implies (\ref{convergence 3}), ultimately deducing the existence of a further subsequence along which (\ref{convergence 1}), (\ref{convergence 2}) and (\ref{convergence 3}) all hold. The identity (\ref{identityindefinitionoflocalsolution}) thus holds in $U$, and it is clear that we can apply Proposition \ref{rockner prop} to deduce that $\sy_{\cdot \wedge \tau^{M,t}_{\infty}}\in C([0,T];H)$, concluding the proof.

\end{proof}

\subsection{Uniqueness}

Having shown the existence of $H-$valued local strong solutions for a bounded initial condition, we move on now to uniqueness but show this for any given initial condition $\sy_0$ (relieving the $L^\infty(\Omega;H)$ constraint).

\begin{theorem} \label{uniqueness for V valued solutions}
Suppose that $(\sy^1,\tau_1)$ and $(\sy^2,\tau_2)$ are two $H-$valued local strong solutions of the equation (\ref{thespde}) for a given $\mathcal{F}_0-$measurable $\sy_0:\Omega \rightarrow H$, and introduce the notation $$\py:=\sy^1-\sy^2, \qquad  \tau:= \tau_1 \wedge \tau_2.$$
Then \begin{equation} \label{uh oh}\mathbbm{E}\norm{\py}_{UH,\tau}^2 = 0\end{equation} and in particular \begin{equation} \label{oh uh}\mathbbm{P}\left(\left\{\omega \in \Omega: \sy^1_{t \wedge \tau(\omega) }(\omega) =  \sy^2_{t \wedge \tau(\omega)}(\omega)  \quad \forall t \in [0,\infty) \right\}  \right) = 1. \end{equation}
\end{theorem}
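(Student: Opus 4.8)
The plan is the standard It\^{o}--Gr\"{o}nwall energy argument for the difference process, carried out in the $U$ inner product (the space in which the equation holds) together with a localisation to control the non-constant factor $\tilde{K}_2$ appearing in Assumption \ref{therealcauchy assumptions}. Since $\sy^1_0 = \sy^2_0 = \sy_0$ and $\sy^i = \sy^i_{\cdot\wedge\tau_i}$ by Remark \ref{remark1}, the process $\py$ satisfies
\begin{equation*}
\py_{t} = \int_0^{t\wedge\tau}\left[\mathcal{A}(s,\sy^1_s) - \mathcal{A}(s,\sy^2_s)\right]ds + \int_0^{t\wedge\tau}\left[\mathcal{G}(s,\sy^1_s) - \mathcal{G}(s,\sy^2_s)\right]d\mathcal{W}_s
\end{equation*}
as an identity in $U$, with $\py_0 = 0$; the integrands are well defined pathwise by (\ref{111}) because each $\sy^i \in C([0,T];H)\cap L^2([0,T];V)$.

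First I would localise. For fixed $T,R>0$ set $\tau_R := T\wedge\tau\wedge\inf\{t\geq 0: \norm{\sy^1_t}_H^2 + \norm{\sy^2_t}_H^2 > R\}$; no boundedness of $\sy_0$ is needed since this stopping time absorbs the dependence on the initial data. Because each $\sy^i$ is continuous in $H$ on $[0,\infty)$, for $\mathbb{P}$-a.e.\ $\omega$ one has $\tau_R = T\wedge\tau$ for all $R$ large, so $\tau_R\uparrow T\wedge\tau$; and on $[0,\tau_R]$ the bound $\norm{\sy^i_s}_H^2\leq R$ together with $H\hookrightarrow U$ gives $c_s\tilde{K}_2(\sy^1_s,\sy^2_s)\leq C_R$ and $\norm{\py_s}_U^2\leq C_R$. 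Applying the It\^{o} formula for $\norm{\cdot}_U^2$ of the abstract framework (\cite{goodair2022stochastic}, Subsection 2.5) and estimating the drift together with the quadratic variation by (\ref{therealcauchy1}), one obtains on $[0,\tau_R]$
\begin{equation*}
\norm{\py_{t\wedge\tau_R}}_U^2 + \kappa\int_0^{t\wedge\tau_R}\norm{\py_s}_H^2\,ds \leq C_R\int_0^{t\wedge\tau_R}\norm{\py_s}_U^2\,ds + N_{t\wedge\tau_R},
\end{equation*}
where $N_t := 2\int_0^{t}\mathbbm{1}_{s\leq\tau_R}\inner{\py_s}{[\mathcal{G}(s,\sy^1_s) - \mathcal{G}(s,\sy^2_s)]\,d\mathcal{W}_s}_U$; by (\ref{therealcauchy2}) and $\norm{\py_s}_U^2\leq C_R$ on $[0,\tau_R]$ the quadratic variation of $N$ has finite expectation, so $N$ is a true mean-zero martingale. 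Taking expectations and applying Gr\"{o}nwall's inequality to the bounded map $t\mapsto\mathbb{E}\norm{\py_{t\wedge\tau_R}}_U^2$ forces $\mathbb{E}\norm{\py_{t\wedge\tau_R}}_U^2 = 0$ for all $t$, and hence $\mathbb{E}\int_0^{t\wedge\tau_R}\norm{\py_s}_H^2\,ds = 0$ as well.

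It then remains to undo the localisation: letting $R\to\infty$ and then $T\to\infty$, using $\tau_R\uparrow T\wedge\tau\uparrow\tau$, the $U$-continuity of $\py$, and Fatou/monotone convergence, one gets $\py_{t\wedge\tau} = 0$ $\mathbb{P}$-a.s.\ for every fixed $t\geq 0$ and $\mathbb{E}\int_0^\tau\norm{\py_s}_H^2\,ds = 0$. Intersecting over rational $t$ and invoking the pathwise continuity of $t\mapsto\py_{t\wedge\tau}$ in $H\hookrightarrow U$ upgrades this to: $\mathbb{P}$-a.s., $\py_{t\wedge\tau} = 0$ for \emph{all} $t\geq 0$; since $\py_r = \py_{r\wedge\tau}$ for $r\leq\tau$, this says $\norm{\py}_{UH,\tau}^2 = 0$ pathwise almost surely, which gives (\ref{uh oh}), while (\ref{oh uh}) is precisely the corresponding almost sure equality of $\sy^1$ and $\sy^2$. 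The one step requiring care is the localisation: one must verify that the $\tau_R$ exhaust $\tau$ — this is exactly where continuity of the solutions in $H$ enters — and that on $[0,\tau_R]$ the coefficient $c_s\tilde{K}_2$ and the process $\py$ are bounded so that the energy identity may be integrated in expectation; with Assumption \ref{therealcauchy assumptions} in hand the rest is routine.
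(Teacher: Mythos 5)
Your proof is correct and rests on the same core mechanism as the paper's: the It\^{o} formula for $\norm{\py}_U^2$ in $U$, the monotonicity Assumption \ref{therealcauchy assumptions}, a localisation to render the coefficient $c_s\tilde{K}_2(\sy^1_s,\sy^2_s)$ bounded, and a Gr\"{o}nwall closure with zero initial data. The execution differs in two ways worth recording. First, you localise via the pointwise $H$-norm of the two solutions; the paper uses first hitting times of $\sup_{r\le s}\norm{\cdot}_U^2+\int_0^s\norm{\cdot}_H^2$. Both exhaust $\tau$ pathwise (yours because continuity in $H$ forces $\tau_R=T\wedge\tau$ for $R$ large), and both make $\tilde{K}_2$ and $\norm{\py}_U$ bounded on the localised interval, so this is immaterial. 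Second, and more substantively, you take expectations immediately — the localised stochastic integral is a genuine mean-zero martingale by (\ref{therealcauchy2}) — and apply the classical deterministic Gr\"{o}nwall inequality to $t\mapsto\mathbbm{E}\norm{\py_{t\wedge\tau_R}}_U^2$, recovering the supremum in (\ref{uh oh}) afterwards from pathwise continuity and the $\int\norm{\py}_H^2$ term by monotone convergence. The paper instead takes the supremum first, applies the Burkholder--Davis--Gundy inequality, and invokes the Stochastic Gr\"{o}nwall lemma (Lemma \ref{gronny}) with $\tilde{c}=0$, mirroring the quantitative Cauchy estimate of Proposition \ref{theorem:cauchygalerkin}. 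Your route is more elementary and avoids BDG entirely; it is available here precisely because the resulting bound is zero, so no quantitative control of the supremum is needed before passing to the limit. Both arguments are complete.
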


\begin{proof}
We follow a similar procedure to that used in Proposition \ref{theorem:cauchygalerkin}. To this end we appreciate that the difference process satisfies the identity
\begin{align*}
    \py_{t \wedge \tau} = \int_0^{t\wedge\tau}\mathcal{A}(s,\sy^1_s) - \mathcal{A}(s,\sy^2_s)ds + \int_0^{t\wedge\tau}\mathcal{G}(s,\sy^1_s) - \mathcal{G}(s,\sy^2_s)d\mathcal{W}_s
\end{align*}
for all $t \geq 0$, $\mathbbm{P}-a.s.$ in $U$. Through the It\^{o} Formula we see that
\begin{align*}
    \norm{\py_{t \wedge \tau}}_U^2 &= 2\int_0^{t\wedge\tau}\inner{\mathcal{A}(s,\sy^1_s) - \mathcal{A}(s,\sy^2_s)}{\py_s}_Uds\\ &+ \int_0^{t\wedge\tau}\sum_{i=1}^\infty\norm{\mathcal{G}_i(s,\sy^1_s) - \mathcal{G}_i(s,\sy^2_s)}_U^2ds + 2\int_0^{t\wedge\tau}\inner{\mathcal{G}(s,\sy^1_s) - \mathcal{G}(s,\sy^2_s)}{\py_s}_Ud\mathcal{W}_s.
\end{align*}
to which we rapidly go through the steps of Proposition \ref{theorem:cauchygalerkin}, introducing analogous notation $$\tilde{\sy}^1_\cdot = \sy^1_\cdot \mathbbm{1}_{\cdot \leq \tau}, \qquad \tilde{\sy}^2_\cdot = \sy^2_\cdot \mathbbm{1}_{\cdot \leq \tau}, \qquad \tilde{\py}_{\cdot} = \py_\cdot \mathbbm{1}_{\cdot \leq \tau}$$ fixing any time $t > 0$ and stopping times $0 \leq \theta_j < \theta_k \leq t$, and applying (\ref{therealcauchy1}) to deduce that 
\begin{align*}
    \norm{\tilde{\py}_{r}}_U^2 + \kappa\int_{\theta_j}^r\norm{\tilde{\py}}_H^2ds &\leq \norm{\tilde{\py}_{\theta_j}}_U^2\\ &+ \int_{\theta_j}^{r}c_s\tilde{K}_2(\tilde{\sy}^1_s,\tilde{\sy}^2_s)\norm{\tilde{\py}_s}_U^2 + 2\int_{\theta_j}^{r}\inner{\mathcal{G}(s,\tilde{\sy}^1_s) - \mathcal{G}(s,\tilde{\sy}^2_s)}{\tilde{\py}_s}_Ud\mathcal{W}_s
\end{align*}
for all $\theta_j \leq r \leq \theta_k$ $\mathbbm{P}-a.s.$. We need to be slightly careful before employing the same procedure, as we no longer have any assumptions towards integrability over the probability space (owing to the fact that we are not restricted to an interval on which bounds such as (\ref{galerkinboundsatisfiedbystoppingtime}) necessarily hold). In order to continue with the same idea we have to introduce a further sequence of stopping times $(\gamma_R)$, \begin{align*}
\gamma_R^j(\omega)&:= R \wedge \inf\left\{s \geq 0: \sup_{r \in [0,s]}\norm{\tilde{\sy}^j_r(\omega)}_U^2 + \int_0^s\norm{\tilde{\sy}^j_r(\omega)}_H^2  \geq R\right\}\\
\gamma_R&:= \gamma_R^1 \wedge \gamma_R^2\end{align*}
with corresponding notation $$\tilde{\sy}^{1,R}_\cdot = \tilde{\sy}^1_\cdot \mathbbm{1}_{\cdot \leq \gamma_R}, \qquad \tilde{\sy}^{2,R}_\cdot = \tilde{\sy}^2_\cdot \mathbbm{1}_{\cdot \leq \gamma_R}, \qquad \tilde{\py}^{R}_\cdot = \tilde{\py}_\cdot \mathbbm{1}_{\cdot \leq \gamma_R}.$$ Multiplying our inequality by the indicator function 
$\mathbbm{1}_{\cdot \leq \gamma_R}$, it is clear that 
\begin{align*}
    \norm{\tilde{\py}^R_{r}}_U^2 &+ \kappa\int_{\theta_j}^r\norm{\tilde{\py}^R_{s}}_H^2ds \leq \norm{\tilde{\py}^R_{\theta_j}}_U^2\\ &+  \int_{\theta_j}^{r}c_s\tilde{K}_2(\tilde{\sy}^{1,R}_s,\tilde{\sy}^{2,R}_s)\norm{\tilde{\py}^R_{s}}_U^2 + 2\int_{\theta_j}^{r}\inner{\mathcal{G}(s,\tilde{\sy}^{1,R}_s) - \mathcal{G}(s,\tilde{\sy}^{2,R}_s)}{\tilde{\py}^R_s}_Ud\mathcal{W}_s
\end{align*}
holds for every $R >0$. The boundedness of the processes $\tilde{\sy}^{1,R},\tilde{\sy}^{2,R}$ in conjunction with the assumption (\ref{therealcauchy2}) affords us the right to take expectation and apply the BDG Inequality once we take the supremum, landing us at the inequality
\begin{align*}
    \mathbbm{E}\norm{\tilde{\py}^R}_{UH,\theta_j,\theta_k}^2 &\leq c\mathbbm{E}\norm{\tilde{\py}^R_{\theta_j}}_U^2 \\&+ c\mathbbm{E}\int_{\theta_j}^{\theta_k}c_s\tilde{K}_2(\tilde{\sy}^{1,R}_s,\tilde{\sy}^{2,R}_s)\norm{\tilde{\py}^R_{s}}_U^2 + 2c\mathbbm{E}\left(\int_{\theta_j}^{\theta_k}c_s\tilde{K}_2(\tilde{\sy}^{1,R}_s,\tilde{\sy}^{2,R}_s)\norm{\tilde{\py}^R_{s}}_U^4 ds\right)^{\frac{1}{2}}.
\end{align*}
having scaled again to remove the $\kappa$ and use the $UH$ norm. We had precisely the same terms in the proof of Proposition \ref{theorem:cauchygalerkin}, so following the same steps used to reach (\ref{lelabel}) we see that for some constant $\hat{c}$,
\begin{align} \label{someconstanthatc} \mathbbm{E}\norm{\tilde{\py}^R}_{UH,\theta_j,\theta_k}^2 \leq \hat{c}\mathbbm{E}\left(\norm{\tilde{\py}^R_{\theta_j}}_U^2 + \int_{\theta_j}^{\theta_k}\tilde{K}_2(\tilde{\sy}^{1,R}_s,\tilde{\sy}^{2,R}_s)\norm{\tilde{\py}^R_s}_U^2ds\right).\end{align} We again apply the Stochastic Gr\"{o}nwall lemma (Lemma \ref{gronny}) for the processes $$\boldsymbol{\phi} = \norm{\tilde{\sy}^{1,R}-\tilde{\sy}^{2,R}}_U^2, \qquad \boldsymbol{\psi} = \norm{\tilde{\sy}^{1,R}-\tilde{\sy}^{2,R}}_H^2, \qquad \boldsymbol{\eta} = \tilde{K}_2(\tilde{\sy}^{1,R},\tilde{\sy}^{2,R})$$ and critically for $\tilde{c}=0$ to see that $$\mathbbm{E}\norm{\tilde{\py}^R}_{UH,\theta_j,\theta_k}^2 \leq C\mathbbm{E}\norm{\tilde{\py}^R_{0}}_U^2 = 0.$$ We can drop the dependence on $\gamma_R$ by observing that the sequence $(\gamma_R)$ tends to infinity $\mathbbm{P}-a.s.$ as $R \rightarrow \infty$. Moreover the sequence of random variables $$\left(\norm{\tilde{\py}^R}_{UH,\theta_j,\theta_k}^2 \right)$$ is monotone increasing in $R$, and convergent to $\norm{\tilde{\py}}_{UH,\theta_j,\theta_k}^2$ $\mathbbm{P}-a.s.$. Thus we may apply the monotone convergence theorem to this sequence of random variables to see that
\begin{align*}
    \mathbbm{E}\norm{\tilde{\py}}_{UH,\theta_j,\theta_k}^2 &= \lim_{R \rightarrow \infty}\mathbbm{E}\norm{\tilde{\py}^R}_{UH,\theta_j,\theta_k}^2\\
    &= 0.
\end{align*}
This is equivalent to the statement (\ref{uh oh}), which implies (\ref{oh uh}) by noting that $$\mathbbm{E}\sup_{r \in [0,t]}\norm{\sy^1_{r\wedge\tau}-\sy^2_{r\wedge\tau}}_U^2 = 0.$$

\end{proof}

\subsection{Maximality for the Bounded Initial Condition} \label{subsection: maximality for bounded in V}

In this subsection we show the existence and uniqueness of an $H-$valued maximal strong solution for the bounded initial condition (\ref{boundedinitialcondition}) as defined in \ref{V valued maximal definition}, \ref{v valued maximal unique}. Moreover we prove Theorem \ref{theorem1} for this bounded initial condition, and pass to the unbounded case in Subsection \ref{subsection maximality for unbounded}.\\
To this end we define $\mathscr{X}$ as the set of all stopping times $\sigma$ such that there exists a process $\sy$ for which $(\sy,\sigma)$ is an $H-$valued local strong solution. We also define $\mathscr{Y}$ as the set of all stopping times given by the $\mathbbm{P}-a.s.$ limit of monotone increasing elements of $\mathscr{X}$. We prove the existence of a maximal solution by showing that the maximum of any two elements of $\mathscr{X}$ is again in $\mathscr{X}$, a property which we use for the sequences in $\mathscr{X}$ to bound sequences in $\mathscr{Y}$ which then enables an application of Zorn's Lemma to deduce a maximal element.

\begin{remark}
For the bounded initial condition (\ref{boundedinitialcondition}), the set $\mathscr{X}$ is non-empty from Theorem \ref{existence of local strong V solution}.
\end{remark}

\begin{lemma} \label{lemma for maximality}
 For any $\sigma_1,\sigma_2 \in \mathscr{X}$ we have that $\sigma_1 \vee \sigma_2 \in \mathscr{X}.$
\end{lemma}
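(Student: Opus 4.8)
The plan is to argue that if $\sigma_1$ and $\sigma_2$ are stopping times in $\mathscr{X}$, witnessed by $H$-valued local strong solutions $(\sy^1,\sigma_1)$ and $(\sy^2,\sigma_2)$, then one can glue the two solutions together on the larger stopping time $\sigma_1 \vee \sigma_2$. The natural candidate process is
\[
\sy_t := \sy^1_t \mathbbm{1}_{t \leq \sigma_1} + \sy^2_t \mathbbm{1}_{\sigma_1 < t \leq \sigma_2}\,\mathbbm{1}_{\sigma_1 < \sigma_2},
\]
or, written more symmetrically, $\sy_t = \sy^1_t$ on $\{t \le \sigma_1\}$ and $\sy_t = \sy^2_t$ on $\{\sigma_1 < t\}$ (on the event $\{\sigma_2 \le \sigma_1\}$ there is nothing new to add and $\sy = \sy^1_{\cdot \wedge \sigma_1 \vee \sigma_2} = \sy^1$). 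First I would invoke Theorem \ref{uniqueness for V valued solutions}: applied to the two solutions with the common stopping time $\tau = \sigma_1 \wedge \sigma_2$, it gives $\sy^1_{t \wedge \sigma_1 \wedge \sigma_2} = \sy^2_{t \wedge \sigma_1 \wedge \sigma_2}$ for all $t$, $\mathbb{P}$-a.s. This ensures the two pieces agree on the overlap, so the glued process $\sy$ is well defined and, crucially, at the splicing time the value carried forward from $\sy^1$ matches the value $\sy^2$ already has there.

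Next I would check the three structural requirements of Definition \ref{definitionofregularsolution} for the pair $(\sy, \sigma_1 \vee \sigma_2)$. Path regularity: for a.e.\ $\omega$, on $\{\sigma_2 \le \sigma_1\}$ we have $\sy_\cdot(\omega) = \sy^1_\cdot(\omega) \in C([0,T];H)$ trivially; on $\{\sigma_1 < \sigma_2\}$ the path $\sy_\cdot(\omega)$ coincides with $\sy^1_\cdot(\omega)$ on $[0,\sigma_1(\omega)]$ and with $\sy^2_\cdot(\omega)$ on $[\sigma_1(\omega), \infty)$ — continuity on each closed piece plus the matching of endpoint values (from the uniqueness step) gives global continuity in $H$. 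The same splitting handles the $L^2([0,T];V)$ integrability of $\sy_\cdot \mathbbm{1}_{\cdot \le \sigma_1 \vee \sigma_2}$ and its progressive measurability in $V$ (in the sense of Remark \ref{remark on prog meas equivalence}): the indicator decomposition $\mathbbm{1}_{\cdot \le \sigma_1 \vee \sigma_2} = \mathbbm{1}_{\cdot \le \sigma_1} + \mathbbm{1}_{\sigma_1 < \cdot \le \sigma_2}$ writes the relevant process as a finite sum of the (already progressively measurable, $L^2$-in-$V$) pieces coming from $\sy^1$ and from $\sy^2$ stopped appropriately, and $\sigma_1, \sigma_2$ being stopping times makes each indicator adapted.

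The heart of the matter is verifying the identity \eqref{identityindefinitionoflocalsolution} for $(\sy, \sigma_1 \vee \sigma_2)$ in $U$. Here I would work on the event $\{\sigma_1 < \sigma_2\}$ (the complement being immediate) and split the integrals at $\sigma_1$: for $t \ge 0$,
\[
\int_0^{t \wedge (\sigma_1 \vee \sigma_2)} \mathcal{A}(s,\sy_s)\,ds = \int_0^{t \wedge \sigma_1} \mathcal{A}(s,\sy^1_s)\,ds + \int_{t \wedge \sigma_1}^{t \wedge \sigma_2} \mathcal{A}(s,\sy^2_s)\,ds,
\]
and similarly for the stochastic integral, using that the Itô integral against $\mathcal{W}$ over $[0,t\wedge\sigma_2]$ decomposes additively at the stopping time $\sigma_1$ (a standard property of stochastic integrals with respect to stopping times, valid since $\sigma_1 \le \sigma_2$). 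Combining: the first summand equals $\sy^1_{t\wedge\sigma_1} - \sy_0 = \sy_{t\wedge\sigma_1} - \sy_0$ by the identity for $(\sy^1,\sigma_1)$; and writing the identity for $(\sy^2,\sigma_2)$ between times $t\wedge\sigma_1$ and $t\wedge\sigma_2$ gives that the remaining terms sum to $\sy^2_{t\wedge\sigma_2} - \sy^2_{t\wedge\sigma_1} = \sy_{t\wedge(\sigma_1\vee\sigma_2)} - \sy_{t\wedge\sigma_1}$, where the last equality again uses the uniqueness step to identify $\sy^2_{t\wedge\sigma_1}$ with $\sy_{t\wedge\sigma_1}$ on this event. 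Adding the two telescopes to exactly \eqref{identityindefinitionoflocalsolution}. Since $\sigma_1 \vee \sigma_2$ is $\mathbb{P}$-a.s.\ positive (it dominates $\sigma_1$), this shows $(\sy, \sigma_1 \vee \sigma_2)$ is an $H$-valued local strong solution, hence $\sigma_1 \vee \sigma_2 \in \mathscr{X}$.

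I expect the main obstacle to be the bookkeeping around the stochastic integral splitting at the random time $\sigma_1$ and the careful invocation of Theorem \ref{uniqueness for V valued solutions} to reconcile the two branches at the splice point — in particular making sure that the "value at $\sigma_1$" inherited from $\sy^1$ genuinely coincides (in $U$, hence in $H$ by continuity) with the value $\sy^2$ already has there, so that the glued path has no jump and the integral identity closes up exactly rather than up to an error term. Everything else — path regularity and the $V$-measurability/integrability — follows routinely from the finite-sum decomposition over the two indicator sets.
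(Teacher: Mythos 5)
Your proposal is correct and rests on the same key ingredient as the paper's proof: Theorem \ref{uniqueness for V valued solutions} applied up to $\sigma_1\wedge\sigma_2$ to make the two candidate processes agree on the overlap. The difference is in how the identity (\ref{identityindefinitionoflocalsolution}) is then verified. The paper defines $\sy(\omega)$ by selecting, $\omega$-by-$\omega$, the \emph{entire path} of whichever solution realises the larger stopping time; on each of the events $\{\sigma_2\le\sigma_1\}$ and $\{\sigma_1<\sigma_2\}$ the glued process is then literally $\sy^1$ or $\sy^2$, so the SPDE identity, the path regularity and the $L^2$-in-$V$ property are inherited verbatim and no integral manipulation is needed. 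You instead splice in time at $\sigma_1$ and reassemble the identity by splitting the Lebesgue and stochastic integrals at that stopping time and telescoping. This is valid, but it is extra work: by the uniqueness theorem together with Remark \ref{remark1} (which gives $\sy^1_\cdot=\sy^1_{\cdot\wedge\sigma_1}$, so $\sy^1_t=\sy^2_t$ for all $t\le\sigma_1$ on $\{\sigma_1<\sigma_2\}$), your spliced process is indistinguishable from $\sy^2$ on $\{\sigma_1<\sigma_2\}$ and from $\sy^1$ on the complement, so the decomposition of the stochastic integral at the random time $\sigma_1$ --- the step you yourself flag as the main obstacle --- can be avoided entirely. Both routes land on the same process; the paper's phrasing just makes the verification trivial.
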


\begin{proof}
It is clear that $\sigma_1 \vee \sigma_2$ is again a $\mathbbm{P}-a.s.$ positive stopping time, so we must simply show the existence of a $\sy$ such that $(\sy,\sigma_1 \vee \sigma_2)$ is an $H-$valued local strong  solution. By definition of $\mathscr{X}$ we have that $(\sy^1,\sigma_1)$ and $(\sy^2, \sigma_2)$ are such solutions for some processes $\sy^1,\sy^2$. From the uniqueness result Theorem \ref{uniqueness for V valued solutions} we have that $\sy^1_{\cdot \wedge \sigma_1 \wedge \sigma_2}$ and $\sy^2_{\cdot \wedge \sigma_1 \wedge \sigma_2}$ are indistinguishable, which ensures that the definition of $\sy$ at $\mathbbm{P}-a.e.$ $\omega$ by $$\sy(\omega):=\sy^k(\omega) \qquad \textnormal{for $\sigma_1(\omega) \vee \sigma_2(\omega) = \sigma_k(\omega)$}$$ is consistent, or more specifically 
$$
   \sy(\omega):=
   \begin{cases}
   \sy^1(\omega)=\sy^2(\omega) & \textnormal{if $\sigma_1(\omega) = \sigma_2(\omega)$}\\
   \sy^1(\omega) & \textnormal{if $\sigma_2(\omega) < \sigma_1(\omega)$}\\
   \sy^2(\omega) & \textnormal{if $\sigma_1(\omega) < \sigma_2(\omega).$}\\
   \end{cases}
$$
 It is clear that $\sy$ satisfies the pathwise properties required in Definition \ref{definitionofregularsolution}. Progressive measurability follows too as the sigma algebra generated by $\sy$ is contained in the maximum of those generated by $\sy^1$ and $\sy^2$, which up until any $T$ is contained in $\mathcal{F}_T \times \mathcal{B}\left([0,T]\right)$ by progressive measurability of the processes $\sy^1$, $\sy^2$ and standard properties of the sigma algebra.
\end{proof}

\begin{theorem} \label{existence of maximal solution}
For the bounded initial condition (\ref{boundedinitialcondition}), there exists an $H-$valued maximal strong solution of the equation (\ref{thespde}). 
\end{theorem}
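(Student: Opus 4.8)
The plan is to obtain the maximal solution by a Zorn's Lemma argument on the set $\mathscr{Y}$ of stopping times, partially ordered — modulo $\mathbbm{P}$-a.s. equality, so that antisymmetry holds — by $\sigma \preceq \sigma'$ iff $\sigma \leq \sigma'$ $\mathbbm{P}$-a.s. This poset is non-empty, since $\mathscr{X} \subseteq \mathscr{Y}$ (take the constant sequence) and $\mathscr{X} \neq \emptyset$ for the bounded initial condition by Theorem \ref{existence of local strong V solution}. To apply Zorn's Lemma I must show that every chain in $\mathscr{Y}$ has an upper bound lying in $\mathscr{Y}$; then a maximal element $\Theta$ exists, and I will argue that the solution built over $\Theta$ is maximal in the sense of Definition \ref{V valued maximal definition}.

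For the chain step, let $\mathcal{C} \subseteq \mathscr{Y}$ be a chain and set $\sigma^* := \esssup_{\sigma \in \mathcal{C}} \sigma$ (possibly $+\infty$). By the standard characterisation of the essential supremum there is a countable subfamily of $\mathcal{C}$ whose pointwise supremum equals $\sigma^*$ $\mathbbm{P}$-a.s., and since $\mathcal{C}$ is totally ordered I may pass to finite maxima to get a monotone increasing sequence $(\sigma_k) \subseteq \mathcal{C}$ with $\sigma_k \uparrow \sigma^*$ $\mathbbm{P}$-a.s. Each $\sigma_k \in \mathscr{Y}$ is the $\mathbbm{P}$-a.s. limit of a monotone increasing sequence $(\sigma_{k,j})_j \subseteq \mathscr{X}$. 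I then run a diagonal construction: set $\rho_m := \sigma_{1,m} \vee \sigma_{2,m} \vee \cdots \vee \sigma_{m,m}$. Iterating Lemma \ref{lemma for maximality} gives $\rho_m \in \mathscr{X}$; monotonicity in $j$ of each $(\sigma_{k,j})_j$ gives $\rho_{m+1} \geq \rho_m$; and from $\sigma_{k,m} \leq \sigma_k \leq \sigma_m \leq \sigma^*$ for $k \leq m$ together with $\rho_m \geq \sigma_{k,m}$ one gets $\sigma_k \leq \liminf_m \rho_m \leq \limsup_m \rho_m \leq \sigma^*$ for every $k$, hence $\rho_m \uparrow \sigma^*$ $\mathbbm{P}$-a.s. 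Therefore $\sigma^* \in \mathscr{Y}$, and it dominates every element of $\mathcal{C}$.

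Zorn's Lemma then yields a maximal $\Theta \in \mathscr{Y}$. I fix a monotone increasing $(\theta_j) \subseteq \mathscr{X}$ with $\theta_j \uparrow \Theta$ $\mathbbm{P}$-a.s. and, for each $j$, a process $\sy^j$ with $(\sy^j,\theta_j)$ an $H-$valued local strong solution. By the uniqueness result Theorem \ref{uniqueness for V valued solutions}, $\sy^j$ and $\sy^k$ are indistinguishable on $[0,\theta_j \wedge \theta_k]$, so — exactly as in the proof of Lemma \ref{lemma for maximality}, now applied to a countable family — the prescription $\sy_\cdot(\omega) := \sy^j_\cdot(\omega)$ on $\{\cdot \leq \theta_j\}$ defines a single process (progressively measurable, with the required pathwise regularity) such that $(\sy_{\cdot \wedge \theta_j},\theta_j) = (\sy^j,\theta_j)$ is an $H-$valued local strong solution for every $j$. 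Thus $(\sy,\Theta)$ has precisely the structure demanded in Definition \ref{V valued maximal definition}. For maximality, if $(\py,\Gamma)$ is any pair with the same structure then $\Gamma \in \mathscr{Y}$ by the very definition of $\mathscr{Y}$, so $\Theta \leq \Gamma$ $\mathbbm{P}$-a.s. forces $\Theta = \Gamma$ $\mathbbm{P}$-a.s. by maximality of $\Theta$ in the poset; hence $(\sy,\Theta)$ is an $H-$valued maximal strong solution.

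I expect the main obstacle to be the chain step: reducing a possibly uncountable chain to a countable cofinal monotone sequence via the essential supremum, and then arranging the diagonal sequence $\rho_m = \bigvee_{k \leq m} \sigma_{k,m}$ so that it simultaneously stays inside $\mathscr{X}$ — which is exactly the role of Lemma \ref{lemma for maximality} — and converges to $\sigma^*$. The remaining ingredients — the gluing of the $\sy^j$ through uniqueness, and the identification of "maximal solution" with "maximal element of $\mathscr{Y}$" — are essentially bookkeeping.
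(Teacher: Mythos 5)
Your proposal is correct and follows essentially the same route as the paper: Zorn's Lemma on $\mathscr{Y}$, the diagonal sequence $\bigvee_{k\leq m}\sigma_{k,m}$ kept inside $\mathscr{X}$ via Lemma \ref{lemma for maximality}, and gluing the $\sy^j$ by the uniqueness theorem. Your reduction of an arbitrary (possibly uncountable) chain to a countable cofinal monotone sequence via the essential supremum is in fact slightly more careful than the paper, which only verifies the Zorn hypothesis for monotone sequences.
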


\begin{proof}
We wish to show that there exists a $\Theta \in \mathscr{Y}$ such that for any $\Gamma \in \mathscr{Y}$, $\Theta \leq \Gamma$ $\mathbbm{P}-a.s.$ implies $\Theta = \Gamma$ $\mathbbm{P}-a.s.$. This will be sufficient to conclude the proof, as for $\Theta$ given by the limit of $(\sigma_j)$ with corresponding solutions $(\sy^j,\sigma_j)$, our process $\sy$ can be consistently defined on $[0,\Theta)$ through \begin{equation}\label{constructing sy}\sy(\omega):= \sy^j(\omega) \qquad \textnormal{on $[0,\sigma_j(\omega)]$}\end{equation} in the same manner as in Lemma \ref{lemma for maximality}. We apply Zorn's Lemma on $\mathscr{Y}$, which we understand to be a partially ordered set for the relation $'\leq'$ defined by $\Gamma_1 \leq \Gamma_2$ if and only if for $\mathbbm{P}-a.e.$ $\omega$, $\Gamma_1(\omega) \leq \Gamma_2(\omega)$. The result would then follow from Zorn's Lemma if we can prove that for every sequence $(\Gamma_k)$ in $\mathscr{Y}$ with $\Gamma_1 \leq \dots \leq \Gamma_k \leq \Gamma_{k+1} \leq \dots$ there exists a $\Lambda \in \mathscr{Y}$ whereby $\Gamma_k \leq \Lambda$ for all $k$. Suppose now that each $\Gamma_k$ is given by the increasing limit of $(\sigma^k_j)$. Let's define the sequence $(\gamma_n)$ as $$\gamma_n:= \bigvee_{k=1}^n \sigma^k_n$$ which by virtue of Lemma \ref{lemma for maximality} is a sequence in $\mathscr{X}$. Inherited from each $(\sigma^k_n)$, note that this is a $\mathbbm{P}-a.s.$ monotone increasing sequence and therefore admits a limiting stopping time which we claim to be our $\Lambda$. By definition $\Lambda \in \mathscr{Y}$, and for each fixed $k$ we see that $\gamma_n \geq \sigma^k_n$ for $n \geq k$. We thus have that the limit of the $(\gamma_n)$ dominates the limit of the $(\sigma^k_n)$, which proves the result. 
\end{proof}

\begin{remark}
An alternative approach could be taken without appealing to Zorn's Lemma, which is given in \cite{doob2012measure} Section 18 pp.71. The method is very involved and we would have to work to extract the results we need from the proof of Doob's given theorem. We find our method to be simpler and more direct, hence of our preference. 
\end{remark}

\begin{theorem} \label{uniqueness of maximal solution}
Let $(\sy,\Theta)$ be an $H-$valued maximal strong solution of the equation (\ref{thespde}). Then $(\sy,\Theta)$ is unique in the sense of Definition \ref{v valued maximal unique}.
\end{theorem}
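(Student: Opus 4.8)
The plan is to establish the two clauses of Definition~\ref{v valued maximal unique} in turn: first that any two $H$-valued maximal strong solutions of (\ref{thespde}) for the same initial datum agree on the common interval of existence, and then that their maximal times coincide. So fix two $H$-valued maximal strong solutions $(\sy,\Theta)$ and $(\py,\Gamma)$ for a given $\mathcal{F}_0$-measurable $\sy_0:\Omega\to H$, with approximating sequences of stopping times $(\theta_j)\uparrow\Theta$ and $(\gamma_j)\uparrow\Gamma$ as in Definition~\ref{V valued maximal definition}, so that each $(\sy_{\cdot\wedge\theta_j},\theta_j)$ and each $(\py_{\cdot\wedge\gamma_j},\gamma_j)$ is an $H$-valued local strong solution.

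\emph{Step 1 (agreement before the earlier blow-up).} For each fixed pair $j,k$, the pairs $(\sy_{\cdot\wedge\theta_j},\theta_j)$ and $(\py_{\cdot\wedge\gamma_k},\gamma_k)$ are $H$-valued local strong solutions with the same initial condition, so Theorem~\ref{uniqueness for V valued solutions}, in the form (\ref{oh uh}), yields $\sy_{t\wedge\theta_j\wedge\gamma_k}=\py_{t\wedge\theta_j\wedge\gamma_k}$ for all $t\geq 0$, $\mathbb{P}$-a.s. Taking the intersection of these full-measure events over the countably many pairs $(j,k)$ and letting $j,k\to\infty$, using $\theta_j\wedge\gamma_k\uparrow\Theta\wedge\Gamma$, one obtains $\sy_t=\py_t$ for all $t\in[0,\Theta\wedge\Gamma)$, $\mathbb{P}$-a.s.

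\emph{Step 2 (the maximal times coincide).} Put $\rho_j:=\theta_j\vee\gamma_j$, a $\mathbb{P}$-a.s. monotone increasing sequence of stopping times with $\rho_j\uparrow\Theta\vee\Gamma$. Using the agreement from Step 1 exactly as in the proof of Lemma~\ref{lemma for maximality}, define the process $\bz^{(j)}$ to equal $\sy$ on $\{\gamma_j\leq\theta_j\}$ and $\py$ on $\{\theta_j<\gamma_j\}$; consistency on $[0,\theta_j\wedge\gamma_j]$ makes this well defined, and the pathwise regularity (continuity in $H$ and the $L^2$-in-time $V$-valued integrability) and progressive measurability transfer just as in that lemma, so $(\bz^{(j)}_{\cdot\wedge\rho_j},\rho_j)$ is an $H$-valued local strong solution. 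By Theorem~\ref{uniqueness for V valued solutions} the processes $\bz^{(j)}$ coincide on $[0,\rho_j]$, hence glue into a single process $\bz$ with $(\bz_{\cdot\wedge\rho_j},\rho_j)$ an $H$-valued local strong solution for every $j$; thus $(\bz,\Theta\vee\Gamma)$, together with the sequence $(\rho_j)$, is a pair of the kind quantified over in Definition~\ref{V valued maximal definition}. Since $\Theta\leq\Theta\vee\Gamma$ $\mathbb{P}$-a.s. and $(\sy,\Theta)$ is maximal, it follows that $\Theta=\Theta\vee\Gamma$ $\mathbb{P}$-a.s., i.e. $\Gamma\leq\Theta$ $\mathbb{P}$-a.s.; the symmetric argument using maximality of $(\py,\Gamma)$ gives $\Theta\leq\Gamma$ $\mathbb{P}$-a.s., so $\Theta=\Gamma$ $\mathbb{P}$-a.s.

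\emph{Conclusion and main obstacle.} Combining $\Theta=\Gamma$ a.s. with Step 1 gives $\sy_t=\py_t$ for all $t\in[0,\Theta)$ on a set of full measure, which is exactly the statement of Definition~\ref{v valued maximal unique}. The step needing genuine care — and the main obstacle — is Step 2: checking that the patched process $\bz$ really inherits the continuity in $H$, the square integrability in time into $V$, and in particular the progressive measurability in $V$ required by Definition~\ref{definitionofregularsolution}, together with the routine but necessary bookkeeping of null sets when passing to the limit over $j,k$ in Step 1; both of these are handled by the same gluing mechanism already employed in Lemma~\ref{lemma for maximality} and Theorem~\ref{existence of maximal solution}.
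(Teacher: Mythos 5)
Your proposal is correct and follows essentially the same route as the paper: apply Lemma \ref{lemma for maximality} to the joins $\theta_j\vee\gamma_j$ to exhibit $\Theta\vee\Gamma$ as the limit of an increasing sequence of solution stopping times, invoke maximality of each solution to force $\Theta=\Theta\vee\Gamma=\Gamma$, and use Theorem \ref{uniqueness for V valued solutions} for the pathwise agreement on $[0,\Theta)$. Your write-up merely makes explicit the gluing of the processes $\bz^{(j)}$, which the paper leaves to the consistent construction (\ref{constructing sy}).
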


\begin{proof}
We start by showing that $\Theta = \Gamma$ $\mathbbm{P}-a.s.$. Suppose that $(\theta_j)$, $(\gamma_j)$ are the sequences in $\mathscr{X}$ convergent to $\Theta,\Gamma$ respectively as stipulated in Definition \ref{V valued maximal definition}. From Lemma \ref{lemma for maximality} we have that the sequence $(\theta_j \vee \gamma_j)$ lives in $\mathscr{X}$, and is clearly monotone increasing and convergent to $\Theta \vee \Gamma$ $\mathbbm{P}-a.s.$. But then $\Theta \vee \Gamma \in \mathscr{Y}$, which by the fact that $\Theta, \Gamma \leq \Theta \vee \Gamma$ $\mathbbm{P}-a.s.$ and the definition of the maximal solution, $\Theta = \Theta \vee \Gamma = \Gamma$ $\mathbbm{P}-a.s.$ 
The first part is complete. The second then follows from Theorem \ref{uniqueness for V valued solutions} as seen in the consistent construction (\ref{constructing sy}). 
\end{proof}

The way we chose to define the maximal time $\Theta$ did not exclude the possibility that some $\Gamma \in \mathscr{Y}$ could be such that $\Gamma > \Theta$ on a set of positive measure, which is often excluded from the definition: see \cite{hairer2009introduction}, \cite{crisan2019solution}. The following corollary shows that such a scenario cannot occur. 

\begin{corollary} \label{greatness corollary}
Let $(\sy,\Theta)$ be the unique $H-$valued maximal strong solution of the equation (\ref{thespde}). Then for any $\Gamma \in \mathscr{Y}$, $\Theta \geq \Gamma$ $\mathbbm{P}-a.s.$. 
\end{corollary}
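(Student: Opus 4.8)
The plan is to reduce the general statement to the defining property of the maximal solution, exactly as in the first part of Theorem \ref{uniqueness of maximal solution}, but now with an arbitrary $\Gamma \in \mathscr{Y}$ in place of a second maximal solution. The only work is to check that $\Theta \vee \Gamma$ again lies in $\mathscr{Y}$ and is realised by a genuine local-strong-solution tower, so that the maximality of $\Theta$ can be applied to it.

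First I would fix $\Gamma \in \mathscr{Y}$ and write $\Gamma$ as the $\mathbb{P}$-a.s. increasing limit of a sequence $(\gamma_j) \subset \mathscr{X}$, with associated local strong solutions $(\sy^{j},\gamma_j)$, and similarly write $\Theta$ as the increasing limit of $(\theta_j) \subset \mathscr{X}$ with solutions $(\py^{j},\theta_j)$. By Lemma \ref{lemma for maximality} each $\theta_j \vee \gamma_j$ belongs to $\mathscr{X}$; the sequence $(\theta_j \vee \gamma_j)$ is $\mathbb{P}$-a.s. monotone increasing (being a pointwise maximum of two increasing sequences) and converges $\mathbb{P}$-a.s. to $\Theta \vee \Gamma$. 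Hence $\Theta \vee \Gamma \in \mathscr{Y}$.

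Next I would observe, exactly as in the consistent construction (\ref{constructing sy}) used in the proof of Theorem \ref{existence of maximal solution} (and relying on the uniqueness result Theorem \ref{uniqueness for V valued solutions} to glue the finitely many solutions producing $\theta_j \vee \gamma_j$, and then the resulting tower), that there is a process $\bz$ such that $(\bz_{\cdot \wedge (\theta_j \vee \gamma_j)}, \theta_j \vee \gamma_j)$ is an $H$-valued local strong solution for each $j$. Thus $(\bz, \Theta \vee \Gamma)$ is a pair having the property appearing in Definition \ref{V valued maximal definition}. Since $\Theta \leq \Theta \vee \Gamma$ $\mathbb{P}$-a.s., the maximality of $(\sy,\Theta)$ forces $\Theta = \Theta \vee \Gamma$ $\mathbb{P}$-a.s., and therefore $\Gamma \leq \Theta \vee \Gamma = \Theta$ $\mathbb{P}$-a.s., which is the claim.

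I do not expect any real obstacle here; the corollary is essentially a bookkeeping consequence of Lemma \ref{lemma for maximality} and the definition of maximality. The only point requiring a modicum of care is the second step — verifying that an arbitrary element of $\mathscr{Y}$ is actually witnessed by a well-defined process solving (\ref{thespde}) on the half-open interval up to that time — but this is precisely the gluing argument already carried out in Theorem \ref{existence of maximal solution}, so it can simply be invoked.
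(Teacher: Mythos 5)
Your proposal is correct and follows essentially the same route as the paper: the paper's one-line proof simply invokes the argument of Theorem \ref{uniqueness of maximal solution}, i.e.\ that $(\theta_j \vee \gamma_j) \subset \mathscr{X}$ by Lemma \ref{lemma for maximality} increases to $\Theta \vee \Gamma \in \mathscr{Y}$, so maximality forces $\Theta = \Theta \vee \Gamma \geq \Gamma$. Your extra care in noting that $\Theta \vee \Gamma$ must be witnessed by an actual glued process (via the construction (\ref{constructing sy}) and uniqueness) is a detail the paper leaves implicit, but it is the same argument.
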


\begin{proof}
Via the same arguments we have that $\Theta = \Theta \vee \Gamma$ $\mathbbm{P}-a.s.$ which concludes the proof.
\end{proof}

Corollary \ref{greatness corollary} is necessary in establishing the blow-up criterion (\ref{blowupproperty}) below. We consider the maximal solution up until the minimum of a first hitting time in the $HV$ norm and the maximal time, we show that a solution exists up until this time, and deduce from Lemma \ref{extension lemma} and Corollary \ref{greatness corollary} that it must therefore be the first hitting time (as the solution can be extended, but no solution can exist beyond the maximal time on a set of positive measure). So the maximal time is greater than the first hitting time for any hitting threshold, from which the result is deduced. The details are given in the following. 

\begin{lemma} \label{first lemma to blow up}
Let $(\sy,\Theta)$ be the $H-$valued maximal strong solution of the equation (\ref{thespde}) and $\tau$ be a stopping time. Then if $\rho:= \tau \wedge \Theta$ is in $\mathscr{X}$ we have that $$\mathbbm{P}\left(\left\{\omega \in \Omega: \tau(\omega) < \Theta(\omega)     \right\}\right) = 1.$$
\end{lemma}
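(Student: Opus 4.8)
The plan is to argue by contradiction: suppose that the set $A := \{\omega \in \Omega : \tau(\omega) \geq \Theta(\omega)\}$ has strictly positive probability, and derive a contradiction with the maximality of $\Theta$. Since $\rho = \tau \wedge \Theta \in \mathscr{X}$ by hypothesis, there is a process $\sy'$ such that $(\sy',\rho)$ is an $H-$valued local strong solution; by the uniqueness result Theorem \ref{uniqueness for V valued solutions} applied to $(\sy,\theta_j)$ and $(\sy',\rho)$ (on the overlap $\theta_j \wedge \rho$) we may take $\sy'$ to be indistinguishable from $\sy$ on $[0,\rho]$, so in fact $(\sy,\rho)\in \mathscr{X}$ in the sense that $(\sy_{\cdot\wedge\rho},\rho)$ is a local strong solution. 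On the event $A$ we have $\rho = \Theta$, so intuitively the solution "reaches" its own maximal time as a genuine stopping time in $\mathscr{X}$, and this should be extendable.

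The key step is to invoke the extension lemma (Lemma \ref{extension lemma}): since $(\sy_{\cdot\wedge\rho},\rho)$ is a local strong solution, there is another local strong solution $(\hat{\sy},\hat\sigma)$ with $\hat\sigma \geq \rho$ $\mathbbm{P}-a.s.$ and $\hat\sigma > \rho$ with positive probability — in particular one can arrange $\hat\sigma > \rho$ on a subset of $A$ of positive measure. Then $\hat\sigma \in \mathscr{X}$, hence $\hat\sigma \in \mathscr{Y}$, and by Corollary \ref{greatness corollary} we must have $\Theta \geq \hat\sigma$ $\mathbbm{P}-a.s.$. But on the positive-measure subset of $A$ where $\hat\sigma > \rho = \Theta$, this gives $\Theta \geq \hat\sigma > \Theta$, a contradiction. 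Therefore $\mathbbm{P}(A) = 0$, i.e. $\tau < \Theta$ $\mathbbm{P}-a.s.$

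The main obstacle I anticipate is the careful bookkeeping around which process to extend and ensuring the extension lemma applies to the stopped maximal solution $(\sy_{\cdot\wedge\rho},\rho)$ rather than to one of the approximating pairs $(\sy_{\cdot\wedge\theta_j},\theta_j)$; one must check that $\rho$ is genuinely in $\mathscr{X}$ (not merely in $\mathscr{Y}$), which is exactly the hypothesis we are given, and that the $HV$-type regularity and progressive measurability needed by Lemma \ref{extension lemma} are inherited by $\sy$ up to $\rho$. A secondary subtlety is making the extension strictly beyond $\rho$ on a \emph{positive-measure} subset of $A$ specifically — the extension lemma presumably gives $\hat\sigma > \rho$ almost surely on the whole space, which certainly intersects $A$ in positive measure if $\mathbbm{P}(A)>0$, so this should be automatic, but it is worth stating explicitly. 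Once these points are in place the contradiction with Corollary \ref{greatness corollary} is immediate.
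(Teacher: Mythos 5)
Your proposal is correct and follows essentially the same route as the paper: argue by contradiction on the positive-measure set where $\rho = \Theta$, apply Lemma \ref{extension lemma} to the solution at time $\rho \in \mathscr{X}$ to get $\gamma \in \mathscr{X} \subset \mathscr{Y}$ with $\gamma > \rho$ a.s., and then contradict Corollary \ref{greatness corollary}. The bookkeeping points you flag (identifying the process on $[0,\rho]$ via uniqueness, and noting the extension is strict a.s. so it is automatically strict on the bad set) are exactly the right ones and are handled the same way, implicitly, in the paper.
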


\begin{proof}
Suppose the contrary, that is there exists a set $D \subset \Omega$ on which $\rho(\omega) = \Theta(\omega)$ and $\mathbbm{P}(D) > 0$. By Lemma \ref{extension lemma} there exists a new stopping time $\gamma$ such that $\gamma > \rho$ $\mathbbm{P}-a.s.$ and $\gamma \in \mathscr{X}$. Evidently then $\gamma \in \mathscr{Y}$ as we can simply take the constant sequence, but then Corollary \ref{greatness corollary} asserts that $\Theta \geq \gamma$ $\mathbbm{P}-a.s.$. We have thus reached our contradiction as on $D$, $\Theta = \rho < \gamma$. 
\end{proof}

We now look to construct a solution up until the minimum of the first hitting time and the maximal time, which is a somewhat technical task. Of course the point is to prove that this is in fact a solution only up until the first hitting time which is strictly less than the maximal time, but a priori we do not know this and so immediately we would have to ask how the maximal solution $\sy$ is defined at $\Theta$ (by definition $\sy$ is only defined on $[0,\Theta)$). We circumvent the issue by constructing a new process which must agree with $\sy$ on $[0,\Theta)$ and is continuous on $[0,\infty)$.\\

To simplify proceedings we make the details of this setting concrete before stating the result. $(\sy,\Theta)$ continues to be the $H-$valued maximal strong solution of the equation (\ref{thespde}) and  let $(\theta_j)$ be the sequence stipulated in Definition \ref{V valued maximal definition}. We suppose without loss of generality that $\sy$ is a process defined on $[0,\infty)$ by $\sy_r(\omega) = 0$ for $\Theta(\omega) \leq r$, in order for the first hitting time to be well defined. Indeed for any $M > 1$ and $t>0$ we define \begin{equation}
    \label{tauMt no n} \tau^M_t(\omega):=t \wedge \inf\left\{s \geq 0: \norm{\sy(\omega)}_{HV,s}^2 \geq M + \norm{\sy_0(\omega)}_H^2 \right\}
\end{equation} and proceed to dissect this definition. Firstly note that $\sy \in C\left([0,\Theta);H\right)$ as $\sy_{
\cdot \wedge \theta_j}\in C\left([0,\theta_j ];H\right)$ $\mathbbm{P}-a.s.$ for every $j$ and $\theta_j \rightarrow \Theta$. As we set $\sy$ to be zero at $\Theta$ then $\sup_{r \in [0,\Theta(\omega))}\norm{\sy_r(\omega)}_H^2 = \sup_{r \in [0,\Theta(\omega)]}\norm{\sy_r(\omega)}_H^2$ so the process $$\mathbf{\eta}_\cdot:=\norm{\sy}_{HV,\cdot}^2$$ doesn't just belong to $C\left([0,\Theta);\R\right)$, but would only fail to be continuous at $\Theta$ if $\lim_{r \rightarrow \Theta}\mathbf{\eta}_r = \infty$. The point, therefore, is that there are no jump type discontinuities so the threshold of the first hitting time (\ref{tauMt no n}) is met continuously, i.e. $\mathbf{\eta}\in C\left([0,\tau^M_t];\R\right)$. In particular $\tau^M_t$ is a well defined stopping time and $\sy$ satisfies the bound \begin{equation} \label{control on sy} \norm{\sy}_{HV,\tau^{M}_t}^2 \leq M + \norm{\sy_0}_H^2\end{equation}analogous to (\ref{first bound on galerkin}). Towards an application of Lemma \ref{first lemma to blow up} we introduce the stopping time \begin{equation}\label{rhoMt}\rho^M_t:=\tau^M_t \wedge \Theta.\end{equation} Through the same reasoning that the terms of (\ref{identityindefinitionoflocalsolution}) are well defined, and the control (\ref{control on sy}), we are justified in defining the process \begin{equation}\label{definition of py}\py_r := \sy_0 + \int_0^{r \wedge \rho^M_t}\mathcal{A}(s,\sy_s)ds + \int_0^{r \wedge \rho^M_t}\mathcal{G}(s,\sy_s)d\mathcal{W}_s\end{equation} in $U$. Similarly to Remark \ref{remark1} we have the property \begin{equation}\label{remark1 property} \py_{\cdot}=\py_{\cdot \wedge \rho^M_t}.
\end{equation}

\begin{proposition} \label{prop to blow up}
Let $(\py,\rho^M_t)$ be as defined in (\ref{definition of py}),(\ref{rhoMt}). Then $(\py,\rho^M_t)$ is an $H-$valued local strong solution of the equation (\ref{thespde}).
\end{proposition}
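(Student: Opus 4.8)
The plan is to verify the three defining properties of an $H-$valued local strong solution from Definition \ref{definitionofregularsolution}: the pathwise regularity ($\py_\cdot(\omega) \in C([0,T];H)$ and $\py_\cdot(\omega)\mathbbm{1}_{\cdot \leq \rho^M_t(\omega)} \in L^2([0,T];V)$ with the appropriate progressive measurability), the identity (\ref{identityindefinitionoflocalsolution}) in $U$, and that $\rho^M_t$ is $\mathbbm{P}-a.s.$ positive. The positivity is immediate: $\rho^M_t = \tau^M_t \wedge \Theta$, and $\Theta$ is $\mathbbm{P}-a.s.$ positive by definition of the maximal solution, while $\tau^M_t$ is $\mathbbm{P}-a.s.$ positive because at $r=0$ we have $\norm{\sy}_{HV,0}^2 = \norm{\sy_0}_H^2 < M + \norm{\sy_0}_H^2$ and $\boldsymbol{\eta}$ is continuous up to the hitting time, so it takes a strictly positive amount of time to reach the threshold. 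The identity (\ref{identityindefinitionoflocalsolution}) for $(\py,\rho^M_t)$ is essentially the definition (\ref{definition of py}) together with the property (\ref{remark1 property}): substituting $t \wedge \rho^M_t$ for $r$ and using $\py_\cdot = \py_{\cdot \wedge \rho^M_t}$ gives exactly the required identity, since the integrals in (\ref{definition of py}) are already truncated at $\rho^M_t$.

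The substantive point is the regularity of $\py$, and here the key observation is that $\py$ must agree with $\sy$ on $[0,\Theta)$ (or more precisely on $[0,\rho^M_t)$, with some care at the endpoint). To see this, note that on $[0,\theta_j]$ the pair $(\sy_{\cdot \wedge \theta_j}, \theta_j)$ is an $H-$valued local strong solution by the definition of the maximal solution, so it satisfies its own identity (\ref{identityindefinitionoflocalsolution}); comparing with (\ref{definition of py}) and using that $\theta_j \wedge \rho^M_t \leq \rho^M_t$, we get $\py_{r} = \sy_r$ for $r \leq \theta_j \wedge \rho^M_t$, $\mathbbm{P}-a.s.$, for each $j$. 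Letting $j \to \infty$ and using $\theta_j \to \Theta \geq \rho^M_t$ on the event $\{\rho^M_t < \Theta\}$ (and handling the event $\{\rho^M_t = \Theta\}$ where $\tau^M_t = \Theta$, noting that on this event the bound (\ref{control on sy}) still forces $\sy$ to have finite $HV$ norm up to $\Theta$), we conclude $\py_r = \sy_r$ for $r \in [0,\rho^M_t)$. Since $\sy \in C([0,\Theta);H)$ with finite $HV$ norm up to $\rho^M_t$ by (\ref{control on sy}), this gives $\py$ the desired $H$-continuity and $L^2([0,T];V)$ regularity on $[0,\rho^M_t)$; beyond $\rho^M_t$, $\py$ is constant in $U$ by (\ref{remark1 property}), and one upgrades this to $H$-continuity on all of $[0,\infty)$ by invoking Proposition \ref{rockner prop} exactly as at the end of the proof of Theorem \ref{existence of local strong V solution}, using that the identity holds in $U$ and the $HV$ bound (\ref{control on sy}) controls the relevant norms.

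For the progressive measurability in $V$, I would argue as in Remark \ref{remark on prog meas equivalence} and the corresponding part of Theorem \ref{existence of limiting pair theorem}: the processes $\sy_{\cdot \wedge \theta_j}\mathbbm{1}_{\cdot \leq \rho^M_t}$ are progressively measurable in $V$ (inherited from the maximal solution's defining sequence), they are uniformly bounded in $L^2(\Omega \times [0,T];V)$ thanks to (\ref{control on sy}), and $\py_\cdot \mathbbm{1}_{\cdot \leq \rho^M_t}$ is their pointwise (hence $L^2(\Omega\times[0,T];V)$-weak) limit, so it inherits progressive measurability in the sense of Remark \ref{remark on prog meas equivalence}.

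I expect the main obstacle to be the careful bookkeeping at the endpoint $\rho^M_t$: one must rule out that $\sy$ blows up \emph{at} $\tau^M_t$ on the event $\{\tau^M_t < \Theta\}$ (which follows because $\boldsymbol{\eta} \in C([0,\tau^M_t];\R)$, established in the discussion preceding the proposition) and, on the event $\{\tau^M_t = \Theta\}$, one must still extract from the bound (\ref{control on sy}) that the integrals in (\ref{definition of py}) make sense and that $\py$ is well defined and $H$-continuous up to and including $\Theta$ — this is precisely the reason for setting $\sy \equiv 0$ past $\Theta$ and for the continuity analysis of $\boldsymbol{\eta}$ carried out before the statement. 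Once this is in hand, everything else is a direct assembly of earlier results.
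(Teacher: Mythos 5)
Your proposal is correct and follows essentially the same route as the paper: establish $\py_r=\sy_r$ on $[0,\rho^M_t)$ by comparing (\ref{definition of py}) with the identities satisfied by $(\sy_{\cdot\wedge\theta_j},\theta_j)$ and letting $j\to\infty$, deduce the identity and the $L^2([0,T];V)$ regularity from (\ref{control on sy}), obtain progressive measurability in $V$ as a weak limit in $L^2(\Omega\times[0,T];V)$, and upgrade to $H$-continuity via Proposition \ref{rockner prop}. The endpoint bookkeeping you flag is exactly what the paper resolves through its two-case verification of (\ref{pysyequivalence}), so no substantive step is missing.
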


\begin{proof}
Observe that for any time $r < \rho^M_t$ $\mathbbm{P}-a.s.$, 
\begin{align}
    \nonumber \py_r &= \lim_{j \rightarrow \infty}\py_{r \wedge \theta_j}\\  \nonumber &= \lim_{j \rightarrow \infty}\left[\sy_0 + \int_0^{r \wedge \theta_j}\mathcal{A}(s,\sy_s)ds + \int_0^{r \wedge \theta_j}\mathcal{G}(s,\sy_s)d\mathcal{W}_s \right]\\ \nonumber 
    &= \lim_{j \rightarrow \infty} \sy_{r \wedge \theta_j}\\ \label{lastlastlastalign} &= \sy_r
\end{align}
taking the limits $\mathbbm{P}-a.s.$ in $U$, using that $\sy \in C\left([0,\Theta);H\right)$ so in particular $\sy \in C\left([0,\rho^M_t);U\right)$ and similarly $\py \in C\left([0,\infty);U\right)$ as the sum of integrals in $U$. Combining this with the construction (\ref{definition of py}) we see that $$\py_r := \py_0 + \int_0^{r \wedge \rho^M_t}\mathcal{A}(s,\py_s)ds + \int_0^{r \wedge \rho^M_t}\mathcal{G}(s,\py_s)d\mathcal{W}_s$$ so $(\py,\rho^M_t)$ satisfies the required identity (\ref{identityindefinitionoflocalsolution}). In the same vein we have that for $\mathbbm{P}-a.e.$ $\omega$ and every $T>0$, $\py_{\cdot}(\omega)\mathbbm{1}_{\cdot \leq \rho^M_t(\omega)}$ is identical to $\sy_{\cdot}(\omega)\mathbbm{1}_{\cdot \leq \rho^M_t(\omega)}$ as an element of $L^2\left([0,T];V\right)$ where we know the latter belongs to this space from (\ref{control on sy}). The continuity is more involved; as specified we know at least that $\py(\omega) \in C\left([0,\rho^M_t(\omega));H\right)$ but we must address a potential discontinuity at $\rho^M_t$. To this end we claim that for $\mathbbm{P}-a.e.$ $\omega$, \begin{equation}\label{pysyequivalence} \py_{\cdot \wedge \theta_j(\omega)}(\omega) = \sy_{\cdot \wedge \theta_j(\omega) \wedge \rho^M_t(\omega)}(\omega).\end{equation} We consider two cases, the first that $\theta_j(\omega) < \rho^M_t(\omega)$. In this case the property (\ref{pysyequivalence}) is immediate by simply replacing $r$ with $\theta_j(\omega)$ in (\ref{lastlastlastalign}). The alternative is that $\theta_j(\omega) \geq \rho^M_t(\omega)$, for which we note that $\sy_{\cdot \wedge \theta_j \wedge \rho^M_t}$ is again a local strong solution and therefore $\sy_{\cdot \wedge \theta_j(\omega) \wedge \rho^M_t(\omega)}(\omega) \in C\left([0,t];H\right)$ by definition. Moreover in this case, $\sy_\cdot(\omega) \in C\left([0,\rho^M_t(\omega)];H\right)$ so $\sy_\cdot(\omega)  \in C\left([0,\rho^M_t(\omega)];U\right)$. Recalling again that $\py_\cdot(\omega) \in C\left([0,\infty);U\right)$ and $\sy_r(\omega) = 
\py_r(\omega)$ for $r < \rho^M_t(\omega)$, we see that $\sy$ and $\py$ do not just agree up until $\rho^M_t(\omega)$ but must also agree at this time due to the continuity in $U$. This is summarised as the equivalence $\py_{\cdot}(\omega) = \sy_{\cdot \wedge \rho^M_t(\omega)}(\omega)$. Moreover from (\ref{remark1 property}) and being in the case $\theta_j(\omega) \geq \rho^M_t(\omega)$ we have that $$ \py_{\cdot}(\omega) = \py_{\cdot \wedge \rho^M_t(\omega)}(\omega) = \py_{\cdot \wedge \rho^M_t(\omega) \wedge \theta_j(\omega)}(\omega) =  \py_{\cdot \wedge \theta_j(\omega)}(\omega)$$ and similarly $\sy_{\cdot \wedge \rho^M_t(\omega)}(\omega) = \sy_{\cdot \wedge \theta_j(\omega) \wedge \rho^M_t(\omega)}(\omega).$ Putting all of this together, $$\py_{\cdot \wedge \theta_j(\omega)}(\omega) = \py_{\cdot}(\omega) = \sy_{\cdot \wedge \rho^M_t(\omega)}(\omega) = \sy_{\cdot \wedge \theta_j(\omega) \wedge \rho^M_t(\omega)}(\omega)$$ justifying (\ref{pysyequivalence}).\\

We shall use this property to first show the progressive measurability in $V$, which follows from the fact that the sequence $\py_{\cdot \wedge \theta_j}\mathbbm{1}_{\cdot \wedge \rho^M_t} = \sy_{\cdot \wedge \theta_j}\mathbbm{1}_{\cdot \wedge \rho^M_t}$ is uniformly bounded in the space $L^2\left(\Omega \times [0,T]; V\right)$ courtesy of (\ref{control on sy}). Once more $\Omega \times [0,T]$ is equipped with product sigma algebra $\mathcal{F}_T \times [0,T]$ using the progressive measurability of $\sy_{\cdot \wedge \theta_j}\mathbbm{1}_{\cdot \wedge \rho^M_t}$. We use a similar argument to that seen in Theorem \ref{existence of limiting pair theorem}, deducing a weakly convergent subsequence in this space which gives rise to a progressively measurable process in $V$. To show that this limit is in fact $\py_{\cdot}\mathbbm{1}_{\cdot \wedge \rho^M_t}$, we appreciate that the convergence $\py_{\cdot \wedge \theta_j} \rightarrow \py$ $\mathbbm{P}-a.s.$ in the space $C([0,T];U)$ implies that of $\py_{\cdot \wedge \theta_j}\mathbbm{1}_{\cdot \wedge \rho^M_t} \rightarrow \py_{\cdot}\mathbbm{1}_{\cdot \wedge \rho^M_t}$ $\mathbbm{P}-a.s.$ in $L^2([0,T];U)$. Furthermore this limit holds in $L^2\left(\Omega; L^2([0,T];U)\right)$ by applying the dominated convergence theorem to the sequence $\left((\norm{\py_{\cdot \wedge \theta_j}\mathbbm{1}_{\cdot \wedge \rho^M_t} - \py_{\cdot}\mathbbm{1}_{\cdot \wedge \rho^M_t}}_{L^2([0,T];U)}^2\right)$ with domination coming from (\ref{control on sy}). We already know that $\py$ is progressively measurable in $U$ as a continuous and adapted process in this space (adaptedness follows from the convergence $\sy_{r \wedge \theta_j \wedge \rho^M_t} \rightarrow \py_r$ $\mathbbm{P}-a.s.$) so $\py_{\cdot \wedge \theta_j}\mathbbm{1}_{\cdot \wedge \rho^M_t} \rightarrow \py_{\cdot}\mathbbm{1}_{\cdot \wedge \rho^M_t}$ holds too in $L^2\left(\Omega \times [0,T]; U\right)$. The limit trivially holds weakly in this space as well, and must agree with the weak limit taken in $L^2\left(\Omega \times [0,T]; V\right)$ from the continuous embedding $V \xhookrightarrow{} U$, so by uniqueness of limits in the weak topology the progressive measurability is shown. Moreover similarly to Theorem \ref{existence of local strong V solution} we apply Proposition \ref{rockner prop} to deduce that $\py_\cdot \in C([0,T];H)$ $\mathbbm{P}-a.s.$ and the result is shown.
\end{proof}

We have now all but proved Theorem \ref{theorem1} in the case of the bounded initial condition. The following Theorem summarises the work of this subsection and rounds off the proof.

\begin{theorem} \label{blow up property theorem}
Let $(\sy,\Theta)$ be the $H-$valued maximal strong solution of the equation (\ref{thespde}). Then at $\mathbbm{P}-a.e.$ $\omega$ for which $\Theta(\omega) < \infty$, we have that \begin{equation}\label{blowupproperty}\sup_{s \in [0,\Theta(\omega))}\norm{\sy(\omega)}_{HV,s}^2 = \infty.\end{equation} Furthermore $\tau^M_t$ given in (\ref{tauMt no n}) is well defined and such that $(\sy_{\cdot \wedge \tau^M_t},\tau^M_t)$ is an $H-$valued local strong solution of the equation (\ref{thespde}). 
\end{theorem}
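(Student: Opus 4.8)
The plan is to assemble the second (extension) assertion first from Proposition~\ref{prop to blow up} and Lemma~\ref{first lemma to blow up}, and then obtain the blow-up criterion (\ref{blowupproperty}) by a contradiction argument run simultaneously over a countable family of truncation parameters. The preamble to Proposition~\ref{prop to blow up} has already verified that for every $M>1$ and $t>0$ the stopping time $\tau^M_t$ of (\ref{tauMt no n}) is well defined (the map $s \mapsto \norm{\sy}_{HV,s}^2$ being continuous on $[0,\Theta)$ with no jump discontinuities once $\sy$ is extended by $0$ past $\Theta$), that $\sy$ obeys the bound (\ref{control on sy}), and that the pair $(\py,\rho^M_t)$ built in (\ref{definition of py})--(\ref{rhoMt}) is an $H-$valued local strong solution; hence $\rho^M_t := \tau^M_t \wedge \Theta \in \mathscr{X}$. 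Applying Lemma~\ref{first lemma to blow up} with $\tau = \tau^M_t$ gives $\tau^M_t < \Theta$ $\mathbbm{P}-$a.s. for each fixed $M,t$. Consequently $\rho^M_t = \tau^M_t$ $\mathbbm{P}-$a.s., and letting $j \to \infty$ in the identity (\ref{pysyequivalence}) proved inside Proposition~\ref{prop to blow up} shows $\py = \sy_{\cdot \wedge \tau^M_t}$; therefore $(\sy_{\cdot \wedge \tau^M_t}, \tau^M_t)$ is precisely the $H-$valued local strong solution $(\py,\rho^M_t)$, which is the ``furthermore'' claim.

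For (\ref{blowupproperty}) I would set $N_{M,t} := \{\omega : \tau^M_t(\omega) \geq \Theta(\omega)\}$, which is $\mathbbm{P}-$null by the previous paragraph, and take $N := \bigcup_{M \geq 2,\, t \geq 1,\ M,t \in \N} N_{M,t}$, again $\mathbbm{P}-$null. Fix $\omega \notin N$ with $\Theta(\omega) < \infty$ and suppose, for contradiction, that $L := \sup_{s \in [0,\Theta(\omega))} \norm{\sy(\omega)}_{HV,s}^2 < \infty$. Since $\sy_r(\omega) = 0$ for $r \geq \Theta(\omega)$, the monotone quantity $s \mapsto \norm{\sy(\omega)}_{HV,s}^2 = \sup_{r\in[0,s]}\norm{\sy_r(\omega)}_H^2 + \int_0^s \norm{\sy_r(\omega)}_V^2\,dr$ is bounded by $L$ on all of $[0,\infty)$ (the supremum part is unchanged by appending indices $r \geq \Theta(\omega)$, and $\int_0^{\Theta(\omega)}\norm{\sy_r(\omega)}_V^2\,dr \leq L$ by monotone convergence). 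Choose integers $M \geq 2$ with $M + \norm{\sy_0(\omega)}_H^2 > L$ and $t \geq 1$ with $t > \Theta(\omega)$; then the threshold in (\ref{tauMt no n}) is never attained, so $\tau^M_t(\omega) = t > \Theta(\omega)$, contradicting $\omega \notin N_{M,t}$. Hence $\sup_{s \in [0,\Theta(\omega))} \norm{\sy(\omega)}_{HV,s}^2 = \infty$ for $\mathbbm{P}-$a.e. $\omega$ with $\Theta(\omega) < \infty$.

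I do not expect a genuine obstacle here: the substantive content is entirely carried by Proposition~\ref{prop to blow up} (construction and identification of the extended solution up to $\rho^M_t$) and Lemma~\ref{first lemma to blow up} (which itself invokes Corollary~\ref{greatness corollary} and the extension Lemma~\ref{extension lemma}). The only points requiring care are bookkeeping ones, namely that $\tau^M_t$ is legitimately a stopping time despite $\sy$ being a priori defined only on $[0,\Theta)$ — resolved by the zero extension and the absence of jump discontinuities in $\norm{\sy}_{HV,\cdot}^2$ — and that a single $\mathbbm{P}-$null exceptional set suffices for all truncation levels, which is exactly why the countable union over integer $M,t$ is formed before the pointwise contradiction argument is carried out.
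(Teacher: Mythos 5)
Your proposal is correct and follows essentially the same route as the paper: Proposition \ref{prop to blow up} places $\rho^M_t$ in $\mathscr{X}$, Lemma \ref{first lemma to blow up} yields $\tau^M_t<\Theta$ $\mathbbm{P}$-a.s., the identification $(\sy_{\cdot\wedge\tau^M_t},\tau^M_t)=(\py,\rho^M_t)$ comes from (\ref{pysyequivalence}), and the blow-up follows from the arbitrariness of $M,t$. Your explicit countable union of null sets over integer $M,t$ before running the pointwise contradiction is just a more careful write-up of the step the paper compresses into ``$M>1$, $t>0$ were arbitrary.''
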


\begin{remark}
The property (\ref{blowupproperty}) is precisely (\ref{actualblowup}). 
\end{remark}

\begin{proof}
Proposition \ref{prop to blow up} informs us that $\rho^M_t \in \mathscr{X}$, so we can apply Lemma \ref{first lemma to blow up} to deduce that $$\mathbbm{P}\left(\left\{\omega \in \Omega: \tau^M_t(\omega) < \Theta(\omega)     \right\}\right) = 1.$$ Of course $M>1, t>0$ were arbitrary so from the characterisation of $\tau^M_t$ we observe that $$ \sup_{s \in [0,\Theta(\omega))}\norm{\sy(\omega)}_{HV,s}^2 > M + \norm{\sy_0(\omega)}_H^2$$ for $\mathbbm{P}-a.e.$ $\omega$, from which we infer (\ref{blowupproperty}). In addition we must have that $\rho^M_t = \tau^M_t$ so $(\py,\tau^M_t)$ is an $H-$valued local strong solution and therefore $(\sy_{\cdot \wedge \tau^M_t},\tau^M_t)$ is too, immediate from (\ref{pysyequivalence}) and taking $j$ large enough so that $\theta_j(\omega) > \tau^M_t(\omega)$. 
\end{proof}

%\begin{proof}
 %We claim that $(\py,\rho^M_t)$ is a local strong solution; \\

 %Hence $\rho = \tau$ $\mathbbm{P}-a.s.$ so we see that $(\py, \tau^M_t)$ is an $H-$valued local strong solution of the equation (\ref{thespde}). To conclude that $(\sy_{\cdot \wedge \tau^M_t},\tau^M_t)$ is such a solution we must simply show that $\py_{\cdot } = \sy_{\cdot \wedge \tau^M_t}$. For $\mathbbm{P}-a.e.$ $\omega$, there exists a (sufficiently large) $j$ such that $\tau^M_t(\omega) < \theta_j(\omega)$. Moreover for such $j$, $$\py_{\cdot}(\omega) = \py_{\cdot \wedge \tau^M_t(\omega)}(\omega) = \py_{\cdot \wedge \theta_j(\omega)}(\omega) = \sy_{\cdot \wedge \theta_j(\omega) \wedge \tau^M_t(\omega)}(\omega) = \sy_{\cdot \wedge \tau^M_t(\omega)}(\omega)$$ using again (\ref{pysyequivalence}), as required. The property (\ref{blowupproperty}) is immediate from the fact that $\tau^M_t < \Theta$ $\mathbbm{P}-a.s.$ for any $M,t$. We are done. 

%\end{proof}

\subsection{Existence and Maximality for an Unbounded Initial Condition} \label{subsection maximality for unbounded}

In the previous subsection we showed the existence and maximality of an  $H-$valued local strong solution of the equation (\ref{thespde}) for any given $H$-valued initial condition $\sy_0$ with $\sy_0 \in L^{\infty}(\Omega;H)$. We now show that such solutions exist for an arbitrary $H$-valued initial condition (not necessarily bounded), proving Theorem \ref{theorem1}. 

\begin{theorem} \label{v valued existence result}
For any given $\mathcal{F}_0-$measurable $\sy_0:\Omega \rightarrow H$, there exists an $H-$valued maximal strong solution $(\sy,\Theta)$ of the equation (\ref{thespde}).
\end{theorem}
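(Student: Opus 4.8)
The plan is to reduce the unbounded case to the bounded case settled in Subsection \ref{subsection: maximality for bounded in V} by decomposing $\Omega$ into countably many $\mathcal{F}_0$-measurable pieces on which $\sy_0$ is bounded, invoking the existing theory on each piece, and patching the resulting maximal solutions together. Concretely, I would set $\Omega_k := \left\{\omega \in \Omega: k-1 \leq \norm{\sy_0(\omega)}_H < k\right\}$ for $k \in \N$; these are disjoint, $\mathcal{F}_0$-measurable, and exhaust $\Omega$, and $\sy_0^{(k)} := \sy_0\mathbbm{1}_{\Omega_k}$ satisfies $\norm{\sy_0^{(k)}}_{L^\infty(\Omega;H)} \leq k$, so it falls under the bounded hypothesis (\ref{boundedinitialcondition}). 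By Theorems \ref{existence of maximal solution}, \ref{uniqueness of maximal solution} and \ref{blow up property theorem} there is then a unique $H-$valued maximal strong solution $(\sy^{(k)},\Theta^{(k)})$ of (\ref{thespde}) with initial condition $\sy_0^{(k)}$, equipped with an approximating sequence $(\theta^{(k)}_j)_{j}$ of stopping times as in Definition \ref{V valued maximal definition}.

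Next I would glue: define $\sy$, $\Theta$, $\theta_j$ on $\Omega_k$ to be $\sy^{(k)}$, $\Theta^{(k)}$, $\theta^{(k)}_j$ respectively; since the $\Omega_k$ are disjoint and cover $\Omega$ this is unambiguous (no consistency check of the kind in Lemma \ref{lemma for maximality} is needed). One checks that $\Theta$ and each $\theta_j$ are stopping times via $\{\Theta \leq t\} = \bigcup_k (\Omega_k \cap \{\Theta^{(k)}\leq t\}) \in \mathcal{F}_t$, that $\theta_j$ is $\mathbb{P}-a.s.$ monotone increasing to $\Theta$, and that the required pathwise regularity ($C([0,T];H)$, and $L^2([0,T];V)$ up to $\theta_j$) holds $\mathbb{P}-a.s.$ because it does on every $\Omega_k$ and the union is countable. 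To see that $(\sy_{\cdot \wedge \theta_j},\theta_j)$ is an $H-$valued local strong solution with initial condition $\sy_0$, I would multiply the identity (\ref{identityindefinitionoflocalsolution}) for $(\sy^{(k)}_{\cdot \wedge \theta^{(k)}_j},\theta^{(k)}_j)$ by $\mathbbm{1}_{\Omega_k}$: since $\Omega_k \in \mathcal{F}_0$ the indicator passes through both the Bochner integral and, by the local property of the stochastic integral, the stochastic integral, while $\mathbbm{1}_{\Omega_k}\mathcal{A}(s,\sy^{(k)}_s) = \mathbbm{1}_{\Omega_k}\mathcal{A}(s,\sy_s)$, $\mathbbm{1}_{\Omega_k}\mathcal{G}(s,\sy^{(k)}_s) = \mathbbm{1}_{\Omega_k}\mathcal{G}(s,\sy_s)$, $\mathbbm{1}_{\Omega_k}\sy_0^{(k)} = \mathbbm{1}_{\Omega_k}\sy_0$ and $\mathbbm{1}_{\Omega_k}\theta^{(k)}_j = \mathbbm{1}_{\Omega_k}\theta_j$; summing over $k$ recovers (\ref{identityindefinitionoflocalsolution}) for $(\sy,\theta_j)$. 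Progressive measurability of $\sy_{\cdot \wedge \theta_j}\mathbbm{1}_{\cdot \leq \theta_j}$ in $V$ (in the sense of Remark \ref{remark on prog meas equivalence}) follows since it equals $\sum_k \mathbbm{1}_{\Omega_k}\sy^{(k)}_{\cdot \wedge \theta^{(k)}_j}\mathbbm{1}_{\cdot \leq \theta^{(k)}_j}$, a countable sum of $V$-progressively measurable processes multiplied by $\mathcal{F}_0$-measurable (hence $\mathcal{F}_t$-measurable for every $t$) indicators. Thus $(\sy,\Theta)$ meets the first requirement of Definition \ref{V valued maximal definition}.

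For maximality, let $(\py,\Gamma)$ be another pair with the approximation property and $\Theta \leq \Gamma$ $\mathbb{P}-a.s.$; the goal is $\Theta = \Gamma$ $\mathbb{P}-a.s.$. Fix $k$, let $(\gamma_j)$ approximate $\Gamma$, and patch $\py$ on $\Omega_k$ with $\sy^{(k)}$ on $\Omega_k^C$: put $\hat{\sy} := \py\mathbbm{1}_{\Omega_k} + \sy^{(k)}\mathbbm{1}_{\Omega_k^C}$, $\hat{\theta}_j := \gamma_j\mathbbm{1}_{\Omega_k} + \theta^{(k)}_j\mathbbm{1}_{\Omega_k^C}$ and $\hat{\Gamma} := \Gamma\mathbbm{1}_{\Omega_k} + \Theta^{(k)}\mathbbm{1}_{\Omega_k^C}$. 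Because $\sy_0 = \sy_0^{(k)}$ on $\Omega_k$ and $\sy_0^{(k)} = 0 = \sy_0^{(k)}$ on $\Omega_k^C$, the same indicator argument as above shows each $(\hat{\sy}_{\cdot \wedge \hat{\theta}_j},\hat{\theta}_j)$ is an $H-$valued local strong solution with initial condition $\sy_0^{(k)}$ and $\hat{\theta}_j$ increases to $\hat{\Gamma}$, so $(\hat{\sy},\hat{\Gamma})$ is an admissible competitor to the maximal solution $(\sy^{(k)},\Theta^{(k)})$. Moreover $\Theta^{(k)} \leq \hat{\Gamma}$ $\mathbb{P}-a.s.$ (on $\Omega_k$ this reads $\Theta^{(k)} = \Theta \leq \Gamma = \hat{\Gamma}$, and on $\Omega_k^C$ it is an equality), so maximality of $(\sy^{(k)},\Theta^{(k)})$ forces $\Theta^{(k)} = \hat{\Gamma}$ $\mathbb{P}-a.s.$, whence $\Theta = \Theta^{(k)} = \Gamma$ on $\Omega_k$. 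Taking the union over $k$ yields $\Theta = \Gamma$ $\mathbb{P}-a.s.$, completing the proof.

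I expect the main work to lie in this maximality step: one cannot simply restrict $(\py,\Gamma)$ to $\Omega_k$, since the zero process need not solve (\ref{thespde}) with zero initial data, so one must patch $(\py,\Gamma)$ together with the already-constructed bounded-data solution $(\sy^{(k)},\Theta^{(k)})$ on $\Omega_k^C$ to obtain a legitimate competitor. Everything else is bookkeeping over the countable partition, resting on the single analytic fact that an $\mathcal{F}_0$-measurable indicator commutes with the stochastic integral. The identical partition-and-glue device, combined with Theorems \ref{uniqueness of maximal solution} and \ref{blow up property theorem}, also transfers uniqueness in the sense of Definition \ref{v valued maximal unique} and the blow-up characterisation (\ref{actualblowup}) from the bounded case, thereby finishing the proof of Theorem \ref{theorem1}.
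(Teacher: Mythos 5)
Your proposal is correct. For the existence half it is essentially the paper's own argument: the same partition of $\Omega$ into the $\mathcal{F}_0$-measurable shells on which $\norm{\sy_0}_H$ is bounded, the same gluing of the bounded-data maximal solutions $(\sy^{(k)},\Theta^{(k)})$ and of their approximating sequences, and the same device of passing the $\mathcal{F}_0$-measurable indicator through the time and stochastic integrals. The only genuine divergence is in the maximality step. The paper forms, for each $k,j$, the stopping time $\theta^k_j\vee\gamma^k_j$, needs Theorem \ref{uniqueness for V valued solutions} to check that the associated patched process is consistently defined where the two stopping times compete, and then invokes Corollary \ref{greatness corollary}; this yields the unconditional inequality $\Gamma\leq\Theta$ for every competitor. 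You instead splice the competitor $\py$ on $\Omega_k$ with the already-constructed $\sy^{(k)}$ on the disjoint complement $\Omega_k^C$, so no consistency check (and hence no appeal to uniqueness or to Corollary \ref{greatness corollary}) is required, and you conclude from the bare definition of maximality of $(\sy^{(k)},\Theta^{(k)})$ applied to the dominating time $\hat{\Gamma}=\Gamma\mathbbm{1}_{\Omega_k}+\Theta^{(k)}\mathbbm{1}_{\Omega_k^C}$. Your variant is slightly leaner and establishes exactly the implication demanded by Definition \ref{V valued maximal definition}, whereas the paper's route costs a little more but delivers the stronger conclusion $\Gamma\leq\Theta$ $\mathbb{P}$-a.s.\ for every competitor. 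Your diagnosis of the key obstruction — that one cannot simply restrict $(\py,\Gamma)$ to $\Omega_k$ because the truncated pair need not solve the equation, so a genuine solution must be supplied on the complement — is precisely the difficulty the paper resolves with its $\vee$-construction, and your resolution of it is equally valid.
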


\begin{proof}
The idea is to piece together solutions obtained for the bounded initial condition, as seen in \cite{glatt2012local} (and generalised from \cite{glatt2009strong}). For any given $k \in \N \cup \{0\}$ we know that there exists a unique $H-$valued maximal strong solution $(\sy^k,\Theta^k)$ for the initial condition $\sy_0\mathbbm{1}_{\{k \leq \norm{\sy_0}_H < k+1\}}$, and we claim that the pair $(\sy,\Theta)$ defined at each time $t \in [0,T]$ and $\omega \in \Omega$ by $$ \sy_t(\omega):= \sum_{k=1}^\infty \sy^k_t(\omega)\mathbbm{1}_{\{k \leq \norm{\sy_0(\omega)}_H < k+1\}}, \qquad \Theta(\omega):=\sum_{k=1}^\infty \Theta^k(\omega)\mathbbm{1}_{\{k \leq \norm{\sy_0(\omega)}_H < k+1\}}$$
is our desired solution for the initial condition $\sy_0$ (where for each fixed $\omega$ the infinite sum is simply a single element). The first task is to show the existence of a sequence of stopping times $(\theta_j)$ which are $\mathbbm{P}-a.s.$ monotone increasing and convergent to $\Theta$, whereby $(\sy_{\cdot \wedge \theta_j},\theta_j)$ is a local strong $H-$valued solution of the equation (\ref{thespde}) for each $j$. There is only one natural choice for the $\theta_j$, which is using that for each fixed $k$ there is a sequence $(\theta^k_j)$ with the properties above for the maximal solution $(\sy^k,\Theta^k)$, and constructing $$\theta_j(\omega):=\sum_{k=1}^\infty \theta^k_j(\omega)\mathbbm{1}_{\{k \leq \norm{\sy_0(\omega)}_H < k+1\}}.$$ Immediately it's clear that for $\mathbbm{P}-a.e.$ $\omega$, $(\sy(\omega),\Theta(\omega)) =(\sy^k(\omega),\Theta^k(\omega))$ for some $k$ and similarly $(\sy(\omega),\theta_j(\omega)) =(\sy^k(\omega),\theta^k_j(\omega))$. The $\mathbbm{P}-a.s.$ monotonicity and convergence $\theta_j \rightarrow \Theta$ comes promptly from the construction, and for each $j$ the pair $(\sy(\omega),\theta_j(\omega))$ inherits the pathwise properties of a solution from the solutions to the bounded problems. In particular $\theta_j$ is a $\mathbbm{P}-a.s.$ positive stopping time and for $\mathbbm{P}-a.e.$ $\omega$, $\sy_{\cdot \wedge \theta_j(\omega)}(\omega) \in C\left([0,T];H\right)$ and  $\sy_{\cdot}(\omega)\mathbbm{1}_{\cdot \leq\theta_j(\omega)} \in L^2\left([0,T];V\right)$ for all $T>0$. In assessing the identity (\ref{identityindefinitionoflocalsolution}), let's now introduce the more compact notation $$A_k:= \left\{ \omega \in \Omega: k \leq \norm{\sy_0(\omega)}_H < k+1\right\}.$$ Then $\mathbbm{P}-a.s.$ in $A_k$ we have the identity 
$$\sy^k_{t} = \sy^k_0 + \int_0^{t\wedge \theta_j^k} \mathcal{A}(s,\sy^k_s)ds + \int_0^{t \wedge \theta_j^k}\mathcal{G} (s,\sy^k_s) d\mathcal{W}_s$$ or equivalently on this set that
$$\sy_{t} = \sy_0 + \int_0^{t\wedge \theta_j} \mathcal{A}(s,\sy_s)ds + \int_0^{t \wedge \theta_j^k}\mathcal{G} (s,\sy^k_s) d\mathcal{W}_s$$ though we have to be a bit more precise with the stochastic integral. We need to justify that $$\mathbbm{1}_{A_k}\int_0^{t \wedge \theta_j^k}\mathcal{G} (s,\sy^k_s) d\mathcal{W}_s = \mathbbm{1}_{A_k}\int_0^{t \wedge \theta_j}\mathcal{G} (s,\sy_s) d\mathcal{W}_s$$ $\mathbbm{P}-a.s.$,  which we do via the manipulations
\begin{align*}
    \mathbbm{1}_{A_k}\int_0^{t \wedge \theta_j^k}\mathcal{G} (s,\sy^k_s) d\mathcal{W}_s &= \mathbbm{1}_{A_k}\int_0^{t} \mathbbm{1}_{\theta_j^k}\mathcal{G} (s,\sy^k_s) d\mathcal{W}_s\\
    &= \int_0^{t} \mathbbm{1}_{A_k}\mathbbm{1}_{s \leq \theta_j^k}\mathcal{G} (s,\sy^k_s) d\mathcal{W}_s\\
    &= \int_0^{t} \mathbbm{1}_{A_k}\mathbbm{1}_{A_k \cap \{s \leq \theta_j^k\}}\mathcal{G} (s,\sy^k_s\mathbbm{1}_{A_k}) d\mathcal{W}_s\\
    &= \int_0^{t} \mathbbm{1}_{A_k}\mathbbm{1}_{A_k \cap \{s \leq \theta_j^k\}}\mathcal{G} (s,\sy^k_s\mathbbm{1}_{A_k}) d\mathcal{W}_s\\
    &= \int_0^{t} \mathbbm{1}_{A_k}\mathbbm{1}_{A_k \cap \{s \leq \theta_j\}}\mathcal{G} (s,\sy_s\mathbbm{1}_{A_k}) d\mathcal{W}_s\\
    &= \int_0^{t} \mathbbm{1}_{A_k}\mathbbm{1}_{s \leq \theta_j}\mathcal{G} (s,\sy_s) d\mathcal{W}_s\\
     &= \mathbbm{1}_{A_k}\int_0^{t \wedge \theta_j}\mathcal{G} (s,\sy_s) d\mathcal{W}_s\\
\end{align*}
where we require that $\mathbbm{1}_{A_k}$ is $\mathcal{F}_0-$measurable which is owing to the $\mathcal{F}_0-$measurability of $\sy_0$, and then apply Proposition 1.6.14 of \cite{goodair2022stochastic}. Therefore the identity (\ref{identityindefinitionoflocalsolution}) holds $\mathbbm{P}-a.s.$ on every $A_k$ hence $\mathbbm{P}-a.s.$ on the whole of $\Omega$. To conclude that $(\sy,\theta_j)$ is a local strong solution for the initial condition $\sy_0$ it only remains to show that $\sy_{\cdot}\mathbbm{1}_{\cdot \leq \theta_j}$ is progressively measurable in $V$, which we deduce from
\begin{align*}
    \sy_{t}(\omega)\mathbbm{1}_{t \leq \theta_j(\omega)} = \sum_{k=1}^\infty \sy^k_{t}(\omega)\mathbbm{1}_{t \leq \theta_j(\omega)}\mathbbm{1}_{\{k \leq \norm{\sy_0(\omega)}_H < k+1\}} =  \sum_{k=1}^\infty \sy^k_{t}(\omega)\mathbbm{1}_{t \leq \theta_j^k(\omega)}\mathbbm{1}_{\{k \leq \norm{\sy_0(\omega)}_H < k+1\}}
\end{align*}
by construction, or equivalently that 
$$\sy_{\cdot}\mathbbm{1}_{\cdot \leq \theta_j} =  \sum_{k=1}^\infty \sy^k_{\cdot}\mathbbm{1}_{\cdot \leq \theta_j^k}\mathbbm{1}_{\{k \leq \norm{\sy_0}_H < k+1\}}$$ for the limit taken pointwise almost everywhere over the product space  $\Omega \times [0,T]$ in $V$ (again for each fixed element of this product space, the infinite sum is in reality just a single term which we know to belong to $V$ from the progressive measurability of the $\sy^k\mathbbm{1}_{\cdot \leq \theta_j^k}$). Moreover for each fixed $T$
we can consider the sequence of processes $(\omega,t) \mapsto \left(\sum_{k=1}^N\sy^k_{t}(\omega)\mathbbm{1}_{t \leq \theta_j^k(\omega)}\mathbbm{1}_{\{k \leq \norm{\sy_0(\omega)}_H < k+1\}}\right)$ as mappings $\Omega \times [0,T] \rightarrow V$ where we equip $\Omega \times [0,T]$ with the sigma algebra $\mathcal{F}_T \times \mathcal{B}([0,T])$. The pointwise limit preserves the measurability which concludes the argument that $(\sy,\theta_j)$ is an $H-$valued local strong solution.\\

It remains to show that if $(\py,\Gamma)$ were any other pair with this property, then $\Theta \leq \Gamma$ $\mathbbm{P}-a.s.$ implies $\Theta = \Gamma$ $\mathbbm{P}-a.s.$. To this end suppose that $(\gamma_j)$ is the sequence of stopping times for $\Gamma$, and define now for each fixed $k,j$ the stopping time $$\gamma^k_j:=\gamma_j\mathbbm{1}_{\{k \leq \norm{\sy_0}_H < k+1\}}$$ along with the process
\begin{align*}
    \hat{\sy}(\omega)&:=\sy^k(\omega) \qquad \textnormal{for $\theta_j^k(\omega) \vee \gamma_j^k(\omega) = \theta_j^k(\omega)$}\\
    \hat{\sy}(\omega)&:=\py(\omega) \qquad  \textnormal{for $\theta_j^k(\omega) \vee \gamma_j^k(\omega) = \gamma_j^k(\omega)$}.
\end{align*}
Whilst we do not claim that $(\py,\gamma_j^k)$ is a local strong solution for the initial condition $\sy_0\mathbbm{1}_{\{k \leq \norm{\sy_0}_H < k+1\}}$ (indeed $\gamma_j^k$ is unlikely to be $\mathbbm{P}-a.s.$ positive), note that from the uniqueness Theorem \ref{uniqueness for V valued solutions} we have that $\sy_{\cdot \wedge \theta_j \wedge \gamma_j}$ and $\py_{\cdot \wedge \theta_j \wedge \gamma_j}$ are indistinguishable. This implies indistinguishability of $\sy^k_{\cdot \wedge \theta^k_j \wedge \gamma^k_j}\mathbbm{1}_{\{k \leq \norm{\sy_0}_H < k+1\}}$ and $\py_{\cdot \wedge \theta^k_j \wedge \gamma^k_j}\mathbbm{1}_{\{k \leq \norm{\sy_0}_H < k+1\}}$ so the definition of $\hat{\sy}$ is consistent. Moreover the exact arguments of Lemma \ref{lemma for maximality} continue to apply here to demonstrate that $(\hat{\sy},\theta_j^k \vee \gamma_j^k)$ is a local strong solution for $\sy_0\mathbbm{1}_{\{k \leq \norm{\sy_0}_H < k+1\}}$ (we of course rely on $(\py,\gamma_j)$ being a solution for $\sy_0$). Thus from the maximality of $\Theta^k$ and Corollary \ref{greatness corollary}, we have $\theta_j^k \vee \gamma_j^k \leq \Theta^k$ $\mathbbm{P}-a.s.$ and in particular $\gamma_j^k \leq \Theta^k$. Defining $$\Gamma^k := \Gamma\mathbbm{1}_{\{k \leq \norm{\sy_0}_H < k+1\}}$$ we have that $\gamma^k_j \rightarrow \Gamma^k$ $\mathbbm{P}-a.s.$ therefore $\Gamma^k \leq \Theta^k$ $\mathbbm{P}-a.s.$. Evidently though $$\Gamma = \sum_{k=1}^\infty\Gamma^k\mathbbm{1}_{\{k \leq \norm{\sy_0}_H < k+1\}}$$ demonstrating that $\Gamma \leq \Theta$ $\mathbbm{P}-a.s.$.

%so $\sy_{t \wedge \tau}$ is the $\mathbbm{P}-a.s.$ limit in $H$ of the sequence $\sum_{k=1}^n \sy^k_{t\wedge\tau^k(\omega)}(\omega)\mathbbm{1}_{\{k \leq \norm{\sy_0(\omega)}_H < k+1\}}$ which is $\mathcal{F}_t-$measurable by adaptedness of the solutions $(\sy^k,\tau^k)$ and measurability of the indicator function. The limit preserves this $\mathcal{F}_t-$measurability and we are done}

\end{proof}

\begin{theorem}
For any given $\mathcal{F}_0-$ measurable $\sy_0:\Omega \rightarrow H$, there exists a unique $H-$valued maximal strong solution $(\sy,\Theta)$ of the equation (\ref{thespde}). Moreover at $\mathbb{P}-a.e.$ $\omega$ for which $\Theta(\omega)<\infty$, we have that \begin{equation}\nonumber \sup_{r \in [0,\Theta(\omega))}\norm{\sy_r(\omega)}_H^2 + \int_0^{\Theta(\omega)}\norm{\sy_r(\omega)}_V^2dr = \infty.\end{equation}
Furthermore for any $M > 1$, $t>0$ the stopping time \begin{equation}
    \nonumber \tau^M_t(\omega):=t \wedge \inf\left\{s \geq 0: \norm{\sy(\omega)}_{HV,s}^2 \geq M + \norm{\sy_0(\omega)}_H^2 \right\}
\end{equation} is well defined and such that $(\sy_{\cdot \wedge \tau^M_t},\tau^M_t)$ is an $H-$valued local strong solution of the equation (\ref{thespde}).
\end{theorem}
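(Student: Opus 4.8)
The statement is essentially a consolidation of the results already proved in this section, so the plan is to assemble it from those pieces using the decomposition of $\sy_0$ by the size of $\norm{\sy_0}_H$ that appears in the proof of Theorem \ref{v valued existence result}. Existence of an $H$-valued maximal strong solution $(\sy,\Theta)$ is precisely Theorem \ref{v valued existence result}; recall that there $\sy$ and $\Theta$ are built from the \emph{unique} maximal solutions $(\sy^k,\Theta^k)$ (Theorems \ref{existence of maximal solution}, \ref{uniqueness of maximal solution}) associated to the bounded data $\sy_0\mathbbm{1}_{A_k}$, where $A_k := \{k \leq \norm{\sy_0}_H < k+1\}$ for $k \in \N \cup \{0\}$, via $\sy = \sum_k \sy^k\mathbbm{1}_{A_k}$, $\Theta = \sum_k \Theta^k\mathbbm{1}_{A_k}$ (a single term at each fixed $\omega$).

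For uniqueness, let $(\py,\Gamma)$ be another $H$-valued maximal strong solution for $\sy_0$. The final paragraph of the proof of Theorem \ref{v valued existence result} in fact establishes $\Gamma \leq \Theta$ $\mathbbm{P}-a.s.$ with no hypothesis relating the two times; running that argument verbatim with $(\sy,\Theta)$ and $(\py,\Gamma)$ interchanged yields $\Theta \leq \Gamma$ $\mathbbm{P}-a.s.$, hence $\Theta = \Gamma$ $\mathbbm{P}-a.s.$. Pathwise agreement on $[0,\Theta)$ then follows from Theorem \ref{uniqueness for V valued solutions} applied on each interval $[0,\theta_j \wedge \gamma_j]$, where $(\theta_j)$, $(\gamma_j)$ are the approximating sequences: $\sy_{\cdot \wedge \theta_j \wedge \gamma_j}$ and $\py_{\cdot \wedge \theta_j \wedge \gamma_j}$ are local strong solutions with common initial datum, hence indistinguishable, and $\theta_j \wedge \gamma_j \rightarrow \Theta = \Gamma$ exhausts $[0,\Theta)$, giving uniqueness in the sense of Definition \ref{v valued maximal unique}.

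The blow-up (\ref{actualblowup}) and the stopping-time assertion are transported from Theorem \ref{blow up property theorem} through the partition $\{A_k\}$. Fix $\omega$ outside a null set with $\Theta(\omega) < \infty$ and let $k$ be the unique index with $\omega \in A_k$; then $\Theta^k(\omega) = \Theta(\omega) < \infty$ and $\sy_\cdot(\omega) = \sy^k_\cdot(\omega)$ on $[0,\Theta(\omega))$, so Theorem \ref{blow up property theorem} gives $\sup_{s \in [0,\Theta(\omega))}\norm{\sy^k(\omega)}_{HV,s}^2 = \infty$, which is (\ref{actualblowup}) since the $HV$ norm is independent of the initial datum. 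For the last claim, on $A_k$ one has $\sy = \sy^k$ and $\sy_0 = \sy_0\mathbbm{1}_{A_k}$, so the first hitting time $\tau^M_t$ defined from $\sy$ coincides on $A_k$ with the corresponding hitting time for $\sy^k$; well-definedness of $\tau^M_t$ follows on each $A_k$ from the continuity reasoning preceding Theorem \ref{blow up property theorem} (the map $s \mapsto \norm{\sy^k}_{HV,s}^2$ has no jump discontinuities, $\sy^k$ being set to zero at $\Theta^k$), and $\mathcal{F}_0$-measurability of the $\mathbbm{1}_{A_k}$ glues these into a genuine stopping time. Since $(\sy^k_{\cdot \wedge \tau^M_t},\tau^M_t)$ is an $H$-valued local strong solution for $\sy_0\mathbbm{1}_{A_k}$ by Theorem \ref{blow up property theorem}, piecing these over $k$ exactly as in the proof of Theorem \ref{v valued existence result} --- Proposition 1.6.14 of \cite{goodair2022stochastic} for the $\mathbbm{1}_{A_k}$ inside the stochastic integral, and a weak limit in $L^2(\Omega \times [0,T];V)$ for progressive measurability --- shows $(\sy_{\cdot \wedge \tau^M_t},\tau^M_t)$ is an $H$-valued local strong solution for $\sy_0$.

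The one part requiring care is the measurability bookkeeping in the gluing step --- the stochastic-integral identity with the $\mathcal{F}_0$-measurable indicators and the progressive measurability in $V$ of the spliced process --- but this repeats, line for line, the argument already carried out in the proof of Theorem \ref{v valued existence result}, so no new ingredient is needed; the content of the theorem is genuinely just the combination of Theorems \ref{v valued existence result}, \ref{uniqueness for V valued solutions} and \ref{blow up property theorem}.
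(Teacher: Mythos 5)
Your proposal follows the paper's own route: existence is Theorem \ref{v valued existence result}; the blow-up and the stopping-time claim are transported from Theorem \ref{blow up property theorem} through the partition $A_k = \{k \leq \norm{\sy_0}_H < k+1\}$ exactly as the paper does, defining $\tau^{k,M}_t$ on each $A_k$ and regluing with the $\mathcal{F}_0$-measurable indicators via the stochastic-integral and weak-limit arguments already carried out in Theorem \ref{v valued existence result}; and pathwise agreement comes from Theorem \ref{uniqueness for V valued solutions}. (Your direct derivation of the blow-up on each $A_k$ is fine and equivalent to the paper's remark that the blow-up follows once $(\sy_{\cdot\wedge\tau^M_t},\tau^M_t)$ is known to be a local strong solution.)

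The one step that does not work as written is the claim that the final paragraph of the proof of Theorem \ref{v valued existence result} can be run ``verbatim with $(\sy,\Theta)$ and $(\py,\Gamma)$ interchanged'' to obtain $\Theta \leq \Gamma$. That paragraph proves $\Gamma \leq \Theta$ by exploiting the specific structure $\Theta = \sum_k \Theta^k \mathbbm{1}_{A_k}$, with each $\Theta^k$ the maximal time for the bounded datum $\sy_0\mathbbm{1}_{A_k}$, together with Corollary \ref{greatness corollary} applied to $\Theta^k$; an arbitrary competitor $\Gamma$ carries no such decomposition, so the two roles are not symmetric and the literal interchange fails. The gap is harmless: having $\Gamma \leq \Theta$ unconditionally, the maximality of $(\py,\Gamma)$ in the sense of Definition \ref{V valued maximal definition} (applied with the competitor pair $(\sy,\Theta)$) immediately upgrades this to $\Gamma = \Theta$. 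The paper instead simply cites Theorem \ref{uniqueness of maximal solution}, whose proof is entirely abstract --- it relies only on Lemma \ref{lemma for maximality}, hence on Theorem \ref{uniqueness for V valued solutions}, both of which were established for unbounded $\sy_0$ --- and so applies unchanged. Either repair is one line; the rest of your argument coincides with the paper's.
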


\begin{remark}
This contains the statement of Theorem \ref{theorem1}. 
\end{remark}

\begin{proof}
 We have justified the existence in Theorem \ref{v valued existence result}, and the uniqueness is immediate from Theorem \ref{uniqueness of maximal solution} as the proof uses only general properties of stopping times associated to solutions (and Theorem \ref{uniqueness for V valued solutions} was shown for the unbounded $\sy_0$). As for the properties shown in Theorem \ref{blow up property theorem} for $\sy_0 \in L^{\infty}(\Omega;H)$, it is sufficient to show that $(\sy_{\cdot \wedge \tau^M_t},\tau^M_t)$ is a local strong solution as once again (\ref{blowupproperty}) would follow (as $\Theta > \tau^M_t$ $\mathbbm{P}-a.s.$ for every $M,t$). We take the same approach as in Theorem \ref{v valued existence result}, defining for each $k$ the stopping time $$\tau^{k,M}_t(\omega):=t \wedge \inf\left\{s \geq 0: \norm{\sy^k(\omega)}_{HV,s}^2 \geq M + \norm{\sy_0(\omega)\mathbbm{1}_{\{k \leq \norm{\sy_0(\omega)}_H <k+1\}}}_H^2 \right\}$$ which is such that $(\sy^k_{\cdot \wedge \tau^{k,M}_t},\tau^{k,M}_t)$ is a local strong solution for $\sy_0\mathbbm{1}_{\{k \leq \norm{\sy_0}_H <k+1\}}$ by Theorem \ref{blow up property theorem}. Noting that \begin{align*}
  \sy_{s \wedge \tau^M_t}(\omega) &=    \sum_{k=1}^\infty\sy_{s \wedge \tau^{k,M}_t}^k(\omega)\mathbbm{1}_{\{k \leq \norm{\sy_0(\omega)}_H <k+1\}}\\
\tau^M_t(\omega) &= \sum_{k=1}^\infty\tau^{k,M}_t(\omega)\mathbbm{1}_{\{k \leq \norm{\sy_0(\omega)}_H <k+1\}}
 \end{align*}
 then the arguments of Theorem \ref{v valued existence result} immediately apply to show that $(\sy_{\cdot \wedge \tau^M_t},\tau^M_t)$ is a local strong solution as required.
\end{proof}

\section{U-Valued Solutions} \label{h valued}

We now extend the framework of Section \ref{Section v valued} to cover the existence, uniqueness and maximality results for an SPDE (\ref{thespde}) satisfying both the assumptions of Subsection \ref{assumptionschapter} and a new set of assumptions relative to a fourth Hilbert Space to be introduced. The need to extend this framework is motivated by application, see Subsection \ref{subsection velocity}. There it is explicitly demonstrated that the SALT Navier-Stokes Equation in Velocity Form does not satisfy the assumptions of Subsection \ref{assumptionschapter} for the optimal spaces, but in the associated paper \cite{goodair2022inprep} we show that the assumptions of this Section (given in Subsection \ref{subsection for assumptions 2}) are satisfied in order to deduce the existence of a solution in the optimal spaces. Supplementing this is Subsection \ref{subsection vorticity} in which we show that Section \ref{Section v valued} is sufficient to deduce the existence of a solution to the SALT Navier-Stokes Equation in Vorticity Form in the optimal spaces, hence why we distinguish between the criteria of Sections \ref{Section v valued} and \ref{h valued}.\\

Our method of proof was just lightly touched upon in the introduction, so before giving the assumptions we expand on that description here. We wish to again apply the Convergence of Random Cauchy Sequences lemma (Lemma \ref{greenlemma}) to deduce the existence of a local strong solution, and continue to prove maximality as seen in Section \ref{Section v valued}. The Cauchy sequence here will not be a Galerkin Approximation, but rather a sequence of solutions of the full equation (\ref{thespde}) which we know to exist from Theorem \ref{theorem1}. The purpose of this section is to find solutions for $\sy_0 \in U$ as opposed to $H$, so the approximating sequence is given by the solutions to (\ref{thespde}) corresponding to the initial conditions $(\mathcal{P}_n\sy_0)$ where $\mathcal{P}_n\sy_0 \in H$ and $\mathcal{P}_n\sy_0 \longrightarrow \sy_0$ in $U$ as $n \rightarrow \infty$. The most delicate issue of the matter is for what type of solutions we should take in our approximation, as the $H-$valued ones only exist up until a blow up of the $HV$ norm as specified in (\ref{actualblowup}). We need to work with solutions existing up until a blow up in the $UH$ norm to take first hitting times in this  norm as done for (\ref{tauMtn}). The fix is to extend the $H-$valued solutions into an intermediary notion of solution (which we call a $U/H$ solution, Definition \ref{definitionofVHsolution}) at the cost of some regularity, but retaining the property that these processes exist in $V$. 

\subsection{Assumptions} \label{subsection for assumptions 2}

Suppose now that $X$ is a Hilbert Space with embedding $U \xhookrightarrow{} X$.  We ask that there is a continuous bilinear form $\inner{\cdot}{\cdot}_{X \times H}: X \times H \rightarrow \R$ such that for $\phi \in U$ and $\psi \in H$, \begin{equation} \label{bilinear form}
    \inner{\phi}{\psi}_{X \times H} =  \inner{\phi}{\psi}_{U}.
\end{equation}
Moreover it is now necessary that the system $(a_n)$ forms an orthogonal basis of $U$. We state the remaining assumptions now for arbitrary elements $\phi,\psi \in H$ and $t \in [0,\infty)$, and continue to use the $c,K,\tilde{K}, \kappa$ notation of Subsection \ref{assumptionschapter}. The operators $\mathcal{A}$ and $\mathcal{G}$ must now be extended to the larger spaces, and are such that for any $T>0$, $\mathcal{A}:[0,T] \times H \rightarrow X$ and  $\mathcal{G}:[0,T] \times H \rightarrow \mathscr{L}^2(\mathfrak{U};U)$ are measurable.

\begin{assumption} \label{gg3}
\begin{align}
    \label{wellposedinX}
    \norm{\mathcal{A}(t,\boldsymbol{\phi})}^2_X +  \sum_{i=1}^\infty \norm{\mathcal{G}_i(t,\boldsymbol{\phi})}^2_U &\leq c_tK(\boldsymbol{\phi})\left[1 + \norm{\boldsymbol{\phi}}_H^2\right],\\ \label{222*} \norm{\mathcal{A}(t,\boldsymbol{\phi}) - \mathcal{A}(t,\boldsymbol{\psi})}_X^2 &\leq  c_t\tilde{K}_2(\phi,\psi)\norm{\phi-\psi}_H^2
\end{align}

\end{assumption}

\begin{assumption} \label{uniqueness for H valued}
\begin{align}
    2\inner{\mathcal{A}(t,\boldsymbol{\phi}) - \mathcal{A}(t,\boldsymbol{\psi})}{\boldsymbol{\phi} - \boldsymbol{\psi}}_X + \sum_{i=1}^\infty\norm{\mathcal{G}_i(t,\boldsymbol{\phi}) - \mathcal{G}_i(t,\boldsymbol{\psi})}_X^2 &\leq \label{therealcauchy1*} c_{t}\tilde{K}_2(\boldsymbol{\phi},\boldsymbol{\psi}) \norm{\boldsymbol{\phi}-\boldsymbol{\psi}}_X^2,\\
    \sum_{i=1}^\infty \inner{\mathcal{G}_i(t,\boldsymbol{\phi}) - \mathcal{G}_i(t,\boldsymbol{\psi})}{\boldsymbol{\phi}-\boldsymbol{\psi}}^2_X & \leq \label{therealcauchy2*} c_{t} \tilde{K}_2(\boldsymbol{\phi},\boldsymbol{\psi}) \norm{\boldsymbol{\phi}-\boldsymbol{\psi}}_X^4
\end{align}
\end{assumption}

\begin{assumption} \label{assumption for probability in H}
With the stricter requirement that $\phi\in V$ then 
\begin{align}
   \label{probability first H} 2\inner{\mathcal{A}(t,\boldsymbol{\phi})}{\boldsymbol{\phi}}_U + \sum_{i=1}^\infty\norm{\mathcal{G}_i(t,\boldsymbol{\phi})}_U^2 &\leq c_tK(\boldsymbol{\phi}) -  \kappa\norm{\boldsymbol{\phi}}_H^2,\\\label{probability second H}
    \sum_{i=1}^\infty \inner{\mathcal{G}_i(t,\boldsymbol{\phi})}{\boldsymbol{\phi}}^2_U &\leq c_tK(\boldsymbol{\phi}).
\end{align}

\begin{remark}
This is a stronger assumption than Assumption \ref{assumption for prob in V}.
\end{remark}
\end{assumption}

As in Subsection \ref{assumptionschapter} we briefly address the purpose of these assumptions.

\begin{itemize}
    \item The significance of the property (\ref{bilinear form}) was discussed in the introduction, but to recapitulate this is necessary in applying the It\^{o} Formula to deduce results pertaining to the $U$ inner product for solutions of $H$ regularity only satisfying an identity in $X$. In particular it is needed for the analysis of Propositions \ref{theorem:cauchy reduce regularity} and \ref{prob theorem two} in order to once more apply the Convergence of Random Cauchy Sequences lemma (Lemma \ref{greenlemma}).
    \item The system $(a_n)$ is now required to form an orthogonal basis of $U$ for the property that given any $\phi \in U$, $\norm{(I-\mathcal{P}_n)\phi}_U \longrightarrow 0$ as $n \rightarrow \infty$. This is applied to show that the sequence of projected initial conditions converge to the original $\sy_0$ in $U$, necessary in Proposition \ref{theorem:cauchy reduce regularity} and Theorem \ref{existence of local strong H solution}. In Section \ref{Section v valued} we had that $\sy_0 \in H$ so this characteristic came from (\ref{mu2}). 
    \item Assumption \ref{gg3} provides the growth constraint (\ref{wellposedinX}) which ensures that the integrals in (\ref{identityindefinitionoflocalsolution**}) are well defined. We also impose a local Lipschitz type condition on $\mathcal{A}$ in (\ref{222*}) which we use to show the convergence of the approximating sequence of time intergrals to the appropriate limit (Theorem \ref{existence of local strong H solution}).
    \item Assumption \ref{uniqueness for H valued} is used to show the uniqueness of solutions (Theorem \ref{uniqueness for H valued solutions}).
    \item Assumption \ref{assumption for probability in H} is used to show the uniform rate of convergence of the approximating solutions to their initial conditions (Proposition \ref{prob theorem two}).

\end{itemize}

%\begin{remark}
%Analagous to ..., the conditions
%\begin{align*}
 %   \mathcal{A} &\in C\left([0,T] \times H; X\right)\\
  %  \mathcal{G} &\in C\left([0,T] \times H; \mathscr{L}^2(\mathfrak{U};X)\right)
%\end{align*}
%imply Assumption \ref{secondmeasurabilityassumption}. 
%\end{remark}

\subsection{Definitions and Main Results}

Similarly to Subsection \ref{subsection:notionsofsolution} we now state the relevant definitions and main results of this section. 

\begin{definition}[$U-$valued local strong solution] \label{definitionofHsolution}
Let $\sy_0: \Omega \rightarrow U$ be $\mathcal{F}_0-$measurable. A pair $(\sy,\tau)$ where $\tau$ is a $\mathbbm{P}-a.s.$ positive stopping time and $\sy$ is a process such that for $\mathbbm{P}-a.e.$ $\omega$, $\sy_{\cdot}(\omega) \in C\left([0,T];U\right)$ and $\sy_{\cdot}(\omega)\mathbbm{1}_{\cdot \leq \tau(\omega)} \in L^2\left([0,T];H\right)$ for all $T>0$ with $\sy_{\cdot}\mathbbm{1}_{\cdot \leq \tau}$ progressively measurable in $H$, is said to be a $U-$valued local strong solution of the equation (\ref{thespde}) if the identity
\begin{equation} \label{identityindefinitionoflocalsolution**}
    \sy_{t} = \sy_0 + \int_0^{t\wedge \tau} \mathcal{A}(s,\sy_s)ds + \int_0^{t \wedge \tau}\mathcal{G} (s,\sy_s) d\mathcal{W}_s
\end{equation}
holds $\mathbbm{P}-a.s.$ in $X$ for all $t \geq 0$.
\end{definition}

\begin{remark} \label{remark for H}
The Remarks \ref{remark1}, \ref{remark on prog meas equivalence} hold in the analagous way here. The same justification of \cite{goodair2022stochastic} Subsections 2.2 and 2.4 shows that $\int_0^{t\wedge \tau} \mathcal{A}(s,\sy_s)ds$ is well defined in $X$ and  $\int_0^{\cdot \wedge \tau}\mathcal{G} (s,\sy_s) d\mathcal{W}_s$ as a local martingale in $U$, referring to Assumption (\ref{wellposedinX}).
\end{remark}

\begin{definition}[$U-$valued maximal strong solution] \label{H valued maximal definition}
A pair $(\sy,\Theta)$ such that there exists a sequence of stopping times $(\theta_j)$ which are $\mathbb{P}-a.s.$ monotone increasing and convergent to $\Theta$, whereby $(\sy_{\cdot \wedge \theta_j},\theta_j)$ is a $U-$valued local strong solution of the equation (\ref{thespde}) for each $j$, is said to be a $U-$valued maximal strong solution of the equation (\ref{thespde}) if for any other pair $(\py,\Gamma)$ with this property then $\Theta \leq \Gamma$ $\mathbb{P}-a.s.$ implies $\Theta = \Gamma$ $\mathbb{P}-a.s.$.
\end{definition}

\begin{definition} \label{definition unique}
A $U-$valued maximal strong solution $(\sy,\Theta)$ of the equation (\ref{thespde}) is said to be unique if for any other such solution $(\py,\Gamma)$, then $\Theta = \Gamma$ $\mathbb{P}-a.s.$ and \begin{equation} \nonumber\mathbb{P}\left(\left\{\omega \in \Omega: \sy_{t}(\omega) =  \py_{t}(\omega) \quad \forall t \in [0,\Theta) \right\} \right) = 1. \end{equation}
\end{definition}

The following is the main result of the section, and holds true if the assumptions of Subsections \ref{assumptionschapter} and \ref{subsection for assumptions 2} are met.

\begin{theorem} \label{theorem2}
For any given $\mathcal{F}_0-$ measurable $\sy_0:\Omega \rightarrow U$, there exists a unique $U-$valued maximal strong solution $(\sy,\Theta)$ of the equation (\ref{thespde}). Moreover at $\mathbb{P}-a.e.$ $\omega$ for which $\Theta(\omega)<\infty$, we have that \begin{equation}\label{blowuppropertyVH} \sup_{r \in [0,\Theta(\omega))}\norm{\sy_r(\omega)}_U^2 + \int_0^{\Theta(\omega)}\norm{\sy_r(\omega)}_H^2dr = \infty.\end{equation}
\end{theorem}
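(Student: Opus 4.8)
The strategy mirrors Section \ref{Section v valued} exactly, with the role of the Galerkin approximation $(\sy^n)$ now played by the sequence of genuine $H$-valued solutions corresponding to projected initial conditions. Concretely, the plan is as follows. First I would reduce to the case $\sy_0 \in L^\infty(\Omega;U)$, since the partition-of-$\Omega$ argument of Subsection \ref{subsection maximality for unbounded} (decomposing $\sy_0$ according to $\{k \le \norm{\sy_0}_U < k+1\}$ and using $\mathcal{F}_0$-measurability of the indicator to pull it through the stochastic integral) carries over verbatim. For bounded $\sy_0$, set $\sy_0^n := \mathcal{P}_n\sy_0$; since $(a_n)$ is an orthogonal basis of $U$ we have $\sy_0^n \to \sy_0$ in $U$, and since $\mathcal{P}_n\sy_0 \in V_n \subset H$ with $\norm{\mathcal{P}_n\sy_0}_H$ possibly unbounded, Theorem \ref{theorem1} supplies a unique $H$-valued maximal strong solution for each $\sy_0^n$. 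As flagged in the section introduction, these only exist up to a blow-up of the $HV$ norm, so the next step is to extend each such solution to the intermediary "$U/H$" notion (Definition \ref{definitionofVHsolution}), losing the uniform $HV$ control but keeping a process lying in $V$ on which first hitting times in the $UH$ norm make sense. One then defines, for fixed $M>1$, $t>0$, the stopping times $\tau^{M,t}_n$ as the first time $\norm{\sy^n}^2_{UH,s}$ reaches $M + \norm{\sy^n_0}_U^2$, exactly as in \eqref{tauMtn}, obtaining the pathwise bound $\norm{\sy^n}^2_{UH,\tau^{M,t}_n} \le M + \norm{\sy_0}^2_{L^\infty(\Omega;U)}$.

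Next I would establish the two ingredients needed for the Convergence of Random Cauchy Sequences lemma (Lemma \ref{greenlemma}), now applied with $\mathcal{H}_1 = H$, $\mathcal{H}_2 = U$. The Cauchy property is proved by writing the identity for $\sy^{m,n} := \sy^n - \sy^m$ in $X$, applying the It\^o formula in the $U$ inner product (legitimate because of the bilinear form relation \eqref{bilinear form} between $X \times H$ and $U$, which is precisely what lets us test an $X$-identity against the $U$-structure), and invoking Assumption \ref{uniqueness for H valued} — specifically \eqref{therealcauchy1*} gives $2\inner{\mathcal{A}(\cdot,\sy^n)-\mathcal{A}(\cdot,\sy^m)}{\sy^{m,n}}_X + \sum_i \norm{\mathcal{G}_i(\cdot,\sy^n)-\mathcal{G}_i(\cdot,\sy^m)}_X^2 \le c_t \tilde{K}_2 \norm{\sy^{m,n}}_X^2$, and \eqref{therealcauchy2*} controls the quadratic variation of the martingale term for the BDG estimate. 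Crucially, unlike Section \ref{Section v valued}, there is no projection-difference term $\sum_i \norm{(I-\mathcal{P}_m)\mathcal{G}_i}_U^2$ to kill, because the approximating processes are full solutions, not finite-dimensional ones; hence the Cauchy property can be proved directly in the space of existence ($UH$) with no appeal to uniform higher regularity, as promised in the introduction. After a stochastic Gr\"onwall application (Lemma \ref{gronny}) one gets $\mathbb{E}\norm{\sy^{m,n}}^2_{UH,\tau^{M,t}_m \wedge \tau^{M,t}_n} \le C\,\mathbb{E}\norm{\sy^{m,n}_0}_U^2 \to 0$. The uniform rate of convergence to the initial condition, $\lim_{S\to 0}\sup_n \mathbb{E}[\norm{\sy^n}^2_{UH,\tau^{M,t}_n \wedge S} - \norm{\sy^n_0}_U^2] = 0$, follows from Assumption \ref{assumption for probability in H} — the It\^o formula in $U$ together with \eqref{probability first H} (whose $-\kappa\norm{\phi}_H^2$ term absorbs the $H$-norm contribution, this requiring $\phi \in V$, available since the $U/H$ solutions live in $V$) and \eqref{probability second H} for the martingale part.

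With these in hand, Lemma \ref{greenlemma} yields a stopping time $\tau^{M,t}_\infty > 0$ $\mathbb{P}$-a.s., a subsequence, and a limiting process $\sy$ with $\sy^{n_l} \to \sy$ in $UH$ both $\mathbb{P}$-a.s. and in $L^2(\Omega)$; the $L^2([0,T];H)$ regularity and $H$-progressive measurability of $\sy\mathbbm{1}_{\cdot \le \tau^{M,t}_\infty}$ come from a weak-compactness argument identical to that in Theorem \ref{existence of limiting pair theorem} (uniform bound in $L^2(\Omega\times[0,T];H)$, extract weak limit, match with the strong $U$-limit via the embedding). Passing to the limit in the identity — testing against $\phi \in U$, using \eqref{wellposedinX} and the Lipschitz bound \eqref{222*} to handle the drift term and the It\^o isometry plus \eqref{wellposedinX} for the stochastic term — shows $(\sy,\tau^{M,t}_\infty)$ satisfies \eqref{identityindefinitionoflocalsolution**} in $X$, and Proposition \ref{rockner prop} upgrades this to $C([0,T];U)$ continuity; so $(\sy,\tau^{M,t}_\infty)$ is a $U$-valued local strong solution. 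Uniqueness of $U$-valued local strong solutions is then a direct energy estimate in the $X \times H$ / $U$ pairing using Assumption \ref{uniqueness for H valued} and a localizing sequence $\gamma_R$ as in Theorem \ref{uniqueness for V valued solutions}, giving $\mathbb{E}\norm{\sy^1-\sy^2}^2_{UH,\tau} = 0$. Finally, maximality and the blow-up characterisation \eqref{blowuppropertyVH} are obtained by repeating Subsections \ref{subsection: maximality for bounded in V} and \ref{subsection maximality for unbounded} word for word: Zorn's lemma on the set of solution-stopping-times (using that the max of two is again one, via uniqueness), the extension lemma and the analogue of Corollary \ref{greatness corollary}, then showing that at $\tau^M_t \wedge \Theta$ a solution still exists and can be extended, forcing $\tau^M_t < \Theta$ for every $M,t$, which is exactly \eqref{blowuppropertyVH}; and the unbounded case by the same countable partition of $\sy_0$.

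I expect the main obstacle to be the passage to and manipulation of the intermediary "$U/H$" solution notion: one must argue that the $H$-valued maximal solution of Theorem \ref{theorem1}, which a priori is controlled only in the $HV$ norm up to $HV$-blow-up, genuinely extends to a process defined up to the $UH$-blow-up time with the regularity needed for the first hitting time $\tau^{M,t}_n$ and the It\^o formula in $U$ to be legitimate — and to verify the It\^o formula applies in the $X$/$H$/$U$ "admissable triplet" setting (invoking \eqref{bilinear form}) rather than a Gelfand triple. Once that bookkeeping is set up correctly, the remaining estimates are structurally identical to Section \ref{Section v valued} but strictly simpler, since no dimension-mismatch terms appear.
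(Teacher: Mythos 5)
Your overall architecture is exactly the paper's: extend the $H$-valued maximal solutions of Theorem \ref{theorem1} for the projected data $\mathcal{P}_n\sy_0$ to $U/H$ solutions existing up to $UH$-blow-up, run the Cauchy property and the uniform rate of convergence to the initial data, apply Lemma \ref{greenlemma}, identify the limit as a $U$-valued local strong solution, and then repeat the Zorn's lemma / extension / partition machinery of Subsections \ref{subsection: maximality for bounded in V}--\ref{subsection maximality for unbounded} with $(H,U,X)$ in place of $(V,H,U)$. The uniqueness step (Assumption \ref{uniqueness for H valued} in the $X$ pairing with the localising times $\gamma_R$), the use of Assumption \ref{assumption for probability in H} for the rate of convergence, and the limit passage via (\ref{222*}) all match the paper.

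There is, however, one concrete error in your Cauchy step. You apply the It\^{o} formula for $\norm{\cdot}_U^2$ (via the $X\times H$ bilinear form (\ref{bilinear form})) but then invoke Assumption \ref{uniqueness for H valued}, i.e.\ (\ref{therealcauchy1*})--(\ref{therealcauchy2*}). These do not match: the It\^{o} formula in $U$ produces $2\inner{\mathcal{A}(\cdot,\sy^n)-\mathcal{A}(\cdot,\sy^m)}{\sy^{m,n}}_{U} + \sum_i\norm{\mathcal{G}_i(\cdot,\sy^n)-\mathcal{G}_i(\cdot,\sy^m)}_U^2$, whereas (\ref{therealcauchy1*}) bounds the corresponding $X$-quantities, and — more importantly — it carries no coercivity term $-\kappa\norm{\boldsymbol{\phi}-\boldsymbol{\psi}}_H^2$, so it cannot produce the $\int\norm{\sy^{m,n}_s}_H^2\,ds$ component of the $UH$ norm that Lemma \ref{greenlemma} with $\mathcal{H}_1=H$, $\mathcal{H}_2=U$ requires. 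The correct ingredient is Assumption \ref{therealcauchy assumptions} from Section \ref{Section v valued}, i.e.\ (\ref{therealcauchy1})--(\ref{therealcauchy2}); this is precisely where the defining $V$-membership of the $U/H$ solutions is used, since it guarantees $\mathcal{A}(s,\sy^n_s)-\mathcal{A}(s,\sy^m_s)\in U$ for a.e.\ $(s,\omega)$ so that $\inner{\cdot}{\cdot}_{X\times H}$ reduces to $\inner{\cdot}{\cdot}_U$ and the $V$-stated monotonicity assumption applies. You already record the $V$-membership as the justification for Assumption \ref{assumption for probability in H}, so the fix is available within your own setup, but as written the Cauchy estimate would not close. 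Assumption \ref{uniqueness for H valued} is reserved for the uniqueness argument in $X$.
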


The remainder of Section \ref{h valued} follows the path portrayed here.

\begin{tikzpicture}[node distance=2cm]

\node (1) [startstop, xshift=10cm] {Suppose that $\sy_0 \in L^{\infty}(\Omega;U)$};

\node(2)[startstop2, below of=1, yshift=-0.5cm]{Consider the sequence of $H-$valued maximal strong solutions of equation (\ref{thespde}) corresponding to the initial conditions $(\mathcal{P}_n\sy_0)$, which we know to exist from Theorem \ref{theorem1}.};

\node(3)[startstop2, below of=2, yshift=-0.9cm]{Extend these solutions (at the cost of regularity) to a maximal time characterised as in (\ref{blowuppropertyVH}). In particular they exist up until first hitting times taken in the norm of $L^\infty([0,T];U) \cap L^2([0,T];H)$.};

\node(4)[startstop, below of=3, yshift=-0.9cm, xshift= -4cm]{Show a Cauchy Property in the norm of $L^2\left(\Omega;L^\infty([0,T];U) \cap L^2([0,T];H) \right)$ up until the first hitting times};

\node(5)[startstop, right of=4, xshift= 6cm]{Prove a uniform rate of convergence of the processes to their initial conditions};

\node(6)[startstop, below of=4, yshift=-1cm]{Apply the Convergence of Random Cauchy Sequences lemma (Lemma \ref{greenlemma}) to deduce the existence of a limiting process and stopping time, which is shown to be a $U-$valued local strong solution};

\node(7)[startstop', right of=6, xshift= 6cm]{Consider an arbitrary $\sy_0$, relieving the $L^{\infty}(\Omega;U)$ constraint};

\node(8)[startstop', below of=7, yshift=-1cm]{Establish uniqueness of $U-$valued local strong solutions};

\node(9)[startstop2, below of=8, yshift=-0.7cm, xshift=-4cm]{Verify the existence and uniqueness of $U-$valued maximal strong solutions for the bounded initial condition, and characterise the maximal time as in (\ref{blowuppropertyVH})};

\node(10)[startstop2', below of=9, yshift=-0.7cm]{Partition the arbitrary $\sy_0$ into countably many intervals within each of which it is bounded, combining them to prove Theorem \ref{theorem2}};

\draw[arrow] (1) -- (2);
\draw[arrow] (2) -- (3);
\draw[arrow] (3) -- (4);
\draw[arrow] (3) -- (5);
\draw[arrow] (4) -- (6);
\draw[arrow] (5) -- (6);
\draw[arrow] (7) -- (8);
\draw[arrow] (6) -- (9);
\draw[arrow] (8) -- (9);
\draw[arrow] (9) -- (10);

\end{tikzpicture}

\subsection{Uniqueness}

Before showing the existence of such solutions, we immediately show uniqueness and do so for any given initial condition $\sy_0 \in U$.

\begin{theorem} \label{uniqueness for H valued solutions}
Suppose that $(\sy^1,\tau_1)$ and $(\sy^2,\tau_2)$ are two $U-$valued local strong solutions of the equation (\ref{thespde}) for a given initial condition $\sy_0$. Then \begin{equation} \nonumber\mathbbm{P}\left(\left\{\omega \in \Omega: \sy^1_{t \wedge \tau_1(\omega) \wedge \tau_2(\omega) }(\omega) =  \sy^2_{t \wedge \tau_1(\omega) \wedge \tau_2(\omega)}(\omega) \quad \forall t \in [0,\infty) \right\} \right) = 1. \end{equation}
\end{theorem}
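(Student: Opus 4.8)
The plan is to mirror the proof of Theorem~\ref{uniqueness for V valued solutions}, but one regularity level higher: the r\^{o}les played there by the $H$ and $U$ inner products are now played by the $U$ and $X$ inner products, and since Assumption~\ref{uniqueness for H valued} carries no coercivity gain the argument is in fact shorter. Write $\py := \sy^1 - \sy^2$ and $\tau := \tau_1 \wedge \tau_2$. Subtracting the two instances of~(\ref{identityindefinitionoflocalsolution**}) shows that $\mathbbm{P}$-a.s.\ in $X$, for all $t \geq 0$,
\begin{equation*}
    \py_{t\wedge\tau} = \int_0^{t\wedge\tau}\left[\mathcal{A}(s,\sy^1_s) - \mathcal{A}(s,\sy^2_s)\right]ds + \int_0^{t\wedge\tau}\left[\mathcal{G}(s,\sy^1_s) - \mathcal{G}(s,\sy^2_s)\right]d\mathcal{W}_s,
\end{equation*}
the drift being an $X$-valued Bochner integral and the stochastic integral a local martingale in $U \hookrightarrow X$ by~(\ref{wellposedinX}). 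Applying the abstract It\^{o} formula of \cite{goodair2022stochastic} (Subsection~2.5) to $\norm{\py_{t\wedge\tau}}_X^2$ — we work throughout in the $X$ inner product, so the pairing~(\ref{bilinear form}) is not needed here, although it would be were we to work in $U$ as in the existence argument — gives
\begin{align*}
    \norm{\py_{t\wedge\tau}}_X^2 &= 2\int_0^{t\wedge\tau}\inner{\mathcal{A}(s,\sy^1_s) - \mathcal{A}(s,\sy^2_s)}{\py_s}_X ds + \int_0^{t\wedge\tau}\sum_{i=1}^\infty\norm{\mathcal{G}_i(s,\sy^1_s) - \mathcal{G}_i(s,\sy^2_s)}_X^2 ds\\
    &\qquad + 2\int_0^{t\wedge\tau}\inner{\mathcal{G}(s,\sy^1_s) - \mathcal{G}(s,\sy^2_s)}{\py_s}_X d\mathcal{W}_s.
\end{align*}
By~(\ref{therealcauchy1*}) the first two terms are dominated by $\int_0^{t\wedge\tau}c_s\tilde{K}_2(\sy^1_s,\sy^2_s)\norm{\py_s}_X^2\,ds$.

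Next I would treat the integrability exactly as in Theorem~\ref{uniqueness for V valued solutions}. Since each $\sy^j_{\cdot}\mathbbm{1}_{\cdot\le\tau}$ lies in $L^\infty([0,T];U)\cap L^2([0,T];H)$ for $\mathbbm{P}$-a.e.\ $\omega$, the stopping times
\begin{equation*}
    \gamma_R^j(\omega) := R \wedge \inf\left\{s \ge 0 : \sup_{r\in[0,s]}\norm{\sy^j_{r\wedge\tau}(\omega)}_U^2 + \int_0^s \norm{\sy^j_{r\wedge\tau}(\omega)}_H^2\,dr \ge R\right\}, \qquad \gamma_R := \gamma_R^1 \wedge \gamma_R^2
\end{equation*}
increase to $\infty$ $\mathbbm{P}$-a.s.\ as $R\to\infty$. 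Stopping everything at $\gamma_R$ bounds $\tilde K_2(\sy^1,\sy^2)$ and the coefficients of the martingale term, so one may take expectations, pass to the supremum over $[0,t]$, and apply the Burkholder--Davis--Gundy inequality, estimating the quadratic variation $\sum_i\inner{\mathcal{G}_i(s,\sy^1_s)-\mathcal{G}_i(s,\sy^2_s)}{\py_s}_X^2$ of the martingale by~(\ref{therealcauchy2*}); after absorbing the resulting half-power term one is left with an inequality of precisely the form treated by the stochastic Gr\"{o}nwall lemma (Lemma~\ref{gronny}), with driving process $\tilde K_2(\sy^1_{\cdot\wedge\gamma_R},\sy^2_{\cdot\wedge\gamma_R})$ and — crucially — with additive constant $0$, because $\py_0 = \sy_0 - \sy_0 = 0$. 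Lemma~\ref{gronny} then yields $\mathbbm{E}\sup_{r\in[0,t]}\norm{\py_{r\wedge\tau\wedge\gamma_R}}_X^2 = 0$ for every $R$, and monotone convergence in $R$ (the random variables are monotone in $R$ and converge $\mathbbm{P}$-a.s.\ since $\gamma_R\to\infty$) gives $\mathbbm{E}\sup_{r\in[0,t]}\norm{\py_{r\wedge\tau}}_X^2 = 0$.

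Finally, this says that for each $t$ and $\mathbbm{P}$-a.e.\ $\omega$ one has $\sy^1_{r\wedge\tau(\omega)}(\omega) = \sy^2_{r\wedge\tau(\omega)}(\omega)$ in $X$ for all $r\in[0,t]$; since $\sy^1_{r\wedge\tau(\omega)}(\omega),\sy^2_{r\wedge\tau(\omega)}(\omega)\in U$ and the continuous embedding $U\hookrightarrow X$ is injective, the equality already holds in $U$, and a countable union over $t\in\N$ delivers the claimed indistinguishability. The only genuine obstacle, just as in Theorem~\ref{uniqueness for V valued solutions}, is the absence of a priori integrability over $\Omega$ of the coefficient $\tilde K_2(\sy^1,\sy^2)$, which forces the localisation by $(\gamma_R)$ and the subsequent monotone limit; the justification of the It\^{o} formula in the $X$/$U$ setting (legitimate within the framework of \cite{goodair2022stochastic}) is the other point needing care but is entirely routine here.
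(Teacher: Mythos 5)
Your proof is correct and follows essentially the same route as the paper's: apply the It\^{o} formula for $\norm{\cdot}_X^2$ to the difference process, invoke Assumption \ref{uniqueness for H valued}, localise with the stopping times $\gamma_R$ to justify expectations and the Burkholder--Davis--Gundy inequality, conclude via the stochastic Gr\"{o}nwall lemma with vanishing additive constant, and remove the localisation by monotone convergence. The observations that the pairing (\ref{bilinear form}) is not needed when working purely in the $X$ inner product and that injectivity of $U\hookrightarrow X$ upgrades the conclusion to equality in $U$ are both accurate.
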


\begin{proof}
 We shall spare most of the details in this argument as it follows in the exact same way as Theorem \ref{uniqueness for V valued solutions}. We apply the It\^{o} Formula to the same difference process but this time in $X$, making the same definitions of $\tilde{\sy}^1$ and $\tilde{\sy}^2$ and employing Assumption \ref{uniqueness for H valued}. In this iteration we define $\gamma_R$ with $\tilde{\sy}^{1,R}$ and $\tilde{\sy}^{2,R}$ as before. We then reach the analogy of (\ref{someconstanthatc}) which is $$\mathbbm{E}\sup_{r \in [\theta_j,\theta_k]}\norm{\tilde{\py}^R_{r}}_X^2 \leq \hat{c}\mathbbm{E}\left(\norm{\tilde{\py}^R_{\theta_j}}_X^2 + \int_{\theta_j}^{\theta_k}\tilde{K}_2(\tilde{\sy}^{1,R}_s,\tilde{\sy}^{2,R}_s)\norm{\tilde{\py}^R_s}_X^2ds\right)$$ from which the proof is concluded in the same fashion.
\end{proof}

\subsection{Maximality of Solutions for an H-Valued Initial Condition}

We introduce now an intermediary notion of solution, to help us pass from the $H-$valued solutions shown to exist in Theorem \ref{v valued existence result} to the $U-$valued ones defined in Definition \ref{definitionofHsolution}.

\begin{definition}[$U/H$ local strong solution] \label{definitionofVHsolution}
Let $\sy_0: \Omega \rightarrow H$ be $\mathcal{F}_0-$measurable. A pair $(\sy,\tau)$ where $\tau$ is a $\mathbbm{P}-a.s.$ positive stopping time and $\sy$ is a process such that for $\mathbbm{P}-a.e.$ $\omega$, $\sy_{\cdot}(\omega) \in C\left([0,T];U\right)$ and $\sy_{\cdot}(\omega)\mathbbm{1}_{\cdot \leq \tau(\omega)} \in L^2\left([0,T];H\right)$ for all $T>0$ with $\sy_{\cdot}\mathbbm{1}_{\cdot \leq \tau}$ progressively measurable in $H$, and $\sy_t(\omega)\mathbbm{1}_{t \leq \tau(\omega)} \in V$ almost everywhere on $\Omega \times [0,\infty)$, is said to be a $U/H$ local strong solution of the equation (\ref{thespde}) if the identity
\begin{equation} \nonumber
    \sy_{t} = \sy_0 + \int_0^{t\wedge \tau} \mathcal{A}(s,\sy_s)ds + \int_0^{t \wedge \tau}\mathcal{G} (s,\sy_s) d\mathcal{W}_s
\end{equation}
holds $\mathbbm{P}-a.s.$ in $X$ for all $t \geq 0$.
\end{definition}

\begin{remark} \label{trivially equal}
Trivially any $H-$valued local strong solution is a $U/H$ one.
\end{remark}

The idea behind introducing this notion of solution is to extend the $H-$valued maximal strong solutions to the maximal time characterised by (\ref{blowuppropertyVH}). This comes with the corresponding loss of regularity in the solution, but by requiring that the process is in $V$ almost surely we can apply the Assumption \ref{therealcauchy assumptions} in the context of our energy methods once more. This is why we do not immediately pass to the $U-$valued solutions for the $H-$valued initial condition, and indeed requiring this initial condition to be in $H$ not just $U$ is what facilitates this requirement of belonging to $V$.

\begin{theorem} \label{blow up property theorem VH}
For any given $\mathcal{F}_0-$measurable $\sy_0: \Omega \rightarrow H$, there exists a unique $U/H$ maximal strong solution $(\sy,\Theta)$ of the equation (\ref{thespde}). Moreover at $\mathbbm{P}-a.e.$ $\omega$ for which $\Theta(\omega)<\infty$, we have that \begin{equation}\nonumber \sup_{r \in [0,\Theta(\omega))}\norm{\sy_r(\omega)}_U^2 + \int_0^{\Theta(\omega)}\norm{\sy_r(\omega)}_H^2dr = \infty.\end{equation} Furthermore for any $M > 1$, $t>0$ the stopping time \begin{equation}
    \label{tauMt no n VH} \tau^M_t(\omega):=t \wedge \inf\left\{s \geq 0: \sup_{r \in [0,s]}\norm{\sy_r(\omega)}_U^2 + \int_0^{s}\norm{\sy_r(\omega)}_H^2dr \geq M + \norm{\sy_0(\omega)}_U^2 \right\}
\end{equation} is well defined and such that $(\sy_{\cdot \wedge \tau^M_t},\tau^M_t)$ is a $U/H$ local strong solution of the equation (\ref{thespde}).
\end{theorem}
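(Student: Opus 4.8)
The plan is to repeat the maximality development of Section~\ref{Section v valued} (Subsections~\ref{subsection: maximality for bounded in V} and \ref{subsection maximality for unbounded}) for the intermediary notion of Definition~\ref{definitionofVHsolution}, with the triple $(X,U,H)$ playing the role that $(U,H,V)$ played there. First I would note that $U/H$ local strong solutions enjoy uniqueness: every $U/H$ local strong solution is in particular a $U$-valued one, so Theorem~\ref{uniqueness for H valued solutions} applies directly. Then I would set up the abstract scaffolding as in Subsection~\ref{subsection: maximality for bounded in V}: let $\mathscr{X}$ be the set of stopping times $\sigma$ for which there is a process $\sy$ with $(\sy,\sigma)$ a $U/H$ local strong solution for the given $\sy_0$, and $\mathscr{Y}$ the set of $\mathbbm{P}$-a.s.\ limits of increasing sequences in $\mathscr{X}$. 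The set $\mathscr{X}$ is non-empty because, by Remark~\ref{trivially equal}, the $H$-valued maximal strong solution of Theorem~\ref{v valued existence result} furnishes $U/H$ local strong solutions. The pasting statement ``$\sigma_1\vee\sigma_2\in\mathscr{X}$ for $\sigma_1,\sigma_2\in\mathscr{X}$'' follows as in Lemma~\ref{lemma for maximality} using $U/H$-uniqueness to glue, and Zorn's Lemma applied as in Theorem~\ref{existence of maximal solution} produces a maximal $\Theta\in\mathscr{Y}$ and hence a $U/H$ maximal strong solution $(\sy,\Theta)$; its uniqueness and the domination $\Theta\ge\Gamma$ for all $\Gamma\in\mathscr{Y}$ come from the arguments of Theorem~\ref{uniqueness of maximal solution} and Corollary~\ref{greatness corollary} verbatim. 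For a general (unbounded) $\sy_0:\Omega\to H$ this already works, since Theorem~\ref{theorem1} supplies the $H$-valued building blocks for unbounded data; alternatively one partitions $\Omega$ along $\{k\le\norm{\sy_0}_H<k+1\}$ as in Theorem~\ref{v valued existence result}.

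For the blow-up characterisation I would follow Subsection~\ref{subsection: maximality for bounded in V} closely. After extending $\sy$ to $[0,\infty)$ by $\sy_r:=0$ for $r\ge\Theta$, the map $s\mapsto \sup_{r\in[0,s]}\norm{\sy_r}_U^2+\int_0^s\norm{\sy_r}_H^2dr$ is non-decreasing and continuous on $[0,\Theta)$ --- continuity in $U$ and local $L^2$-in-$H$ integrability being inherited from the solutions $(\sy_{\cdot\wedge\theta_j},\theta_j)$ --- so it can only fail to be continuous at $\Theta$ by tending to $+\infty$; hence $\tau^M_t$ in (\ref{tauMt no n VH}) is a well-defined stopping time and $\norm{\sy}_{UH,\tau^M_t}^2\le M+\norm{\sy_0}_U^2$. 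Putting $\rho^M_t:=\tau^M_t\wedge\Theta$, I would define $\py$ by the stochastic identity (\ref{definition of py}): the growth bound (\ref{wellposedinX}) together with this $UH$-control makes the time integral well defined in $X$ and the stochastic integral a $U$-valued martingale, and taking limits along $(\theta_j)$ as in Proposition~\ref{prop to blow up} identifies $\py$ with $\sy$ up to $\rho^M_t$, giving the identity in $X$ and the $L^2([0,T];H)$ and a.e.-$V$ regularity. Then (\ref{bilinear form}) allows Proposition~\ref{rockner prop}, now applied with $(X,U,H)$, to upgrade $\py$ to $C([0,T];U)$, so $(\py,\rho^M_t)=(\sy_{\cdot\wedge\rho^M_t},\rho^M_t)$ is a $U/H$ local strong solution and $\rho^M_t\in\mathscr{X}$. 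Invoking the extension lemma and the domination $\Theta\ge\Gamma$ exactly as in Lemma~\ref{first lemma to blow up} and Theorem~\ref{blow up property theorem} forces $\tau^M_t<\Theta$ $\mathbbm{P}$-a.s.; since $M>1$ and $t>0$ are arbitrary, (\ref{blowuppropertyVH}) follows, and $(\sy_{\cdot\wedge\tau^M_t},\tau^M_t)$ is a $U/H$ local strong solution because $\rho^M_t=\tau^M_t$.

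The main obstacle is the $U/H$-analogue of the extension Lemma~\ref{extension lemma}, i.e.\ that a $U/H$ local strong solution $(\sy,\rho)$ with $\rho$ bounded can be strictly extended. Since a $U/H$ solution is only continuous in $U$, its value $\sy_\rho$ lies in $U$ but need not lie in $H$, so one cannot simply restart the $H$-valued theory at $\rho$. The approach I would take is to restart at a time where the process lies in $H$ --- available because $\sy\mathbbm{1}_{\cdot\le\rho}\in L^2([0,T];H)$, combined with a weak-compactness argument yielding the terminal value in $H$ on the event that the $HV$-norm has not blown up --- apply Theorem~\ref{v valued existence result} from that point, and concatenate. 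The delicate points are: selecting such a restart time as a genuine stopping time; checking that the concatenation remains a $U/H$ solution strictly past $\rho$, where the loss of the $L^2([0,T];V)$ bound means continuity in $U$ at the join cannot be read off the identity and must again be recovered via (\ref{bilinear form}) and Proposition~\ref{rockner prop}; and exploiting that Definition~\ref{definitionofVHsolution} demands only a.e.-$V$ membership, which keeps the concatenation admissible even beyond the $HV$-blow-up time of the $H$-valued piece. Everything else in the proof is abstract and transfers from Section~\ref{Section v valued} without change.
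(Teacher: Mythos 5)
Your architecture coincides with the paper's: uniqueness of $U/H$ solutions inherited from Theorem \ref{uniqueness for H valued solutions} (a $U/H$ solution is in particular a $U$-valued one), the Zorn's Lemma scaffolding of Subsection \ref{subsection: maximality for bounded in V} run on the set of $U/H$ stopping times seeded by Remark \ref{trivially equal}, the construction of the candidate $(\py,\rho^M_t)$ by the integral formula now taken in $X$ using (\ref{wellposedinX}), the recovery of $C([0,T];U)$ continuity from (\ref{bilinear form}) and Proposition \ref{rockner prop} applied to the triple $H\subset U\subset X$, and the partition argument of Subsection \ref{subsection maximality for unbounded} for unbounded data. The one genuinely new verification the paper records --- that $\py_{\cdot}\mathbbm{1}_{\cdot \leq \rho^M_t}$ retains the a.e.-$V$ membership, obtained from the identity $\py_{\cdot \wedge \theta_j}\mathbbm{1}_{\cdot\leq\theta_j\wedge\rho^M_t}=\sy_{\cdot\wedge\theta_j\wedge\rho^M_t}\mathbbm{1}_{\cdot\leq\theta_j\wedge\rho^M_t}$ via (\ref{pysyequivalence}) and letting $j\to\infty$ --- is present in your sketch, if compressed.

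The gap is in your resolution of what you rightly flag as the main obstacle: the extension lemma for $U/H$ solutions (Lemma \ref{extension lemma} is only asserted for Definitions \ref{definitionofregularsolution} and \ref{definitionofHsolution}, and the terminal value of a $U/H$ solution lies only in $U$). Your fix is to restart at a time $\sigma<\rho$ at which the process lies in $H$ and concatenate the $H$-valued solution launched there. But by uniqueness the relaunched $H$-valued maximal solution coincides with $\py$ on the overlap and exists only until the blow-up of its own $HV$ norm, which is the $HV$ blow-up of $\py$ itself restarted from $\sigma$. If that blow-up occurs at or before $\rho$ --- precisely the regime in which a $U/H$ solution is meant to outlive the $H$-valued one while its $UH$ norm stays bounded --- then no choice of $\sigma<\rho$ produces a solution strictly past $\rho$, and your weak-compactness alternative (terminal value in $H$) is unavailable by hypothesis. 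In the complementary regime the $HV$ norm is bounded up to $\rho$, your restart does work, but then $(\py,\rho)$ was already an $H$-valued local solution and Theorem \ref{blow up property theorem} applies directly, so nothing beyond Theorem \ref{theorem1} is gained. Consequently the inequality $\tau^M_t<\Theta$, hence (\ref{blowuppropertyVH}) and the final claim of the statement, is only established by your argument on the event where the $HV$ norm does not blow up before the $UH$ norm. Note also that the paper's own proof does not supply this step --- it asserts that the path of Theorem \ref{blow up property theorem} transfers and only records the $V$-membership check --- so the missing ingredient is a local existence (or restart) statement applicable to terminal values that lie only in $U$, which in the paper's ordering is developed only after this theorem.
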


\begin{remark}
The existence and uniqueness definitions are precisely those of \ref{V valued maximal definition} and \ref{uniqueness of maximal solution} but simply for the $U/H$ solution as defined in Definition \ref{definitionofVHsolution} instead.
\end{remark}

\begin{proof}
 We do not wish to spend too much time on the concepts already covered which is why we immediately state this theorem without needlessly redefining what we mean by a unique $U/H$ maximal strong solution. We reach this result via the same path taken to get the equivalent result for the unique $H-$valued maximal strong solution, starting again from a bounded $\sy_0$ in $H$ and using that we have already a $U/H$ local strong solution for this initial condition from Remark \ref{trivially equal}.  We prove the analogy of Theorem \ref{blow up property theorem} here in the same way, just using the norm bounds from the stopping times in the larger Hilbert Spaces which is sufficient to construct $\py$ in $X$ (as opposed to $U$ in (\ref{definition of py})). Beyond this symmetry the only difference here is the additional requirement that our candidate local strong solution $(\py,\rho^M_t)$ is such that $\py_{\cdot}\mathbbm{1}_{\cdot \leq \rho^M_t}$ belongs to $V$ almost surely over the product space. Note that \begin{align*}\py_{\cdot}\mathbbm{1}_{\cdot \leq \theta_j \wedge \rho^M_t} &= \py_{\cdot \wedge \theta_j \wedge \rho^M_t}\mathbbm{1}_{\cdot \leq \theta_j \wedge \rho^M_t}\\ &= \py_{\cdot \wedge \theta_j}\mathbbm{1}_{\cdot \leq \theta_j \wedge \rho^M_t}\\ &= \sy_{\cdot \leq \theta_j \wedge \rho^M_t}\mathbbm{1}_{\cdot \leq \theta_j \wedge \rho^M_t}
 \end{align*}
$\mathbbm{P}-a.s.$ for every $j$, using (\ref{pysyequivalence}). Thus $\py_{\cdot}\mathbbm{1}_{\cdot \leq \theta_j \wedge \rho^M_t}$ belongs to $V$ almost surely as this is inherited from the local strong solution $(\sy, \theta_j \wedge \rho^M_t)$, so we must also have the same for $\py_{\cdot}\mathbbm{1}_{\cdot \leq  \rho^M_t}$ from the convergence $\theta_j \wedge \rho^M_t \rightarrow \rho^M_t$  $\mathbbm{P}-a.s.$. We can then pass to the unbounded initial condition exactly as in Subsection \ref{subsection maximality for unbounded}. 
\end{proof}

\subsection{Existence Method for a Bounded Initial Condition}

We fix an initial condition $\sy_0 \in L^\infty(\Omega;U)$ and consider the sequence of random variables $(\sy^n_0):=(\mathcal{P}_n\sy_0)$. From the continuity $\mathcal{P}_n: U \rightarrow H$ (as indeed the range $H$ is really just the finite dimensional $V_n$ equipped with the equivalent $H$ inner product) ensures that each $\sy^n_0$ is $H-$measurable. So from Theorem \ref{blow up property theorem VH}, for each $n$ there exists a unique $U/H$ maximal strong solution $(\sy^n,\Theta^n)$ such that for any $M > 1$ and $t >0$, $(\sy^n_{\cdot \wedge \tau^{M,t}_n},\tau^{M,t}_n)$ is a $U/H$ local strong solution where
\begin{equation} \label{the new tau m t n}
     \tau^{M,t}_n(\omega):=t \wedge \inf\left\{s \geq 0: \norm{\sy^n(\omega)}_{UH,s}^2 \geq M + \norm{\sy^n_0(\omega)}_U^2 \right\}.
\end{equation}
This set up is of course reminiscent of that in Subsection \ref{subsection:Existence Method for a Regular Truncated Initial Condition} and we again have the bounds (\ref{galerkinboundsatisfiedbystoppingtime}) and (\ref{galerkinboundsatisfiedbystoppingtime2}).

\begin{proposition} \label{theorem:cauchy reduce regularity}
For any $m,n \in \N$ with $m<n$, define the process $\sy^{m,n}$ by $$\sy^{m,n}_r(\omega) := \sy^n_r(\omega) - \sy^m_r(\omega).$$
Then $$\lim_{m \rightarrow \infty}\sup_{n \geq m}\left[\mathbbm{E}\norm{\sy^{m,n}}_{UH,\tau^{M,t}_m \wedge \tau^{M,t}_n}^2\right] = 0.$$
\end{proposition}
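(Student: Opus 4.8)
The plan is to rerun the proof of Proposition \ref{theorem:cauchygalerkin}, but now in a streamlined form: the approximating processes $\sy^n$ are genuine $U/H$ local strong solutions (Theorem \ref{blow up property theorem VH}) rather than Galerkin truncations, so no projection-difference terms appear and there is no $\lambda_m$-dependent remainder — this is the promised simplification. Write $\tau := \tau^{M,t}_m \wedge \tau^{M,t}_n$. For $m<n$ the difference $\sy^{m,n}_{\cdot\wedge\tau}$ satisfies, $\mathbbm{P}$-a.s. in $X$,
\[
\sy^{m,n}_{r\wedge\tau} = \sy^{m,n}_0 + \int_0^{r\wedge\tau}\left[\mathcal{A}(s,\sy^n_s) - \mathcal{A}(s,\sy^m_s)\right]ds + \int_0^{r\wedge\tau}\left[\mathcal{G}(s,\sy^n_s) - \mathcal{G}(s,\sy^m_s)\right]d\mathcal{W}_s,
\]
with $\sy^{m,n}_0 = \mathcal{P}_n\sy_0 - \mathcal{P}_m\sy_0$. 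The key point is that $\sy^n_s,\sy^m_s \in V$ for $\mathbbm{P}\times\lambda$-a.e. $(s,\omega)$, so along the interval of integration $\mathcal{A}(s,\sy^j_s)\in U$ and $\mathcal{G}_i(s,\sy^j_s)\in H$; combining this with the bilinear forms (\ref{bilinear formog}), (\ref{bilinear form}) lets us apply the It\^{o} formula for $\norm{\cdot}_U^2$ to obtain
\begin{align*}
\norm{\sy^{m,n}_{r\wedge\tau}}_U^2 &= \norm{\sy^{m,n}_0}_U^2 + 2\int_0^{r\wedge\tau}\inner{\mathcal{G}(s,\sy^n_s)-\mathcal{G}(s,\sy^m_s)}{\sy^{m,n}_s}_U d\mathcal{W}_s\\
&\quad + \int_0^{r\wedge\tau}\left(2\inner{\mathcal{A}(s,\sy^n_s)-\mathcal{A}(s,\sy^m_s)}{\sy^{m,n}_s}_U + \sum_{i=1}^\infty\norm{\mathcal{G}_i(s,\sy^n_s)-\mathcal{G}_i(s,\sy^m_s)}_U^2\right)ds.
\end{align*}

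Next I would apply the monotonicity Assumption \ref{therealcauchy assumptions} — legitimate precisely because $\sy^n_s,\sy^m_s\in V$ — so that (\ref{therealcauchy1}) bounds the $ds$-integrand by $c_s\tilde{K}_2(\sy^n_s,\sy^m_s)\norm{\sy^{m,n}_s}_U^2 - \kappa\norm{\sy^{m,n}_s}_H^2$. Moving the $\kappa$ term to the left, scaling to recover the $UH$ norm, taking $\sup_{r\in[0,t]}$, taking expectations, and estimating the martingale term by the BDG inequality (its quadratic variation controlled via (\ref{therealcauchy2}) followed by Young's inequality in the standard way), I reach, exactly as in the derivation of (\ref{someconstanthatc}),
\[
\mathbbm{E}\norm{\sy^{m,n}}_{UH,\tau}^2 \leq \hat{c}\,\mathbbm{E}\left(\norm{\sy^{m,n}_0}_U^2 + \int_0^{\tau} \tilde{K}_2(\sy^n_s,\sy^m_s)\norm{\sy^{m,n}_s}_U^2 \,ds\right).
\]
All integrability needed here holds because the first-hitting-time bounds (\ref{galerkinboundsatisfiedbystoppingtime}), (\ref{galerkinboundsatisfiedbystoppingtime2}) are in force up to $\tau$; in particular $\tilde{K}_2(\sy^n_s,\sy^m_s) \leq c(1 + \norm{\sy^n_s}_H^2 + \norm{\sy^m_s}_H^2)$ with the right-hand side $\mathbbm{P}\times\lambda$-integrable. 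I then invoke the Stochastic Gr\"{o}nwall lemma (Lemma \ref{gronny}) with $\boldsymbol{\phi}=\norm{\sy^{m,n}}_U^2$, $\boldsymbol{\psi}=\norm{\sy^{m,n}}_H^2$, $\boldsymbol{\eta}=\tilde{K}_2(\sy^n,\sy^m)$ and, importantly, $\tilde{c}=0$, yielding a constant $C$ depending only on $M,t$ with $\mathbbm{E}\norm{\sy^{m,n}}_{UH,\tau}^2 \leq C\,\mathbbm{E}\norm{\sy^{m,n}_0}_U^2$.

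Finally I would control the initial data. Since $(a_n)$ is an orthogonal basis of $U$ one has $\mathcal{P}_n\mathcal{P}_m=\mathcal{P}_m$ for $m\leq n$, so $\sy^{m,n}_0=(\mathcal{P}_n-\mathcal{P}_m)\sy_0$ is the $U$-orthogonal projection of $\sy_0$ onto $\textnormal{span}\{a_{m+1},\dots,a_n\}$, whence $\norm{\sy^{m,n}_0}_U \leq \norm{(I-\mathcal{P}_m)\sy_0}_U$ for every $n\geq m$. Because $(a_n)$ is a basis of $U$, $\norm{(I-\mathcal{P}_m)\sy_0}_U \to 0$ pointwise, and as $\sy_0\in L^\infty(\Omega;U)$ dominated convergence gives $\sup_{n\geq m}\mathbbm{E}\norm{\sy^{m,n}_0}_U^2 \leq \mathbbm{E}\norm{(I-\mathcal{P}_m)\sy_0}_U^2 \longrightarrow 0$ as $m\to\infty$; combined with the previous bound this is the assertion. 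The only genuine subtlety — the rest being a streamlined repeat of Proposition \ref{theorem:cauchygalerkin} — is justifying the It\^{o} formula for $\norm{\cdot}_U^2$ applied to processes whose identity holds only in $X$: this is exactly what the $V$-membership clause in the definition of a $U/H$ solution (Definition \ref{definitionofVHsolution}) and the bilinear form (\ref{bilinear form}) are there to supply, returning $\mathcal{A},\mathcal{G}$ to $U$ (resp. $\mathscr{L}^2(\mathfrak{U};U)$) en route so that Assumption \ref{therealcauchy assumptions} is applicable.
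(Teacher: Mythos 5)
Your proposal is correct and follows essentially the same route as the paper: reduce the $X$-valued identity to a $U$-inner-product It\^{o} expansion via the $V$-membership of the $U/H$ solutions and the bilinear form (\ref{bilinear form}) (the paper packages this as an appeal to Proposition \ref{rockner prop}), apply Assumption \ref{therealcauchy assumptions}, BDG and the Stochastic Gr\"{o}nwall lemma exactly as in Proposition \ref{theorem:cauchygalerkin} but without the projection-difference remainders, and finish by noting $\norm{(I-\mathcal{P}_m)\sy_0}_U^2 \to 0$ from the orthogonal basis property. The only cosmetic difference is that the Gr\"{o}nwall hypothesis should be verified over arbitrary stopping times $\theta_j \leq \theta_k$ rather than just $[0,\tau]$, which you implicitly cover by deferring to the derivation of (\ref{someconstanthatc}).
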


\begin{proof}
 As alluded to we look to go by the same method of proof as the corresponding Proposition \ref{theorem:cauchygalerkin}. The real difference between the proofs lies in the loss of regularity we have for the $U/H$ solutions, as the identity is no longer satisfied in $U$ but only in $X$. On this occasion we must appeal to Proposition \ref{rockner prop}; that is we have the equality
 \begin{align*}
    \norm{\sy^{m,n}_{r \wedge \tau^{M,t}_m \wedge \tau^{M,t}_n} }_U^2 = \norm{\sy^{m,n}_0}_U^2 &+ 2\int_0^{r\wedge \tau^{M,t}_m \wedge \tau^{M,t}_n} \inner{\mathcal{A}(s,\sy^n_s) - \mathcal{A}(s,\sy^m_s)}{\sy^{m,n}_s}_{X \times H}ds\\ &  +\int_0^{r\wedge \tau^{M,t}_m \wedge \tau^{M,t}_n}\sum_{i=1}^\infty\norm{\mathcal{G}_i(s,\sy^n_s) - \mathcal{G}_i(s,\sy^m_s)}_U^2ds\\ &+ 2\int_0^{r\wedge \tau^{M,t}_m \wedge \tau^{M,t}_n}\inner{\mathcal{G}_i(s,\sy^n_s) - \mathcal{G}_i(s,\sy^m_s)}{\sy^{m,n}_s}_UdW^i_s
\end{align*}
$\mathbbm{P}-a.s.$ for any $r \geq 0$. From the defining $V$ regularity of the $U/H$ solutions we do indeed have that $\mathcal{A}(s,\sy^n_s(\omega)) - \mathcal{A}(s,\sy^m_s(\omega))$ belongs to $U$ for almost every $(s,\omega)$. Thus \begin{align*}\int_0^{r\wedge \tau^{M,t}_m \wedge \tau^{M,t}_n} \inner{\mathcal{A}(s,\sy^n_s) &- \mathcal{A}(s,\sy^m_s)}{\sy^{m,n}_s}_{X \times H}ds\\ &= \int_0^{r\wedge \tau^{M,t}_m \wedge \tau^{M,t}_n} \inner{\mathcal{A}(s,\sy^n_s) - \mathcal{A}(s,\sy^m_s)}{\sy^{m,n}_s}_{U}ds\end{align*} $\mathbbm{P}-a.s.$ from (\ref{bilinear form}), putting us in a position to apply Assumption \ref{therealcauchy assumptions}. Again with the notation (\ref{more tilde notation}) and for arbitrary stopping times $0 \leq \theta_j \leq \theta_k \leq t$ $\mathbbm{P}-a.s.$ and any $\theta_j \leq r \leq t$ $\mathbbm{P}-a.s.$, we deduce that
\begin{align*}
    \norm{\tilde{\sy}^{m,n}_{r}}_U^2 &+ \kappa\int_{\theta_j}^r\norm{\tilde{\sy}^n_{s} - \tilde{\sy}^m_{s}}_H^2ds\leq  \norm{\sy^n_{\theta_j}-\sy^m_{\theta_j}}_U^2\\ &+ \int_{\theta_j}^rc\tilde{K}_2(\tilde{\sy}^m_{s},\tilde{\sy}^n_{s})\norm{\tilde{\sy}^n_{s} - \tilde{\sy}^m_{s}}_U^2ds + 2\int_0^{r}\inner{\mathcal{G}_i(s,\tilde{\sy}^n_s) - \mathcal{G}_i(s,\tilde{\sy}^m_s)}{\tilde{\sy}^{m,n}_{s}}_UdW^i_s
\end{align*}
$\mathbbm{P}-a.s.$ from (\ref{therealcauchy1}). Using once more the bound (\ref{galerkinboundsatisfiedbystoppingtime}) and (\ref{therealcauchy2}), we scale to remove the $\kappa$, take the supremum over $r \in [\theta_j,\theta_k]$ followed by the expectation and apply the Burkholder-Davis-Gundy Inequality to deduce 
\begin{align*}
   \mathbbm{E}\norm{\tilde{\sy}^{m,n}}_{UH,\theta_j,\theta_k}^2 \leq c\mathbbm{E}\norm{\tilde{\sy}^{m,n}_{\theta_j}}_U^2   &+ c\mathbbm{E}\int_{\theta_j}^{\theta_k} \tilde{K}_2(\tilde{\sy}^m_s,\tilde{\sy}^n_s) \norm{\tilde{\sy}^{m,n}_{s}}_U^2ds\\
        &+ c\mathbbm{E}\bigg(\int_{\theta_j}^{\theta_k} \tilde{K}_2(\tilde{\sy}^m_s,\tilde{\sy}^n_s) \norm{\tilde{\sy}_s^n-\tilde{\sy}_s^m}_U^4\bigg)^\frac{1}{2}.
\end{align*}
We can conclude the proof via an identical procedure to Proposition \ref{theorem:cauchygalerkin}, deducing this time that $\norm{(I-\mathcal{P}_m)\sy_0}_U^2$ is a ($\mathbbm{P}-a.s.)$ monotone decresasing sequence in $m$ convergent to zero from the fact that the $(\mathcal{P}_m)$ are orthogonal projections onto an orthogonal basis of $U$.
\end{proof}

\begin{proposition} \label{prob theorem two}
We have that $$\lim_{S \rightarrow 0}\sup_{n \in \mathbb{N}}\mathbb{P}\left(\left\{ \norm{\sy^{n}}^2_{UH,\tau^{M,t}_n \wedge S} \geq M-1+\norm{\sy^n_0}_{U}^2 \right\}\right) = 0.$$
\end{proposition}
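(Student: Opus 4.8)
The plan is to follow the proof of Proposition \ref{theorem: probability one} essentially verbatim, the only new ingredient being the extra care required because the $U/H$ solutions satisfy their identity in $X$ rather than $U$. Concretely, I would first establish the stronger statement
$$\lim_{S \rightarrow 0}\sup_{n \in \mathbb{N}}\mathbbm{E}\left[\norm{\sy^{n}}^2_{UH,\tau^{M,t}_n \wedge S} - \norm{\sy^n_0}_{U}^2 \right] = 0,$$
and then deduce the displayed probability bound by Markov's inequality: on the event $\{\norm{\sy^{n}}^2_{UH,\tau^{M,t}_n \wedge S} \geq M-1+\norm{\sy^n_0}_{U}^2\}$ the random variable $\norm{\sy^{n}}^2_{UH,\tau^{M,t}_n \wedge S} - \norm{\sy^n_0}_{U}^2$ — which is always nonnegative, since $\norm{\sy^{n}}^2_{UH,\tau^{M,t}_n \wedge S} \geq \norm{\sy^n_{0}}_U^2$ by taking $r=0$ in the supremum — is at least $M-1>0$, so the probability is at most $(M-1)^{-1}$ times the expectation above, which tends to $0$ as $S \to 0$.

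To prove the expectation claim I would apply the It\^o formula to $\norm{\sy^n_{\cdot \wedge \tau^{M,t}_n}}_U^2$. Because $(\sy^n,\Theta^n)$ is only a $U/H$ maximal strong solution its identity holds in $X$, so — exactly as in the proof of Proposition \ref{theorem:cauchy reduce regularity} — I would invoke Proposition \ref{rockner prop}, the bilinear form reduction (\ref{bilinear form}), and the defining $V$-regularity of the $U/H$ solution (which guarantees $\mathcal{A}(s,\sy^n_s)\in U$ for a.e. $(s,\omega)$, hence $\inner{\mathcal{A}(s,\sy^n_s)}{\sy^n_s}_{X\times H}=\inner{\mathcal{A}(s,\sy^n_s)}{\sy^n_s}_U$). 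Writing $\tilde{\sy}^n_\cdot := \sy^n_\cdot\mathbbm{1}_{\cdot \leq \tau^{M,t}_n}$ and applying Assumption \ref{assumption for probability in H}, in particular (\ref{probability first H}) with the favourable term $-\kappa\norm{\tilde{\sy}^n_s}_H^2$ kept on the left, this yields
$$\norm{\tilde{\sy}^n_r}_U^2 + \kappa\int_0^r\norm{\tilde{\sy}^n_s}_H^2\,ds \leq \norm{\sy^n_0}_U^2 + \int_0^r c_s K(\tilde{\sy}^n_s)\,ds + 2\int_0^r\inner{\mathcal{G}(s,\tilde{\sy}^n_s)}{\tilde{\sy}^n_s}_U\,d\mathcal{W}_s$$
for all $r\geq 0$, $\mathbbm{P}$-a.s. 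Since we stop at $\tau^{M,t}_n$ and $\sy_0 \in L^\infty(\Omega;U)$, the bound (\ref{galerkinboundsatisfiedbystoppingtime2}) holds here exactly as in Section \ref{Section v valued}, so $\sup_s K(\tilde{\sy}^n_s)\leq c$ uniformly in $n,\omega$ and therefore $\int_0^r c_s K(\tilde{\sy}^n_s)\,ds \leq cr$.

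Next I would take the supremum over $r\in[0,S]$ and then expectation — legitimate because the stopped process is bounded in the $UH$-norm by $M+\norm{\sy_0}_{L^\infty(\Omega;U)}^2$ — and control the martingale term by the Burkholder--Davis--Gundy inequality together with (\ref{probability second H}) and $K(\tilde{\sy}^n_s)\leq c$, giving
$$\mathbbm{E}\sup_{r\in[0,S]}\left\vert 2\int_0^r\inner{\mathcal{G}(s,\tilde{\sy}^n_s)}{\tilde{\sy}^n_s}_U\,d\mathcal{W}_s\right\vert \leq c\,\mathbbm{E}\left(\int_0^S\sum_{i=1}^\infty\inner{\mathcal{G}_i(s,\tilde{\sy}^n_s)}{\tilde{\sy}^n_s}_U^2\,ds\right)^{1/2} \leq c\sqrt{S}.$$
Noting that $\norm{\sy^n}^2_{UH,\tau^{M,t}_n\wedge S} = \norm{\tilde{\sy}^n}^2_{UH,S}$ by continuity of $\sy^n$ in $U$, this produces $\mathbbm{E}\norm{\sy^n}^2_{UH,\tau^{M,t}_n\wedge S} - \mathbbm{E}\norm{\sy^n_0}_U^2 \leq cS + c\sqrt{S}$ with $c$ independent of $n$; letting $S\to 0$ and then invoking Markov's inequality as above closes the argument.

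The one genuinely delicate point — the only real departure from Proposition \ref{theorem: probability one} — is the passage to the $U$-valued It\^o expansion for a solution whose identity lives only in $X$; this is exactly what Proposition \ref{rockner prop} combined with the bilinear form (\ref{bilinear form}) is for, and it is precisely the reason the $V$-membership was built into the definition of a $U/H$ local strong solution. Everything else is a routine transcription of the earlier argument.
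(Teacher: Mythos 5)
Your proposal is correct and follows essentially the same route as the paper: reduce to the expectation bound via Chebyshev/Markov, apply the It\^{o} formula for $\norm{\cdot}_U^2$ (through Proposition \ref{rockner prop} and the bilinear form (\ref{bilinear form}), since the $U/H$ identity holds only in $X$), invoke Assumption \ref{assumption for probability in H} with the coercive $-\kappa\norm{\cdot}_H^2$ term kept on the left, use (\ref{galerkinboundsatisfiedbystoppingtime2}) to bound $K(\tilde{\sy}^n)$ uniformly, and control the martingale term by Burkholder--Davis--Gundy with (\ref{probability second H}) to obtain $cS+cS^{1/2}$ uniformly in $n$. The paper's own proof is simply a terser version of the same argument, and your explicit justification of the passage to the $U$ inner product via the $V$-regularity of the $U/H$ solution is exactly the point the paper relies on.
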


\begin{proof}
Just as in the corresponding Proposition \ref{theorem: probability one} we argue that it is sufficient to show the property (\ref{sufficient thing in probability condition}). With the same approach but this time using Assumption \ref{assumption for probability in H}, note that
 \begin{align*}
    \mathbb{E}\Bigg[\sup_{r \in [0,\tau^{M,t}_n \wedge S]}\norm{\sy^{n}_{r}}^2_U + \kappa\int_0^{\tau^{M,t}_n \wedge S}\norm{\sy^{n}_{r}}^2_Hdr - \norm{\sy^n_0}_{U}^2 \Bigg] &\leq c\mathbb{E}\int_0^SK(\tilde{\sy}^n_r)dr + c\mathbbm{E}\left(\int_0^S K(\tilde{\sy}^n_r)dr\right)^{\frac{1}{2}}\\
    &\leq c\int_0^S1dr + c\left(\int_0^S1dr\right)^{\frac{1}{2}}\\
    &\leq cS + cS^{\frac{1}{2}}.
\end{align*}
In particular we have both that $$ \mathbb{E}\Bigg[\sup_{r \in [0,\tau^{M,t}_n \wedge S]}\norm{\sy^{n}_{r}}^2_U  - \norm{\sy^n_0}_{U}^2 \Bigg] \leq cS + cS^{\frac{1}{2}} $$ and $$\mathbb{E}\Bigg[\int_0^{\tau^{M,t}_n \wedge S}\norm{\sy^{n}_{r}}^2_Hdr \Bigg] \leq cS + cS^{\frac{1}{2}}$$
where we allow our generic $c$ to depend on $\kappa$, so $$\mathbb{E}\Bigg[\norm{\sy^{n}}^2_{UH,\tau^{M,t}_n \wedge S} - \norm{\sy^n_0}_{U}^2 \Bigg] \leq cS + cS^{\frac{1}{2}}$$ and the result follows.
\end{proof}

\begin{theorem} \label{existence of limiting pair theorem for H}
There exists a stopping time $\tau^{M,t}_{\infty}$, a subsequence $(\sy^{n_l})$ and a process $\sy_\cdot= \sy_{\cdot \wedge \tau^{M,t}_
\infty}$ whereby $\sy_{\cdot}\mathbbm{1}_{\cdot \leq \tau^{M,t}_{\infty}}$ is progressively measurable in $H$ and such that:
\begin{itemize}
    \item $\mathbb{P}\left(\left\{ 0 < \tau^{M,t}_{\infty} \leq \tau^{M,t}_{n_l}\right)\right\} = 1$;
    \item For $\mathbb{P}-a.e.$ $\omega$, $\sy(\omega) \in C\left([0,T];U\right)$ and $\sy_{\cdot}(\omega)\mathbbm{1}_{\cdot \leq \tau^{M,t}_{\infty}(\omega)} \in  L^2\left([0,T];H\right)$ for all $T>0$;
    \item For $\mathbb{P}-a.e.$ $\omega$, $\sy^{n_l}(\omega) \rightarrow \sy(\omega)$ in $ L^\infty\left([0,\tau^{M,t}_{\infty}(\omega)];U\right) \cap L^2\left([0,\tau^{M,t}_{\infty}(\omega)];H\right)$, i.e. 
\begin{equation} \nonumber \norm{\sy^{n_l}(\omega) - \sy(\omega)}_{UH,\tau^{M,t}_{\infty}(\omega)}^2 \longrightarrow 0;\end{equation}
\item $\sy^{n_l}\rightarrow \sy$ holds in the sense that \begin{equation}\nonumber\mathbb{E}\norm{\sy^{n_l} - \sy}_{UH,\tau^{M,t}_{\infty}}^2 \longrightarrow 0.\end{equation}
\end{itemize}
\end{theorem}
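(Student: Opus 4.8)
The plan is to mirror the proof of Theorem~\ref{existence of limiting pair theorem}, which is in fact \emph{simplified} in the present setting: there the square-integrability-in-time regularity to be exhibited was membership in $L^2([0,T];V)$, which is not part of the $UH$ norm and therefore required an auxiliary weak-compactness argument based on the uniform bounds of Proposition~\ref{theorem:uniformbounds}; here the regularity sought for the limit is $L^2([0,T];H)$, which is already a component of the $UH$ norm in which Cauchy control has been established, so no such extra step is needed.

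First I would combine Proposition~\ref{theorem:cauchy reduce regularity} with Proposition~\ref{prob theorem two} and apply the Convergence of Random Cauchy Sequences lemma (Lemma~\ref{greenlemma}) to the stopped processes $(\sy^n_{\cdot\wedge\tau^{M,t}_n})$ with the spaces $\mathcal{H}_1=H$ (square integrability in time) and $\mathcal{H}_2=U$ (continuity in time). This directly produces the stopping time $\tau^{M,t}_{\infty}$, a subsequence $(\sy^{n_l})$, and a limit process $\sy=\sy_{\cdot\wedge\tau^{M,t}_{\infty}}$ satisfying the first bullet point ($0<\tau^{M,t}_{\infty}\leq\tau^{M,t}_{n_l}$ $\mathbbm{P}$-a.s.) and the $\mathbbm{P}$-a.s.\ $UH$-convergence of the third bullet point. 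Since $\tau^{M,t}_{\infty}\leq\tau^{M,t}_{n_l}$ for all $l$, one may freely ignore the fact that $\sy^{n_l}$ is stopped at $\tau^{M,t}_{n_l}$ whenever working on $[0,\tau^{M,t}_{\infty}]$, exactly as in the earlier argument.

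For the second bullet point: each $\sy^{n_l}(\omega)$ is continuous in $U$ on $[0,T]$ (a defining property of the $U/H$ local strong solutions of Definition~\ref{definitionofVHsolution}), and the $UH$-convergence provides uniform convergence in $U$ on $[0,\tau^{M,t}_{\infty}(\omega)]$, so the limit is continuous in $U$ there; as $\sy=\sy_{\cdot\wedge\tau^{M,t}_{\infty}}$ is constant thereafter, $\sy_{\cdot}(\omega)\in C([0,T];U)$ for every $T>0$, while the $L^2([0,\tau^{M,t}_{\infty}(\omega)];H)$ part of the same convergence gives $\sy_{\cdot}(\omega)\mathbbm{1}_{\cdot\leq\tau^{M,t}_{\infty}(\omega)}\in L^2([0,T];H)$. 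For the fourth bullet point I would argue as in Theorem~\ref{existence of limiting pair theorem}: because $\tau^{M,t}_{\infty}\leq\tau^{M,t}_{n_l}$, Proposition~\ref{theorem:cauchy reduce regularity} shows $(\sy^{n_l}_{\cdot}\mathbbm{1}_{\cdot\leq\tau^{M,t}_{\infty}})$ is genuinely Cauchy in $L^2\!\left(\Omega;L^\infty([0,T];U)\cap L^2([0,T];H)\right)$ and hence converges there, and the $\mathbbm{P}$-a.s.\ $UH$-limit forces that limit to agree with $\sy\mathbbm{1}_{\cdot\leq\tau^{M,t}_{\infty}}$, yielding the stated convergence in expectation.

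Finally, progressive measurability in $H$ of $\sy_{\cdot}\mathbbm{1}_{\cdot\leq\tau^{M,t}_{\infty}}$ follows the route used at the close of Theorem~\ref{existence of limiting pair theorem}: writing $\sy^{n_l}_{\cdot}\mathbbm{1}_{\cdot\leq\tau^{M,t}_{\infty}}=\left(\sy^{n_l}_{\cdot}\mathbbm{1}_{\cdot\leq\tau^{M,t}_{n_l}}\right)\mathbbm{1}_{\cdot\leq\tau^{M,t}_{\infty}}$, each such process is progressively measurable in $H$ (the first factor by the $U/H$ solution property, the second since $\tau^{M,t}_{\infty}$ is a stopping time), these form a uniformly bounded sequence in $L^2\!\left(\Omega\times[0,T];H\right)$ with the product sigma algebra, and the convergence in expectation from the previous step identifies the (strong, hence weak) limit in this space with $\sy_{\cdot}\mathbbm{1}_{\cdot\leq\tau^{M,t}_{\infty}}$, establishing the claimed measurability; here one keeps in mind the mild abuse of notation of Remarks~\ref{remark on prog meas equivalence} and~\ref{remark for H}. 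The essential content has already been supplied by Propositions~\ref{theorem:cauchy reduce regularity} and~\ref{prob theorem two}, so there is no serious obstacle in this theorem; the only point requiring a little care is this last identification of limits across the different topologies.
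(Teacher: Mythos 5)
Your proposal is correct and follows essentially the same route as the paper, which simply declares the proof identical to that of Theorem \ref{existence of limiting pair theorem} modulo the two simplifications you identify: the $L^2([0,T];H)$ regularity of the limit needs no weak-compactness detour via Proposition \ref{theorem:uniformbounds} because it is already part of the $UH$ norm (equivalently, of the uniform bound (\ref{galerkinboundsatisfiedbystoppingtime})), and continuity in $U$ comes for free from uniform convergence of continuous processes. Your handling of the fourth bullet and of progressive measurability in $H$ matches the closing arguments of that earlier theorem, so there is nothing to add.
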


\begin{proof}
The proof is identical to that of Theorem \ref{existence of limiting pair theorem}, the only difference being that instead of resorting to Proposition \ref{theorem:uniformbounds} to show the second bullet point we have the required uniform boundedness immediately from (\ref{galerkinboundsatisfiedbystoppingtime}), and the continuity comes for free from the convergence of continuous processes in $L^\infty\left([0,\tau^{M,t}_{\infty}(\omega)];U\right)$.
\end{proof}

\begin{theorem} \label{existence of local strong H solution}
The pair $(\sy,\tau^{M,t}_{\infty})$ specified in Theorem \ref{existence of limiting pair theorem for H} is a $U-$valued local strong solution of the equation (\ref{thespde}) as defined in \ref{definitionofHsolution}.
\end{theorem}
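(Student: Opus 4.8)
The plan is to mirror the proof of Theorem~\ref{existence of local strong V solution} almost verbatim, now with $X$ in the role played there by $U$, with $U$ in the role of $H$, and with $H$ in the role of $V$. After Theorem~\ref{existence of limiting pair theorem for H}, every requirement of Definition~\ref{definitionofHsolution} is already in hand except the identity (\ref{identityindefinitionoflocalsolution**}) in $X$: the $\mathbb{P}$-a.s.\ positivity of $\tau^{M,t}_{\infty}$, the continuity $\sy_{\cdot}(\omega)\in C([0,T];U)$, the regularity $\sy_{\cdot}(\omega)\mathbbm{1}_{\cdot\leq\tau^{M,t}_{\infty}(\omega)}\in L^2([0,T];H)$ and the progressive measurability in $H$ all come from that theorem, and the stochastic and time integrals are well defined in the sense of Remark~\ref{remark for H}. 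Since $\sy=\sy_{\cdot\wedge\tau^{M,t}_{\infty}}$, it suffices to show that for every $\phi$ in a dense subset of $X$ and all $s\geq0$ the pairing of (\ref{identityindefinitionoflocalsolution**}) against $\phi$ holds $\mathbb{P}$-a.s.; I would take $\phi\in H$, which is dense in $X$ because $H\hookrightarrow U\hookrightarrow X$ densely, and read each term through (\ref{bilinear form}), so that the three $U$-valued terms are paired by $\inner{\cdot}{\cdot}_U$ and the (only $X$-valued) drift integral by $\inner{\cdot}{\cdot}_{X\times H}$.

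Because $\tau^{M,t}_{\infty}\leq\tau^{M,t}_{n_l}$ $\mathbb{P}$-a.s.\ and each $(\sy^{n_l}_{\cdot\wedge\tau^{M,t}_{n_l}},\tau^{M,t}_{n_l})$ is a $U/H$ local strong solution of (\ref{thespde}) with initial datum $\mathcal{P}_{n_l}\sy_0$, each $\sy^{n_l}$ satisfies $\sy^{n_l}_{s\wedge\tau^{M,t}_{\infty}}=\mathcal{P}_{n_l}\sy_0+\int_0^{s\wedge\tau^{M,t}_{\infty}}\mathcal{A}(r,\sy^{n_l}_r)dr+\int_0^{s\wedge\tau^{M,t}_{\infty}}\mathcal{G}(r,\sy^{n_l}_r)d\mathcal{W}_r$ in $X$, and I would pass to the limit $n_l\to\infty$ termwise. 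The initial data converge, $\norm{\mathcal{P}_{n_l}\sy_0-\sy_0}_U\to0$ (hence in $X$), since $(a_n)$ is now an orthogonal basis of $U$ — this is what replaces the appeal to (\ref{mu2}) in Section~\ref{Section v valued}. For the drift I would bound the expected absolute value of the paired difference by H\"{o}lder's inequality on $\Omega\times[0,s\wedge\tau^{M,t}_{\infty}]$, using (\ref{222*}) so that $\norm{\mathcal{A}(r,\sy^{n_l}_r)-\mathcal{A}(r,\sy_r)}_X^2\leq c_r\tilde{K}_2(\sy^{n_l}_r,\sy_r)\norm{\sy^{n_l}_r-\sy_r}_H^2$, the uniform bound (\ref{galerkinboundsatisfiedbystoppingtime}) (which controls the $K$-part of $\tilde{K}_2$ by the $U$-norm pointwise and the $H$-norm part in the time integral, whence $\mathbb{E}\int\tilde{K}_2(\sy^{n_l}_r,\sy_r)dr\leq c$), and the convergence $\mathbb{E}\int\norm{\sy^{n_l}_r-\sy_r}_H^2dr\to0$ from Theorem~\ref{existence of limiting pair theorem for H}. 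For the stochastic term I would use the It\^{o} isometry in $X$ together with the $\mathcal{G}$-Lipschitz bound $\sum_i\norm{\mathcal{G}_i(r,\sy^{n_l}_r)-\mathcal{G}_i(r,\sy_r)}_X^2\leq c_r\tilde{K}_2(\sy^{n_l}_r,\sy_r)\norm{\sy^{n_l}_r-\sy_r}_H^2$ obtained by combining (\ref{therealcauchy1*}) with (\ref{222*}) and $\norm{\cdot}_X\leq c\norm{\cdot}_H$. Passing to a further subsequence, all three limits hold $\mathbb{P}$-a.s.\ for each $\phi\in H$ and $s\geq0$, so (\ref{identityindefinitionoflocalsolution**}) holds in $X$; combined with $\sy\in C([0,T];U)$ this upgrades to all $t\geq0$, and $(\sy,\tau^{M,t}_{\infty})$ is a $U$-valued local strong solution.

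The step I expect to demand the most care is the stochastic term. In Theorem~\ref{existence of local strong V solution} the bound (\ref{333}) did the work, with a weight $K$ controllable purely by the (uniformly bounded) $U$-norm; here the limit $\sy$ lies only in $H$, not $V$, as a function of time, so (\ref{333}) is unavailable and the only usable $\mathcal{G}$-Lipschitz estimate comes from the monotonicity assumptions (\ref{therealcauchy1*})--(\ref{therealcauchy2*}), whose weight $\tilde{K}_2$ carries an $\norm{\cdot}_H^2$ term not bounded uniformly in time, so a naive H\"{o}lder argument would need $\mathbb{E}\int\norm{\sy^{n_l}_r}_H^4dr$, which is not available. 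One clean way around this, and the bookkeeping hidden behind phrases like ``an identical procedure to Proposition~\ref{theorem:cauchygalerkin}'' used in the proof of Proposition~\ref{theorem:cauchy reduce regularity}, is to extract a subsequence along which $\sy^{n_l}_r\to\sy_r$ in $H$ for a.e.\ $(r,\omega)$, observe that $\tilde{K}_2(\sy^{n_l}_r,\sy_r)$ is then finite a.e.\ and that its $K$-part is bounded by (\ref{galerkinboundsatisfiedbystoppingtime}), and conclude that $\sum_i\norm{\mathcal{G}_i(r,\sy^{n_l}_r)-\mathcal{G}_i(r,\sy_r)}_X^2\to0$ a.e.\ $(r,\omega)$ and in $L^1(\Omega\times[0,T])$ by a dominated-convergence argument using the a.s.\ $L^2([0,T];H)$ control on the $\sy^{n_l}$; an analogous remark applies to the term controlled by (\ref{therealcauchy2*}).
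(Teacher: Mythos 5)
Your overall strategy (pass to the limit term by term in the identity satisfied by the $U/H$ approximants) is the paper's, and your treatment of the initial condition and of the drift via (\ref{222*}), H\"{o}lder and (\ref{galerkinboundsatisfiedbystoppingtime}) matches the printed proof. The genuine problem is the stochastic term, precisely the step you flag as delicate: your proposed resolution does not close. The bound you derive from (\ref{therealcauchy1*}) and (\ref{222*}) carries the weight $\tilde{K}_2(\sy^{n_l}_r,\sy_r)$, whose $\norm{\cdot}_H^2$ part, multiplied by $\norm{\sy^{n_l}_r-\sy_r}_H^2$, produces fourth-order $H$-moments. Your ``dominated convergence using the a.s.\ $L^2([0,T];H)$ control'' has no dominating function: the a.s.\ bound $\int_0^{\tau}\norm{\sy^{n_l}_r}_H^2\,dr\leq C$ gives a uniform $L^1(\Omega\times[0,T])$ bound on $\norm{\sy^{n_l}_r}_H^2$, not a pointwise majorant independent of $n_l$, and neither $\sup_{n_l}\norm{\sy^{n_l}_r}_H^4$ nor any $L^2(\Omega\times[0,T])$ bound on $\norm{\sy^{n_l}_r}_H^2$ is available (the $HV$ control of Proposition \ref{theorem:uniformbounds} is not at your disposal in Section \ref{h valued}). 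One would instead need uniform integrability of $\tilde{K}_2(\sy^{n_l},\sy)\norm{\sy^{n_l}-\sy}_H^2$ over $\Omega\times[0,T]$, and the estimates at hand do not supply it.

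The paper's route is the one you discarded: it proves (\ref{convergence 3*}) by establishing convergence of the stochastic integrals in $U$ first (which implies convergence in $X$), exactly as in the $J_2$ estimate of Theorem \ref{existence of local strong V solution}, using (\ref{333}). The point is that the weight there is $K(\sy^{n_l}_r,\sy_r)$, a function of the $U$-norms only, and these are bounded uniformly in $(n_l,r,\omega)$ by (\ref{galerkinboundsatisfiedbystoppingtime}); the It\^{o} isometry then leaves only $\mathbb{E}\int\norm{\sy^{n_l}_r-\sy_r}_H^2\,dr\rightarrow0$, with no projection remainder since the approximants solve the full equation. Your reason for rejecting (\ref{333}) --- that the limit $\sy$ is not $V$-valued --- does point at a genuine imprecision (Assumption \ref{new assumption 1} is stated on $V$), but the right-hand side of (\ref{333}) involves only $U$- and $H$-norms and the operators are extended to $H$ in Subsection \ref{subsection for assumptions 2}, so the intended reading is that this Lipschitz bound persists on $H\times H$; that extension, not a dominated-convergence argument, is what makes the step work. (Your test-function formulation against $\phi\in H$ dense in $X$ is harmless but unnecessary: with (\ref{222*}) the limits can be taken strongly in $X$, which is what the paper does.)
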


\begin{proof}
We of course take a similar approach to Theorem \ref{existence of local strong V solution}, though with (\ref{222*}) we now don't need to use a weak convergence result as done in the previous theorem. To this end we consider the limit $\lim_{n_l \rightarrow \infty}\sy^{n_l}_{s \wedge \tau^{M,t}_{\infty}}$ taken as a $\mathbbm{P}-a.s.$ limit in $X$, with the idea to show that \begin{align}
    \label{convergence 1*}\lim_{n_l \rightarrow \infty}\sy^{n_l}_0 &= \sy_0\\\label{convergence 2*}
   \lim_{n_l \rightarrow \infty}\int_0^{s \wedge \tau^{M,t}_{\infty}}\mathcal{A}(r,\sy^{n_l}_r)dr &= \int_0^{s \wedge \tau^{M,t}_{\infty}}\mathcal{A}(r,\sy_r)dr\\
   \lim_{n_l \rightarrow \infty}\int_0^{s \wedge \tau^{M,t}_{\infty}}\mathcal{G}(r,\sy^{n_l}_r)d\mathcal{W}_r &= \int_0^{s \wedge \tau^{M,t}_{\infty}}\mathcal{G}(r,\sy_r)d\mathcal{W}_r \label{convergence 3*}.
\end{align}
If we prove the above (at least just for a further subsequence) then (\ref{identityindefinitionoflocalsolution**}) would be justified and we would be done. Firstly (\ref{convergence 1*}) follows from the fact that the $\mathcal{P}_{n_l}$ are orthogonal projections onto an orthogonal basis in $U$. In the vein of showing (\ref{convergence 2*}), we consider the term \begin{equation}
    \label{term for consieration*} \mathbbm{E}\left\Vert\int_0^{s \wedge \tau^{M,t}_{\infty}}\mathcal{A}(r,\sy^{n_l}_r)dr -  \int_0^{s \wedge \tau^{M,t}_{\infty}}\mathcal{A}(r,\sy_r)dr \right\Vert_X
\end{equation}
and have that 
\begin{align*}
    (\ref{term for consieration*}) & \leq  \mathbbm{E}\int_0^{s \wedge \tau^{M,t}_{\infty}}\left\Vert \mathcal{A}(r,\sy^{n_l}_r) - \mathcal{A}(r,\sy_r) \right\Vert_Xdr\\
    &\leq \mathbbm{E}\int_0^{s \wedge \tau^{M,t}_{\infty}}c_r\left[K(\sy^{n_l}_r,\sy_r) + \norm{\sy^{n_l}_r}_H + \norm{\sy_r}_H \right]\norm{\sy^{n_l}_r-\sy_r}_Hdr\\
    &\leq c\left(\mathbbm{E}\int_0^{s \wedge \tau^{M,t}_{\infty}}K(\sy^{n_l}_r,\sy_r) + \norm{\sy^{n_l}_r}_H^2 + \norm{\sy_r}_H^2dr\right)^{\frac{1}{2}}\left(\mathbbm{E}\int_0^{s \wedge \tau^{M,t}_{\infty}}\norm{\sy^{n_l}_r-\sy_r}_H^2dr \right)^{\frac{1}{2}}
\end{align*}
having employed the assumption (\ref{222*}), from which we use the boundedness (\ref{galerkinboundsatisfiedbystoppingtime}) to conclude that (\ref{convergence 2*}) holds along a subsequence $(\sy^{m_j})$ as in Theorem \ref{existence of local strong V solution}. We can in fact show (\ref{convergence 3*}) by taking the convergence first in $U$ (which implies that in $X$), which is contained in the argument to show (\ref{convergence 3}) and the proof is complete.

\end{proof}

\subsection{Maximality for a U-Valued initial condition}

We conclude this section by giving the now very brief proof of Theorem (\ref{theorem2}).

\begin{proof}[Proof of \ref{theorem2}:]
With the the uniqueness and existence results of Theorems \ref{uniqueness for H valued solutions}, \ref{existence of local strong H solution} in place, the procedure is identical to that carried out in Subsections \ref{subsection: maximality for bounded in V} and \ref{subsection maximality for unbounded} for the spaces $H,U,X$ corresponding to $V,H,U$.
\end{proof}

\section{Applications} \label{section applications}

In this section we give two applications of these results, both for the SALT Navier-Stokes Equation with one in velocity form and one in vorticity form. Whilst similar in nature these different forms make clear why we distinguished between the $H-$valued and $U-$valued solutions, and the insufficiency to consider just $H-$valued solutions in general. We only sketch the framework of application here, referring to \cite{goodair2022inprep} for the complete details. 

\subsection{SALT Navier-Stokes in Velocity Form} \label{subsection velocity}

Our object of study is the equation 
\begin{equation} \label{number2equation}
    u_t - u_0 + \int_0^t\mathcal{L}_{u_s}u_s\,ds - \int_0^t \Delta u_s\, ds + \int_0^t Bu_s \circ d\mathcal{W}_s + d \nabla \rho_t = 0
\end{equation}
 supplemented with the divergence-free (incompressibility) and zero-average conditions on the three dimensional torus $\mathbb{T}^3$. The equation is presented here in velocity form where $u$ represents the fluid velocity, $\rho$ the pressure, $\mathcal{L}$ is the mapping corresponding to the nonlinear term, $\mathcal{W}$ is a cylindrical Brownian Motion as throughout the paper and $B$ is the relevant transport operator defined with respect to a collection of functions $(\xi_i)$ which physically represent spatial correlations. These $(\xi_i)$ can be determined at coarse-grain resolutions from finely resolved numerical simulations, and mathematically are derived as eigenvectors of a velocity-velocity correlation matrix (see \cite{cotter2018modelling}, \cite{cotter2019numerically}, \cite{crisan2019solution}). The corresponding stochastic Euler equation was derived in \cite{street2021semi} and the viscous term plays no additional role in the stochastic derivation (without loss of generality we set the viscosity coefficient to be $1$).\\

We detail now the operators involved alongside the function spaces which define the equation. The operator $\mathcal{L}$ is defined for sufficiently regular functions $\phi,\psi:\mathbb{T}^3 \rightarrow \R^3$ by $$\mathcal{L}_{\phi}\psi:= \sum_{j=1}^3\phi^j\partial_j\psi$$ where $\phi^j:\mathbb{T}^3 \rightarrow \R$ is the $j^{\textnormal{th}}$ coordinate mapping of $\phi$ and $\partial_j\psi$ is defined by its $k^{\textnormal{th}}$ coordinate mapping $(\partial_j\psi)^k = \partial_j\psi^k$. The operator $B$ is defined as a linear operator on $\mathfrak{U}$ by its action on the basis vectors $B(e_i,\cdot) := B_i(\cdot)$ by $$B_i = \mathcal{L}_{\xi_i} + \mathcal{T}_{\xi_i}$$ for $\mathcal{L}$ as above and $$\mathcal{T}_{\phi}\psi:= \sum_{j=1}^3\psi^j\nabla\phi^j.$$  A complete discussion of how $B$ is then defined on $\mathfrak{U}$ is given in \cite{goodair2022stochastic} Subsection 2.2. We embed the divergence-free and zero-average conditions into the relevant function spaces and simply define our solutions as belonging to these spaces. To be explicit, by a divergence-free function we mean a $\phi \in W^{1,2}(\mathbb{T}^3;\R^3)$ such that $$\sum_{j=1}^3\partial_j\phi^j = 0$$ and by zero-average we ask for a $\psi \in L^2(\mathbb{T}^3;\R^3)$ with the property $$\int_{\mathbb{T}^3}\psi \ d\lambda = 0$$ for $\lambda$ the Lebesgue measure on $\mathbb{T}^3$. We introduce the space $L^2_{\sigma}(\mathbb{T}^3;\R^3)$ as the subspace of $L^2(\mathbb{T}^3;\R^3)$ consisting of zero-average functions which are 'weakly divergence-free'; see \cite{robinson2016three} Definition 2.1 for the precise construction. For general $m \in \N$ we then define  $W^{m,2}_{\sigma}(\mathbb{T}^3;\R^3)$ as $W^{m,2}(\mathbb{T}^3;\R^3) \cap L^2_{\sigma}(\mathbb{T}^3;\R^3)$.\\

As is standard in the treatment of the incompressible Navier-Stokes Equation we consider a projected version to eliminate the pressure term and facilitate us working in the above spaces. To this end we introduce the standard Leray Projector $\mathcal{P}$ defined as the orthogonal projection in $L^2(\mathbb{T}^3;\R^3)$ onto $L^2_{\sigma}(\mathbb{T}^3;\R^3)$, and assume that the $(\xi_i)$ are such that $\xi_i \in W^{1,2}_{\sigma}(\mathbb{T}^3;\R^3) \cap W^{3,\infty}(\mathbb{T}^3;\R^3)$ and satisfy the bound \begin{equation} \label{xi bound}
    \sum_{i=1}^\infty\norm{\xi_i}_{W^{3,\infty}}^2 < \infty.
\end{equation}
Our new equation is then
\begin{align} 
     u_t - u_0 + \int_0^t\mathcal{P}\mathcal{L}_{u_s}u_s\,ds + \int_0^t A u_sds -  \frac{1}{2}\sum_{i=1}^\infty \int_0^t\mathcal{P}B_i^2u_sds + \sum_{i=1}^\infty\int_0^t \mathcal{P}B_iu_s  dW_s^i = 0 \label{converted equation}
\end{align}
where $A := -\mathcal{P}\Delta$ is known as the Stokes Operator, indicating once more that all details are given in \cite{goodair2022inprep}. We shall use the Stokes operator to define inner products with which we equip our function spaces. Recall from \cite{robinson2016three} Theorem 2.24 for example that there exists a collection of functions $(a_k)$, $a_k \in W^{1,2}_{\sigma}(\mathbb{T}^3;\R^3) \cap C^{\infty}(\mathbb{T}^3;\R^3)$ such that the $(a_k)$ are eigenfunctions of $A$, are an orthonormal basis in $L^2_{\sigma}(\mathbb{T}^3;\R^3)$ and an orthogonal basis in $W^{1,2}_{\sigma}(\mathbb{T}^3;\R^3)$ considered as Hilbert Spaces with standard $L^2(\mathbb{T}^3;\R^3)$, $W^{1,2}(\mathbb{T}^3;\R^3)$ inner products. The corresponding eigenvalues $(\lambda_k)$ are strictly positive and approach infinity as $k \rightarrow \infty$. Thus any $\phi \in W^{1,2}_{\sigma}(\mathbb{T}^3;\R^3)$ admits the representation $$\phi = \sum_{k=1}^\infty \phi_ka_k$$ so for $m \in \mathbb{N}$ we can define $A^{m/2}$ by $$A^{m/2}: \phi \mapsto  \sum_{k=1}^\infty \lambda_k^{m/2}\phi_ka_k$$ which is a well defined element of $L^2_{\sigma}(\mathbb{T}^3;\R^3)$ on any $\phi$
such that \begin{equation} \label{powerproperty} \sum_{k=1}^\infty \lambda_k^m\phi_k^2 < \infty.\end{equation} For $\phi,\psi$ with the property (\ref{powerproperty}) then the bilinear form $$\inner{\phi}{\psi}_{m}:= \inner{A^{m/2}\phi}{A^{m/2}\psi}$$ is well defined. and equivalent to the standard $W^{m,2}(\mathbb{T}^3;\R^3)$ inner product. We equip $L^2_{\sigma}(\T;\R^3)$ with the usual inner product and $W^{m,2}(\T;\R^3)$ with the $\inner{\cdot}{\cdot}_m$ inner product.

\begin{definition} \label{definitionofirregularsolutionNS}
A pair $(u,\tau)$ where $\tau$ is a $\mathbb{P}-a.s.$ positive stopping time and $u$ is a process such that for $\mathbb{P}-a.e.$ $\omega$, $u_{\cdot}(\omega) \in C\left([0,T];W^{1,2}_{\sigma}(\T;\R^N)\right)$ and $u_{\cdot}(\omega)\mathbbm{1}_{\cdot \leq \tau(\omega)} \in L^2\left([0,T];W^{2,2}_{\sigma}(\T;\R^N)\right)$ for all $T>0$ with $u_{\cdot}\mathbbm{1}_{\cdot \leq \tau}$ progressively measurable in $W^{2,2}_{\sigma}(\T;\R^N)$, is said to be a local strong solution of the equation (\ref{converted equation}) if the identity
\begin{equation} \label{identityindefinitionoflocalsolutionHNS}
     u_t = u_0 - \int_0^{t\wedge \tau}\mathcal{P}\mathcal{L}_{u_s}u_s\ ds - \nu\int_0^{t\wedge \tau} A u_s\, ds + \int_0^{t\wedge \tau}\sum_{i=1}^\infty \mathcal{P}B_i^2u_s ds - \int_0^{t\wedge \tau} \mathcal{P}Bu_s d\mathcal{W}_s 
\end{equation}
holds $\mathbb{P}-a.s.$ in $L^2_{\sigma}(\T;\R^N)$ for all $t \geq 0$.
\end{definition}

\begin{definition} \label{H valued maximal definition NS}
A pair $(u,\Theta)$ such that there exists a sequence of stopping times $(\theta_j)$ which are $\mathbb{P}-a.s.$ monotone increasing and convergent to $\Theta$, whereby $(u_{\cdot \wedge \theta_j},\theta_j)$ is a local strong solution of the equation (\ref{converted equation}) for each $j$, is said to be a maximal strong solution of the equation (\ref{converted equation}) if for any other pair $(v,\Gamma)$ with this property then $\Theta \leq \Gamma$ $\mathbb{P}-a.s.$ implies $\Theta = \Gamma$ $\mathbb{P}-a.s.$.
\end{definition}

\begin{definition} \label{definition unique NS}
A maximal strong solution $(u,\Theta)$ of the equation (\ref{converted equation}) is said to be unique if for any other such solution $(v,\Gamma)$, then $\Theta = \Gamma$ $\mathbb{P}-a.s.$ and for all $t \in [0,\Theta)$, \begin{equation} \nonumber\mathbb{P}\left(\left\{\omega \in \Omega: u_{t}(\omega) =  v_{t}(\omega)  \right\} \right) = 1. \end{equation}
\end{definition}

For the spaces \begin{align*}
    V&:= W^{3,2}_{\sigma}(\T;\R^N), \qquad H:= W^{2,2}_{\sigma}(\T;\R^N),\\
    U&:= W^{1,2}_{\sigma}(\T;\R^N), \qquad X:= L^2_{\sigma}(\T;\R^N).
\end{align*}
and operators
\begin{align*}
    \mathcal{A}&:= -\left(\mathcal{P}\mathcal{L} +  A \right) + \frac{1}{2}\sum_{i=1}^\infty \mathcal{P}B_i^2\\
    \mathcal{G}&:= -\mathcal{P}B
\end{align*}
it is shown in \cite{goodair2022inprep} that we can apply Theorem \ref{theorem2} to conclude Theorem \ref{theorem2NS}.

\begin{theorem} \label{theorem2NS}
For any given $\mathcal{F}_0-$ measurable $u_0:\Omega \rightarrow W^{1,2}_{\sigma}(\T;\R^N)$, there exists a unique maximal strong solution $(u,\Theta)$ of the equation (\ref{converted equation}). Moreover at $\mathbb{P}-a.e.$ $\omega$ for which $\Theta(\omega)<\infty$, we have that \begin{equation}\nonumber \sup_{r \in [0,\Theta(\omega))}\norm{u_r(\omega)}_{1}^2 + \int_0^{\Theta(\omega)}\norm{u_r(\omega)}_2^2dr = \infty.\end{equation}
\end{theorem}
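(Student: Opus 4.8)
The plan is to obtain Theorem \ref{theorem2NS} as a direct corollary of Theorem \ref{theorem2}: it suffices to check that the spaces $V = W^{3,2}_{\sigma}(\mathbb{T}^3;\R^N)$, $H = W^{2,2}_{\sigma}(\mathbb{T}^3;\R^N)$, $U = W^{1,2}_{\sigma}(\mathbb{T}^3;\R^N)$, $X = L^2_{\sigma}(\mathbb{T}^3;\R^N)$ together with the operators $\mathcal{A} = -(\mathcal{P}\mathcal{L} + A) + \tfrac{1}{2}\sum_{i=1}^\infty \mathcal{P}B_i^2$ and $\mathcal{G} = -\mathcal{P}B$ verify all of Assumptions \ref{assumption fin dim spaces}--\ref{finally the last assumption} and \ref{gg3}--\ref{assumption for probability in H}, and that a $U$-valued local strong solution of (\ref{thespde}) in the abstract sense coincides with a local strong solution of (\ref{converted equation}) in the sense of Definition \ref{definitionofirregularsolutionNS}. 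The latter identification is the usual conversion of the Stratonovich integral $\int_0^t Bu_s \circ d\mathcal{W}_s$ into its It\^o form $\int_0^t \mathcal{P}Bu_s\, d\mathcal{W}_s - \tfrac{1}{2}\sum_{i=1}^\infty\int_0^t \mathcal{P}B_i^2 u_s\, ds$ after projecting by $\mathcal{P}$, together with the observation that the abstract identity in $X = L^2_{\sigma}$ is exactly the projected equation (\ref{converted equation}); the blow-up alternative (\ref{blowuppropertyVH}) then reads precisely as the stated blow-up of $\sup_{r}\norm{u_r}_1^2 + \int_0^\Theta \norm{u_r}_2^2\,dr$.

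First I would assemble the functional-analytic ingredients. Since the Stokes eigenfunctions $(a_k)$ are simultaneously orthogonal in every $W^{m,2}_{\sigma}(\mathbb{T}^3;\R^N)$ and the fractional powers $A^{m/2}$ are diagonal over them, the projections $\mathcal{P}_n$ onto $V_n = \mathrm{span}\{a_1,\dots,a_n\}$ are orthogonal in $U$, $H$ and $X$ at once: this gives (\ref{projectionsboundedonH}) with $c=1$, the decay (\ref{mu2}) with $\mu_n = \lambda_{n+1}^{1/2}$, the fact that $(a_n)$ is an orthogonal basis of $U$ as required in Subsection \ref{subsection for assumptions 2}, and — by self-adjointness of the fractional Stokes powers — the bilinear identities (\ref{bilinear formog}) and (\ref{bilinear form}). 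The embeddings $V \hookrightarrow H \hookrightarrow U \hookrightarrow X$ are Rellich and dense, so the structural hypotheses preceding the assumptions are met.

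The growth bounds (\ref{111}), (\ref{wellposedinX}) and the local Lipschitz estimates (\ref{222}), (\ref{333}), (\ref{222*}), (\ref{lastlast assumption}) then follow from Sobolev product and interpolation inequalities on $\mathbb{T}^3$ applied to $\mathcal{L}_\phi\psi = (\phi\cdot\nabla)\psi$ and to the first-order transport operators $B_i = \mathcal{L}_{\xi_i} + \mathcal{T}_{\xi_i}$, using the summability hypothesis (\ref{xi bound}) on $(\xi_i)$ in $W^{3,\infty}$; these are routine once the right interpolation between $\norm{\cdot}_U$, $\norm{\cdot}_H$ and $\norm{\cdot}_V$ is chosen. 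The quadratic conditions (\ref{uniformboundsassumpt2}), (\ref{therealcauchy2}), (\ref{probability second}), (\ref{therealcauchy2*}), (\ref{probability second H}) are similarly straightforward Cauchy--Schwarz consequences of the bounds already in hand.

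The substance — and the main obstacle — is the verification of the coercivity and monotonicity inequalities: (\ref{uniformboundsassumpt1}), (\ref{therealcauchy1}) and their $X$-level analogues (\ref{probability first H}), (\ref{therealcauchy1*}). In each of these the viscous/Stokes term supplies a good negative contribution, namely $-\kappa\norm{\phi}_V^2$ at the $H$ level and $-\kappa\norm{\phi}_H^2$ at the $U$ or $X$ level, and everything else must be absorbed into it. Two cancellations are essential. The nonlinear term enjoys the antisymmetry $\inner{(\phi\cdot\nabla)\phi}{\phi}_{L^2}=0$ for divergence-free $\phi$, but in the higher inner products only a commutator survives, which has to be controlled by interpolation so as to produce a small multiple of the top-order norm plus a term of the form $c_t\tilde{K}_2(\phi)(1+\norm{\phi}_H^2)$. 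More delicately, because each $B_i$ is first order, the martingale-part term $\sum_i\norm{\mathcal{P}B_i\phi}_H^2$ is itself of top order $\norm{\phi}_V^2$; this is compensated precisely by the It\^o--Stratonovich correction $\sum_i\inner{\mathcal{P}B_i^2\phi}{\phi}_H$, whose leading contribution, after integration by parts and using that the $\xi_i$ are divergence-free, exactly cancels the leading term of $\sum_i\norm{\mathcal{P}B_i\phi}_H^2$. Establishing this cancellation at each relevant Sobolev level, and also for the differences $\phi-\psi$ needed in (\ref{therealcauchy1}) and (\ref{therealcauchy1*}), is the heart of the proof; it is also where the fourth space $X = L^2_{\sigma}$ is indispensable, since in the triple $(W^{2,2}_{\sigma}, W^{1,2}_{\sigma}, L^2_{\sigma})$ alone the corresponding estimate cannot be closed for a genuinely first-order noise. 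With all of Assumptions \ref{assumption fin dim spaces}--\ref{finally the last assumption} and \ref{gg3}--\ref{assumption for probability in H} verified, Theorem \ref{theorem2} yields the unique $U$-valued maximal strong solution and the blow-up dichotomy, which is exactly Theorem \ref{theorem2NS}.
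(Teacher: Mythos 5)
Your proposal is correct and follows essentially the same route as the paper: Theorem \ref{theorem2NS} is obtained by applying Theorem \ref{theorem2} to the spaces $V=W^{3,2}_{\sigma}$, $H=W^{2,2}_{\sigma}$, $U=W^{1,2}_{\sigma}$, $X=L^2_{\sigma}$ and the operators $\mathcal{A}$, $\mathcal{G}$ as you list them, with the verification of Assumptions \ref{assumption fin dim spaces}--\ref{finally the last assumption} and \ref{gg3}--\ref{assumption for probability in H} deferred to the companion paper \cite{goodair2022inprep}, exactly as you sketch. One small point: the paper attributes the failure of a direct application of Theorem \ref{theorem1} with the lower triple $(W^{2,2}_{\sigma},W^{1,2}_{\sigma},L^2_{\sigma})$ to the loss of the Sobolev algebra property for the nonlinear term at the $W^{1,2}$ level in (\ref{uniformboundsassumpt1}), rather than to the first-order noise as you suggest, though this does not affect the validity of your argument.
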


It is necessary to address explicitly why Theorem \ref{theorem2NS} could not be achieved by a more simple application of Theorem \ref{theorem1} with the spaces $V:= W^{2,2}_{\sigma}(\T;\R^N), H:= W^{1,2}_{\sigma}(\T;\R^N)$ and $U:= L^2_{\sigma}(\T;\R^N)$. The issue arises from the necessary control of the nonlinear term in showing (\ref{uniformboundsassumpt1}): for $H=W^{2,2}_{\sigma}(\mathbb{T}^3;\R^3)$ we have the algebra property of the Sobolev Space which affords us a bound $$\norm{\mathcal{P}\mathcal{L}_{\phi^n}\phi^n}_2 \leq c\norm{\phi^n}_2\norm{\phi^n}_3$$ using the equivalence of the $\norm{\cdot}_2$ and the standard $W^{2,2}(\T;\R^3)$ one. In the $W^{1,2}(\T;\R^3)$ norm we do not have the same luxury and so this nonlinear term cannot be bounded just in terms of the $W^{1,2}$ and $W^{2,2}$ norms as would be required.

\subsection{SALT Navier-Stokes in Vorticity Form} \label{subsection vorticity}

If we consider (\ref{number2equation}) in the alternative vorticity form, then we can show the optimal existence result with an application of Theorem \ref{theorem1} hence explicitly justifying the need to keep Sections \ref{Section v valued} and \ref{h valued} distinct. The equation is
\begin{equation} \label{number3equation}
    w_t - w_0 + \int_0^t\mathscr{L}(u_s,w_s)\,ds - \int_0^t \Delta w_s\, ds -\frac{1}{2}\sum_{i=1}^\infty\int_0^t\mathscr{L}^2_iw_s\,ds + \int_0^t \mathscr{L}_iw_s dW^i_s = 0
\end{equation}
which is again supplemented with the divergence free condition on $u,w$ and the boundary conditions $$u \cdot \mathbf{n} = 0, \qquad w=0$$ on a smooth bounded domain $\mathscr{O}\subset \R^3$ with $\mathbf{n}$ the outward unit normal. These are the so called Navier Boundary Conditions which are well summarised in \cite{kelliher2006navier}. Technical issues surrounding the noise term prevent us from working on a bounded domain in the velocity form, though these issues aren't present for the vorticity form which is a key motivator for our analysis in this setting. Here $w$ represents the fluid vorticity, $u$ continues to denote the velocity and the new operators are defined for sufficiently regular functions $\phi,\psi:\mathscr{O}\rightarrow\R^3$ by
\begin{align*}
    \mathscr{L}(\phi,\psi) &:= \mathcal{L}_{\phi}\psi -\mathcal{L}_{\psi}\phi\\
    \mathscr{L}_i\phi&:= \mathscr{L}(\xi_i,\phi).
\end{align*}
We can prescribe $u$ for a given $w$ via a Biot-Savart Operator \cite{enciso2018biot} to close (\ref{number3equation}) as an equation in $w$. We shall work with the spaces $L^2_{\sigma}(\mathscr{O};\R^3)$, $W^{m,2}_{\sigma}(\mathscr{O};\R^3)$ which are defined similarly to those on the torus but now to incorporate the zero-normal and zero-trace conditions, with the technicalities again deferred to \cite{goodair2022inprep}.

\begin{definition} \label{definitionofirregularsolutionNSw}
A pair $(w,\tau)$ where $\tau$ is a $\mathbb{P}-a.s.$ positive stopping time and $w$ is a process such that for $\mathbb{P}-a.e.$ $\omega$, $w_{\cdot}(\omega) \in C\left([0,T];W^{1,2}_{\sigma}(\mathscr{O};\R^N)\right)$ and $w_{\cdot}(\omega)\mathbbm{1}_{\cdot \leq \tau(\omega)} \in L^2\left([0,T];W^{2,2}_{\sigma}(\mathscr{O};\R^N)\right)$ for all $T>0$ with $w_{\cdot}\mathbbm{1}_{\cdot \leq \tau}$ progressively measurable in $W^{2,2}_{\sigma}(\mathscr{O};\R^N)$, is said to be a local strong solution of the equation (\ref{converted equation}) if the identity
\begin{equation} \nonumber
     w_t - w_0 + \int_0^{t\wedge \tau}\mathscr{L}(u_s,w_s)\,ds - \int_0^{t \wedge \tau} \Delta w_s\, ds -\frac{1}{2}\sum_{i=1}^\infty\int_0^{t \wedge \tau}\mathscr{L}^2_iw_s\,ds + \int_0^{t\wedge \tau} \mathscr{L}_iw_s dW^i_s = 0
\end{equation}
holds $\mathbb{P}-a.s.$ in $L^2_{\sigma}(\mathscr{O};\R^N)$ for all $t \geq 0$.
\end{definition}

\begin{definition} \label{H valued maximal definition NSw}
A pair $(w,\Theta)$ such that there exists a sequence of stopping times $(\theta_j)$ which are $\mathbb{P}-a.s.$ monotone increasing and convergent to $\Theta$, whereby $(w_{\cdot \wedge \theta_j},\theta_j)$ is a local strong solution of the equation (\ref{number3equation}) for each $j$, is said to be a maximal strong solution of the equation (\ref{number3equation}) if for any other pair $(\eta,\Gamma)$ with this property then $\Theta \leq \Gamma$ $\mathbb{P}-a.s.$ implies $\Theta = \Gamma$ $\mathbb{P}-a.s.$.
\end{definition}

\begin{definition} \label{definition unique NSw}
A maximal strong solution $(w,\Theta)$ of the equation (\ref{number3equation}) is said to be unique if for any other such solution $(\eta,\Gamma)$, then $\Theta = \Gamma$ $\mathbb{P}-a.s.$ and for all $t \in [0,\Theta)$, \begin{equation} \nonumber\mathbb{P}\left(\left\{\omega \in \Omega: w_{t}(\omega) =  \eta_{t}(\omega)  \right\} \right) = 1. \end{equation}
\end{definition}

For the spaces $V:= W^{2,2}_{\sigma}(\mathscr{O};\R^N), H:= W^{1,2}_{\sigma}(\mathscr{O};\R^N)$ and $U:= L^2_{\sigma}(\mathscr{O};\R^N)$ it is shown in \cite{goodair2022inprep} that we can apply Theorem \ref{theorem1} to conclude Theorem \ref{theorem2NSw}.

\begin{theorem} \label{theorem2NSw}
For any given $\mathcal{F}_0-$ measurable $w_0:\Omega \rightarrow W^{1,2}_{\sigma}(\mathscr{O};\R^N)$, there exists a unique maximal strong solution $(w,\Theta)$ of the equation (\ref{number3equation}). Moreover at $\mathbb{P}-a.e.$ $\omega$ for which $\Theta(\omega)<\infty$, we have that \begin{equation}\nonumber \sup_{r \in [0,\Theta(\omega))}\norm{w_r(\omega)}_{1}^2 + \int_0^{\Theta(\omega)}\norm{w_r(\omega)}_2^2dr = \infty.\end{equation}
\end{theorem}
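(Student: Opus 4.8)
The plan is to recognise Theorem \ref{theorem2NSw} as a direct application of Theorem \ref{theorem1}, the proof consisting of verifying that the SALT Navier--Stokes system in vorticity form---once closed in $w$ through the Biot--Savart operator and rewritten in It\^{o} form---fits the abstract framework of Subsection \ref{assumptionschapter} for the triple $V \hookrightarrow H \hookrightarrow U$ with $V = W^{2,2}_{\sigma}(\mathscr{O};\R^N)$, $H = W^{1,2}_{\sigma}(\mathscr{O};\R^N)$, $U = L^2_{\sigma}(\mathscr{O};\R^N)$ (here $N=3$). First I would set $\sy := w$, identify
\[
\mathcal{A}(t,w) := -\mathscr{L}(u,w) + \Delta w + \tfrac{1}{2}\sum_{i=1}^\infty \mathscr{L}_i^2 w, \qquad \mathcal{G}_i(t,w) := -\mathscr{L}_i w,
\]
with $u$ prescribed from $w$ by Biot--Savart so that $\norm{u}_{W^{k+1,2}} \leq c\norm{w}_{W^{k,2}}$ for the relevant $k$, and check that the integrals in (\ref{number3equation}) coincide with those of (\ref{thespde}). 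For the Galerkin data I would take $V_n$ to be the span of the first $n$ eigenfunctions of the relevant Stokes-type operator subject to the Navier boundary conditions; as in the velocity case these form an orthogonal basis of each of $U$, $H$, $V$ for the equivalent norms built from that operator, which yields Assumption \ref{assumption fin dim spaces}: (\ref{projectionsboundedonH}) with $c=1$ and (\ref{mu2}) with $\mu_n$ the square root of the $n$-th eigenvalue. The continuous bilinear form $\inner{\cdot}{\cdot}_{U\times V}$ reducing to $\inner{\cdot}{\cdot}_H$ is constructed through the same operator, exactly as in Subsection \ref{subsection velocity}.

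Next I would verify the analytic inequalities of Assumptions \ref{new assumption 1}--\ref{finally the last assumption}. The growth and local Lipschitz bounds of Assumption \ref{new assumption 1} follow from the algebra structure and Sobolev embeddings of $W^{m,2}(\mathscr{O};\R^N)$ in dimension three, together with the Biot--Savart estimate and the summability (\ref{xi bound}); for instance $\norm{\mathscr{L}(u,w)}_U \leq c\,\norm{u}_{W^{1,\infty}}\norm{w}_{W^{1,2}} \leq c(1+\norm{w}_H^2)\norm{w}_V$, and analogously for the differences and for the noise terms measured in the $H$ norm. The heart of the matter is the coercivity and monotonicity estimates (\ref{uniformboundsassumpt1}), (\ref{therealcauchy1}) and (\ref{probability first}), with the quartic companions (\ref{uniformboundsassumpt2}), (\ref{therealcauchy2}), (\ref{probability second}) then following from Cauchy--Schwarz. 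Here I would exploit the It\^{o}--Stratonovich cancellation intrinsic to transport noise: pairing $\sum_i \mathscr{L}_i^2 w$ against $w$ and $\sum_i \norm{\mathscr{L}_i w}^2$ in the $W^{1,2}$ inner product, the genuinely top-order contributions cancel---using that each $\xi_i$ is divergence free, so that $\mathscr{L}_i$ agrees with a skew-adjoint operator on $L^2$ up to a zero-order term---and what remains is controlled by $\sum_i\norm{\xi_i}_{W^{3,\infty}}^2\norm{w}_H^2$. The only surviving unbounded term, produced by $\inner{\mathscr{L}(u,w)}{w}_H$ after the Biot--Savart substitution and by the commutators of $\mathscr{L}_i$ with the first derivatives appearing in the $W^{1,2}$ pairing, I would absorb into the parabolic dissipation $-\kappa\norm{w}_{W^{2,2}}^2$ coming from $\inner{\Delta w}{w}_H$, via an interpolation (Ladyzhenskaya/Agmon-type) inequality, checking that the Navier boundary conditions make all the integrations by parts produce no uncontrolled boundary integrals. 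Assumption \ref{finally the last assumption} is then a corollary of the same nonlinear estimates applied with a general test function $\eta \in H$.

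Once every assumption of Subsection \ref{assumptionschapter} is in place, Theorem \ref{theorem1} applied with $\sy_0 = w_0$ produces a unique $H$-valued maximal strong solution $(w,\Theta)$, which unwinds to precisely a maximal strong solution of (\ref{number3equation}) in the sense of Definitions \ref{definitionofirregularsolutionNSw}--\ref{definition unique NSw}, and the abstract blow-up alternative (\ref{actualblowup}) reads verbatim as the claimed dichotomy in the $W^{1,2}_{\sigma}$ and $W^{2,2}_{\sigma}$ norms. I expect the main obstacle to be precisely the coercivity step on the bounded three-dimensional domain: showing that the nonlinear transport term, which is effectively of top order in the $W^{1,2}$ pairing once $u$ is expressed through $w$, together with the transport-noise commutators, is dominated by the viscous dissipation with no boundary leakage, and that the Stratonovich correction eliminates the leading-order quadratic variation of the stochastic integral. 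The full computations are carried out in \cite{goodair2022inprep}.
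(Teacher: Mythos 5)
Your proposal is correct and follows essentially the same route as the paper: identify the spaces $V = W^{2,2}_{\sigma}(\mathscr{O};\R^N)$, $H = W^{1,2}_{\sigma}(\mathscr{O};\R^N)$, $U = L^2_{\sigma}(\mathscr{O};\R^N)$, verify the assumptions of Subsection \ref{assumptionschapter} for the operators read off from (\ref{number3equation}) with $u$ prescribed by Biot--Savart, and apply Theorem \ref{theorem1}, with the detailed verification of the assumptions deferred to \cite{goodair2022inprep} exactly as the paper does. Your observation that the extra derivative gained by $u$ over $w$ is what lets the vorticity form fit the Section \ref{Section v valued} framework (in contrast to the velocity form) is consistent with the paper's own explanation of why the two applications are treated differently.
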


\newpage

\section{Appendix} \label{appendix}

\subsection{Appendix I: Proofs from Section \ref{Section v valued}} \label{proofs section 3 appendix}

\begin{proof}[Proof of \ref{localexistenceofsolutionsofGalerkin}:]
It is clear that for any $R > \sup_{n\in\N}\norm{\sy^n_0}_{L^{\infty}(\Omega,H)}^2$ and the stopping time $$\tau^t_{n,R}(\omega):= t \wedge \inf\left\{s \geq 0: \sup_{r \in [0,s]}\norm{\sy^{n,R}_r(\omega)}_H^2 \geq R \right\}$$ that $(\sy^{n,R}, \tau^t_{n,R})$ is a local strong solution of (\ref{nthorderGalerkin}). We would therefore be done if there exists an $R > \sup_{n\in\N}\norm{\sy^n_0}_{L^{\infty}(\Omega,H)}^2$ such that $\tau^{M,t}_{n,R} \leq \tau^t_{n,R}$ $\mathbbm{P}-a.s.$. Due to the norm equivalence on $V_n$, there exists some constant $c_n$ such that the stopping time $$\tilde{\tau}^t_{n,R}(\omega):= t \wedge \inf\left\{s \geq 0: \sup_{r \in [0,s]}\norm{\sy^{n,R}_r(\omega)}_U^2 \geq c_nR \right\}$$ satisfies $\tilde{\tau}^t_{n,R} \leq \tau^t_{n,R} $ $\mathbbm{P}-a.s.$. Thus if we choose $$R > \min\left\{\sup_{n\in\N}\norm{\sy^n_0}_{L^{\infty}(\Omega,H)}^2, \frac{M + \sup_{n\in\N}\norm{\sy^n_0}_{L^{\infty}(\Omega,U)}^2}{c_n} \right\}$$ then for any $s \in [0,\tau^{M,t}_{n,R}(\omega)]$ we have that 
\begin{align*}
  \sup_{r \in [0,s]}\norm{\sy^{n,R}_{r}(\omega)}^2_U& \leq   \sup_{r \in [0,s]}\norm{\sy^{n,R}_{r}(\omega)}^2_U + \int_0^s\norm{\sy^{n,R}_{r}(\omega)}^2_Hdr\\ &\leq M + \norm{\sy^n_0(\omega)}_U^2\\ &\leq M + \sup_{n\in\N}\norm{\sy^n_0}_{L^{\infty}(\Omega,U)}^2\\ &\leq c_nR
\end{align*}
and hence $\tau^{M,t}_{n,R} \leq \tilde{\tau}^t_{n,R} \leq \tau^t_{n,R}$ $\mathbbm{P}-a.s.$, which implies the result.

\end{proof}

\begin{proof}[Proof of \ref{theorem:uniformbounds}:]
By equipping $V_n$ with the $H$ inner product we can apply the It\^{o} Formula for $\norm{\cdot}_H^2$ to see that for any $0 \leq r \leq t$, the identity \begin{align}\nonumber\norm{\sy^n_{r\wedge\tau^{M,t}_n}}_H^2 &= \norm{\sy^n_0}_H^2 + 2\int_0^{r\wedge\tau^{M,t}_n}\inner{\mathcal{P}_n\mathcal{A}(s,\sy^n_s)}{\sy^n_s}_Hds\\ & \qquad  + \int_0^{r\wedge\tau^{M,t}_n}\sum_{i=1}^\infty\norm{\mathcal{P}_n\mathcal{G}_i(s,\sy^n_s)}_H^2ds + 2\sum_{i=1}^\infty\int_0^{r\wedge\tau^{M,t}_n}\inner{\mathcal{P}_n\mathcal{G}_i(s,\sy^n_s)}{\sy^n_s}_HdW^i_s\nonumber
\end{align}
holds $\mathbbm{P}-a.s.$. Adopting the notation (\ref{tildesynotation}) to simplify proceedings prompts a shift from the above to 
\begin{align}\nonumber\norm{\tilde{\sy}^n_{r}}_H^2 = \norm{\sy^n_0}_H^2 &+ 2\int_0^{r}\inner{\mathcal{P}_n\mathcal{A}(s,\sy^n_s)}{\sy^n_s}_H\mathbbm{1}_{s \leq \tau^{M,t}_n}ds  + \int_0^{r}\sum_{i=1}^\infty\norm{\mathcal{P}_n\mathcal{G}_i(s,\sy^n_s)}_H^2\mathbbm{1}_{s \leq \tau^{M,t}_n}ds\\ & + 2\sum_{i=1}^\infty\int_0^{r}\inner{\mathcal{P}_n\mathcal{G}_i(s,\tilde{\sy}^n_s)}{\tilde{\sy}^n_s}_HdW^i_s.\nonumber
\end{align}
Note that we have left the indicator function outside of the time integral terms as we have no linearity assumption to take it through the  $\norm{\mathcal{P}_n\mathcal{G}_i(s,\sy^n_s)}_H^2$ term: more precisely, it may be the case that $\mathcal{P}_n\mathcal{G}_i(s,0) \neq 0$. To do this for the stochastic integral we are just relying on the linearity of the inner product. Let $0 \leq \theta_j < \theta_k \leq t$ be two arbitrary stopping times. By substituting in $\theta_j$ to the above, and then subtracting this from the identity for any $\theta_j \leq r \leq t$ $\mathbbm{P}-a.s.$, then we also have the equality
\begin{align}\nonumber\norm{\tilde{\sy}^n_{r}}_H^2 = \norm{\tilde{\sy}^n_{\theta_j}}_H^2 &+ 2\int_{\theta_j}^{r}\left(\inner{\mathcal{P}_n\mathcal{A}(s,\sy^n_s)}{\sy^n_s}_H + \sum_{i=1}^\infty\norm{\mathcal{P}_n\mathcal{G}_i(s,\sy^n_s)}_H^2\right)\mathbbm{1}_{s \leq \tau^{M,t}_n}ds\\ &+ 2\sum_{i=1}^\infty\int_{\theta_j}^{r}\inner{\mathcal{P}_n\mathcal{G}_i(s,\tilde{\sy}^n_s)}{\tilde{\sy}^n_s}_HdW^i_s\nonumber
\end{align}
$\mathbbm{P}-a.s.$. Applying (\ref{uniformboundsassumpt1}) to the process $\sy^n$, we deduce the inequality
\begin{align*}\norm{\tilde{\sy}^n_{r}}_H^2 \leq \norm{\tilde{\sy}^n_{\theta_j}}_H^2 &+ \int_{\theta_j}^{r}\left(c_{s}\tilde{K}_2(\tilde{\sy}^n_s)\left[1 + \norm{\tilde{\sy}^n_s}_H^2\right] - \kappa\norm{\tilde{\sy}^n_s}_V^2\right) ds\\  & \qquad \qquad \qquad \qquad + 2\sum_{i=1}^\infty\int_{\theta_j}^{r}\inner{\mathcal{P}_n\mathcal{G}_i(s,\tilde{\sy}^n_s)}{\tilde{\sy}^n_s}_HdW^i_s
\end{align*}
having now assimilated the indicator function through the norms into the $\tilde{\sy}^n$ (and recalling the notation of $\tilde{K}_2$ introduced in (\ref{Ktilde2})). We shall continue to use $c$ to represent a generic constant, which may well depend on the constants involved in our assumptions, as well as on $\sy_0$ and the choices of $M$ and $t$. The constant will not depend on $n,r$ or $\omega$. Using the boundedness of $c_{s}$ on $[0,t]$ and  (\ref{galerkinboundsatisfiedbystoppingtime2}), then we reduce the above to the inequality 
\begin{align*}\norm{\tilde{\sy}^n_{r}}_H^2 +\kappa\int_{\theta_j}^{r}\norm{\tilde{\sy}^n_s}_V^2ds &\leq \norm{\tilde{\sy}^n_{\theta_j}}_H^2\\ &  + c\int_{\theta_j}^{r}(1 + \norm{\tilde{\sy}^n_s}_H^2)^2 ds + 2\sum_{i=1}^\infty\int_{\theta_j}^{r}\inner{\mathcal{P}_n\mathcal{G}_i(s,\tilde{\sy}^n_s)}{\tilde{\sy}^n_s}_HdW^i_s.
\end{align*}
We can freely bound the stochastic integral by its absolute value, and as $r\in[\theta_j,t]$ was arbitrary we may take the supremum over all $r \in [\theta_j,\theta_k]$, deducing that 
\begin{align*}\sup_{r\in [\theta_j,\theta_k]}\norm{\tilde{\sy}^n_{r}}_H^2 + & \kappa\int_{\theta_j}^{\theta_k}\norm{\tilde{\sy}^n_s}_V^2ds \leq \norm{\tilde{\sy}^n_{\theta_j}}_H^2\\ & \qquad + c\int_{\theta_j}^{\theta_k}1 + \norm{\tilde{\sy}^n_s}_H^4 ds + 2\sup_{r \in [\theta_j,\theta_k]}\left\vert\sum_{i=1}^\infty\int_{\theta_j}^{r}\inner{\mathcal{P}_n\mathcal{G}_i(s,\tilde{\sy}^n_s)}{\tilde{\sy}^n_s}_HdW^i_s\right\vert
\end{align*}
having also used the simple manipulation $(1 + \norm{\tilde{\sy}^n_s}_H^2)^2 \leq c(1 + \norm{\tilde{\sy}^n_s}_H^4)$. This can be scaled and rewritten as 
\begin{align*}\norm{\tilde{\sy}^n}_{HV,\theta_j,\theta_k}^2 \leq c\norm{\tilde{\sy}^n_{\theta_j}}_H^2 + c\int_{\theta_j}^{\theta_k}1 + \norm{\tilde{\sy}^n_s}_H^4 ds + 2c\sup_{r \in [\theta_j,\theta_k]}\left\vert\sum_{i=1}^\infty\int_{\theta_j}^{r}\inner{\mathcal{P}_n\mathcal{G}_i(s,\tilde{\sy}^n_s)}{\tilde{\sy}^n_s}_HdW^i_s\right\vert.
\end{align*}
We are justified in taking the expectation here for the $\mathbbm{P}-a.s.$ inequality as the process is a genuine square integrable semimartingale, and again that this stochastic integral is a true square integrable martingale. There is no issue considering the supremum in a random time interval: we could simply absorb the randomness into the integrand through an indicator function and the result would be clear. Moreover
\begin{align*}\mathbbm{E}\norm{\tilde{\sy}^n}_{HV,\theta_j,\theta_k}^2 &\leq c\mathbbm{E}\norm{\tilde{\sy}^n_{\theta_j}}_H^2\\ &+ c\mathbbm{E}\int_{\theta_j}^{\theta_k}1 + \norm{\tilde{\sy}^n_s}_H^4ds + 2c\mathbbm{E}\sup_{r \in [\theta_j,\theta_k]}\left\vert\sum_{i=1}^\infty\int_{\theta_j}^{r}\inner{\mathcal{P}_n\mathcal{G}_i(s,\tilde{\sy}^n_s)}{\tilde{\sy}^n_s}dW^i_s\right\vert
\end{align*}
and we apply the Burkholder-Davis-Gundy Inequality (recalling this once more from \cite{goodair2022stochastic} Theorem 1.6.9) in conjunction with (\ref{uniformboundsassumpt2}) to further deduce that 
\begin{align*}\mathbbm{E}\norm{\tilde{\sy}^n}_{HV,\theta_j,\theta_k}^2 &\leq c\mathbbm{E}\norm{\tilde{\sy}^n_{\theta_j}}_H^2\\ &  + c\mathbbm{E}\int_{\theta_j}^{\theta_k}1 + \norm{\tilde{\sy}^n_s}_H^4ds + 2c\mathbbm{E}\left(\int_{\theta_j}^{\theta_k}c_{s}\tilde{K}_2(\tilde{\sy}^n_s)\left[1+\norm{\tilde{\sy}^n_s}_H^4\right] ds\right)^{\frac{1}{2}}
\end{align*}
and subsequently
\begin{align}\nonumber &\mathbbm{E}\norm{\tilde{\sy}^n}_{HV,\theta_j,\theta_k}^2 \leq c\mathbbm{E}\norm{\tilde{\sy}^n_{\theta_j}}_H^2\\  & \qquad + c\mathbbm{E}\int_{\theta_j}^{\theta_k}(1 + \norm{\tilde{\sy}^n_s}_H^2)^2 ds + c\mathbbm{E}\left(\int_{\theta_j}^{\theta_k} \left(1 + \norm{\tilde{\sy}^n_s}_H^2\right)\left[1+ \norm{\tilde{\sy}^n_s}_H^4\right] ds\right)^{\frac{1}{2}}. \label{beforeshiftinnotation}
\end{align}
Now we observe $\left(1 + \norm{\tilde{\sy}^n_s}_H^2\right)\left[1+ \norm{\tilde{\sy}^n_s}_H^4\right] \leq c\left(1 + \norm{\tilde{\sy}^n_s}_H^6\right)$ and
 %\begin{equation} \label{BDGtermiinuniformbound} c\left(\int_{\theta_j}^{\theta_k}(1 + \norm{\tilde{\sy}^n_s}_H^2) \norm{\tilde{\sy}^n_s}_H^4 ds\right)^{\frac{1}{2}} \leq c\left(\int_{\theta_j}^{\theta_k} \norm{\tilde{\sy}^n_s}_H^4 ds\right)^{\frac{1}{2}} + c\left(\int_{\theta_j}^{\theta_k} \norm{\tilde{\sy}^n_s}_H^2 \norm{\tilde{\sy}^n_s}_H^4 ds\right)^{\frac{1}{2}}\end{equation}
\begin{align*}
    \left(\int_{\theta_j}^{\theta_k}1 + \norm{\tilde{\sy}^n_s}_H^6 ds\right)^{\frac{1}{2}} 
    \leq t^{\frac{1}{2}} + \left(\int_{
    \theta_j}^{\theta_k}\norm{\tilde{\sy}^n_s}_H^6 ds\right)^{\frac{1}{2}}
\end{align*}
so using that our generic $c$ may depend on $t$ we reduce (\ref{beforeshiftinnotation}) to
\begin{align}\mathbbm{E}\norm{\tilde{\sy}^n}_{HV,\theta_j,\theta_k}^2 &\leq c\mathbbm{E}\norm{\tilde{\sy}^n_{\theta_j}}_H^2 + c + c\mathbbm{E}\int_{\theta_j}^{\theta_k}\norm{\tilde{\sy}^n_s}_H^4 ds + c\mathbbm{E}\left(\int_{\theta_j}^{\theta_k}\norm{\tilde{\sy}^n_s}_H^6 ds\right)^{\frac{1}{2}}. \label{beforeshiftinnotation*}
\end{align}
Now we have that
\begin{align}
    \nonumber c\left(\int_{\theta_j}^{\theta_k} \norm{\tilde{\sy}^n_s}_H^6 ds\right)^{\frac{1}{2}} &= c\left(\int_{\theta_j}^{\theta_k}\norm{\tilde{\sy}^n_s}_H^2\norm{\tilde{\sy}^n_s}_H^4 ds\right)^{\frac{1}{2}}\\ \nonumber
    &\leq c\left(\sup_{s \in [{\theta_j},\theta_k]}\norm{\tilde{\sy}^n_s}_H^2\int_{\theta_j}^{\theta_k} \norm{\tilde{\sy}^n_s}_H^4 ds\right)^{\frac{1}{2}}\\ \nonumber
    &= c\left(\sup_{s \in [{\theta_j},\theta_k]}\norm{\tilde{\sy}^n_s}_H^2\right)^{\frac{1}{2}}\left(\int_{\theta_j}^{\theta_k} \norm{\tilde{\sy}^n_s}_H^4 ds\right)^{\frac{1}{2}}\\ \label{howweusebdg}
    &\leq \frac{1}{2}\sup_{r \in [{\theta_j},\theta_k]}\norm{\tilde{\sy}^n_r}_H^2 + \frac{c^2}{2}\int_{\theta_j}^{\theta_k}\norm{\tilde{\sy}^n_s}_H^4 ds
\end{align}
via an application of Young's Inequality. Taking the expectation and then the supremum term over to the left hand side whilst absorbing the integral term into what we already have, we reduce (\ref{beforeshiftinnotation*}) to \begin{equation} \label{finaluniformboundbeforegronwall}
    \mathbbm{E}\norm{\tilde{\sy}^n}_{HV,\theta_j,\theta_k}^2 \leq c\mathbbm{E}\norm{\sy^n_{\theta_j}}_H^2+c+ c\mathbbm{E}\int_{\theta_j}^{\theta_k}\norm{\tilde{\sy}^n_s}_H^4 ds.
\end{equation}
having scaled once more, and rewrite this as 
\begin{equation} \label{finaluniformboundbeforegronwall2}
    \mathbbm{E}\norm{\tilde{\sy}^n}_{HV,\theta_j,\theta_k}^2 \leq \hat{c}\mathbbm{E}\left(\norm{\sy^n_{\theta_j}}_H^2+1+ \int_{\theta_j}^{\theta_k}\norm{\tilde{\sy}^n_s}_H^4 ds\right).
\end{equation}
for some particular $\hat{c}$ which we shall reference. Now we may apply the Stochastic Gr\'{o}nwall lemma (Lemma \ref{gronny}) for the processes $$\boldsymbol{\phi} = \norm{\tilde{\sy}^n}_H^2, \qquad \boldsymbol{\psi}= \norm{\tilde{\sy}^n}_V^2, \qquad \boldsymbol{\eta}= \norm{\tilde{\sy}^n}_H^2$$ noting that the bound (\ref{boundingronny}) comes from (\ref{galerkinboundsatisfiedbystoppingtime}). The application of this result conclude the proof.

\end{proof}

\begin{proof}[Proof of \ref{lemma to use cauchy}:]
We show the inequalities independently, starting with $A$ through
\begin{align}
    \nonumber A &=  2\inner{\mathcal{P}_n\left[\mathcal{A}(s,\boldsymbol{\phi}) - \mathcal{A}(s,\boldsymbol{\psi})
    \right] + \left[\mathcal{P}_n- \mathcal{P}_m\right]\mathcal{A}(s,\boldsymbol{\psi})}{\boldsymbol{\phi} - \boldsymbol{\psi}}_U \\\nonumber  & \qquad \qquad \qquad \qquad \qquad \qquad + \sum_{i=1}^\infty\norm{\mathcal{P}_n\left[\mathcal{G}_i(s,\boldsymbol{\phi}) - \mathcal{G}_i(s,\boldsymbol{\psi})\right] + \left[\mathcal{P}_n- \mathcal{P}_m\right]\mathcal{G}_i(s,\boldsymbol{\psi})}_U^2\\\nonumber 
    & \leq 2\inner{\mathcal{P}_n\left[\mathcal{A}(s,\boldsymbol{\phi}) - \mathcal{A}(s,\boldsymbol{\psi})
    \right] }{\boldsymbol{\phi} - \boldsymbol{\psi}}_U + \sum_{i=1}^\infty\norm{\mathcal{P}_n\left[\mathcal{G}_i(s,\boldsymbol{\phi}) - \mathcal{G}_i(s,\boldsymbol{\psi})\right]}_U^2\\ \nonumber & \qquad \qquad \qquad + 2\inner{\left[\mathcal{P}_n- \mathcal{P}_m\right]\mathcal{A}(s,\boldsymbol{\psi})}{\boldsymbol{\phi} - \boldsymbol{\psi}}_U + \sum_{i=1}^\infty\norm{\left[\mathcal{P}_n- \mathcal{P}_m\right]\mathcal{G}_i(s,\boldsymbol{\psi})}_U^2\\\nonumber 
    & \qquad \qquad \qquad \qquad \qquad \qquad + 2\sum_{i=1}^\infty\norm{\mathcal{P}_n\left[\mathcal{G}_i(s,\boldsymbol{\phi}) -\mathcal{G}_i(s,\boldsymbol{\psi})\right]}_U\norm{\left[\mathcal{P}_n- \mathcal{P}_m\right]\mathcal{G}_i(s,\boldsymbol{\psi})}_U\\\nonumber
    &\leq 2\inner{\mathcal{A}(s,\boldsymbol{\phi}) - \mathcal{A}(s,\boldsymbol{\psi})
     }{\boldsymbol{\phi} - \boldsymbol{\psi}}_U + \sum_{i=1}^\infty\norm{\left[\mathcal{G}_i(s,\boldsymbol{\phi}) - \mathcal{G}_i(s,\boldsymbol{\psi}\right]}_U^2\\ \nonumber& \qquad \qquad \qquad + 2\inner{\mathcal{A}(s,\boldsymbol{\psi})}{[I-\mathcal{P}_m]\boldsymbol{\phi} - \boldsymbol{\psi}}_U + \sum_{i=1}^\infty\norm{\mathcal{P}_n\left[I - \mathcal{P}_m \right]\mathcal{G}_i(s,\boldsymbol{\psi})}_U^2\\ \nonumber
    & \qquad \qquad \qquad \qquad \qquad \qquad + 2\sum_{i=1}^\infty\norm{\mathcal{G}_i(s,\boldsymbol{\phi}) -\mathcal{G}_i(s,\boldsymbol{\psi})}_U\norm{\mathcal{P}_n\left[I - \mathcal{P}_m \right]\mathcal{G}_i(s,\boldsymbol{\psi})}_U
    \\
    &:=\alpha + \beta +\gamma \nonumber
\end{align}
having used that $\mathcal{P}_n$ is an orthogonal projection on $U$ and $\mathcal{P}_n\mathcal{P}_m = \mathcal{P}_m$.
We look to show the appropriate bounds on $\alpha$, $\beta$ and $\gamma$. For $\alpha$ we simply apply (\ref{therealcauchy1}). Moving on to $\beta$, we use again that $\mathcal{P}_n$ is an orthogonal projection and the property (\ref{mu2}) to see that $$\beta \leq \frac{2}{\mu_m}\norm{\mathcal{A}(s,\boldsymbol{\psi})}_U\norm{\boldsymbol{\phi} - \boldsymbol{\psi}}_H + \sum_{i=1}^\infty \frac{1}{\mu_m^2}\norm{\mathcal{G}_i(s,\boldsymbol{\psi})}_H^2.$$ Through Young's Inequality with a constant $c$ dependent on $\kappa$, we can bound this further by $$\frac{c}{\mu_m^2}\left(\norm{\mathcal{A}(s,\boldsymbol{\psi})}_U^2 + \sum_{i=1}^\infty\norm{\mathcal{G}_i(s,\boldsymbol{\psi})}_H^2 \right) + \frac{\kappa}{2}\norm{\boldsymbol{\phi} - \boldsymbol{\psi}}_H^2$$ to which we apply (\ref{111}) to the bracketed term. As for $\gamma$, we have that
\begin{align*}
   \gamma &\leq \sum_{i=1}^\infty\left(\norm{\mathcal{G}_i(s,\boldsymbol{\phi})}_U + \norm{\mathcal{G}_i(s,\boldsymbol{\psi})}_U\right)\norm{\left[I - \mathcal{P}_m \right]\mathcal{G}_i(s,\boldsymbol{\psi})}_U\\
   &\leq c\sum_{i=1}^\infty\left(\norm{\mathcal{G}_i(s,\boldsymbol{\phi})}_H + \norm{\mathcal{G}_i(s,\boldsymbol{\psi})}_H\right)\norm{\left[I - \mathcal{P}_m \right]\mathcal{G}_i(s,\boldsymbol{\psi})}_U\\
   &\leq \frac{c}{\mu_m}\sum_{i=1}^\infty\left(\norm{\mathcal{G}_i(s,\boldsymbol{\phi})}_H + \norm{\mathcal{G}_i(s,\boldsymbol{\psi})}_H\right)\norm{\mathcal{G}_i(s,\boldsymbol{\psi})}_H\\
   &\leq \frac{c}{\mu_m}\sum_{i=1}^\infty\left(\norm{\mathcal{G}_i(s,\boldsymbol{\phi})}_H^2 + \norm{\mathcal{G}_i(s,\boldsymbol{\psi})}_H^2\right)
\end{align*}
which we handle through (\ref{111}) once more. Altogether then, with notation $\lambda_m:= \min\{\mu_m,\mu_m^2\}$, we have that  
\begin{align*}
    A &\leq c_{t}\tilde{K}_2(\boldsymbol{\phi},\boldsymbol{\psi}) \norm{\boldsymbol{\phi}-\boldsymbol{\psi}}_U^2 - \kappa\norm{\boldsymbol{\phi}-\boldsymbol{\psi}}_H^2 + \frac{c}{\lambda_m}\left(c_s K(\boldsymbol{\phi})\left[1 + \norm{\boldsymbol{\phi}}_V^2\right] \right) + \frac{\kappa}{2} \norm{\boldsymbol{\phi} - \boldsymbol{\psi}}_H^2\\ & \qquad \qquad \qquad + \frac{c}{\lambda_m}\left(c_s K(\boldsymbol{\phi})\left[1 + \norm{\boldsymbol{\phi}}_V^2\right] + c_s K(\boldsymbol{\psi})\left[1 + \norm{\boldsymbol{\psi}}_V^2\right] \right)\\
    &\leq c_{t}\tilde{K}_2(\boldsymbol{\phi},\boldsymbol{\psi}) \norm{\boldsymbol{\phi}-\boldsymbol{\psi}}_U^2 - \frac{\kappa}{2}\norm{\boldsymbol{\phi}-\boldsymbol{\psi}}_H^2 + \frac{c_s}{\lambda_m}K(\boldsymbol{\phi},\boldsymbol{\psi})\left[1 + \norm{\boldsymbol{\phi}}_V^2 + \norm{\boldsymbol{\psi}}_V^2\right] 
\end{align*}
as required. It remains to show the bound on $B$, which we approach in a similar manner:
\begin{align*}
    B &= \sum_{i=1}^\infty \inner{\mathcal{P}_n\left[\mathcal{G}_i(s,\boldsymbol{\phi}) - \mathcal{G}_i(s,\boldsymbol{\psi})\right] + \left[\mathcal{P}_n- \mathcal{P}_m\right]\mathcal{G}_i(s,\boldsymbol{\psi})}{\boldsymbol{\phi}-\boldsymbol{\psi}}^2_U\\
    &\leq 2\sum_{i=1}^\infty\left(\inner{\mathcal{G}_i(s,\boldsymbol{\phi}) - \mathcal{G}_i(s,\boldsymbol{\psi})}{\boldsymbol{\phi}-\boldsymbol{\psi}}^2_U + \inner{\left[I- \mathcal{P}_m\right]\mathcal{G}_i(s,\boldsymbol{\psi})}{\boldsymbol{\phi}-\boldsymbol{\psi}}^2_U\right).
\end{align*}
The first term here is precisely what we have in (\ref{therealcauchy2}). For the second, we have that
\begin{align*}
    \inner{\left[I- \mathcal{P}_m\right]\mathcal{G}_i(s,\boldsymbol{\psi})}{\boldsymbol{\phi}-\boldsymbol{\psi}}^2_U &\leq \frac{1}{\mu_m^2}\norm{\mathcal{G}_i(s,\boldsymbol{\psi})}_H^2\norm{\boldsymbol{\phi}-\boldsymbol{\psi}}_U^2\\
    &\leq \frac{c_s}{\mu_m^2} K(\boldsymbol{\phi},\boldsymbol{\psi})\left[1 + \norm{\boldsymbol{\psi}}_V^2\right]
\end{align*}
as required. We note that this is a coarse bound, though sufficient for our purposes. 

\end{proof}

\begin{proof}[Proof of \ref{theorem:cauchygalerkin}:]
The proof uses very similar methods to those used in Proposition \ref{theorem:uniformbounds}, this time relying on Lemma \ref{lemma to use cauchy} instead of Assumption \ref{assumptions for uniform bounds2}. For any $0 \leq r \leq t$, $\sy^{m,n}$ satisfies the identity
\begin{align*}
    \sy^{m,n}_{r \wedge \tau^{M,t}_m \wedge \tau^{M,t}_n} = \sy^{m,n}_0 & + \int_0^{r\wedge \tau^{M,t}_m \wedge \tau^{M,t}_n}\mathcal{P}_n\mathcal{A}(s,\sy^n_s) - \mathcal{P}_m\mathcal{A}(s,\sy^m_s) ds\\
    & +\sum_{i=1}^\infty \int_0^{r \wedge \tau^{M,t}_m \wedge \tau^{M,t}_n}\mathcal{P}_n\mathcal{G}_i(s,\sy^n_s) - \mathcal{P}_m\mathcal{G}_i(s,\sy^m_s) dW^i_s
\end{align*}
$\mathbbm{P}-a.s.$ in $V_n$, noting that the difference process $\sy^{m,n}_{\cdot \wedge \tau^{M,t}_m \wedge \tau^{M,t}_n}$ is again a genuine square integrable semimartingale in $U$. We thus apply the It\^{o} Formula to this difference process, reaching the equality
\begin{align*}
    &\norm{\sy^{m,n}_{r \wedge \tau^{M,t}_m \wedge \tau^{M,t}_n} }_U^2 = \norm{\sy^{m,n}_0}_U^2 + 2\int_0^{r\wedge \tau^{M,t}_m \wedge \tau^{M,t}_n} \inner{\mathcal{P}_n\mathcal{A}(s,\sy^n_s) - \mathcal{P}_m\mathcal{A}(s,\sy^m_s)}{\sy^{m,n}_s}_Uds\\ & \qquad \qquad +\int_0^{r\wedge \tau^{M,t}_m \wedge \tau^{M,t}_n}\sum_{i=1}^\infty\norm{\mathcal{P}_n\mathcal{G}_i(s,\sy^n_s) - \mathcal{P}_m\mathcal{G}_i(s,\sy^m_s)}_U^2ds\\  & \qquad \qquad \qquad \qquad + 2\sum_{i=1}^\infty \int_0^{r\wedge \tau^{M,t}_m \wedge \tau^{M,t}_n}\inner{\mathcal{P}_n\mathcal{G}_i(s,\sy^n_s) - \mathcal{P}_m\mathcal{G}_i(s,\sy^m_s)}{\sy^{m,n}_s}_UdW^i_s
\end{align*}
to which we introduce notation similar to (\ref{tildesynotation}), that is \begin{equation} \label{more tilde notation}\tilde{\sy}^m_\cdot:= \sy^m_{\cdot}\mathbbm{1}_{\cdot \leq \tau^{M,t}_m \wedge \tau^{M,t}_n}, \qquad \tilde{\sy}^n_\cdot:= \sy^n_{\cdot}\mathbbm{1}_{\cdot \leq \tau^{M,t}_m \wedge \tau^{M,t}_n}, \qquad \tilde{\sy}^{m,n}:= \tilde{\sy}^n-\tilde{\sy}^m\end{equation}
and thus rewrite our equality as
\begin{align*}
        \norm{\tilde{\sy}^{m,n}_{r}}_U^2 &= \norm{\sy^{m,n}_0}_U^2 + 2\int_0^{r} \inner{\mathcal{P}_n\mathcal{A}(s,\sy^n_s) - \mathcal{P}_m\mathcal{A}(s,\sy^m_s)}{\sy^{m,n}_s}_U\mathbbm{1}_{s \leq \tau^{M,t}_m \wedge \tau^{M,t}_n}ds\\ & \qquad  \qquad \qquad  +\int_0^{r}\sum_{i=1}^\infty\norm{\mathcal{P}_n\mathcal{G}_i(s,\sy^n_s) - \mathcal{P}_m\mathcal{G}_i(s,\sy^m_s)}_U^2\mathbbm{1}_{s \leq \tau^{M,t}_m \wedge \tau^{m,t}_n}ds\\ & \qquad\qquad \qquad \qquad \qquad \qquad + 2\sum_{i=1}^\infty \int_0^{r}\inner{\mathcal{P}_n\mathcal{G}_i(s,\tilde{\sy}^n_s) - \mathcal{P}_m\mathcal{G}_i(s,\tilde{\sy}^m_s)}{\tilde{\sy}^{m,n}_{s}}_UdW^i_s.
\end{align*}
Identically to \ref{theorem:uniformbounds}, we fix arbitrary stopping times $0 \leq \theta_j < \theta_k \leq t$ and have that for any $\theta_j \leq r \leq t$ $\mathbbm{P}-a.s.$, 
\begin{align*}
        \norm{\tilde{\sy}^{m,n}_{r}}_U^2 &= \norm{\tilde{\sy}^{m,n}_{\theta_j}}_U^2 + 2\int_{\theta_j}^{r} \inner{\mathcal{P}_n\mathcal{A}(s,\sy^n_s) - \mathcal{P}_m\mathcal{A}(s,\sy^m_s)}{\sy^{m,n}_s}_U\mathbbm{1}_{s \leq \tau^{M,t}_m \wedge \tau^{M,t}_n}ds\\ & \qquad  \qquad \qquad  +\int_{\theta_j}^{r}\sum_{i=1}^\infty\norm{\mathcal{P}_n\mathcal{G}_i(s,\sy^n_s) - \mathcal{P}_m\mathcal{G}_i(s,\sy^m_s)}_U^2\mathbbm{1}_{s \leq \tau^{M,t}_m \wedge \tau^{m,t}_n}ds\\ & \qquad\qquad \qquad \qquad \qquad \qquad + 2\sum_{i=1}^\infty \int_{\theta_j}^{r}\inner{\mathcal{P}_n\mathcal{G}_i(s,\tilde{\sy}^n_s) - \mathcal{P}_m\mathcal{G}_i(s,\tilde{\sy}^m_s)}{\tilde{\sy}^{m,n}_{s}}_UdW^i_s.
\end{align*}
$\mathbbm{P}-a.s.$. Combining the time integrals and applying the bound on $A$ in Lemma \ref{lemma to use cauchy}, we deduce the inequality
\begin{align*}
        &\norm{\tilde{\sy}^{m,n}_{r}}_U^2 \leq \norm{\tilde{\sy}^{m,n}_{\theta_j}}_U^2\\& \quad + \int_{\theta_j}^{r}\bigg[
        c_{s}\tilde{K}_2(\tilde{\sy}^m_s,\tilde{\sy}^n_s)\norm{\tilde{\sy}^{m,n}_{s}}_U^2 - \frac{\kappa}{2}\norm{\tilde{\sy}^{m,n}_{s}}_H^2 + \frac{c_{s}}{\lambda_m} K(\tilde{\sy}^m_s,\tilde{\sy}^n_s)\left[1 + \norm{\tilde{\sy}^m_s}_V^2 + \norm{\tilde{\sy}^n_s}_V^2\right]\bigg]ds \\& \quad \quad 
        + 2\sum_{i=1}^\infty \int_{\theta_j}^{r}\inner{\mathcal{P}_n\mathcal{G}_i(s,\tilde{\sy}^n_s) - \mathcal{P}_m\mathcal{G}_i(s,\tilde{\sy}^m_s)}{\tilde{\sy}^{m,n}_{s}}_UdW^i_s.
\end{align*}
Using (\ref{galerkinboundsatisfiedbystoppingtime2}) and the boundedness of $c_{\cdot}$ on $[0,t]$ then we can reduce the above to 
\begin{align*}
        &\norm{\tilde{\sy}^{m,n}_{r}}_U^2 + \frac{\kappa}{2}\int_{\theta_j}^{r}\norm{\tilde{\sy}^{m,n}_{s}}_H^2ds\leq \norm{\tilde{\sy}^{m,n}_{\theta_j}}_U^2\\& \qquad \qquad \qquad + c\int_{\theta_j}^{r}
        \tilde{K}_2(\tilde{\sy}^m_s,\tilde{\sy}^n_s)\norm{\tilde{\sy}^{m,n}_{s}}_U^2 ds  + \frac{c}{\lambda_m}\int_{\theta_j}^{r} 1 + \norm{\tilde{\sy}^m_s}_V^2 + \norm{\tilde{\sy}^n_s}_V^2ds \\& \qquad \qquad \qquad \qquad \qquad \qquad
        + 2\sum_{i=1}^\infty \int_{\theta_j}^{r}\inner{\mathcal{P}_n\mathcal{G}_i(s,\tilde{\sy}^n_s) - \mathcal{P}_m\mathcal{G}_i(s,\tilde{\sy}^m_s)}{\tilde{\sy}^{m,n}_{s}}_UdW^i_s.
\end{align*}
Just as we did in Proposition \ref{theorem:uniformbounds} we now scale, take the absolute value of the stochastic integral and the supremum over $r\in[\theta_j,\theta_k]$, taking our inequality to
\begin{align*}
        \norm{\tilde{\sy}^{m,n}}_{UH,\theta_j,\theta_k}^2 &\leq c\norm{\tilde{\sy}^{m,n}_{\theta_j}}_U^2\\&   + c\int_{\theta_j}^{\theta_k} \tilde{K}_2(\tilde{\sy}^m_s,\tilde{\sy}^n_s) \norm{\tilde{\sy}^{m,n}_{s}}_U^2ds + \frac{c}{\lambda_m}\int_{\theta_j}^{\theta_k}1 + \norm{\tilde{\sy}^m_s}_V^2 + \norm{\tilde{\sy}^n_s}_V^2 ds\\ &
        + 2c\sup_{r \in [\theta_j,\theta_k]}\left\vert\sum_{i=1}^\infty \int_{\theta_j}^{r}\inner{\mathcal{P}_n\mathcal{G}_i(s,\tilde{\sy}^n_s) - \mathcal{P}_m\mathcal{G}_i(s,\tilde{\sy}^m_s)}{\tilde{\sy}^{m,n}_{s}}_UdW^i_s\right\vert.
\end{align*}
$\mathbbm{P}-a.s.$. All in one step we take the expectation, apply the Burkholder-Davis-Gundy Inequality and employ the inequality for $B$ in Lemma \ref{lemma to use cauchy} to deduce now
\begin{align*}
        \mathbbm{E}\norm{\tilde{\sy}^{m,n}}_{UH,\theta_j,\theta_k}^2 &\leq c\mathbbm{E}\norm{\tilde{\sy}^{m,n}_{\theta_j}}_U^2\\&  + c\mathbbm{E}\int_{\theta_j}^{\theta_k} \tilde{K}_2(\tilde{\sy}^m_s,\tilde{\sy}^n_s) \norm{\tilde{\sy}^{m,n}_{s}}_U^2ds + \frac{c}{\lambda_m}\mathbbm{E}\int_{\theta_j}^{\theta_k} 1 + \norm{\tilde{\sy}^m_s}_V^2 + \norm{\tilde{\sy}^n_s}_V^2 ds\\ &
        + 2c\mathbbm{E}\bigg(\int_{\theta_j}^{\theta_k} c_{s} \tilde{K}_2(\tilde{\sy}^m_s,\tilde{\sy}^n_s) \norm{\tilde{\sy}^{m,n}_{s}}_U^4  + \frac{c_{s}}{\lambda_m}K(\tilde{\sy}^m_s,\tilde{\sy}^n_s)\left[1 + \norm{\tilde{\sy}^m_s}_V^2\right]ds\bigg)^\frac{1}{2}
\end{align*}
and subsequently
\begin{align}
        \nonumber \mathbbm{E}\norm{\tilde{\sy}^{m,n}}_{UH,\theta_j,\theta_k}^2 &\leq \mathbbm{E}\norm{\tilde{\sy}^{m,n}_{\theta_j}}_U^2\\& + c\mathbbm{E}\int_{\theta_j}^{\theta_k} \tilde{K}_2(\tilde{\sy}^m_s,\tilde{\sy}^n_s) \norm{\tilde{\sy}^{m,n}_{s}}_U^2ds + \frac{c}{\lambda_m}\mathbbm{E}\int_{\theta_j}^{\theta_k} 1 + \norm{\tilde{\sy}^m_s}_V^2 + \norm{\tilde{\sy}^n_s}_V^2 ds\nonumber\\ & 
        + c\mathbbm{E}\bigg(\int_{\theta_j}^{\theta_k} \tilde{K}_2(\tilde{\sy}^m_s,\tilde{\sy}^n_s) \norm{\tilde{\sy}_s^n-\tilde{\sy}_s^m}_U^4  + \frac{1}{\lambda_m}\left[1 + \norm{\tilde{\sy}^m_s}_V^2\right]ds\bigg)^\frac{1}{2}. \label{andsubsequently}
\end{align}
We appreciate now that
\begin{align} \label{BDGbound}
    &c\bigg(\int_{\theta_j}^{\theta_k} \tilde{K}_2(\tilde{\sy}^m_s,\tilde{\sy}^n_s) \norm{\tilde{\sy}_s^n-\tilde{\sy}_s^m}_U^4  + \frac{1}{\lambda_m}\left[1 + \norm{\tilde{\sy}^m_s}_V^2\right]ds\bigg)^\frac{1}{2}\\ & \qquad  \leq c\left(\int_{\theta_j}^{\theta_k}\tilde{K}_2(\tilde{\sy}^m_s,\tilde{\sy}^n_s)\norm{\tilde{\sy}_s^n-\tilde{\sy}_s^m}_U^4ds\right)^{\frac{1}{2}}  +c\left(\int_{\theta_j}^{\theta_k}\frac{1}{\lambda_m}\left[1 + \norm{\tilde{\sy}^m_s}_V^2\right]ds \right)^{\frac{1}{2}} \nonumber
\end{align}
and treat these terms individually. Through the same process as (\ref{howweusebdg}), we see that
\begin{align*}
    &c\left(\int_{\theta_j}^{\theta_k}\tilde{K}_2(\tilde{\sy}^m_s,\tilde{\sy}^n_s)\norm{\tilde{\sy}_s^n-\tilde{\sy}_s^m}_U^4ds\right)^{\frac{1}{2}}\\ & \qquad \qquad \qquad \qquad \leq \frac{1}{2}\sup_{r \in [\theta_j,\theta_k]}\norm{\tilde{\sy}_r^n-\tilde{\sy}_r^m}_U^2 + \frac{c^2}{2}\int_{\theta_j}^{\theta_k}\tilde{K}_2(\tilde{\sy}^m_s,\tilde{\sy}^n_s)\norm{\tilde{\sy}_s^n-\tilde{\sy}_s^m}_U^2ds.
\end{align*}
As $m$ is sufficiently large so that $\lambda_m \geq 1$, then $\frac{1}{\lambda_m} \leq \frac{1}{\sqrt{\lambda_m}}$. Introducing this bound into (\ref{andsubsequently}) along with the deduced restraint on (\ref{BDGbound}), we have that
\begin{align}
    \nonumber \norm{\tilde{\sy}^{m,n}}_{UH,\theta_j,\theta_k}^2 &\leq c\mathbbm{E}\norm{\tilde{\sy}^{m,n}_{\theta_j}}_U^2\\\nonumber  & + c\mathbbm{E}\int_{\theta_j}^{\theta_k} \tilde{K}_2(\tilde{\sy}^m_s,\tilde{\sy}^n_s) \norm{\tilde{\sy}^{m,n}_{s}}_U^2ds\\ & + \frac{c}{\sqrt{\lambda_m}}\mathbbm{E}\left[ \int_{\theta_j}^{\theta_k} 1 + \norm{\tilde{\sy}^m_s}_V^2 + \norm{\tilde{\sy}^n_s}_V^2 ds + \left(\int_{\theta_j}^{\theta_k}1 + \norm{\tilde{\sy}^m_s}_V^2ds \right)^{\frac{1}{2}} \right]. \label{cauchyfinalbeforegronwall}
\end{align}
Considering the $\frac{c}{\sqrt{\lambda_m}}$ term, we can of course bound these integrals by integrating over the whole interval $[0,t]$, and we also have $$\mathbbm{E}\left(\int_{0}^{t}1 + \norm{\tilde{\sy}^m_s}_V^2ds \right)^{\frac{1}{2}} \leq \left(\mathbbm{E}\int_{0}^{t}1 + \norm{\tilde{\sy}^m_s}_V^2ds \right)^{\frac{1}{2}}.$$
It is here that we apply the result (\ref{secondresultofuniformbounds}), reducing (\ref{cauchyfinalbeforegronwall}) to
\begin{align}
    \nonumber \mathbbm{E}\norm{\tilde{\sy}^{m,n}}_{UH,\theta_j,\theta_k}^2 \leq c\mathbbm{E}\norm{\tilde{\sy}^{m,n}_{\theta_j}}_U^2  + c\mathbbm{E}\int_{\theta_j}^{\theta_k} \tilde{K}_2(\tilde{\sy}^m_s,\tilde{\sy}^n_s) \norm{\tilde{\sy}^{m,n}_{s}}_U^2ds + \frac{c}{\sqrt{\lambda_m}}
\end{align}
which is reminiscent of (\ref{finaluniformboundbeforegronwall}), and as such we rewrite it in the form (\ref{finaluniformboundbeforegronwall2}) as
\begin{align}
    \mathbbm{E}\norm{\tilde{\sy}^{m,n}}_{UH,\theta_j,\theta_k}^2 \leq \hat{c}\mathbbm{E}\left(\norm{\tilde{\sy}^{m,n}_{\theta_j}}_U^2 + \frac{1}{\sqrt{\lambda_m}} + \int_{\theta_j}^{\theta_k}\tilde{K}_2(\tilde{\sy}^m_s,\tilde{\sy}^n_s)\norm{\tilde{\sy}^{m,n}_{s}}_U^2ds \right). \label{lelabel}
\end{align}
We are again precisely in the setting of the Stochastic Gr\'{o}nwall lemma (Lemma \ref{gronny}), for the processes
$$\boldsymbol{\phi}= \norm{\tilde{\sy}^{m,n}}_U^2, \qquad \boldsymbol{\psi}= \norm{\tilde{\sy}^{m,n}}_H^2, \qquad \boldsymbol{\eta}= \tilde{K}_2(\tilde{\sy}^m_s,\tilde{\sy}^n_s)$$
where we appreciate that this $\boldsymbol{\eta}$ satisfies (\ref{boundingronny}) from (\ref{galerkinboundsatisfiedbystoppingtime}). Applying Lemma \ref{gronny} we deduce the existence of a constant $C$ with the same genericity as our usual $c$ such that
$$\mathbbm{E}\norm{\tilde{\sy}^{m,n}}_{UH,t}^2 \leq C\left[\mathbbm{E}\norm{\sy^{m,n}_0}_U^2 + \frac{1}{\sqrt{\lambda_m}} \right]$$ which implies (\ref{cauchy result pt 1}). As for (\ref{cauchy result pt 2}), note that $$ \norm{\sy^{m,n}_0}_U^2 = \norm{\mathcal{P}_n(I-\mathcal{P}_m)\sy_{0}}_U^2 \leq \norm{(I-\mathcal{P}_m)\sy_{0}}_U^2 \leq \frac{1}{\lambda_m}\norm{\sy_0}_H^2$$ from (\ref{mu2}) which is a ($\mathbbm{P}-a.s.)$  monotone decreasing sequence in $m$ convergent to zero. We therefore have \begin{align*}\lim_{m \rightarrow \infty}\sup_{n \geq m}\mathbbm{E}\norm{\tilde{\sy}^{m,n}}_{UH,0,t}^2
&\leq \lim_{m \rightarrow \infty}\left[\mathbbm{E} \left(\norm{(I-\mathcal{P}_m)\sy_{0}}_U^2\right) + \frac{1}{\sqrt{\lambda_m} } \right]\\
&=0
\end{align*}
from the monotone convergence theorem, and limit of the $(\lambda_m)$ being infinite. This proves (\ref{cauchy result pt 2}).

\end{proof}

\begin{proof}[Proof of \ref{theorem: probability one}:]
We immediately note that (\ref{sufficient thing in probability condition}) implies (\ref{qwerty}) as
\begin{align*}
    \mathbb{P}\Bigg(\Bigg\{ \norm{\sy^{n}}^2_{UH,\tau^{M,t}_n \wedge S} \geq M-1+\norm{\sy^n_0}_{U}^2 \Bigg\}\Bigg) &= \mathbb{P}\left(\left\{ \norm{\sy^{n}}^2_{UH,\tau^{M,t}_n \wedge S} - \norm{\sy^n_0}_{U}^2 \geq M-1 \right\}\right)\\ &\leq \frac{1}{M-1}\mathbb{E}\left[\norm{\sy^{n}}^2_{UH,\tau^{M,t}_n \wedge S} - \norm{\sy^n_0}_{U}^2 \right] 
\end{align*}
having applied Chebyshev's Inequality, appreciating that the random variable in the expectation is non-negative. In order to achieve this we first appreciate that for any $\phi \in V_n$, \begin{align*}2\inner{\mathcal{P}_n\mathcal{A}(s,\boldsymbol{\phi})}{\boldsymbol{\phi}}_U + \sum_{i=1}^\infty\norm{\mathcal{P}_n\mathcal{G}_i(s,\boldsymbol{\phi})}_U^2 &\leq 2\inner{\mathcal{A}(s,\boldsymbol{\phi})}{\boldsymbol{\phi}}_U + \sum_{i=1}^\infty\norm{\mathcal{G}_i(s,\boldsymbol{\phi})}_U^2\\
&\leq c_sK(\boldsymbol{\phi})\left[1 + \norm{\boldsymbol{\phi}}_H^2\right]\\
&= c_sK(\boldsymbol{\phi})\left[1 + \norm{\boldsymbol{\phi}}_H^2\right] - \norm{\boldsymbol{\phi}}_H^2
\end{align*} 
having first applied (\ref{probability first}) and then simply absorbing an additional $\norm{\boldsymbol{\phi}}_H^2$ into the $c_sK(\boldsymbol{\phi})\left[1 + \norm{\boldsymbol{\phi}}_H^2\right]$ for the final equality. In the same vein we also have
\begin{equation}
   \nonumber \sum_{i=1}^\infty \inner{\mathcal{P}_n\mathcal{G}_i(s,\boldsymbol{\phi})}{\boldsymbol{\phi}}^2_U \leq c_sK(\boldsymbol{\phi})\left[1 + \norm{\boldsymbol{\phi}}_H^4\right]
\end{equation}
coming from (\ref{probability second}). In an identical manner to Proposition \ref{theorem:uniformbounds} and as seen again in Proposition \ref{theorem:cauchygalerkin}, we apply the It\^{o} Formula for $\norm{\cdot}_U^2$, take the supremum up to $\tau^{M,t}_n$, take expectation with the Burkholder-Davis-Gundy Inequality, apply the above inequalities and use the boundedness of the generic $c_s$ to arrive at the inequality
\begin{align*}
    \mathbb{E}\Bigg[\norm{\sy^{n}}^2_{UH,\tau^{M,t}_n \wedge S} &- \norm{\sy^n_0}_{U}^2 \Bigg]\\ &\leq c\mathbb{E}\int_0^SK(\tilde{\sy}^n_r)\left[1 + \norm{\tilde{\sy}^n_r}_H^2\right]dr + c\mathbbm{E}\left(\int_0^SK(\tilde{\sy}^n_r) \left[1 + \norm{\tilde{\sy}^n_r}_H^4\right]dr\right)^{\frac{1}{2}}\\
    &\leq c\mathbb{E}\int_0^S 1 + \norm{\tilde{\sy}^n_r}_H^2dr + c\mathbbm{E}\left(\int_0^S 1dr\right)^{\frac{1}{2}} + c\mathbb{E}\left(\int_0^S \norm{\tilde{\sy}^n_r}_H^4dr\right)^{\frac{1}{2}}\\
    &\leq c\int_0^S 1 + C dr + c\left(\int_0^S 1dr\right)^{\frac{1}{2}} + c\left[\mathbb{E}\sup_{r\in[0,t]}\norm{\tilde{\sy}^n_r}_H^2\right]^{\frac{1}{2}}\left[\mathbb{E}\int_0^S \norm{\tilde{\sy}^n_r}_H^2dr\right]^{\frac{1}{2}}\\
    &\leq c\int_0^S 1 + Cdr + c\left(\int_0^S 1dr\right)^{\frac{1}{2}} + c\left[C\right]^{\frac{1}{2}}\left[\int_0^S Cdr\right]^{\frac{1}{2}}\\
\end{align*}
using again the notation (\ref{tildesynotation}) and employing the bound (\ref{galerkinboundsatisfiedbystoppingtime2}), and applying (\ref{secondresultofuniformbounds}) with the constant $C$ coming from there. Absorbing this $C$ into our generic $c$, we have that $$\mathbb{E}\Bigg[\norm{\sy^{n}}^2_{UH,\tau^{M,t}_n \wedge S} - \norm{\sy^n_0}_{U}^2 \Bigg] \leq cS + cS^{\frac{1}{2}}.$$ This bound is of course independent of $n$, so (\ref{sufficient thing in probability condition}) follows and hence the result.

\end{proof}

\subsection{Useful Results} \label{subsection useful results}
We state some key theorems used throughout the paper. Firstly is a well-posedness result for an SPDE in a finite dimensional Hilbert Space (though driven by a Cylindrical Brownian Motion) in the standard case of Lipschitz and linear growth constraints.
\begin{proposition} \label{Skorotheorem}
Fix a finite-dimensional Hilbert Space $\mathcal{H}$. Suppose the following:
\begin{itemize}
    %\item[1:] $\mathscr{A}: [0,\infty) \times \mathcal{H} \rightarrow \mathcal{H}$, and for every $T>0$ we have that $\mathscr{A} \in C\left([0,T] \times \mathcal{H};\mathcal{H}\right)$. Similarly $\mathscr{G}: [0,\infty) \times \mathcal{H} \rightarrow \mathscr{L}^2\left(\mathfrak{U};\mathcal{H}\right)$, and for every $T>0$ we have that $\mathscr{G} \in C\left([0,T] \times \mathcal{H};\mathscr{L}^2\left(\mathfrak{U};\mathcal{H}\right)\right)$;\\
    \item[1:] For any $T>0$, the operators $\mathscr{A}:[0,T] \times \mathcal{H} \rightarrow \mathcal{H}$ and $\mathscr{G}:[0,T] \times \mathcal{H} \rightarrow \mathscr{L}^2(\mathfrak{U};\mathcal{H})$ are measurable;\\
    \item[2:] There exists a $C_{\cdot}:[0,\infty) \rightarrow \R$ bounded on $[0,T]$ for every $T$, and constants $c_i$ such that for every $\boldsymbol{\phi}, \boldsymbol{\psi} \in \mathcal{H}$ and $t \in [0,\infty)$, \begin{align*}\norm{\mathscr{A}(t,\boldsymbol{\phi})}^2_{\mathcal{H}}  &\leq C_t\left[1 + \norm{\boldsymbol{\phi}}_{\mathcal{H}}^2\right]\\
     \norm{\mathscr{G}_i(t,\boldsymbol{\phi})}^2_{\mathcal{H}} &\leq C_tc_i\left[1 + \norm{\boldsymbol{\phi}}_{\mathcal{H}}^2\right]\\
     \sum_{i=1}^\infty c_i &< \infty\\
    \norm{\mathscr{A}(t,\boldsymbol{\phi}) - \mathscr{A}(t,\boldsymbol{\psi})}^2_{\mathcal{H}} &+\sum_{i=1}^\infty \norm{\mathscr{G}_i(t,\boldsymbol{\phi}) - \mathscr{G}_i(t,\boldsymbol{\psi})}^2_{\mathcal{H}} \leq C_t \norm{\boldsymbol{\phi}-\boldsymbol{\psi}}_{\mathcal{H}}^2
    \end{align*}
    \item[3:] $\py_0 \in L^2(\Omega;\mathcal{H})$.\\
\end{itemize}
Then there exists a process $\py:[0,\infty) \times \Omega \rightarrow \mathcal{H}$ such that for $\mathbbm{P}-a.e.$ $\omega$, $\py_\cdot(\omega) \in C\left([0,T];\mathcal{H}\right)$ for every $T>0$, $\py$ is progressively measurable in $\mathcal{H}$ and the identity \begin{equation}\label{identityingalerkinsolution}\py_t = \py_0 + \int_0^t \mathscr{A}(s,\py_s)ds + \int_0^t \mathscr{G}(s,\py_s)d\mathcal{W}_s\end{equation} holds $\mathbbm{P}-a.s.$ in $\mathcal{H}$ for every $t \geq 0$.
\end{proposition}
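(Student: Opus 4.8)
The plan is to build the solution on each finite interval $[0,T]$ by a Picard iteration and then patch these pieces together into a global process. Fix $T>0$ and work on $[0,T]$ throughout. Since $\mathcal{H}$ is finite-dimensional, $\mathscr{L}^2(\mathfrak{U};\mathcal{H})$ is the space of Hilbert--Schmidt operators, and the bounds of hypothesis 2 together with $\sum_i c_i < \infty$ ensure that $\sum_i \norm{\mathscr{G}_i(t,\boldsymbol{\phi})}_{\mathcal{H}}^2 \leq C_t\big(\sum_i c_i\big)\big[1+\norm{\boldsymbol{\phi}}_{\mathcal{H}}^2\big]$; hence for any progressively measurable process $\boldsymbol{\phi}$ in $\mathcal{H}$ with $\mathbb{E}\sup_{s\le T}\norm{\boldsymbol{\phi}_s}_{\mathcal{H}}^2 < \infty$, both $\int_0^\cdot \mathscr{A}(s,\boldsymbol{\phi}_s)\,ds$ and $\int_0^\cdot \mathscr{G}(s,\boldsymbol{\phi}_s)\,d\mathcal{W}_s$ are well defined, the latter as a continuous square-integrable martingale in $\mathcal{H}$ in the sense of the stochastic integral recalled in Subsection 1.4.

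Define the iterates $\py^{(0)}_\cdot := \py_0$ and
\[
\py^{(k+1)}_t := \py_0 + \int_0^t \mathscr{A}(s,\py^{(k)}_s)\,ds + \int_0^t \mathscr{G}(s,\py^{(k)}_s)\,d\mathcal{W}_s .
\]
By induction each $\py^{(k)}$ is progressively measurable with continuous paths; applying the linear-growth bounds, the Burkholder--Davis--Gundy inequality (Theorem 1.6.8 of \cite{goodair2022stochastic}), Jensen's inequality and Grönwall's lemma gives a bound $\sup_{k\in\N}\mathbb{E}\sup_{s\le T}\norm{\py^{(k)}_s}_{\mathcal{H}}^2 \leq C_T\big(1 + \mathbb{E}\norm{\py_0}_{\mathcal{H}}^2\big)$ uniform in $k$, so every iterate lies in the class on which the above integrals make sense.

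Next I would show $(\py^{(k)})$ is Cauchy. Setting $g_k(t) := \mathbb{E}\sup_{s\le t}\norm{\py^{(k+1)}_s - \py^{(k)}_s}_{\mathcal{H}}^2$, the Lipschitz hypothesis, the BDG inequality and a Cauchy--Schwarz estimate on the drift term yield $g_k(t) \leq C_T\int_0^t g_{k-1}(s)\,ds$; iterating gives $g_k(T) \leq c\,(C_T T)^k/k!$, so $\sum_k g_k(T)^{1/2} < \infty$ and $(\py^{(k)})$ converges in $L^2\!\big(\Omega;C([0,T];\mathcal{H})\big)$ to a limit $\py$, again progressively measurable with continuous paths. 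Passing to the limit in the defining identity — using the growth and Lipschitz bounds with the $L^2$ convergence to identify the limits of the two integral terms — shows $\py$ satisfies \eqref{identityingalerkinsolution} $\mathbb{P}$-a.s. in $\mathcal{H}$ for every $t \in [0,T]$. The same Lipschitz estimate and Grönwall's lemma give pathwise uniqueness on $[0,T]$, so the solutions constructed on $[0,T]$ and $[0,T']$ with $T<T'$ are indistinguishable on $[0,T]$; concatenating over $T = 1,2,3,\dots$ produces the global process $\py$ on $[0,\infty)$ with the asserted properties.

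The only point requiring care is the bookkeeping for the cylindrical noise: one must justify moving the sum through the stochastic integral, applying BDG to the $\mathcal{H}$-valued martingale $\sum_i \int_0^\cdot \mathscr{G}_i(s,\py^{(k)}_s)\,dW^i_s$, and bounding its quadratic variation by $\int_0^\cdot \sum_i \norm{\mathscr{G}_i(s,\py^{(k)}_s)}_{\mathcal{H}}^2\,ds$ — which is precisely where the hypotheses $\norm{\mathscr{G}_i(t,\boldsymbol{\phi})}_{\mathcal{H}}^2 \leq C_t c_i\big(1+\norm{\boldsymbol{\phi}}_{\mathcal{H}}^2\big)$ and $\sum_i c_i < \infty$ are used; everything else is the classical finite-dimensional Picard argument.
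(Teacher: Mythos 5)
Your proposal is correct. Note that the paper does not actually prove Proposition \ref{Skorotheorem} in the text: it simply defers to Theorem 2.8.1 of \cite{goodair2022stochastic}, so there is no internal argument to compare against. The classical Picard iteration you give is the standard route for an SPDE with globally Lipschitz, linear-growth coefficients in a finite-dimensional state space, and all the steps you outline go through: the hypotheses $\norm{\mathscr{G}_i(t,\boldsymbol{\phi})}^2_{\mathcal{H}} \leq C_t c_i\bigl[1+\norm{\boldsymbol{\phi}}^2_{\mathcal{H}}\bigr]$ with $\sum_i c_i<\infty$ give exactly the Hilbert--Schmidt summability needed to define the stochastic integral and to apply the Burkholder--Davis--Gundy inequality to the series $\sum_i\int_0^\cdot \mathscr{G}_i(s,\cdot)\,dW^i_s$; the Lipschitz bound yields the iterated integral inequality $g_k(t)\leq C_T\int_0^t g_{k-1}(s)\,ds$ and hence the factorial decay and Cauchy property in $L^2\bigl(\Omega;C([0,T];\mathcal{H})\bigr)$; and since the Lipschitz and growth constants are global (only time-dependent through the bounded $C_\cdot$), no localisation is required, so the concatenation over $T=1,2,3,\dots$ via pathwise uniqueness produces the global process. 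The only implicit ingredient worth making explicit is that $\py_0$ must be $\mathcal{F}_0$-measurable for the iterates to be adapted; this is tacit in the statement and in your argument.
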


\begin{proof}
See \cite{goodair2022stochastic} Theorem 2.8.1.

\end{proof}

We shall also make use of the following 'Stochastic Gronwall Lemma'. 

\begin{lemma}[Stochastic Gr\"{o}nwall] \label{gronny}
Fix $t>0$ and suppose that $\boldsymbol{\phi},\boldsymbol{\psi}, \boldsymbol{\eta}$ are real-valued, non-negative stochastic processes. Assume, moreover, that there exists constants $c',\hat{c}, \tilde{c}$ (allowed to depend on $t$) such that for $\mathbbm{P}-a.e.$ $\omega$, \begin{equation} \label{boundingronny} \int_0^t\boldsymbol{\eta}_s(\omega) ds \leq c'\end{equation} and for all stopping times $0 \leq \theta_j < \theta_k \leq t$,
$$\mathbbm{E}\sup_{r \in [\theta_j,\theta_k]}\boldsymbol{\phi}_r + \mathbbm{E}\int_{\theta_j}^{\theta_k}\boldsymbol{\psi}_sds \leq \hat{c}\mathbbm{E}\left(\left[\boldsymbol{\phi}_{\theta_j} + \tilde{c} \right] + \int_{\theta_j}^{\theta_k} \boldsymbol{\eta}_s\boldsymbol{\phi}_sds\right) < \infty. $$Then there exists a constant $C$ dependent only on $c',\hat{c},\tilde{c},t$ such that $$\mathbbm{E}\sup_{r \in [0,t]}\boldsymbol{\phi}_r + \mathbbm{E}\int_{0}^{t}\boldsymbol{\psi}_sds \leq C\left[\mathbbm{E}(\boldsymbol{\phi}_{0}) + \tilde{c}\right].$$
\end{lemma}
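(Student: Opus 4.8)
The statement to prove is the Stochastic Gr\"{o}nwall Lemma (Lemma \ref{gronny}). The plan is to run a localisation-and-iteration argument on a partition of $[0,t]$ chosen so that the accumulated weight $\int \boldsymbol{\eta}$ over each subinterval is small, which lets one absorb the $\int \boldsymbol{\eta}_s\boldsymbol{\phi}_s\,ds$ term into the left-hand side and propagate the bound forward subinterval by subinterval.

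\textbf{Setup and choice of partition.} First I would use the a.s.\ bound $\int_0^t \boldsymbol{\eta}_s(\omega)\,ds \leq c'$ to build a (random) partition. Fix an integer $N$ to be chosen later, and define stopping times $0=\sigma_0 \leq \sigma_1 \leq \dots \leq \sigma_N \leq t$ by $\sigma_{k} := t \wedge \inf\{ r \geq \sigma_{k-1} : \int_{\sigma_{k-1}}^r \boldsymbol{\eta}_s\,ds \geq \tfrac{1}{2\hat c}\}$. Since the total mass of $\boldsymbol{\eta}$ on $[0,t]$ is at most $c'$ a.s., choosing $N := \lceil 2\hat c c'\rceil + 1$ guarantees $\sigma_N = t$ a.s., and on each subinterval $[\sigma_{k-1},\sigma_k]$ we have $\int_{\sigma_{k-1}}^{\sigma_k}\boldsymbol{\eta}_s\,ds \leq \tfrac{1}{2\hat c}$ by construction (the infimum is attained continuously because the integral is continuous in the upper limit). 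These are genuine stopping times by continuity of $r \mapsto \int_0^r \boldsymbol{\eta}_s\,ds$, and the hypothesis of the lemma applies to each pair $\theta_j = \sigma_{k-1}$, $\theta_k = \sigma_k$.

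\textbf{The absorption step on one subinterval.} Apply the assumed inequality with $\theta_j = \sigma_{k-1}$, $\theta_k = \sigma_k$:
\begin{align*}
\mathbbm{E}\sup_{r\in[\sigma_{k-1},\sigma_k]}\boldsymbol{\phi}_r + \mathbbm{E}\int_{\sigma_{k-1}}^{\sigma_k}\boldsymbol{\psi}_s\,ds &\leq \hat c\, \mathbbm{E}\big(\boldsymbol{\phi}_{\sigma_{k-1}} + \tilde c\big) + \hat c\, \mathbbm{E}\int_{\sigma_{k-1}}^{\sigma_k}\boldsymbol{\eta}_s\boldsymbol{\phi}_s\,ds.
\end{align*}
For the last term, bound $\boldsymbol{\phi}_s \leq \sup_{r\in[\sigma_{k-1},\sigma_k]}\boldsymbol{\phi}_r$ pointwise and pull the supremum out, so that $\hat c\,\mathbbm{E}\int_{\sigma_{k-1}}^{\sigma_k}\boldsymbol{\eta}_s\boldsymbol{\phi}_s\,ds \leq \hat c\big(\int_{\sigma_{k-1}}^{\sigma_k}\boldsymbol{\eta}_s\,ds\big)\mathbbm{E}\sup_{r\in[\sigma_{k-1},\sigma_k]}\boldsymbol{\phi}_r \leq \tfrac12 \mathbbm{E}\sup_{r\in[\sigma_{k-1},\sigma_k]}\boldsymbol{\phi}_r$ using the partition property. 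Here I should be slightly careful: $\int_{\sigma_{k-1}}^{\sigma_k}\boldsymbol{\eta}_s\,ds$ is random, so strictly I am using that it is bounded by $\tfrac{1}{2\hat c}$ \emph{a.s.}, which suffices to pull the constant out of the expectation. Subtracting the (finite, by hypothesis) quantity $\tfrac12\mathbbm{E}\sup_{[\sigma_{k-1},\sigma_k]}\boldsymbol{\phi}_r$ from both sides gives
\begin{align*}
\mathbbm{E}\sup_{r\in[\sigma_{k-1},\sigma_k]}\boldsymbol{\phi}_r + 2\,\mathbbm{E}\int_{\sigma_{k-1}}^{\sigma_k}\boldsymbol{\psi}_s\,ds &\leq 2\hat c\, \mathbbm{E}\boldsymbol{\phi}_{\sigma_{k-1}} + 2\hat c\tilde c.
\end{align*}
In particular $\mathbbm{E}\boldsymbol{\phi}_{\sigma_k} \leq \mathbbm{E}\sup_{[\sigma_{k-1},\sigma_k]}\boldsymbol{\phi}_r \leq 2\hat c\,\mathbbm{E}\boldsymbol{\phi}_{\sigma_{k-1}} + 2\hat c\tilde c$.

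\textbf{Iteration and conclusion.} Setting $a_k := \mathbbm{E}\boldsymbol{\phi}_{\sigma_k}$, the recursion $a_k \leq 2\hat c\, a_{k-1} + 2\hat c \tilde c$ with $a_0 = \mathbbm{E}\boldsymbol{\phi}_0$ unrolls (writing $\beta := 2\hat c$) to $a_k \leq \beta^k\,\mathbbm{E}\boldsymbol{\phi}_0 + \tilde c(\beta^k + \dots + \beta) \leq \beta^N(\mathbbm{E}\boldsymbol{\phi}_0 + N\tilde c)$ for every $k \leq N$; crucially $N$ depends only on $c',\hat c$ (and $\beta$ only on $\hat c$), so this is a bound of the form $C_0(\mathbbm{E}\boldsymbol{\phi}_0 + \tilde c)$ with $C_0$ depending only on $c',\hat c,\tilde c,t$. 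Then summing the one-step estimate over $k=1,\dots,N$,
\begin{align*}
\mathbbm{E}\sup_{r\in[0,t]}\boldsymbol{\phi}_r + \mathbbm{E}\int_0^t \boldsymbol{\psi}_s\,ds &\leq \sum_{k=1}^N\Big(\mathbbm{E}\sup_{r\in[\sigma_{k-1},\sigma_k]}\boldsymbol{\phi}_r + 2\,\mathbbm{E}\int_{\sigma_{k-1}}^{\sigma_k}\boldsymbol{\psi}_s\,ds\Big) \leq \sum_{k=1}^N\big(2\hat c\, a_{k-1} + 2\hat c\tilde c\big),
\end{align*}
where I used $\sup_{[0,t]} \boldsymbol{\phi} \leq \sum_k \sup_{[\sigma_{k-1},\sigma_k]}\boldsymbol{\phi}$ (the subintervals cover $[0,t]$ since $\sigma_N = t$) and likewise additivity of the $\boldsymbol{\psi}$ integral; substituting the bound on $a_{k-1}$ yields the claimed estimate $\mathbbm{E}\sup_{r\in[0,t]}\boldsymbol{\phi}_r + \mathbbm{E}\int_0^t\boldsymbol{\psi}_s\,ds \leq C(\mathbbm{E}\boldsymbol{\phi}_0 + \tilde c)$ with $C$ depending only on $c',\hat c,\tilde c,t$.

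\textbf{Main obstacle.} The delicate points are all bookkeeping rather than deep: (i) ensuring the partition times are legitimate stopping times and that $\sigma_N = t$ a.s.\ with $N$ deterministic — this rests entirely on the a.s.\ bound $\int_0^t\boldsymbol{\eta} \leq c'$ and continuity of the indefinite integral; (ii) justifying the absorption, i.e.\ that $\mathbbm{E}\sup_{[\sigma_{k-1},\sigma_k]}\boldsymbol{\phi}_r$ is finite so it may be moved to the left-hand side (this is exactly the "$<\infty$" in the hypothesis); and (iii) handling the fact that $\int_{\sigma_{k-1}}^{\sigma_k}\boldsymbol{\eta}_s\,ds$ is random when pulling it outside the expectation, which is fine because it is bounded by $\tfrac1{2\hat c}$ pathwise. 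I expect (ii)–(iii) — keeping the integrability and the random-versus-deterministic constant distinctions honest throughout the iteration — to be the part that needs the most care, though none of it is hard.
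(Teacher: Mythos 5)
Your proof is correct: the paper itself gives no argument for this lemma but simply cites \cite{glatt2009strong} Lemma 5.3, and your partition-and-iterate strategy (stopping times chosen so that $\int_{\sigma_{k-1}}^{\sigma_k}\boldsymbol{\eta}_s\,ds \leq \tfrac{1}{2\hat{c}}$, absorption of the weighted term into the supremum, then a finite deterministic number of iterations) is precisely the argument used there. The only points worth tidying are cosmetic: one should assume $\hat{c}\geq 1$ without loss of generality so that the recursion constant $\beta=2\hat{c}$ exceeds $1$ and so that the one-step estimate remains valid pathwise on the event where consecutive partition times coincide at $t$.
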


\begin{proof}
See \cite{glatt2009strong} Lemma 5.3.
\end{proof}

The subsequent Lemma is the critical one in \cite{glatt2009strong} which gives the existence of a limiting process and stopping time from the Galerkin Scheme. 

\begin{lemma}[Convergence of Random Cauchy Sequences]\label{greenlemma}
Let $\mathcal{H}_1 \subset \mathcal{H}_2$ be Hilbert Spaces with continuous embedding, and $(\sy^n)$ be a sequence of processes such that for $\mathbb{P}-a.e.$ $\omega$, $\sy^n(\omega) \in C\left([0,T];\mathcal{H}_2\right) \cap L^2\left([0,T];\mathcal{H}_1\right)$ which is a Banach Space with norm $$\norm{\boldsymbol{\psi}}_{X(T)}:= \left(\sup_{r \in [0,T]}\norm{\boldsymbol{\psi}_{r}}^2_{\mathcal{H}_2} + \int_0^T\norm{\boldsymbol{\psi}_{r}}^2_{\mathcal{H}_1}dr\right)^{\frac{1}{2}}.$$ For some fixed $M > 1$ and $t > 0$ define the stopping times $$\tau^{M,t}_n(\omega) := t \wedge \inf\left\{s \geq 0: \norm{\sy^n(\omega)}_{X(s)}^2 \geq M + \norm{\sy^n_0(\omega)}_{\mathcal{H}_2}^2 \right\}$$ and suppose that $$\lim_{m \rightarrow \infty}\sup_{n \geq m}\mathbb{E}\norm{\sy^n-\sy^m}_{X(\tau^{M,t}_m \wedge \tau^{M,t}_n)}^2 = 0$$ and $$\lim_{S \rightarrow 0}\sup_{n \in \mathbb{N}}\mathbb{P}\left(\left\{ \norm{\sy^n}_{X(\tau^{M,t}_n \wedge S)}^2 \geq M-1+\norm{\sy^n_0}_{\mathcal{H}_2}^2 \right\}\right) = 0.$$
Then there exists a stopping time $\tau^{M,t}_{\infty}$, a subsequence $(\sy^{n_l})$ and process $\sy = \sy_{
\cdot \wedge \tau^{M,t}_{\infty}}$ such that:

\begin{itemize}
    \item $\mathbb{P}\left(\left\{ 0 < \tau^{M,t}_{\infty} \leq \tau^{M,t}_{n_l}\right)\right\} = 1$;
    \item For $\mathbb{P}-a.e.$ $\omega$, $\sy(\omega) \in C\left([0,\tau^{M,t}_{\infty}(\omega)];\mathcal{H}_2\right) \cap L^2\left([0,\tau^{M,t}_{\infty}(\omega)];\mathcal{H}_1\right)$;
    \item For $\mathbb{P}-a.e.$ $\omega$, $\sy^{n_l}(\omega) \rightarrow \sy(\omega)$ in $\left( C\left([0,\tau^{M,t}_{\infty}(\omega)];\mathcal{H}_2\right) \cap L^2\left([0,\tau^{M,t}_{\infty}(\omega)];\mathcal{H}_1\right), \norm{\cdot}_{X(\tau^{M,t}_{\infty}(\omega))} \right).$
\end{itemize}
\end{lemma}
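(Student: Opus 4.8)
The lemma is a purely probabilistic distillation of the usual "closing the Galerkin scheme" argument, and I would prove it by a Borel--Cantelli extraction followed by a pathwise completeness argument on a common time window; the only genuinely delicate point is the strict positivity of the limiting stopping time, which is where the second hypothesis enters. \textbf{Step 1 (a fast subsequence).} Using the first hypothesis, pick an increasing sequence $(n_l)$ with $\mathbb{E}\norm{\sy^{n_{l+1}}-\sy^{n_l}}_{X(\tau^{M,t}_{n_l}\wedge\tau^{M,t}_{n_{l+1}})}^2 \leq 8^{-l}$. Put $F_l := \{\norm{\sy^{n_{l+1}}-\sy^{n_l}}_{X(\tau^{M,t}_{n_l}\wedge\tau^{M,t}_{n_{l+1}})} \leq 2^{-l}\}$; Chebyshev gives $\mathbb{P}(F_l^C) \leq 4^l 8^{-l} = 2^{-l}$, so by Borel--Cantelli there is a full-measure set $\Omega_1$ such that each $\omega \in \Omega_1$ lies in $F_l$ for all $l \geq L_1(\omega)$.

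\textbf{Step 2 (the limit on a common window).} Define $\tau^{M,t}_\infty := \inf_{l} \tau^{M,t}_{n_l}$, an infimum of stopping times and hence itself a stopping time, and note $\tau^{M,t}_\infty \leq \tau^{M,t}_{n_l}$ for every $l$. For $\omega \in \Omega_1$ and $L_1(\omega) \leq l < l'$, the triangle inequality together with the monotonicity of $s \mapsto \norm{\cdot}_{X(s)}$ and the bound $\tau^{M,t}_\infty \leq \tau^{M,t}_{n_j}\wedge\tau^{M,t}_{n_{j+1}}$ give
\begin{align*}
\norm{\sy^{n_l}(\omega)-\sy^{n_{l'}}(\omega)}_{X(\tau^{M,t}_\infty(\omega))} &\leq \sum_{j=l}^{l'-1}\norm{\sy^{n_j}(\omega)-\sy^{n_{j+1}}(\omega)}_{X(\tau^{M,t}_{n_j}(\omega)\wedge\tau^{M,t}_{n_{j+1}}(\omega))}\\ &\leq 2^{-l+1}.
\end{align*}
Thus $(\sy^{n_l}(\omega))_l$ is Cauchy in the Banach space $C([0,\tau^{M,t}_\infty(\omega)];\mathcal{H}_2)\cap L^2([0,\tau^{M,t}_\infty(\omega)];\mathcal{H}_1)$ and converges there to a limit, which we call $\sy(\omega)$ (and which we may take to satisfy $\sy = \sy_{\cdot\wedge\tau^{M,t}_\infty}$). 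This already delivers the third bullet point (convergence in $\norm{\cdot}_{X(\tau^{M,t}_\infty(\omega))}$ along the subsequence), the second bullet point ($\sy(\omega)$ is a member of that Banach space), and the inequality $\tau^{M,t}_\infty \leq \tau^{M,t}_{n_l}$ in the first bullet point. As a by-product, the differences $\norm{\sy^{n_l}_0-\sy^{n_{l+1}}_0}_{\mathcal{H}_2}$ are dominated by $\norm{\sy^{n_l}-\sy^{n_{l+1}}}_{X(\tau^{M,t}_{n_l}\wedge\tau^{M,t}_{n_{l+1}})}$, so the initial data $\sy^{n_l}_0(\omega)$ converge in $\mathcal{H}_2$ on $\Omega_1$.

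\textbf{Step 3 (positivity of $\tau^{M,t}_\infty$ --- the main obstacle).} It remains to show $\mathbb{P}(\tau^{M,t}_\infty > 0) = 1$, which does not follow formally since an infimum of a.s.\ positive stopping times can vanish on a set of positive measure; this is exactly where the second hypothesis is used. First note that for $0 < S < t$ one has the inclusion $\{\tau^{M,t}_n < S\} \subseteq \{\norm{\sy^n}_{X(\tau^{M,t}_n\wedge S)}^2 \geq M-1+\norm{\sy^n_0}_{\mathcal{H}_2}^2\}$: if $\tau^{M,t}_n(\omega) < S < t$ then $\tau^{M,t}_n(\omega)$ is a genuine exit time, so by continuity of $s\mapsto\norm{\sy^n(\omega)}_{X(s)}$ we get $\norm{\sy^n(\omega)}_{X(\tau^{M,t}_n(\omega))}^2 = M+\norm{\sy^n_0(\omega)}^2 \geq M-1+\norm{\sy^n_0(\omega)}^2$, and $\tau^{M,t}_n(\omega)\wedge S = \tau^{M,t}_n(\omega)$. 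Hence the second hypothesis, via Chebyshev, yields that for every $\varepsilon>0$ there is $S=S(\varepsilon)\in(0,t)$ with $\sup_n\mathbb{P}(\tau^{M,t}_n < S) < \varepsilon$. I would then upgrade this uniform-in-$n$ non-escape estimate into $\lim_{S\downarrow0}\mathbb{P}(\tau^{M,t}_\infty < S)=0$ by combining it with (i) the buffer $M>1$, which allows one to transfer the exit behaviour of $\sy^{n_l}$ to that of a nearby $\sy^{n_{l'}}$, and (ii) the almost-sure convergence from Step 2, which forces $\sy^{n_l}_0(\omega)$ and the $X$-norms of $\sy^{n_l}(\omega)$ to stabilise, so that for large $l$ the events $\{\tau^{M,t}_{n_l}<S\}$ are all governed by the behaviour of finitely many of the approximations rather than contributing independently to a union over $l$. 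Intersecting the resulting full-measure set with $\Omega_1$ completes the proof. I expect the write-up to be routine apart from this last step: the obstruction is precisely that the natural candidate $\tau^{M,t}_\infty=\inf_l\tau^{M,t}_{n_l}$ is an infimum over infinitely many indices, so $\mathbb{P}(\tau^{M,t}_\infty<S)$ cannot be controlled by a crude union bound over $l$ and must instead exploit the near-coincidence (granted by Step 2 and the buffer $M>1$) of the exit times of $\sy^{n_l}$ for large $l$.
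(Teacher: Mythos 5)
The paper itself offers no proof of this lemma: it is quoted from \cite{glatt2009strong} (Lemma 5.1) and the appendix simply cites that reference, so your attempt must be judged against the argument there. Your Steps 1 and 2 are correct and coincide with the standard route: a fast subsequence via Chebyshev and Borel--Cantelli, the observation that $\tau^{M,t}_{\infty}:=\inf_{l}\tau^{M,t}_{n_l}$ is a stopping time dominated by every $\tau^{M,t}_{n_l}$, and pathwise completeness of $C([0,\tau^{M,t}_{\infty}(\omega)];\mathcal{H}_2)\cap L^2([0,\tau^{M,t}_{\infty}(\omega)];\mathcal{H}_1)$ on the common window, which yields the second and third bullet points.

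The gap is Step 3, which you correctly identify as the only nontrivial conclusion and then leave as a plan rather than a proof. Moreover, the one concrete reduction you do write down points the wrong way: replacing the second hypothesis by ``$\sup_n\mathbb{P}(\tau^{M,t}_n<S)<\varepsilon$'' discards precisely the slack the transfer argument needs, since $\tau^{M,t}_{n_k}\geq S$ only gives $\norm{\sy^{n_k}}^2_{X(S)}\leq M+\norm{\sy^{n_k}_0}^2_{\mathcal{H}_2}$, and adding the Cauchy increments to pass to $\sy^{n_j}$ then exceeds the threshold $M+\norm{\sy^{n_j}_0}^2_{\mathcal{H}_2}$. One must instead retain the events $E_k:=\{\norm{\sy^{n_k}}^2_{X(\tau^{M,t}_{n_k}\wedge S_k)}<M-1+\norm{\sy^{n_k}_0}^2_{\mathcal{H}_2}\}$ with $S_k\downarrow 0$ chosen so that $\mathbb{P}(E_k^C)$ is summable, so Borel--Cantelli makes $E_k$ hold for all large $k$ almost surely, and then run an induction on $j\geq k$: assuming $\tau^{M,t}_{n_i}\geq\tau^{M,t}_{n_k}\wedge S_k$ for $k\leq i<j$ (needed even to justify the triangle inequality on the relevant time window), one bounds
$$\norm{\sy^{n_j}}_{X(\tau^{M,t}_{n_j}\wedge\tau^{M,t}_{n_k}\wedge S_k)}\leq\norm{\sy^{n_k}}_{X(\tau^{M,t}_{n_k}\wedge S_k)}+\sum_{i=k}^{j-1}\norm{\sy^{n_{i+1}}-\sy^{n_i}}_{X(\tau^{M,t}_{n_i}\wedge\tau^{M,t}_{n_{i+1}})},$$
squares, and uses $\norm{\sy^{n_k}_0}^2_{\mathcal{H}_2}\leq\norm{\sy^{n_j}_0}^2_{\mathcal{H}_2}+O(2^{-k})$ together with the pathwise bound $\sup_l\norm{\sy^{n_l}_0(\omega)}_{\mathcal{H}_2}<\infty$ to fit everything strictly below $M+\norm{\sy^{n_j}_0}^2_{\mathcal{H}_2}$ once $k\geq K(\omega)$; this forces $\tau^{M,t}_{n_j}\geq\tau^{M,t}_{n_k}\wedge S_k$, hence $\inf_{j\geq k}\tau^{M,t}_{n_j}\geq\tau^{M,t}_{n_k}\wedge S_k>0$, and the finitely many remaining indices are individually positive. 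Your ingredients (the buffer $M>1$, the Cauchy property, the convergence of the initial data) are the right ones, so the diagnosis is sound, but the proof of strict positivity of $\tau^{M,t}_{\infty}$ is not actually carried out, and the step you do record would not suffice on its own.
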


\begin{proof}
See \cite{glatt2009strong} Lemma 5.1.
\end{proof}

Lemma \ref{extension lemma} tells us that any local strong solution can be extended up to a strictly greater stopping time. 

\begin{lemma} \label{extension lemma}
Suppose that $(\sy,\tau)$ is a local strong solution of (\ref{thespde}) in the sense of \ref{definitionofregularsolution}, \ref{definitionofHsolution}  and that $\tau$ is $\mathbbm{P}-a.s.$ finite. Then there exists a local strong solution $(\py,\sigma)$ such that $\sigma > \tau$ $\mathbbm{P}-a.s.$ and for all $t \in [0,\infty)$, $$\mathbbm{P}\left(\left\{\omega \in \Omega: \sy_{t \wedge \tau(\omega)}(\omega) =  \py_{t \wedge \tau(\omega)}  \right\} \right) = 1.$$
\end{lemma}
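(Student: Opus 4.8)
The plan is to re-run the existence machinery of this section but started from the value the solution takes at time $\tau$, and then glue. Concretely, since $(\sy,\tau)$ is a local strong solution and (by Remark \ref{remark1}) $\sy_\cdot=\sy_{\cdot\wedge\tau}$ with $\sy_{\cdot}\in C([0,T];H)$ (or $C([0,T];U)$ in the $U$-valued case), the random variable $\sy_\tau:=\lim_{t\to\tau}\sy_t$ is well defined in $H$ (resp.\ $U$) and $\mathcal{F}_\tau$-measurable. The idea is to solve the equation afresh with initial condition $\sy_\tau$ over the shifted filtration $(\mathcal{F}_{\tau+t})_{t\ge 0}$ and the shifted cylindrical Brownian motion $\widetilde{\mathcal W}_t:=\mathcal W_{\tau+t}-\mathcal W_\tau$, which is again a cylindrical Brownian motion over $\mathfrak U$ adapted to the shifted filtration by the strong Markov-type property of the stochastic integral. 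Theorem \ref{v valued existence result} (resp.\ Theorem \ref{theorem2}) applied in this shifted setup produces a maximal strong solution $(\bz,\widetilde\Theta)$ with $\bz_0=\sy_\tau$ and, crucially, $\widetilde\Theta$ is $\mathbb P$-a.s.\ positive: it is the increasing limit of a sequence $(\widetilde\theta_j)$ of $\mathbb P$-a.s.\ positive stopping times in the shifted filtration, so in particular $\widetilde\theta_1>0$ $\mathbb P$-a.s.

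First I would make the gluing precise. Define $\sigma:=\tau+\widetilde\theta_1$, which is a stopping time for $(\mathcal F_t)$ with $\sigma>\tau$ $\mathbb P$-a.s.\ (since $\widetilde\theta_1>0$ a.s.\ and $\tau<\infty$ a.s.), and define the process
\begin{equation}\nonumber
\py_r(\omega):=\begin{cases}\sy_r(\omega) & r\le\tau(\omega)\\ \bz_{r-\tau(\omega)}(\omega) & r>\tau(\omega).\end{cases}
\end{equation}
The pathwise regularity required in Definition \ref{definitionofregularsolution} (resp.\ \ref{definitionofHsolution}) for $(\py,\sigma)$ follows by concatenating: $\py$ is continuous in $H$ (resp.\ $U$) on $[0,\sigma]$ because $\sy$ and $\bz$ are each continuous on their respective intervals and they match up at the junction, $\sy_\tau=\bz_0$, by construction; the $L^2([0,T];V)$ (resp.\ $L^2([0,T];H)$) integrability of $\py_{\cdot}\mathbbm 1_{\cdot\le\sigma}$ is the sum of the two pieces; and progressive measurability of $\py_{\cdot}\mathbbm 1_{\cdot\le\sigma}$ in $V$ (resp.\ $H$) follows from that of $\sy_{\cdot}\mathbbm 1_{\cdot\le\tau}$ together with the measurability of $\bz$ relative to $(\mathcal F_{\tau+t})$ composed with the time shift, exactly the kind of sigma-algebra bookkeeping already used in Lemma \ref{lemma for maximality} and Theorem \ref{v valued existence result}. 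The final statement of the lemma, $\py_{t\wedge\tau}=\sy_{t\wedge\tau}$ $\mathbb P$-a.s., is then immediate from the definition of $\py$.

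The remaining point is the identity (\ref{identityindefinitionoflocalsolution}) (resp.\ (\ref{identityindefinitionoflocalsolution**})) for $(\py,\sigma)$ in $U$ (resp.\ $X$). On $\{r\le\tau\}$ this is exactly the identity for $(\sy,\tau)$. For $r>\tau$, one writes the integral from $0$ to $r\wedge\sigma$ as the integral from $0$ to $\tau$ plus the integral from $\tau$ to $r\wedge\sigma$; the first equals $\sy_\tau-\sy_0$ by the solution property of $(\sy,\tau)$, and the second, after the change of variables $s\mapsto\tau+s$ and using that the shifted stochastic integral $\int_\tau^{\tau+u}\mathcal G(s,\py_s)\,d\mathcal W_s=\int_0^{u}\mathcal G(\tau+s,\bz_s)\,d\widetilde{\mathcal W}_s$ $\mathbb P$-a.s.\ (a property of stochastic integrals under shifting, analogous to Proposition 1.6.14 of \cite{goodair2022stochastic} invoked in Subsection \ref{subsection maximality for unbounded}), equals $\bz_{r-\tau}-\bz_0$ by the solution property of $(\bz,\widetilde\theta_1)$. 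Adding gives $\py_{r\wedge\sigma}=\sy_0+\int_0^{r\wedge\sigma}\mathcal A(s,\py_s)\,ds+\int_0^{r\wedge\sigma}\mathcal G(s,\py_s)\,d\mathcal W_s$, which is the required identity.

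The main obstacle I anticipate is the rigorous justification of the shift argument: that $\widetilde{\mathcal W}$ is a cylindrical Brownian motion over $\mathfrak U$ with respect to $(\mathcal F_{\tau+t})$, and that the stochastic integral against $\mathcal W$ over $[\tau,\tau+u]$ coincides $\mathbb P$-a.s.\ with the stochastic integral against $\widetilde{\mathcal W}$ over $[0,u]$ of the time-shifted integrand. This is standard in finite dimensions but needs the infinite-dimensional stochastic calculus of \cite{goodair2022stochastic} to be cited carefully; once it is in place, everything else is concatenation bookkeeping of the sort already performed repeatedly in Section \ref{Section v valued}. (An alternative route that sidesteps the shift entirely, and is perhaps cleaner to write, is to invoke directly the existence theory with the $\mathcal F_\tau$-measurable, not-necessarily-bounded initial datum $\sy_\tau$ over the original filtration restarted at $\tau$; but either way the conceptual content is the restart-and-glue, and the one genuine technical lemma is the behaviour of the stochastic integral under restarting.)
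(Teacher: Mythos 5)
Your proposal is correct and follows essentially the same route as the paper, which does not prove this lemma itself but defers to \cite{glatt2009strong} Lemma 4.1; that argument is precisely the restart-and-glue you describe (solve afresh from the $\mathcal{F}_\tau$-measurable datum $\sy_\tau$, concatenate the paths, and verify the identity by splitting the integrals at $\tau$), with the behaviour of the stochastic integral under restarting being the one genuine technical input, exactly as you identify. The only wrinkle worth noting is that shifting time by the random $\tau$ turns the (deterministically time-dependent) coefficients $\mathcal{A}(\tau+u,\cdot)$, $\mathcal{G}(\tau+u,\cdot)$ and the growth function $c_{\tau+u}$ into random objects, which the existence theorems as stated do not literally cover, so the alternative you mention — running the Galerkin/approximation machinery directly on $[\tau,\tau+t]$ with the original filtration rather than shifting — is the cleaner way to make the restart rigorous.
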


\begin{proof}
See \cite{glatt2009strong} Lemma 4.1. Though we have to note that it is on this instance only shown for the Stochastic Navier-Stokes Equation, the arguments are general and undergo no changes for the solutions as defined in \ref{definitionofregularsolution}, \ref{definitionofHsolution}.  
\end{proof}

\begin{proposition} \label{rockner prop}
Let $\mathcal{H}_1 \subset \mathcal{H}_2 \subset \mathcal{H}_3$ be a triplet of embedded Hilbert Spaces where $\mathcal{H}_1$ is dense in $\mathcal{H}_2$, with the property that there exists a continuous nondegenerate bilinear form $\inner{\cdot}{\cdot}_{\mathcal{H}_3 \times \mathcal{H}_1}: \mathcal{H}_3 \times \mathcal{H}_1 \rightarrow \R$ such that for $\phi \in \mathcal{H}_2$ and $\psi \in \mathcal{H}_1$, $$\inner{\phi}{\psi}_{\mathcal{H}_3 \times \mathcal{H}_1} = \inner{\phi}{\psi}_{\mathcal{H}_2}.$$ Suppose that for some $T > 0$ and stopping time $\tau$,
\begin{enumerate}
        \item $\sy_0:\Omega \rightarrow \mathcal{H}_2$ is $\mathcal{F}_0-$measurable;
        \item $\eta:\Omega \times [0,T] \rightarrow \mathcal{H}_3$ is such that for $\mathbbm{P}-a.e.$ $\omega$, $\eta(\omega) \in L^2([0,T];\mathcal{H}_3)$;
        \item $B:\Omega \times [0,T] \rightarrow \mathscr{L}^2(\mathfrak{U};\mathcal{H}_2)$ is progressively measurable and such that for $\mathbbm{P}-a.e.$ $\omega$, $B(\omega) \in L^2\left([0,T];\mathscr{L}^2(\mathfrak{U};\mathcal{H}_2)\right)$;
        \item  \label{4*} $\sy:\Omega \times [0,T] \rightarrow \mathcal{H}_1$ is such that for $\mathbbm{P}-a.e.$ $\omega$, $\sy_{\cdot}(\omega)\mathbbm{1}_{\cdot \leq \tau(\omega)} \in L^2([0,T];\mathcal{H}_1)$ and $\sy_{\cdot}\mathbbm{1}_{\cdot \leq \tau}$ is progressively measurable in $\mathcal{H}_1$;
        \item \label{item 5 again*} The identity
        \begin{equation} \label{newest identity*}
            \sy_t = \sy_0 + \int_0^{t \wedge \tau}\eta_sds + \int_0^{t \wedge \tau}B_s d\mathcal{W}_s
        \end{equation}
        holds $\mathbbm{P}-a.s.$ in $\mathcal{H}_3$ for all $t \in [0,T]$.
    \end{enumerate}
The the equality 
  \begin{align} \label{ito big dog*}\norm{\sy_t}^2_{H} = \norm{\sy_0}^2_{H} + \int_0^{t\wedge \tau} \bigg( 2\inner{\eta_s}{\sy_s}_{U \times V} + \norm{B_s}^2_{\mathscr{L}^2(\mathcal{U};H)}\bigg)ds + 2\int_0^{t \wedge \tau}\inner{B_s}{\sy_s}_{H}d\mathcal{W}_s\end{align}
  holds for any $t \in [0,T]$, $\mathbbm{P}-a.s.$ in $\R$. Moreover for $\mathbbm{P}-a.e.$ $\omega$, $\sy_{\cdot}(\omega) \in C([0,T];\mathcal{H}_2)$. 
\end{proposition}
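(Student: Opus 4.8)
The plan is to follow the classical derivation of Itô's formula for an ``admissible triplet'' (Pardoux, Krylov--Rozovskii, and the treatment in \cite{goodair2022stochastic}): localise, pass to a path-continuous $\mathcal{H}_3$-valued version, apply the one-dimensional Itô formula coordinate-wise against a carefully chosen basis, and then pass to the limit. First I would dispense with $\tau$: since the right-hand side of (\ref{newest identity*}) is constant in $t$ for $t\ge\tau$ we have $\sy_t=\sy_{t\wedge\tau}$, and replacing $\eta_s$ by $\eta_s\mathbbm{1}_{s\le\tau}$ and $B_s$ by $B_s\mathbbm{1}_{s\le\tau}$ turns (\ref{newest identity*}) into $\sy_t=\sy_0+\int_0^t\eta_s\,ds+\int_0^tB_s\,d\mathcal{W}_s$ while leaving both sides of (\ref{ito big dog*}) unchanged. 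Next, introducing the stopping times $\sigma_N:=T\wedge\inf\{t:\int_0^t(\norm{\sy_s}_{\mathcal{H}_1}^2+\norm{\eta_s}_{\mathcal{H}_3}^2+\norm{B_s}_{\mathscr{L}^2(\mathfrak{U};\mathcal{H}_2)}^2)\,ds\ge N\}$ together with the events $\{\norm{\sy_0}_{\mathcal{H}_2}\le N\}$, it is enough to prove (\ref{ito big dog*}) for each $N$, since $\sigma_N\uparrow T$ $\mathbbm{P}$-a.s.; so one may assume $\eta,B\in L^2(\Omega\times[0,T])$ and $\sy_0\in L^\infty(\Omega;\mathcal{H}_2)$. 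Under these hypotheses $N_t:=\int_0^tB_s\,d\mathcal{W}_s$ is a continuous square-integrable martingale in $\mathcal{H}_2$, hence in $\mathcal{H}_3$, so the right-hand side of the identity defines an adapted, $\mathcal{H}_3$-continuous process $\bar\sy$ with $\bar\sy_t=\sy_t$ $\mathbbm{P}$-a.s. for each $t$; by Fubini $\bar\sy=\sy$ $\mathbbm{P}\times\lambda$-a.e., and since $\sy\in L^2([0,T];\mathcal{H}_1)\subset L^2([0,T];\mathcal{H}_2)$ we get $\bar\sy_t\in\mathcal{H}_2$ for a.e.\ $t$. We work henceforth with $\bar\sy$, renamed $\sy$.

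Fix a sequence $(e_k)\subset\mathcal{H}_1$ which is total in $\mathcal{H}_1$, orthonormal in $\mathcal{H}_2$, and such that the $\mathcal{H}_2$-orthogonal projections $\Pi_n$ onto $\mathrm{span}\{e_1,\dots,e_n\}$ are uniformly bounded on $\mathcal{H}_1$ --- for instance a basis simultaneously orthogonal for $\inner{\cdot}{\cdot}_{\mathcal{H}_1}$ and $\inner{\cdot}{\cdot}_{\mathcal{H}_2}$ (available in every application of the result; concretely one may take the system $(a_k)$ of Assumption \ref{assumption fin dim spaces}, for which (\ref{projectionsboundedonH}) supplies exactly this uniform bound after renormalisation). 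Pairing (\ref{newest identity*}) against $e_k$ through $\inner{\cdot}{\cdot}_{\mathcal{H}_3\times\mathcal{H}_1}$ and using $\inner{\phi}{\psi}_{\mathcal{H}_3\times\mathcal{H}_1}=\inner{\phi}{\psi}_{\mathcal{H}_2}$ on $\mathcal{H}_2\times\mathcal{H}_1$, the real process $t\mapsto\inner{\sy_t}{e_k}_{\mathcal{H}_2}$ is a genuine real Itô process, with drift $\inner{\eta_s}{e_k}_{\mathcal{H}_3\times\mathcal{H}_1}$, martingale part $\inner{N_t}{e_k}_{\mathcal{H}_2}$, and quadratic variation $\int_0^t\norm{B_s^{*}e_k}_{\mathfrak{U}}^2\,ds$. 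Applying the one-dimensional Itô formula to $\inner{\sy_t}{e_k}_{\mathcal{H}_2}^2$ and summing over $k\le n$ (using bilinearity for the drift and summing the quadratic variations for the correction) gives, $\mathbbm{P}$-a.s.\ for all $t\in[0,T]$,
\begin{align*}
\norm{\Pi_n\sy_t}_{\mathcal{H}_2}^2 = \norm{\Pi_n\sy_0}_{\mathcal{H}_2}^2 &+ 2\int_0^t\inner{\eta_s}{\Pi_n\sy_s}_{\mathcal{H}_3\times\mathcal{H}_1}\,ds + \int_0^t\norm{\Pi_n B_s}_{\mathscr{L}^2(\mathfrak{U};\mathcal{H}_2)}^2\,ds\\
&+ 2\int_0^t\inner{B_s}{\Pi_n\sy_s}_{\mathcal{H}_2}\,d\mathcal{W}_s,
\end{align*}
the Itô correction being automatic from the scalar formula.

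Letting $n\to\infty$: the left side tends to $\norm{\sy_t}_{\mathcal{H}_2}^2$ for a.e.\ $t$; $\norm{\Pi_nB_s}_{\mathscr{L}^2(\mathfrak{U};\mathcal{H}_2)}\uparrow\norm{B_s}_{\mathscr{L}^2(\mathfrak{U};\mathcal{H}_2)}$, so the correction term converges by monotone convergence; for the drift term $\abs{\inner{\eta_s}{(\Pi_n-I)\sy_s}_{\mathcal{H}_3\times\mathcal{H}_1}}\le C\norm{\eta_s}_{\mathcal{H}_3}\norm{(\Pi_n-I)\sy_s}_{\mathcal{H}_1}\to0$ for a.e.\ $s$, with domination coming from the uniform $\mathcal{H}_1$-bound on $\Pi_n$, so one concludes by dominated convergence; and the martingale term converges (in $L^2(\Omega)$) via the Itô isometry, $\norm{\Pi_n\sy_s}_{\mathcal{H}_2}\le\norm{\sy_s}_{\mathcal{H}_2}$ and dominated convergence. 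This proves (\ref{ito big dog*}) for a.e.\ $t$, hence for every $t\in[0,T]$ since, once the identity holds a.e., its right-hand side furnishes an $\mathbbm{P}$-a.s.\ continuous version of $t\mapsto\norm{\sy_t}_{\mathcal{H}_2}^2$.

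Finally, for the $\mathcal{H}_2$-path-continuity, (\ref{ito big dog*}) gives that $t\mapsto\norm{\sy_t}_{\mathcal{H}_2}^2$ is $\mathbbm{P}$-a.s.\ continuous, while the $\mathcal{H}_3$-continuity of $\sy$ together with $\sy_t\in\mathcal{H}_2$ and the (localised) $L^\infty(\Omega;\mathcal{H}_2)$-type bound yields weak $\mathcal{H}_2$-continuity of the paths; the parallelogram law then upgrades norm-convergence plus weak convergence to strong convergence, giving $\sy\in C([0,T];\mathcal{H}_2)$ $\mathbbm{P}$-a.s. The main obstacle is the limit passage above: one must secure $\Pi_n\sy\to\sy$ in $L^2(\Omega\times[0,T];\mathcal{H}_1)$ --- needed for the drift cross-term --- while keeping the coordinate-wise Itô computation legitimate, i.e.\ reconciling the $\mathcal{H}_2$-orthogonality used to form the square with the $\mathcal{H}_3\times\mathcal{H}_1$ pairing governing the drift. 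This is precisely why the choice of basis, and the uniform boundedness of the projections on $\mathcal{H}_1$ (here guaranteed by (\ref{projectionsboundedonH})), is essential; every remaining step is routine localisation and dominated/monotone convergence.
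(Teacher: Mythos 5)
Your route is genuinely different from the paper's: the paper does not prove Proposition \ref{rockner prop} in-text but cites Theorem 4.2.5 of Liu and R\"{o}ckner (via Proposition 2.5.5 of \cite{goodair2022stochastic}), whose argument is the partition-based one of Krylov--Rozovskii --- telescoping $\norm{\sy_{t_{k+1}}}_{\mathcal{H}_2}^2-\norm{\sy_{t_k}}_{\mathcal{H}_2}^2$ along refining partitions --- and therefore needs no Galerkin basis at all. You instead take the coordinate-wise It\^{o} route. Your localisation, the passage to an $\mathcal{H}_3$-continuous version, the scalar It\^{o} computation and the treatment of the martingale and correction terms are all sound.

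The gap is the basis. You need $(e_k)\subset\mathcal{H}_1$ orthonormal in $\mathcal{H}_2$, total in $\mathcal{H}_1$, with the $\mathcal{H}_2$-orthogonal projections $\Pi_n$ uniformly bounded on $\mathcal{H}_1$; this is indispensable for your drift term, since $\eta_s$ lives only in $\mathcal{H}_3$ and the pairing is only continuous on $\mathcal{H}_3\times\mathcal{H}_1$, so passing $\inner{\eta_s}{\Pi_n\sy_s}_{\mathcal{H}_3\times\mathcal{H}_1}$ to the limit forces $\Pi_n\sy_s\to\sy_s$ in $\mathcal{H}_1$. No such basis is among the hypotheses of Proposition \ref{rockner prop}, and its existence for an arbitrary continuous dense embedding $\mathcal{H}_1\subset\mathcal{H}_2$ is not automatic (a simultaneously orthogonal basis exists when, e.g., the embedding is compact, but not in general). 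Your concrete fallback is also mismatched: (\ref{projectionsboundedonH}) states that the \emph{$U$-orthogonal} projections $\mathcal{P}_n$ are bounded on $H$, i.e.\ $\mathcal{H}_3$-orthogonal projections bounded on $\mathcal{H}_2$ in the instantiation $(\mathcal{H}_1,\mathcal{H}_2,\mathcal{H}_3)=(V,H,U)$ used in Theorem \ref{existence of local strong V solution}; what your argument needs there is $H$-orthogonal projections uniformly bounded on $V$, which the paper never assumes. As written, your proof therefore establishes the proposition only under an additional structural hypothesis. A secondary, more repairable soft spot: the upgrade from ``(\ref{ito big dog*}) holds for a.e.\ $t$'' to ``for every $t$'', together with the strong $\mathcal{H}_2$-continuity, is where the real work of the classical theorem lies --- your weak-continuity step presupposes a pathwise bound $\sup_{t}\norm{\sy_t}_{\mathcal{H}_2}<\infty$ which at that stage you possess only for a.e.\ $t$, so the order of these deductions must be arranged with care to avoid circularity.
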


\begin{proof}
This is a minor extension of \cite{liu2015stochastic} Theorem 4.2.5, which is stated and justified as Proposition 2.5.5. in \cite{goodair2022stochastic}. The extension is primarily from the Gelfand Triple to this setting.
\end{proof}

\textbf{Thanks:} The authors would like to thank Romeo Mensah for his contribution to the project. We would also like to thank James-Michael Leahy, Arnaud Debussche, Etienne M\'{e}min, Darryl Holm and Fang Rui Lim for valuable discussions surrounding the project.\\

\textbf{Acknowledgements:} Daniel Goodair was supported by the Engineering and Physical Sciences Research Council (EPSCR) Project 2478902. Dan Crisan and Oana Lang were partially supported by the European Research Council (ERC) under the European Union's Horizon 2020 Research and Innovation Programme (ERC, Grant Agreement No 856408).\\

\textbf{Data availability:} Data sharing is not applicable to this article as no datasets were generated or analysed during the current study.\\

\textbf{Conflict of Interest:} On behalf of all authors, the corresponding author states that there is no conflict of interest. 

\bibliographystyle{spmpsci}
\bibliography{myBibliography}

\begin{thebibliography}{10}
\providecommand{\url}[1]{{#1}}
\providecommand{\urlprefix}{URL }
\expandafter\ifx\csname urlstyle\endcsname\relax
  \providecommand{\doi}[1]{DOI~\discretionary{}{}{}#1}\else
  \providecommand{\doi}{DOI~\discretionary{}{}{}\begingroup
  \urlstyle{rm}\Url}\fi

\bibitem{alonso2020well}
Alonso-Or{\'a}n, D., Bethencourt~de Le{\'o}n, A.: On the well-posedness of
  stochastic boussinesq equations with transport noise.
\newblock Journal of Nonlinear Science \textbf{30}(1), 175--224 (2020)

\bibitem{alonso2020modelling}
Alonso-Or{\'a}n, D., Bethencourt~de Le{\'o}n, A., Holm, D.D., Takao, S.:
  Modelling the climate and weather of a 2d lagrangian-averaged
  euler--boussinesq equation with transport noise.
\newblock Journal of Statistical Physics \textbf{179}(5), 1267--1303 (2020)

\bibitem{attanasio2011renormalized}
Attanasio, S., Flandoli, F.: Renormalized solutions for stochastic transport
  equations and the regularization by bilinear multiplicative noise.
\newblock Communications in Partial Differential Equations \textbf{36}(8),
  1455--1474 (2011)

\bibitem{brzezniak2021well}
Brze{\'z}niak, Z., Slavik, J.: Well-posedness of the 3d stochastic primitive
  equations with multiplicative and transport noise.
\newblock Journal of Differential Equations \textbf{296}, 617--676 (2021)

\bibitem{cotter2020data}
Cotter, C., Crisan, D., Holm, D., Pan, W., Shevchenko, I.: Data assimilation
  for a quasi-geostrophic model with circulation-preserving stochastic
  transport noise.
\newblock Journal of Statistical Physics \textbf{179}(5), 1186--1221 (2020)

\bibitem{cotter2018modelling}
Cotter, C., Crisan, D., Holm, D.D., Pan, W., Shevchenko, I.: Modelling
  uncertainty using stochastic transport noise in a 2-layer quasi-geostrophic
  model.
\newblock arXiv preprint arXiv:1802.05711  (2018)

\bibitem{cotter2019numerically}
Cotter, C., Crisan, D., Holm, D.D., Pan, W., Shevchenko, I.: Numerically
  modeling stochastic lie transport in fluid dynamics.
\newblock Multiscale Modeling \& Simulation \textbf{17}(1), 192--232 (2019)

\bibitem{crisan2019solution}
Crisan, D., Flandoli, F., Holm, D.D.: Solution properties of a 3d stochastic
  euler fluid equation.
\newblock Journal of Nonlinear Science \textbf{29}(3), 813--870 (2019)

\bibitem{crisan2021theoretical}
Crisan, D., Holm, D.D., Luesink, E., Mensah, P.R., Pan, W.: Theoretical and
  computational analysis of the thermal quasi-geostrophic model.
\newblock arXiv preprint arXiv:2106.14850  (2021)

\bibitem{crisan2019well}
Crisan, D., Lang, O.: Well-posedness for a stochastic 2d euler equation with
  transport noise.
\newblock arXiv preprint arXiv:1907.00451  (2019)

\bibitem{crisan2020local}
Crisan, D., Lang, O.: Local well-posedness for the great lake equation with
  transport noise.
\newblock arXiv preprint arXiv:2003.03357  (2020)

\bibitem{crisan2021well}
Crisan, D., Lang, O.: Well-posedness properties for a stochastic rotating
  shallow water model.
\newblock arXiv preprint arXiv:2107.06601  (2021)

\bibitem{debussche2011local}
Debussche, A., Glatt-Holtz, N., Temam, R.: Local martingale and pathwise
  solutions for an abstract fluids model.
\newblock Phys. D \textbf{240}(14), 1123--1144 (2011)

\bibitem{doob2012measure}
Doob, J.L.: Measure theory, vol. 143.
\newblock Springer Science \& Business Media (2012)

\bibitem{dufee2022stochastic}
Duf{\'e}e, B., M{\'e}min, E., Crisan, D.: Stochastic parametrization: an
  alternative to inflation in ensemble kalman filters.
\newblock Quarterly Journal of the Royal Meteorological Society
  \textbf{148}(744), 1075--1091 (2022)

\bibitem{enciso2018biot}
Enciso, A., Garcia-Ferrero, M.A., Peralta-Salas, D.: The biot--savart operator
  of a bounded domain.
\newblock Journal de Math{\'e}matiques Pures et Appliqu{\'e}es \textbf{119},
  85--113 (2018)

\bibitem{flandoli2015open}
Flandoli, F.: An open problem in the theory of regularization by noise for
  nonlinear pdes.
\newblock In: Workshop Classic and Stochastic Geometric Mechanics, pp. 13--29.
  Springer (2015)

\bibitem{flandoli2021scaling}
Flandoli, F., Galeati, L., Luo, D.: Scaling limit of stochastic 2d euler
  equations with transport noises to the deterministic navier--stokes
  equations.
\newblock Journal of Evolution Equations \textbf{21}(1), 567--600 (2021)

\bibitem{flandoli2021high}
Flandoli, F., Luo, D.: High mode transport noise improves vorticity blow-up
  control in 3d navier--stokes equations.
\newblock Probability Theory and Related Fields \textbf{180}(1), 309--363
  (2021)

\bibitem{flandoli20212d}
Flandoli, F., Pappalettera, U.: 2d euler equations with stratonovich transport
  noise as a large-scale stochastic model reduction.
\newblock Journal of Nonlinear Science \textbf{31}(1), 1--38 (2021)

\bibitem{flandoli2022additive}
Flandoli, F., Pappalettera, U.: From additive to transport noise in 2d fluid
  dynamics.
\newblock Stochastics and Partial Differential Equations: Analysis and
  Computations pp. 1--41 (2022)

\bibitem{glatt2011cauchy}
Glatt-Holtz, N., Temam, R.: Cauchy convergence schemes for some nonlinear
  partial differential equations.
\newblock Applicable Analysis \textbf{90}(1), 85--102 (2011)

\bibitem{glatt2009strong}
Glatt-Holtz, N., Ziane, M., et~al.: Strong pathwise solutions of the stochastic
  navier-stokes system.
\newblock Advances in Differential Equations \textbf{14}(5/6), 567--600 (2009)

\bibitem{glatt2012local}
Glatt-Holtz, N.E., Vicol, V.C.: Local and global existence of smooth solutions
  for the stochastic {E}uler equations with multiplicative noise.
\newblock Ann. Probab. \textbf{42}(1), 80--145 (2014).
\newblock \doi{10.1214/12-AOP773}.
\newblock
  \urlprefix\url{https://doi-org.univaq.clas.cineca.it/10.1214/12-AOP773}

\bibitem{goodair2022existence}
Goodair, D.: Existence and uniqueness of maximal solutions to a 3d
  navier-stokes equation with stochastic lie transport.
\newblock arXiv preprint arXiv:2202.09242  (2022)

\bibitem{goodair2022stochastic}
Goodair, D.: Stochastic calculus in infinite dimensions and spdes.
\newblock arXiv preprint arXiv:2203.17206  (2022)

\bibitem{goodair2022inprep}
Goodair, D., Crisan, D.: On the navier-stokes equations with stochastic lie
  transport.
\newblock In preparation  (2022)

\bibitem{gyongy1980stochastic}
Gy{\"o}ngy, I., Krylov, N.V.: On stochastic equations with respect to
  semimartingales i.
\newblock Stochastics: An International Journal of Probability and Stochastic
  Processes \textbf{4}(1), 1--21 (1980)

\bibitem{hairer2009introduction}
Hairer, M.: An introduction to stochastic pdes.
\newblock arXiv preprint arXiv:0907.4178  (2009)

\bibitem{holm2015variational}
Holm, D.D.: Variational principles for stochastic fluid dynamics.
\newblock Proceedings of the Royal Society A: Mathematical, Physical and
  Engineering Sciences \textbf{471}(2176), 20140,963 (2015)

\bibitem{holm2019stochastic}
Holm, D.D., Luesink, E.: Stochastic wave--current interaction in thermal
  shallow water dynamics.
\newblock Journal of Nonlinear Science \textbf{31}(2), 1--56 (2021)

\bibitem{holm2020stochastic}
Holm, D.D., Luesink, E., Pan, W.: Stochastic circulation dynamics in the ocean
  mixed layer.
\newblock arXiv preprint arXiv:2006.05707  (2020)

\bibitem{kato1984nonlinear}
Kato, T., Lai, C.Y.: Nonlinear evolution equations and the euler flow.
\newblock Journal of functional analysis \textbf{56}(1), 15--28 (1984)

\bibitem{kelliher2006navier}
Kelliher, J.P.: Navier--stokes equations with navier boundary conditions for a
  bounded domain in the plane.
\newblock SIAM journal on mathematical analysis \textbf{38}(1), 210--232 (2006)

\bibitem{krylov2007stochastic}
Krylov, N.V., Rozovskii, B.L.: Stochastic evolution equations.
\newblock In: Stochastic Differential Equations: Theory And Applications: A
  Volume in Honor of Professor Boris L Rozovskii, pp. 1--69. World Scientific
  (2007)

\bibitem{lang2022pathwise}
Lang, O., Pan, W.: A pathwise parameterisation for stochastic transport.
\newblock arXiv preprint arXiv:2202.10852  (2022)

\bibitem{van2021bayesian}
van Leeuwen, P.J., Crisan, D., Lang, O., Potthast, R.: Bayesian inference for
  fluid dynamics: A case study for the stochastic rotating shallow water model.
\newblock arXiv preprint arXiv:2112.15216  (2021)

\bibitem{liu2013well}
Liu, W.: Well-posedness of stochastic partial differential equations with
  lyapunov condition.
\newblock Journal of Differential Equations \textbf{255}(3), 572--592 (2013)

\bibitem{liu2010spde}
Liu, W., R{\"o}ckner, M.: Spde in hilbert space with locally monotone
  coefficients.
\newblock Journal of Functional Analysis \textbf{259}(11), 2902--2922 (2010)

\bibitem{liu2013local}
Liu, W., R{\"o}ckner, M.: Local and global well-posedness of spde with
  generalized coercivity conditions.
\newblock Journal of differential equations \textbf{254}(2), 725--755 (2013)

\bibitem{liu2015stochastic}
Liu, W., R{\"o}ckner, M.: Stochastic partial differential equations: an
  introduction.
\newblock Springer (2015)

\bibitem{luo2021convergence}
Luo, D.: Convergence of stochastic 2d inviscid boussinesq equations with
  transport noise to a deterministic viscous system.
\newblock Nonlinearity \textbf{34}(12), 8311 (2021)

\bibitem{luo2020scaling}
Luo, D., Saal, M.: A scaling limit for the stochastic msqg equations with
  multiplicative transport noises.
\newblock Stochastics and Dynamics \textbf{20}(06), 2040,001 (2020)

\bibitem{memin2014fluid}
M{\'e}min, E.: Fluid flow dynamics under location uncertainty.
\newblock Geophysical \& Astrophysical Fluid Dynamics \textbf{108}(2), 119--146
  (2014)

\bibitem{neelima2020coercivity}
Neelima, {\v{S}}i{\v{s}}ka, D.: Coercivity condition for higher moment a priori
  estimates for nonlinear spdes and existence of a solution under local
  monotonicity.
\newblock Stochastics \textbf{92}(5), 684--715 (2020)

\bibitem{pardoux1975equations}
Pardoux, E.: Equations aux d{\'e}riv{\'e}es partielles stochastiques monotones,
  these, univ (1975)

\bibitem{robinson2016three}
Robinson, J.C., Rodrigo, J.L., Sadowski, W.: The three-dimensional
  Navier--Stokes equations: Classical theory, vol. 157.
\newblock Cambridge university press (2016)

\bibitem{rockner2022well}
R{\"o}ckner, M., Shang, S., Zhang, T.: Well-posedness of stochastic partial
  differential equations with fully local monotone coefficients.
\newblock arXiv preprint arXiv:2206.01107  (2022)

\bibitem{street2021semi}
Street, O.D., Crisan, D.: Semi-martingale driven variational principles.
\newblock Proceedings of the Royal Society A \textbf{477}(2247), 20200,957
  (2021)

\end{thebibliography}

\end{document}